\numberwithin{equation}{section} %adds section number before the equation label
\newtheorem{theorem}{Theorem}[section]
\newaliascnt{lemma}{theorem}  
\newtheorem{lemma}[lemma]{Lemma}  
\newaliascnt{definition}{theorem}  
\newtheorem{definition}[definition]{Definition}  
\newaliascnt{corollary}{theorem}  
\newtheorem{corollary}[corollary]{Corollary}  
\newaliascnt{proposition}{theorem}  
\newtheorem{proposition}[proposition]{Proposition}  
\newaliascnt{remark}{theorem}  
\newtheorem{remark}[remark]{Remark}  
\newaliascnt{notation}{theorem}  
\newtheorem{notation}[notation]{Notation}  
\newaliascnt{example}{theorem}  
\newtheorem{example}[example]{Example}  
\newaliascnt{conjecture}{theorem}  
\newaliascnt{question}{theorem}  
\newaliascnt{fact}{theorem}  
\newaliascnt{claim}{theorem}  
\newcommand{\Boxdot}{\,
	\setlength{\unitlength}{1ex}
	\begin{picture}(2,2)
	\put(0,0){$\Box$}
	\put(.50,.65){.}
	\end{picture}
	\!}
\newcommand{\NR}{\,
	{\psset{xunit=10pt,yunit=10pt,runit=.3pt}
		\begin{pspicture}(1,1)
		{
			\newrgbcolor{curcolor}{0 0 0}
			\pscustom[linewidth=1,linecolor=curcolor]
			{\newpath
				\moveto(.8,.8)
				\lineto(-.1,-.1)}
		}
		\put(0,0){$\mathcal{R}$}
		\end{pspicture}}
	\!}
\def\emptycommand{}
\def\kcal{\mathcal{K}}
\def\q{\mathrel{\sf q}}
\def\IQC{\hbox{\sf IQC}{} }
\def\CQC{\hbox{\sf CQC}{} }
\def\IPC{\hbox{\sf IPC} }
\def\HA{\hbox{\sf HA}{} }
\def\PA{\hbox{\sf PA}{} }
\def\LC{\hbox{\sf LC}{}}
\def\CP{\hbox{\sf CP}{} }
\def\NNIL{\hbox{\sf NNIL}{} }
\def\TNNIL{\hbox{\sf TNNIL}{} }
\def\K4{\hbox{\sf K4}{} }
\def\GL{\hbox{\sf GL}{} }
\def\L{\hbox{\sf L}{} }
\def\R{\mathcal{R}}
\def\Rbar{\overline{\mathcal{R}}}
\def\PL{\mathcal{PL}}
\def\PLS{\mathcal{PL}_{\sigma}}
\def\alphabar{\overline{\alpha}}
\def\betabar{\overline{\beta}}
\def\lle{\hbox{\sf{LLe}}{}^+}
\def\lles{\sf{iH}_{\sigma}}
\def\mrefute{\nVdash_\tinysub{\sf max}}
\newcommand{\brt}{\mathrel{\mbox{\textcolor{gray}{$\blacktriangleright$}}}}
\newcommand{\blt}{\mathrel{\mbox{\textcolor{gray}{$\blacktriangleleft$}}}}
\newcommand{\ra }{\rightarrow }
\newcommand{\lr }{\leftrightarrow}
\newcommand{\bo }{\Box }
\newcommand{\blrt }{\brt\!\blt }
\newcommand{\Prf}{{\sf Proof}}
\newcommand{\Prv}{{\sf Prov}}
\newcommand{\gnumber}[1]{\ulcorner#1\urcorner}
\newcommand{\tinysub}[1]{{^{_{#1}}}}
\begin{document}
\setstcolor{red}
\title{The $\Sigma_1$-Provability Logic of $\HA$}
\author{\begin{tabular}{c c}
		Mohammad Ardeshir\thanks{mardeshir@sharif.ir}  
		\quad &	Mojtaba Mojtahedi\thanks{mojtaba.mojtahedi@ut.ac.ir}\\
		Department of Mathematical Sciences  
		\quad &Department of Mathematics, \\
		Sharif University of Technology
		\quad &  Statistics and Computer Science, \\ 
		\quad &College of Sciences,  University of Tehran
\end{tabular}}

\maketitle

\begin{abstract}
In this paper we introduce a modal theory $\lles$ which is sound and complete for 
arithmetical $\Sigma_1$-interpretations in $\HA$, in other words, we will show that $\lles$ is the 
$\Sigma_1$-provability logic of $\HA$. Moreover we will show that $\lles$ is decidable.
As a by-product of these results, we show that $\HA+\Box \bot$ has de Jongh property. 
\end{abstract}

\tableofcontents

\section{Introduction}\label{sec-introduction}
As far as we know, there are at least two updated reliable sources \cite{ArtBekProv,VisBek} for current situation, historical background and motivations for {\em provability logic}.
 To be self-contained, in this introduction, we extract a brief backgrounds of provability logic from the mentioned sources for readers not much familiar with the subject. 

Provability Logic is a  modal logic in which the modal operator
$\Box$ has intended meaning of provability in some formal system. Unlike the other realms of modal logic, 
e.g. temporal logic, epistemic logic and  deontic logic, 
here in provability logic, we have a rational meaning for $\Box A$:
 \begin{center}
  ``$A$   is provable in the system $T$"
  \end{center}

The notion of  provability logic goes back essentially to K.~G\"{o}del 
\cite{Godel33} in 1933, where he intended to provide a semantics for 
 Heyting's formalization of {\em intuitionistic logic} $\IPC$.
He defined a
{\em translation}, or {\em interpretation} $\tau$ from the
propositional language to the modal language such that
\begin{center}
${\sf IPC}\vdash A\quad\quad\Longleftrightarrow \quad\quad{\sf S4}\vdash \tau(A)$.
\end{center}
The translation $\tau(A)$ adds a $\Box$ before each sub-formula of $A$.
The idea behind this translation is hidden  in the intuitionistic meaning of truth (the BHK interpretation): 
``The truth of a proposition coincides with its provability". 
Hence if one assumes  $\Box A$ as ``provability of $A$", then it is  reasonable to add a $\Box$ behind each sub-formula 
and expect to have a correspondence between the intuitionistic 
propositional calculus $\IPC$ and some classical modal logic.

On the other hand, by works of G\"{o}del in \cite{Godel}, for each arithmetical formula $A$ and
recursively axiomatizable theory ${T}$ (like ${\sf PA}$), we can
formalize the statement ``there exists a proof in ${ T}$ for
$A$"  by a sentence of the language of arithmetic, i.e.
${\sf Prov}_\tinysub{T}(\gnumber{A}):=\exists{x}\,{\sf Proof}_\tinysub{T}(x,\gnumber{A})$, 
where $\gnumber{A}$ is the code
of $A$. Now the question is whether we can find  some modal propositional theory 
such  that the $\square$ operator captures  the notion of {\em
provability} in classical mathematics. Let us restrict our attention to
the part of mathematics known as Peano Arithmetic ${\sf PA}$. Hence 
the question is to find some propositional modal theory $T_\Box$ such that:
$$T_\Box\vdash A\quad\quad \Longleftrightarrow \quad\quad \forall{*} \ \PA\vdash A^*$$
By  $(\ )^*$, we mean a mapping from the modal language to the first-order language of arithmetic, such that
\begin{itemize}
\item $p^*$ is an arithmetical first-order sentence, for any atomic 
variable $p$, and $\bot^*=\bot$,
\item $(A\circ B)^*=A^*\circ B^*$, for $\circ\in\{\vee, \wedge, \rightarrow\}$,
\item $(\square A)^*:=\exists{x}\,{\sf Proof}_\tinysub{\sf PA}(x,\gnumber{A^*})$.
\end{itemize}

 It turned out that ${\sf S4}$ is {\em not} a right candidate for
interpreting  the notion of {\em provability}, since
$\neg\square\bot$ is a theorem of ${\sf S4}$, contradicting
G\"{o}del's second incompleteness theorem (Peano Arthmetic ${\sf PA}$, does not prove its own consistency).

In 1976, R. Solovay \cite{Solovay} proved that  the right modal logic, in
which the $\square$ operator interprets the notion of {\em
provability in {\sf PA}}, is $\GL$. This modal logic is well-known as the
G\"{o}del-L\"{o}b logic, and has the following axioms and
rules:
\begin{itemize}
\item all tautologies of classical propositional logic,
\item $\Box (A\rightarrow B)\rightarrow(\Box A\rightarrow\Box B)$,
\item $\Box A\rightarrow\Box\Box A$,
\item L\"{o}b's axiom \textup{(}{\sf L}\textup{)}: $\Box(\Box A\rightarrow A)\rightarrow\Box A$,
\item Necessitation Rule: $A/\Box A$,
\item Modus ponens: $(A,A\rightarrow B)/B$.
\end{itemize}
\noindent {\bf Theorem.} {(Solovay)} 
{\em For any sentence $A$ in the language of modal logic, ${\sf
GL}\vdash A$ if and only if for all interpretations $(\ )^*$,
${\sf PA}\vdash A^*$. }
\vspace{.15cm}

There are many open problems which could be assumed as a generalization of the above theorem. 
A list of such problems could be found in \cite{VisBek}. Also a live list of open problems could be found in the
homepage of Lev Beklemishev\footnote{\url{http://www.mi.ras.ru/~bekl}}.

The question of generalizing Solovay's result from classical
theories to intuitionistic ones, such as the intuitionistic
counterpart of ${\sf PA}$, well-known as Heyting Arithmetic
${\sf HA}$, proved to be remarkably difficult \cite{ArtBekProv}. This
problem was taken up by A. Visser, D. de Jongh and their
students. The problem of axiomatizing the provability logic of
${\sf HA}$ remains a major open problem since the end of 70s
\cite{ArtBekProv}. Precisely speaking, the problem of the provability logic of $\HA$
is as follows: 
 $$\text{Find a modal theory ${\sf iH}$ such that:}\quad
 {\sf iH}\vdash A\quad\quad \Longleftrightarrow \quad\quad \forall{*} \ \HA\vdash A^*$$
 Note that in the above statement of the provability logic of $\HA$, we have $(\Box A)^*:={\sf Prov}_{_{\sf HA}}(\gnumber{A^*})$.
The following list contains important  results about the provability logic of $\HA$ with arithmetical nature:
\begin{itemize}
\item Myhill 1973 and Friedman 1975. $ {\sf iH}\nvdash \Box (A\vee B)\to(\Box A\vee \Box B)$, \cite{Myhill,Friedman75}
\item Leivant 1975. ${\sf iH}\vdash\Box(A\vee B)\to\Box(\Boxdot A\vee\Boxdot B)$, in which $\Boxdot A$ is a shorthand for
$A\wedge\Box A$, \cite{Leivant-Thesis} 
\item Visser 1981. ${\sf iH}\vdash\Box\neg\neg\Box A\to\Box\Box A$ and 
${\sf iH}\vdash \Box(\neg\neg\Box A\to\Box A)\to\Box(\Box A\vee \neg\Box A)$, \cite{VisserThes,Visser82}
\item Iemhoff 2001. Introduced a uniform axiomatization  of all known axiom schemas of  ${\sf iH}$ in an extended language 
with a bimodal operator $\rhd$. In her Ph.D. dissertation \cite{IemhoffT}, Iemhoff raised a conjecture that implies directly that her axiom system, ${\sf iPH}$,  restricted to the normal modal language, is equal to ${\sf iH}$, \cite{IemhoffT} 
\item Visser 2002. Introduced a decision algorithm for ${\sf iH}\vdash A$, for all $A$ not containing any atomic variable.
\cite{Visser02} 
\end{itemize}

In this paper, we introduce an axiomatization of a modal logic $\lles$ 
and prove the following result which partially
answers the question. 

We first show that  any $\TNNIL$-proposition\footnote{
We say that $A$ is $\TNNIL$, if any  two nested occurrences of $\to$ 
in the left are separated  by a $\Box$. For example $(p\to q)\to r$ and $\neg (p\to q)$ are not $\TNNIL$, while $p\to q$ and 
$\Box(p\to q)\to r$ are $\TNNIL$. Precise definition of $\TNNIL$-propositions, a modal variant of  $\NNIL$-propositions
\cite{Visser-Benthem-NNIL}, is in \Cref{subsubsec-TNNIL-algorithm}.
}
 $A$
is in the $\Sigma_1$-provability logic of ${\sf HA}$,
iff ${\sf i GL}+{\sf  CP}\vdash A$, where ${\sf iGL}$ is the intuitionistic
G\"odel-L\"ob's logic and ${\sf CP}$ is the {\em completeness principle} $B\to\Box B$ (we call this theory as $\LC$). 
This fact in combination with the conservativity result of
 \Cref{Theorem-TNNIL Conservativity of LC over LLe+} and also some variant of 
 Visser's $\NNIL$-algorithm in  \cite{Visser02}, implies that the $\Sigma_1$-provability logic
of ${\sf HA}$, is a decidable modal theory, that is called $\lles$ here. More precisely, we find a system $\lles$ such that
$$\lles\vdash A\quad\quad \Longleftrightarrow \quad\quad \forall{*} \ \HA\vdash A^*,$$
in which, $*$ range over all of the interpretations that $p^*$ is a $\Sigma_1$-sentence for atomic variables $p$.
The complete axiomatization of $\lles$ is in \Cref{Sec-Aximatizing-TNNIL}.
It is worth mentioning that a non-modal variant for all of the axioms of $\lles$, were  already discovered by Visser in \cite{VisserThes,Visser82,Visser02}.
He also showed in \cite{Visser02} that those variant of axioms of $\lles$ are sound for 
$\Sigma_1$ arithmetical interpretations in $\HA$.

\subsection{Inspiring examples}

In the following four examples,  we roughly explain the main roads in the paper. 
Before we continue with examples, let us review what we 
are going to do in this paper.  Our main results are soundness and completeness theorems of  $\lles$ for arithmetical $\Sigma_1$-interpretations in $\HA$. As usual, the difficult part is the completeness theorem.  The soundness part is not problematic: 
some major  part of soundness is already done by  Visser \cite{Visser02} and the rest (extended Leivant's principle) is done in \autoref{Theorem-Soundness of HA for lle+}. We are not going to talk about soundness in these examples. 
We explain how to refute some
modal proposition $A$ from the $\Sigma_1$-provability logic (and a fortiori from the provability logic) of $\HA$, i.e. we will 
find some $\Sigma_1$-interpretation $\sigma_{_{\sf HA}}$ such that $\HA\nvdash \sigma_{_{\sf HA}}(A)$. 
 The propositions which we treat here are $(p\to q) \vee (q\to p)$, 
$\Box(p\vee q)\to(\Box p\vee\Box q)$, ${\neg\neg\Box(\neg\neg p\to  p)}\to{\Box(\neg\neg p\to p)}$ and finally
$A:=\Box(p\vee q)\to [ (\Box p\to (p\vee q\vee \Box q))\vee (\Box q\to (p\vee q\vee \Box p))]$.  
%{\color{blue} 
%In all of these examples we do not provide the axiomatization of the $\Sigma_1$-provability logic of 
%$\HA$ ($\lles$) and  decision process 
%}
 
\begin{example}\label{example00}\em 
In this example we will show that how to refute the Dummet formula $A:={(p\to q)} \vee {(q\to p)}$
 from the $\Sigma_1$-provability logic of $\HA$. 
Since $A$ is non-modal, we are actually faced with a special case of proving 
 de Jongh property for $\HA$ with $\Sigma_1$-substitutions.
C. Smory\'nsk first discovered this result  \cite{Smorynski-Thesis}. For a survey on the de Jongh property see \cite{vi1}. 
Here we explain how to use  Solovey's method \cite{Solovay} in combination with Smor\'ynski's general method for defining first-order Kripke models of $\HA$ \cite[page~372]{Smorynski-Troelstra} to deduce the de Jongh property (with $\Sigma_1$-substitutions) for $\HA$.  We are not going to 
provide all details here,  instead we explain the idea which motivated us to our main result \autoref{Theorem-Main tool}.
 First we find a Kripke model $\kcal_0\nVdash A$:

\vspace{.2in}

\begin{tabular}{r l}
\quad \quad \quad \quad  \parbox{3cm}{\includegraphics[scale=.6]{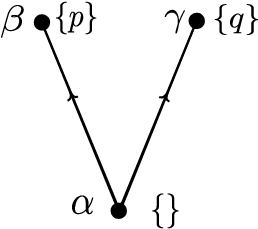}} \quad \quad \quad \quad 
& \parbox{7cm}{$\beta\Vdash p$\quad,\quad $\beta\nVdash q$\quad,\quad $\gamma\Vdash q$\quad, \quad
$\gamma\nVdash p$ \quad  

\vspace{.3in} 

$ \alpha\leq \beta,\gamma$ \quad , \quad $\alpha \nVdash p,q$
}
\end{tabular}

\vspace{.2in}

\noindent In the left and right hand side of each node we wrote the name of that node and the set of atomic variables which are 
forced at that node, respectively. 
The precise definition of  Kripke models for intuitionistic propositional logic $\IPC$, 
  came in \Cref{Sec-PropModKripke}.

Next we will find some arithmetical $\Sigma_1$-sentences 
$B$ and $C$ and also a first-order Kripke model $\kcal_1\Vdash \HA$
such that $\kcal_1$ simulates $\kcal_0$, with $B$ and $C$ playing the role of $p$ and $q$, respectively: 
\begin{center}
\includegraphics[scale=.6]{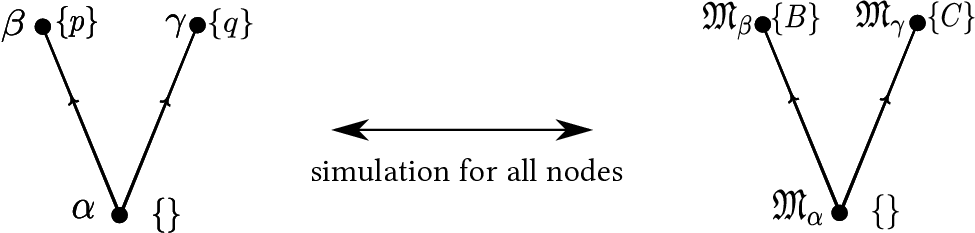}
\end{center}
In the above picture, $\mathfrak{M}_\alpha$, $\mathfrak{M}_\beta$ and $\mathfrak{M}_\gamma$ are classical 
structures assigned to the corresponding nodes. For definition of intuitionistic first-order Kripke models, see 
\Cref{sec-KripkeModelFirstOrder}.
%An observant reader may noticed that the node $\alpha_0$ is unnecessary. We added this 
%extra node because we can not simulate the behaviour of the root of tree and we out to add some extra node such that 
%we could simulate all the nodes above its new root, including its previous root.
%By the way for this special example, everything works fine 
%without adding this fresh root $\alpha_0$, 

 To explain what are these classical structures and also what are the 
 sentences $B$ and $C$, we first define a recursive function $F$ with the domain of natural numbers and with the range in 
the nodes of the  Kripke model. Let us define:
$$B:=\exists{x}(F(x)=\beta) \quad \text{and} \quad C:=\exists{x}(F(x)=\gamma)$$ 
Since $F$ is a recursive function, $B$ and $C$ are $\Sigma_1$ sentences. Moreover, 
for any $\delta\in\{\alpha,\beta,\gamma\}$, we assume the classical structures $\mathfrak{M}_\delta$ such that 
$$\mathfrak{M}_\delta \models T_\delta\quad , \quad T_\delta:=\PA+(\lim_{x\to\infty}\!\!F(x)=\delta)$$
In which $\lim_{x\to\infty}\!\!F(x)=\delta$ is defined as $\exists{x}\forall{y\geq x} F(y)=\delta$.
The function $F$ is defined  as follows: $F(0):=\alpha_0$ and $F(n+1)$ is defined to be some node $\delta>F(n)$, if there exists 
some proof (in $\PA$) with the G\"odel number less than $n+1$  for the statement 
$$  \lim_{x\to\infty}F(x)\neq\delta $$  
In other words, $F$ climbs at stage $n+1$ to the node $\delta$ if there is a witness for the inconsistency of 
$T_\delta$. This function is the same as the Solovey's function in \cite{Solovay}. The recursive definition of $F$ is such that 
although it is true that $F$ is a constant function,  $\PA$ can't prove it. Moreover, $F$ is such that for any  pair of nodes
$\delta\lneqq\delta'$, we have $T_\delta\vdash {\sf Con}(T_{\delta'})$, i.e. 
$$\PA+\lim_{x\to \infty}F(x)=\delta\vdash \neg {\sf Prov}_{_{\sf PA}}(\gnumber{\lim_{x\to \infty}F(x)\neq\delta'})$$
This guaranties the existence of the classical structures $\mathfrak{M}_\delta\models T_\delta$, such that $\kcal_1$ is a first-order 
Kripke model of $\HA$ (see \cite{Smorynski-Thesis,Smorynski-Troelstra}).   From $\mathfrak{M}_\beta\models T_\beta$,
we can  deduce that $\kcal_1,\beta\Vdash B$  and $\kcal_1,\beta\nVdash C$. From $\mathfrak{M}_\gamma\models T_\gamma$,
we can deduce $\kcal_1,\gamma\Vdash C$ and $\kcal_1,\gamma \nVdash B$. These would imply that 
$\kcal_1,\alpha\nVdash (B\to C)\vee(C\to B)$, as desired.
\end{example}

\begin{example}\label{example01}\em 
Let $A=\Box (p\vee q)\to(\Box p\vee\Box q)$. %We will show that how to refute $A$ from the provability logic of $\HA$.
 J. Myhill \cite{Myhill} and H. Friedman   \cite{Friedman75}  have already shown
that there exist  some first-order arithmetical formulas $B$ and $C$ such that 
$\HA\nvdash \Box(B\vee C)\to(\Box B\vee\Box C)$, in other words,  there exist  some arithmetical substitution
$\sigma$ such that $\HA\nvdash\sigma_{_{\sf HA}}(A)$, i.e. $A$ 
does not belong to the provability logic of $\HA$. However, their proof does not provide an explicit $B$ and $C$. 
It only guarantees the existence of such arithmetical propositions.
With the methods of this paper, we will find some explicit sentences $B$ and $C$ such that 
$\HA\nvdash \Box(B\vee C)\to(\Box B\vee\Box C)$. 
%Some difficulties which comes from the $\NNIL$ subject are not involved
%which make it rather simplified while it steel is motivative.
\\

As usual, we  first  find some Kripke model which refutes the proposition $A$.  The
Kripke models of intuitionistic modal logic have two relations: one for intuitionistic  logic ($\leq$)
which we illustrated it in the  pictures with one  arrow in the middle line,  
and another relation for modal connective  (${\R}$)
which is illustrated with two arrows in  middle line. 
All  Kripke models in this paper have the following property: $\alpha\leq \beta\,\R\,\gamma$ implies $\alpha\,\R\,\gamma$.
 Also, in this example and other examples (\Cref{example00,example01,example02,example03}), the relations 
$\R$  and  $\leq$ are transitive, $\R\;\subseteq\;\leq$ and moreover, $\R$ is irreflexive and $\leq$ is reflexive.
In the pictures, we do not draw all the relations and 
always we assume the colsure of relations under the mentioned properties for the relations. For precise definition of 
Kripke semantics for intuitionistic modal logics, see \Cref{Sec-PropModKripke}.
The Kripke counter-model for $A$ is $\kcal_0$:

\vspace{.2in}

\begin{tabular}{r l}
\quad \quad \quad \quad  \parbox{3cm}{\includegraphics[scale=.6]{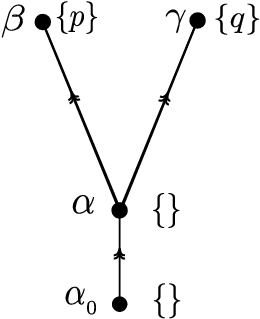}} \quad \quad 
& \parbox{9cm}{ \quad\quad$\beta\Vdash p$\quad,\quad $\beta\nVdash q$\quad,\quad $\gamma\Vdash q$\quad, \quad
$\gamma\nVdash p$ \quad  

\vspace{.25in} 

\quad\quad$\alpha\,\R\,\beta,\!\gamma$ \quad , \quad $\alpha\leq\beta,\gamma$

\vspace{.25in} 

\quad\quad$\alpha_0\,\R\,\alpha,\beta,\!\gamma$\quad , \quad  $\alpha_0\leq \alpha,\beta,\gamma$ \quad , \quad  $\alpha,\alpha_0\nVdash p,q$
}
\end{tabular}

\vspace{.2in} 

As it may be observed, the node $\alpha_0$ is not necessary. We add  this 
extra (root) node,  whenever we are not able to simulate the behaviour of the existing root of the tree.
\Cref{Theorem-Propositional Completeness LC} ensures us that always for invalid propositions, such Kripke models exist.
Next we will find some arithmetical sentences $B$ and $C$ and also a first-order Kripke model $\kcal_1\Vdash \HA$
such that $\kcal_1$ simulates $\kcal_0$ with $B$ and $C$ playing the role of $p$ and $q$, respectively: 
%An observant reader may noticed that the node $\alpha_0$ is unnecessary. We add  this 
%extra node because we can not simulate the behaviour of the root of tree and we should add some extra node such that 
%we can simulate all the nodes above its new root, including its previous root.
%The \autoref{Theorem-Propositional Completeness LC} ensures us that always for invalid propositions, such Kripke models exist.
%Next we will find some arithmetical sentences $B$ and $C$ and also a first-order Kripke model $\kcal_1\Vdash \HA$
%such that $\kcal_1$ simulates $\kcal_0$ with $B$ and $C$ playing the role of $p$ and $q$ respectively: 
\begin{center}
\includegraphics[scale=.6]{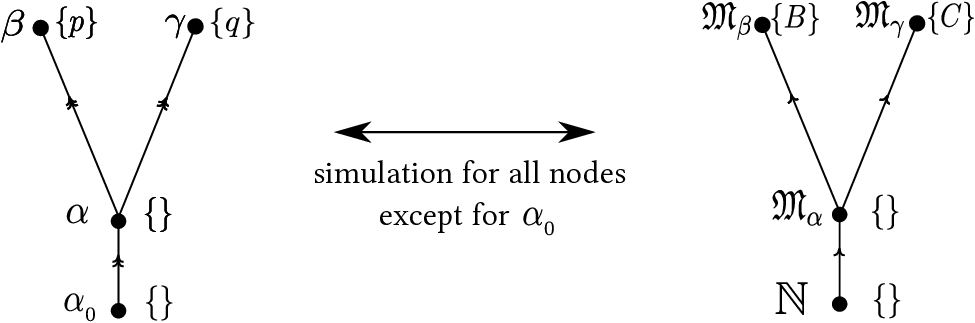}
\end{center}
In the above picture, $\mathfrak{M}_\alpha$, $\mathfrak{M}_\beta$ and $\mathfrak{M}_\gamma$ are classical 
structures assigned to the corresponding nodes and $\mathbb{N}$ indicates the standard model of arithmetic. To explain 
 these classical structures and also   the 
sentences $B$ and $C$, we first define a recursive function $F$ with the domain of natural numbers and 
with the range in the nodes of the  Kripke model. Let us define: 
$$B:=\exists{x}(F(x)=\beta) \quad \text{and} \quad C:=\exists{x}(F(x)=\gamma)$$ 
Moreover, for any $\delta\in\{\alpha,\beta,\gamma\}$, we assume the classical structures $\mathfrak{M}_\delta$ such that 
\begin{equation}\label{EQ765}
\mathfrak{M}_\delta \models T_\delta , \quad 
T_\delta:=\PA+(\lim_{x\to\infty}\!\!F(x)=\delta)+{\sf Prov}_{_{\sf HA}}(\gnumber{\varphi_{_\delta}}) ,  \quad
 \varphi_{_\alpha}:=B\vee C,\quad \varphi_{_\beta}:=\varphi_{_\gamma}:=B\wedge C
 \end{equation}

Note that $\mathfrak{M}_\delta\models {\sf Prov}_{_{\sf HA}}(\gnumber{\varphi_{_\delta}})$, and this implies
$\kcal_1,\delta\Vdash {\sf Prov}_{_{\sf HA}}(\gnumber{\varphi_{_\delta}})$. This means that the node $\delta$ in the first-order
Kripke model forces the interpretation of those boxed propositions which are forced at $\delta$ in 
the propositional Kripke model $\kcal_0$.  As we will see in \Cref{corollar-4st&2st}:  
 $$\PA+(\lim_{x\to\infty}\!\!F(x)=\delta)\vdash {\sf Prov}_{_{\sf HA}}(\gnumber{\varphi_{_\delta}})$$
 Hence we may define $T_\delta$ simply as $\PA+(\lim_{x\to\infty}\!\!F(x)=\delta)$, instead of our previous definition in
 \cref{EQ765}.

The function $F$ is defined as follows: $F(0):=\alpha_0$ and $F(n+1)$ is defined to be 
some node $\delta$  such that 
$F(n)\,\R\,\delta$, if there exists 
some proof (in $\PA$) with the G\"odel number less than $n+1$  for the statement 
$$\neg[\lim_{x\to\infty}F(x)=\delta\wedge {\sf Prov}_{_{\sf HA}}(\gnumber{\varphi_{_\delta}})]$$  
 Otherwise define $F(n+1):=F(n)$. 
In other words, $F$ climbs at stage $n+1$ to the node $\delta$ if there is a witness for the inconsistency of 
$T_\delta$ with ${\sf Prov}_{_{\sf HA}}(\gnumber{\varphi_{_\delta}})$.
What is $\varphi_{_\delta}$? The proposition $ \varphi_{_\delta}$ is the conjunction of all propositions $E$ such that $\Box E$ is forced at 
$\delta$. Since the number of such propositions are infinite, we only take care of those $E$ which are important to us, i.e. those 
which are a sub-formula of $A$. Without $\varphi_{_\delta}$, the function $F$ becomes exactly what Solovay used to prove 
his  completeness theorems for $\GL$ \cite{Solovay}, as we used in \autoref{example00}. This is not enough for our aim. 
We need to have $T_\alpha\vdash {\sf Prov}_{_{\sf HA}}(\gnumber{B\vee C})$ and generally, 
$T_\delta\vdash {\sf Prov}_{_{\sf HA}}(\gnumber{\varphi_{_\delta}})$, which is not the case without 
the clause of  $\varphi_{_\delta}$ in the recursive definition of $F$.  

By arguments in \Cref{sec-transforming},
there exists some classical structures $\mathfrak{M}_\delta\models T_\delta$ such that $\kcal_1$ is a first-order 
Kripke model of $\HA$. This implies that $\mathfrak{M}_\beta\models B$, $\mathfrak{M}_\beta\not\models C$,
$\mathfrak{M}_\gamma\models C$ and $\mathfrak{M}_\gamma\not \models B$. Since 
$T_\alpha\vdash {\sf Prov}_{_{\sf HA}}(\gnumber{B\vee C})$, we can deduce that 
$\mathfrak{M}_\alpha\models {\sf Prov}_{_{\sf HA}}(\gnumber{B\vee C})$. Also it is easy to show that 
for any first-order Kripke model $\kcal\Vdash \HA$, any node $\delta$  and arbitrary $\Sigma_1$-sentence $E$, we have 
$$\kcal,\delta\Vdash E\quad \quad \Longleftrightarrow \quad \quad \mathfrak{M}_\delta\models E$$
Since ${\sf Prov}(x)$ is a $\Sigma_1$-predicate, we can deduce 
$\kcal_1,\alpha\Vdash {\sf Prov}_{_{\sf HA}}(\gnumber{B\vee C})$ and 
$\kcal_1,\alpha\nVdash {\sf Prov}_{_{\sf HA}}(\gnumber{B})\vee {\sf Prov}_{_{\sf HA}}(\gnumber{ C})$.
Hence $\kcal_1 \nVdash {\sf Prov}_{_{\sf HA}}(\gnumber{B\vee C})\to 
({\sf Prov}_{_{\sf HA}}(\gnumber{B})\vee {\sf Prov}_{_{\sf HA}}(\gnumber{ C}))$, as desired.  We will consider 
this proposition ($A$) again in \Cref{example1} and refute it from $\Sigma_1$-provability logic of $\HA$
 with the direct use of our main theorem in \Cref{sec-transforming}. 

\end{example}

\begin{example}\label{example02}\em 
In this example, we show that how to refute $A=\neg\neg\Box(\neg\neg p\to  p)\to\Box(\neg\neg p\to p)$ from 
the provability logic of $\HA$  and also  from the $\Sigma_1$-provability logic of $\HA$. 
In  this example, the  $\TNNIL$ algorithm is involved.

The first thing is that we cannot directly refute $A$ from the provability logic of $\HA$ as we did in \Cref{example00,example01}.
The difficulty comes from the nested implications in the left hand side which are not separated by a $\Box$. Note that $\neg p$
is a shorthand for $p\to \bot$. To overcome this difficulty, we iteratively use  Visser's $\NNIL$ 
({\sf N}o {\sf N}ested {\sf I}mplication to the {\sf L}eft) approximation \cite{Visser02}
 in the modal language, i.e. inside any $\Box$ we compute the best $\NNIL$ approximation from below and replace it for the proposition. 
The approximated proposition for any modal proposition  $E$ is denoted in this paper by $E^+$. Some of 
Visser's $\NNIL$ approximations are \cite{Visser02}:
$$(\neg\neg p)^+=p\quad , \quad (\neg\neg p\to p)^+=p\vee\neg p \quad , \quad ((p\to q)\to r)^+=r\vee(p\wedge(q\to r))$$
The process of computing the approximation $(.)^+$ is complicated and we do not precisely define it in this example. 
It is explained in details in \Cref{sec-nnil}. We may briefly describe it in the following way.

\vspace{.1in}

Let $A$ be a non-modal proposition. Its $\NNIL$ approximation $A^+$, is some proposition with no nested implications to the left
such that $\IPC\vdash A^+\to A$, and for any  other $\NNIL$ proposition $B$ such that $\IPC\vdash B\to A$, we have 
$\IPC\vdash B\to A^+$. It is clear that, up to  $\IPC$-deductive equivalency,  such an approximation is unique.

\vspace{.1in}

\noindent We have the following approximation for $A$:
 $$A^+=\Box(p\vee\neg p)\vee\neg\Box(p\vee \neg p)$$
Now we can handle this simplified proposition $A^+$ as we did in \Cref{example00,example01}.
The following Kripke model  $\kcal_0$ is a counter-model for $A^+$:

\vspace{.2in} 
 
\begin{tabular}{r l}
\quad \quad \quad \quad \quad \quad  \parbox{3cm}{\includegraphics[scale=.6]{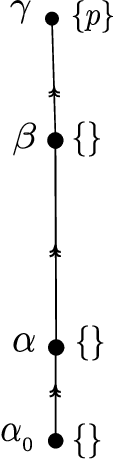}} 
& \parbox{9cm}{$\alpha_0,\alpha,\beta\nVdash p$\quad \quad $\gamma\Vdash p$\quad \quad  and 

\vspace{.3in} 

$\alpha\,\R\,\beta,\gamma$ \quad   \quad $\alpha\leq\beta,\gamma$ \quad   \quad $\beta\leq\gamma$  \quad  \quad  
$\beta\,\R\,\gamma$

\vspace{.3in} 

$\alpha_0\,\R\,\alpha,\beta,\gamma$ \quad  \quad $\alpha_0\leq\alpha,\beta,\gamma$ 
}
\end{tabular}

\vspace{.2in}

One can define the recursive function $F$ exactly the same as  \Cref{example01} with new definitions for $\varphi_{_\delta}$
and $B$:
$$ \varphi_{_\alpha}:= \top \quad , \quad \varphi_{_\beta}=\varphi_{_\gamma}:=B\vee \neg B 
 \quad , \quad   B:=(\exists{x}F(x)=\gamma)$$
Then we can define the first-order Kripke model $\kcal_1\Vdash \HA$ which simulates $\kcal_0$ in a same way as \Cref{example01}. 
Then we can  deduce that 
$\kcal_1\nVdash  {\sf Prov}_{_{\sf HA}}(\gnumber{B\vee \neg B})\vee \neg {\sf Prov}_{_{\sf HA}}(\gnumber{B\vee \neg B})$.
Although we have refuted $\Box(p\vee\neg p)\to \neg\Box(p\vee\neg p)$ from the $\Sigma_1$-provability logic of $\HA$, 
the proposition $A$ is not refuted yet. But the key point here is that by Visser's Rule,  we have 
$\HA\vdash {\neg\neg {\sf Prov}_{_{\sf HA}}(\gnumber{\neg\neg B\to B})}\to 
{{\sf Prov}_{_{\sf HA}}(\gnumber{\neg\neg B\to B})}$
if and only if 
$\HA\vdash {\sf Prov}_{_{\sf HA}}(\gnumber{\neg\neg B\to B})\vee \neg {\sf Prov}_{_{\sf HA}}(\gnumber{\neg\neg B\to B})$. 
And also by formalized Visser's Rule, we have: 
$$\HA\vdash  {\sf Prov}_{_{\sf HA}}(\gnumber{B\vee \neg B}) \leftrightarrow {\sf Prov}_{_{\sf HA}}(\gnumber{\neg\neg B\to B})$$
This will finish the refutation process, i.e. 
$\HA\nvdash \neg\neg {\sf Prov}_{_{\sf HA}}(\gnumber{\neg\neg B\to B})\to {\sf Prov}_{_{\sf HA}}(\gnumber{\neg\neg B\to B})$. 
The Visser's Rule says that for any $\Sigma_1$-sentence $B$, we have $\HA\vdash \neg\neg B\to B$ iff $\HA\vdash B\vee\neg B$. The 
proof of this rule first appeared in \cite{VisserThes} (see \Cref{corollar-NNIL properties} \cref{1corollar-NNIL properties}).

We will consider 
this proposition ($A$) again in \Cref{example2} and refute it from $\Sigma_1$-provability logic of $\HA$ with the direct use of our main theorem in \Cref{sec-transforming}. 
 % 
%
%Then by \Cref{Theorem-Main tool}, there exists some first-order Kripke model $\kcal_1\Vdash\HA$ and some 
%$\Sigma_1$-substitution $\sigma$ such that $\kcal_1\nVdash \sigma_{_{\sf HA}}(A^+)$. Hence 
%$\HA\nvdash \sigma_{_{\sf HA}}(A^+)$.  Since $\HA\vdash \sigma_{_{\sf HA}}(\Box A\lr\Box A^+)$
%(\Cref{corollar-NNIL properties}), we can deduce 
%$\HA\nvdash \sigma_{_{\sf HA}}(A)$.
\end{example}

In all of the \Cref{example00,example01,example02}, the relation for modal operator did not play an independent role, i.e. 
$\R$ and $\leq$ either where  equal (\Cref{example01,,example02}) or could be defined as equal relations (\Cref{example00}).
This made too much simplifications in the definition of the recursive function $F$. In the following example, 
there exist  some $\alpha$ and $\beta$ such that $\alpha\leq \beta$ but it is not the case that $\alpha\,\R\, \beta$.

\begin{example}\label{example03}\em 
In this example we refute the modal proposition 
$$A:=\Box(p\vee q)\to [ (\Box p\to (p\vee q\vee \Box q))\vee (\Box q\to (p\vee q\vee \Box p))]$$
Like \Cref{example00,example01},
we first find a Kripke counter-model $\kcal_0\nVdash A$:
\vspace{.2in} 
 
\begin{tabular}{r l}
\quad \quad  \parbox{3cm}{\includegraphics[scale=.6]{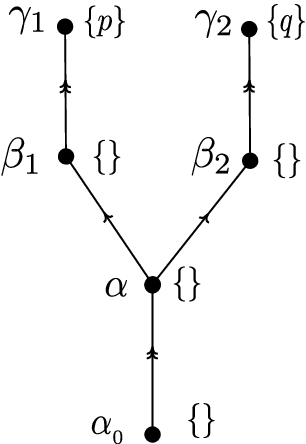}} \quad \quad \quad \quad 
& \parbox{9cm}{ $\gamma_1\Vdash p$ \quad \quad $\gamma_2\Vdash q$  \quad \quad

\vspace{.2in} 

$\alpha_0,\alpha,\beta_1,\beta_2,\gamma_2\nVdash p$ \quad \quad  $\alpha_0,\alpha,\beta_1,\beta_2,\gamma_1\nVdash q$ 

\vspace{.2in} 

$\alpha_0\leq \alpha,\beta_1,\beta_2,\gamma_1,\gamma_2$ \quad \quad $\alpha_0\,\R\, \alpha,\beta_1,\beta_2,\gamma_1,\gamma_2$ 

\vspace{.2in} 

$\alpha\leq \beta_1,\beta_2,\gamma_1,\gamma_2$ \quad \quad $\alpha\,\R\, \gamma_1,\gamma_2$ \quad \quad $\beta_i\leq\gamma_i$
\quad \quad $\beta_i\,\R\, \gamma_i$

\vspace{.2in} 

$\alpha \NR \beta_1$ \quad \quad $\alpha\NR \beta_2$
}
\end{tabular}

\vspace{.2in}

We simulate this Kripke model with a first-order Kripke model $\kcal_1\Vdash\HA$:

\begin{center}
\includegraphics[scale=.6]{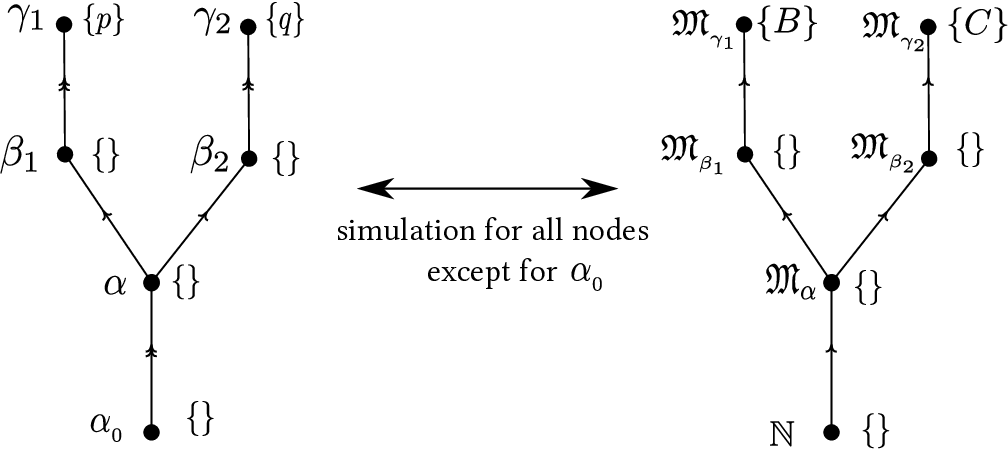}
\end{center}

We define the $\Sigma_1$-sentences $B$ and  $C$ and also the sentences $\varphi_{_\delta}$ for any $\delta\neq \alpha_0$  like before:
$$B:=(\exists{x}F(x)=\gamma_1)\quad \quad C:=(\exists{x}F(x)=\gamma_2)\quad \quad \varphi_{_\alpha}:=B\vee C \quad \quad
\varphi_{_{\beta_1}}:=B \quad \quad \varphi_{_{\beta_2}}:= C 
$$
\begin{equation}\label{eq-prop-of-firs-order-kripke-model}
 \varphi_{_{\gamma_1}}:=\varphi_{_{\gamma_2}}:=B\wedge C
\quad \quad \mathfrak{M}_\delta\models T_\delta \quad \quad 
T_\delta:=\PA+\lim_{x\to \infty}F(x)=\delta+{\sf Prov}_{_{\sf HA}}(\gnumber{\varphi_{_{\delta}}})
\end{equation}
 
 The recursive definition of $F$ is more complicated than previous examples. This is because we have really two different relations:
 $\leq$ and $\R$. The clause in recursive definition of $F$ for the treatment of $\R$ is as before. For 
 $\leq$ we use a variant of Berarducci's   primitive recursive   function in \cite{Berarducci} which he used
  for characterizing the interpretability logic of $\PA$.
\\ 
 We define $F(0):=\alpha_0$. Assume that we have defined $F(n):=\delta$, and we will define $F(n+1):=\delta'$ 
 if one of the following cases occurs, otherwise we define $F(n+1):=F(n)=\delta$.
 \begin{itemize}
  \item  $\delta\,\R\,\delta'$ and there exists some witness ( which is less than or equal to $n+1$)  for the inconsistency of $T_{\delta'}$,
  or in other words, 
    there exists some proof (in $\PA$) with the G\"odel number  $\leq n+1$  for the statement 
$$\neg[\lim_{x\to\infty}F(x)=\delta'\wedge {\sf Prov}_{_{\sf HA}}(\gnumber{\varphi_{_{\delta'}}})]$$  
%  $\neg[\lim_{x\to\infty}F(x)=\delta\wedge {\sf Prov}_{_{\sf HA}}(\gnumber{\varphi_{_\delta}})]$. 
  \item  All of the following conditions hold:
  \begin{itemize}
   \item $\delta\NR\delta'$ and $\delta\leq\delta'$,
   \item There exists some witness (which is less than or equal to $n+1$)  for the inconsistency of $T_{\delta'}$,
   \item The inconsistency rank of $T_{\delta'}$ (we call it  $r(\delta',n+1)$)
   is less than the inconsistency   rank of $T_\delta$ (we call it  $r(\delta,n+1)$), 
   \item $F(r(\delta',n+1))\,\R\, \delta$.
\end{itemize}  
The inconsistency rank of $T_\delta$ is defined to be the minimum $k$ such that there exists a witness (less than or equal to $n+1$)
 for the 
inconsistency of 
$$\PA_k+\lim_{x\to\infty}F(x)=\delta+ {\sf Prov}_{_{\sf HA}}(\gnumber{\varphi_{_\delta}})$$
In above definition, $\PA_k$ is the theory $I\Sigma_1$ plus induction axiom for those formulas with G\"odel number less than $k$.
 \end{itemize}
 The crucial fact about the function $F$  is that   $F$ would not climb over tree 
  (see \Cref{Lemma-limit is root}).  This fact is crucial for proving that the first-order Kripke model $\kcal_1\Vdash \HA$ exists 
  such that it fulfils the conditions in \cref{eq-prop-of-firs-order-kripke-model}.  By \Cref{corollar-4st&2st}, we  have 
  $$T_\delta\vdash {\sf Prov}_{_{\sf HA}}(\gnumber{\varphi_{_\delta}})\quad \text{for any }\delta\neq\alpha_0$$
  To simplify notations, we use $\Box A$ instead of ${\sf Prov}_{_{\sf HA}}(\gnumber{A})$ for arithmetical formula $A$.  Hence we have
  $$\mathfrak{M}_\alpha\models \Box (B\vee C) \quad  \mathfrak{M}_{\beta_1}\models \Box B \quad 
  \mathfrak{M}_{\beta_2}\models \Box C \quad  \mathfrak{M}_{\gamma_1} \models B\quad \mathfrak{M}_{\gamma_2}\models C$$
  Moreover, we have
  $$\mathfrak{M}_{\beta_1}\not \models B,C \quad 
     \quad 	\quad \mathfrak{M}_{\beta_2}\not \models B,C  $$
We need two more conditions to deduce that
 $$\kcal_1\nVdash \Box(B\vee C)\to [ (\Box B\to (B\vee C\vee \Box C))\vee (\Box C\to (B\vee C\vee \Box B))]$$
 These two conditions are $\mathfrak{M}_{\beta_1}\not\models \Box C$ and   $\mathfrak{M}_{\beta_2}\not\models \Box B$. 
 We will show in \Cref{Lemma-2st Properties of Solovay's Function}  that these conditions hold as well.  
 The proof of this fact  
 take up all \Cref{subsection-proofofmaintheorem} and there, we use  \Cref{Lemma-sigma_l translation} which is 
 the essential result in  \Cref{Sec-Arithmetic}. 
 
 Why  $\leq$ is not treated like $\R$ in recursive definition of $F$? 
 Because if we do so, we are not able to prove that 
the function $F$ is constant (\Cref{Lemma-limit is root}) and even 
 the consistency of $L=\alpha_0$ and consequently the consistency of all the theories $T_\delta$ will be lost.  
\end{example}

\subsection{What happens in classical case}
The main result of this paper in classical case, i.e. the $\Sigma_1$-provability logic of $\PA$ is already characterized 
by  A. Visser \cite{VisserThes} and is remarkably simpler than the intuitionistic case. A.~Visser showed:
$${\sf GLV}\vdash A\quad\quad \Longleftrightarrow \quad\quad \forall{*} \ \PA\vdash A^*,$$
in which, $*$ ranges over all of the  interpretations  that $p^*$ is a $\Sigma_1$-sentence for atomic variables $p$ and 
${\sf GLV}$ is $\GL$ plus the completeness axiom for atomic variables: $p\to \Box p$. For a proof of this fact see \cite{Boolos} page 135.
It is shown \cite{reduction} that 
the provability logic of $\PA$ could be reduced to its $\Sigma_1$-provability logic.

\subsection{Map of sections}
Let us explain the content of sections and their interrelationship. 
 All of the contents of this paper are minimally chosen for one major goal: 
  {\em soundness and  completeness of $\lles$ for arithmetical $\Sigma_1$-interpretations}, i.e.
 \Cref{Theorem HA-Completeness,Theorem-Soundness}. In \Cref{sec-definitions}, we give  
 definitions   of some elementary notions and also make some conventions. 
 In \Cref{Sec-Arithmetic}, we gather all the required statements with arithmetical nature.
 Most of the lemmas and definitions are for proving a 
 refinement of Leivant's principle in \Cref{Lemma-sigma_l translation} (or its simplified form in
 \Cref{Theorem-Leivant}). This will be used in \Cref{sec-transforming}. 
 In \Cref{sec-propositional}, we collect all required notions with propositional nature.
 The most crucial fact we will show in this section is that in $\lles$ (precise axiomatization of 
 $\lles$ will come in \Cref{Sec-Aximatizing-TNNIL}), 
 one could transform any modal proposition $A$
 to another proposition $A^+$ with simpler form, which is called $\TNNIL$ in this paper. Roughly speaking, in a $\TNNIL$-formula, every two nested implications in the left hand side are separated by a $\Box$.   This is done in 
\Cref{Theorem HA-NNIL approximation is propositionally equivalent}  and
\Cref{corollar HA-NNIL approximation is propositionally equivalent}.   
 Then we show that the theory $\LC$ (intuitionistic version of $\GL$ plus the axiom schema $A\to\Box A$)
 is $\TNNIL$-conservative over $\lles$ (\Cref{Theorem-TNNIL Conservativity of LC over LLe+}). It turns out that  
 $\LC$ and $\lles$ actually prove same $\TNNIL$ modal propositions (\Cref{Corollary-TNNIL-equaipotency}).
 Moreover, it is shown in \Cref{Theorem-Propositional Completeness LC} that $\LC$ is sound and complete for a special class of 
 finite  Kripke models (perfect Kripke models). In \Cref{sec-transforming}, we show that one could transform 
 a finite Kripke model of $\LC$ (with tree-frame) to a first-order Kripke model of $\HA$ (\Cref{Theorem-Main tool}). 
 This transformation is such that there is a natural correspondence between these two Kripke models.
 Finally in \Cref{Section-Sigma Provability}, we use the results of \Cref{Sec-Arithmetic,,sec-propositional,,sec-transforming},
to prove the soundness and  completeness of $\lles$ for arithmetical
 $\Sigma_1$-interpretations.
%
%For better understanding of what is going on this paper, we propose to those  readers who are somehow familiar with intuitionistic arithmetic and intuitionistic modal logics,  have a look at first  \Cref{example1} and \Cref{example2} 
%at the beginning of the \Cref{Section-Sigma Provability}. 
\section{Definitions and conventions}\label{sec-definitions}
The propositional non-modal language $\mathcal{L}_0$ contains atomic variables,
$\vee, \wedge, \ra, \bot$ and  the propositional modal language, $\mathcal{L}_\Box$ has an additional operator $\Box$. 
In this paper, the atomic propositions (in modal or non-modal language) includes 
atomic variables and $\bot$. 
For an arbitrary proposition $A$, ${\sf Sub}(A)$ is defined to be the set
of all sub-formulae of $A$, including $A$ itself. We take
${\sf Sub}(X):=\bigcup_{A\in X}{\sf Sub}(A)$ for a set of propositions $X$.
We use 
$\Boxdot A$ as a shorthand for $A\wedge\Box A$. 
The logic \IPC is  intuitionistic propositional
non-modal logic over usual propositional non-modal language.
The theory $\IPC_\Box$ is the same theory \IPC in the extended language
of propositional modal language, i.e. its language is
propositional modal language and its axioms and rules are 
same as \IPC. Because we have no axioms for $\Box$
in $\IPC_\Box$, it is obvious that $\Box A$ for each $A$,
behaves exactly like an atomic variable inside $\IPC_\Box$.
%Note that nothing more than  symbol of $A$ plays a role in
%$\Box A$. 
First-order intuitionistic logic is denoted
 $\IQC$ and the logic $\CQC$ is its classical closure, i.e. $\IQC$ plus
the principle of excluded middle. 
 For a set of sentences and rules
$\Gamma\cup\{A\}$ in the  propositional non-modal, propositional modal
or first-order language, $\Gamma\vdash A$ means that $A$ is
derivable from $\Gamma$ in the system $\IPC, \IPC_\Box,\IQC$,
respectively. For an arithmetical formula, $\ulcorner
A\urcorner$ represents the G\"{o}del number of $A$. For an
arbitrary  arithmetical theory $T $ with a set of $\Delta_0$-
axioms, we have the $\Delta_0$-predicate $\Prf_\tinysub{T}(x,\ulcorner A\urcorner)$, 
that is a formalization of ``$x$ is  the code of a proof
for $A$ in $T$". We also have the provability predicate
$\Prv_\tinysub{T}(\ulcorner A \urcorner):=\exists{x}\ \Prf_\tinysub{T}(x,\ulcorner A
\urcorner)$.  The set of natural numbers is denoted by
$\omega:=\{0,1,2,\ldots\}$.

\begin{definition}\label{Definition-Arithmetical substitutions}
Suppose $T$ is a recursively enumerable (r.e.) arithmetical theory and $\sigma$ is a  substitution i.e. a
function from atomic variables to arithmetical sentences. We define the interpretation $\sigma_{_T}$ which 
extend the substitution $\sigma$ to all modal propositions $A$, inductively:
\begin{itemize}
\item $\sigma_{_T}(A):=\sigma(A)$ for atomic $A$,
\item $\sigma_{_T}$ distributes over $\wedge, \vee, \ra$,
\item $\sigma_{_T}(\Box A):=\Prv_{_T}(\ulcorner\sigma_{_T}(A)\urcorner)$.
%, in
%which $\Prv_{T}(x)$ is the $\Sigma_1$-predicate that formalizes
%provability of a sentence with G\"{o}del number $x$, in the
%theory $T$.
\end{itemize}
We call $\sigma$  a $\Sigma_1$-substitution, if for every
atomic $A$, $\sigma(A)$ is a $\Sigma_1$-sentence.  We also say that $\sigma_{_T}$ is a $\Sigma_1$-interpretation
if $\sigma$ is a $\Sigma_1$-substitution.
\end{definition}

\begin{definition}\label{Definition-Provability Logic}
The provability logic of a sufficiently strong theory $T$, is defined
to be a modal propositional theory $\PL(T)$ such that
$\PL(T)\vdash A$  iff for all arithmetical substitutions
$\sigma$, \ \ $T\vdash\sigma_{_T}(A)$. If we restrict the
substitutions to $\Sigma_1$-substitutions, then the new modal
theory is $\PLS(T)$.
\end{definition}

\begin{lemma}\label{Lemma-boxed as atomic}
Let $A(p_1, \ldots, p_n)$ be a non-modal proposition with
$p_i\neq p_j$ for all $0<i<j\leq n$. Then for all modal
sentences $B_1, \ldots, B_n$ 
%with $B_i\neq B_j$ for $0<i<j\leq n$
we have:
$$\IPC\vdash A \text{\ \ iff\ \ } \IPC_\Box\vdash A[p_1|\Box B_1,\ldots,p_n|\Box B_n]$$
\end{lemma}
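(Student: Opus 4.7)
The forward implication is immediate: if $\IPC\vdash A$, then by closure of $\IPC$ under arbitrary substitution and the fact that every $\IPC$-derivation is also an $\IPC_\Box$-derivation in the extended language, we get $\IPC_\Box\vdash A[p_1|\Box B_1,\ldots,p_n|\Box B_n]$. The real content lies in the converse, and I would prove it by exploiting the fact (already noted in the text) that inside $\IPC_\Box$ the operator $\Box$ carries no axioms, so formulas of the form $\Box C$ behave like atoms.

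Concretely, the plan is to fix an injection $C\mapsto q_{C}$ from the set of modal formulas of the form $\Box C$ into a reservoir of fresh atomic variables disjoint from $\{p_1,\ldots,p_n\}$, and to define a translation $\tau$ from $\mathcal L_\Box$ to $\mathcal L_0$ by
\[
\tau(p)=p,\qquad \tau(\bot)=\bot,\qquad \tau(D\circ E)=\tau(D)\circ\tau(E)\ (\circ\in\{\wedge,\vee,\ra\}),\qquad \tau(\Box D)=q_{\Box D}.
\]
The first key step is to verify
\[
\IPC_\Box\vdash D \quad\iff\quad \IPC\vdash \tau(D).
\]
The right-to-left direction follows because any $\IPC$-proof of $\tau(D)$ can be lifted back to an $\IPC_\Box$-proof of $D$ by replacing each $q_{\Box E}$ with $\Box E$, since every axiom scheme of $\IPC$ remains an axiom scheme of $\IPC_\Box$ in the extended language. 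For the left-to-right direction, one inducts on a proof of $D$ in $\IPC_\Box$: every axiom of $\IPC_\Box$ is an $\IPC$-axiom scheme (with propositional-modal formulas as instances), and since $\tau$ commutes with the propositional connectives, its image of any such axiom is again an instance of the same $\IPC$-axiom scheme; modus ponens is preserved trivially.

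The second step is to apply this equivalence to $D:=A[p_1|\Box B_1,\ldots,p_n|\Box B_n]$. Because $\tau$ commutes with propositional connectives and is the identity on $\mathcal L_0$-atoms,
\[
\tau(D)=A[p_1|q_{\Box B_1},\ldots,p_n|q_{\Box B_n}].
\]
The distinctness hypothesis $B_i\neq B_j$ (for $i\neq j$) ensures, via injectivity of $C\mapsto q_{C}$, that $q_{\Box B_1},\ldots,q_{\Box B_n}$ are pairwise distinct fresh atoms, and by construction they are disjoint from $\{p_1,\ldots,p_n\}$. Therefore $\tau(D)$ is just $A$ with its $n$ distinct atoms renamed to $n$ other distinct atoms, and standard renaming-invariance of $\IPC$ gives $\IPC\vdash\tau(D)\iff \IPC\vdash A$. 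Combining the two steps yields the desired equivalence.

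The only delicate point is the verification that the translation $\tau$ genuinely commutes with provability in $\IPC_\Box$; this is the step I expect to require the most care, since one must check that no implicit use of a $\Box$-axiom is hidden in an $\IPC_\Box$-derivation. But because $\IPC_\Box$ is literally defined as $\IPC$ with an enlarged formula algebra and no new axioms or rules governing $\Box$, this verification reduces to an entirely routine induction on the length of derivations.
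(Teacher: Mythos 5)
Your proposal is correct and follows essentially the same route as the paper, which disposes of the lemma in one line (``by simple inductions on complexity of proofs in $\IPC$ and $\IPC_\Box$''); your translation $\tau$ sending outermost boxed subformulas to fresh atoms is just a careful packaging of those two inductions, with the distinctness hypothesis used exactly where it must be to make the renaming injective.
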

\begin{proof}
By simple inductions on complexity of proofs in \IPC and
$\IPC_\Box$.
\end{proof}

\noindent
We define  {\sf NOI} (No Outside Implication) as the set of modal
propositions $A$, such that any occurrence of $\ra$ is in the scope of
some $\Box$. To be able to state an extension of Leivant's
Principle (that is adequate to axiomatize $\Sigma_1$-provability
logic of $\HA$) we need a translation on the modal language which we
call  \emph{Leivant's translation}. We define it recursively as
follows:
\begin{itemize}
\item $A^l:=A$ for atomic or boxed $A$, 
\item $(A\wedge B)^l:=A^l\wedge B^l$.
\item $(A\vee B)^l:=\Boxdot A^l\vee\Boxdot B^l$.
\item $(A\ra B)^l$ is defined by cases: If $A\in {\sf NOI}$, we define
$(A\ra B)^l:=A\ra B^l$, otherwise we define $(A\ra B)^l:=A\ra B$.
\end{itemize}

\begin{definition} \label{Def-Axiom schema and modal theories}
Minimal provability logic {\sf iGL}, is the same as G\"{o}del-L\"{o}b
provability logic \GL, with all tautologies of intuitionistic logic (in modal language) instead of tautologies of classical logic.%i.e. it has the following axioms and rules:
%\begin{itemize}
%\item The theorems of $\IPC_{\Box}$.
%\item $\Box (A\ra B)\ra(\Box A\ra\Box B)$,
%\item $\Box A\ra\Box\Box A$,
%\item L\"{o}b's axiom \textup{(}{\sf L}\textup{)}: $\Box(\Box A\ra A)\ra\Box A$,
%\item Necessitation Rule: $A/\Box A$,
%\item Modus ponens: $(A,A\ra B)/B$.
%\end{itemize}
\ ${\sf iK4}$ is ${\sf iGL}$ without  L\"{o}b's axiom. Note
that we can get rid of the necessitation rule by adding $\Box
A$ to the axioms, for each axiom $A$ in the above list. We will
use this fact later in this paper. %!?#
We list the following axiom schemas:
\begin{itemize}
\item The Completeness Principle: $\CP:=A\ra\Box A$.
\item Restriction of Completeness Principle to atomic formulae: $\CP_{\sf a}:=p\ra\Box p$, for atomic $p$.
\item Leivant's Principle: ${\sf Le}:=\Box(B\vee C)\ra\Box (\Box B\vee C)$. \cite{Leivant-Thesis}
\item Extended Leivant's Principle: ${\sf Le}^+:=\Box A\ra\Box A^l$.
\end{itemize}
We define theories $\LC:={\sf iGL}+\CP$ and  $\lle:={\sf iGL}+{\sf Le}^++\CP_{\sf a}$. Note
that in the presence of \CP and modus ponens, the necessitation
rule is superfluous. 
%Later we will find a Kripke semantics for
%$\LC$  and also we will see that $\LC$ and  $\lle$ proves the
%same formulae w.r.t. a class of restricted complexity, to wit  $\TNNIL$.
\end{definition}

\section{Arithmetic}\label{Sec-Arithmetic}
In this section,
we gather some preliminaries from intuitionistic arithmetic. Mostly we will prove some refinements of 
well-known theorems such as: $\Pi_2$-conservativity of $\PA$ over $\HA$,
 G\"odel's diagnolization lemma and $\Sigma_1$-completeness of $\HA$.
Most of these preliminaries will be used to
prove a refinement of Leivant's principle 
$\Box (A\vee B)\to\Box(\Boxdot A\vee\Boxdot B)$
in  the technical \Cref{Lemma-qtranslation2}. 
 \Cref{Theorem-Leivant} states a simplified version of \Cref{Lemma-qtranslation2}.
%For better understanding of 
%what \Cref{Lemma-qtranslation2} is going to state, 
%we suggest the reader first to see \Cref{Theorem-Leivant}.

\subsection{Some arithmetical preliminaries}
The first-order language of arithmetic contains three functions
(successor, addition and multiplication), one predicate symbol
and a constant: $(S,+,.,\leq,0)$. First-order intuitionistic
arithmetic ($\HA$) is the theory over $\IQC$ with the axioms:
 \begin{enumerate}
  \item[Q1] $S(x)\neq 0$,
  \item[Q2] $S(x)=S(y)\ra x=y$,
  \item[Q3] $y=0\vee\exists{x}\;S(x)=y$,
  \item[Q4] $x+0=x$,
  \item[Q5] $x+S(y)=S(x+y)$,
  \item[Q6] $x.0=0$,
  \item[Q7] $x.S(y)=(x.y)+x$,
  \item[Q8] $x\leq y\lr\exists{z}(z+x=y)$,
  \item[Ind:] For each formula $A(x)$:
    $${\sf Ind}(A,x):=\mathcal{UC}[A(0)\wedge\forall{x}(A(x)\ra A(S(x)))]\ra\forall{x}A(x)]$$
    In which $\mathcal{UC}(B)$ is the universal closure  of $B$.
 \end{enumerate}
Peano Arithmetic \PA\!\!, has the same axioms of \HA over
\CQC\!\!. We also define $x<y$ as $x\leq y\wedge x\neq y$.  Let $T$ be an r.e.~theory
with the set of axioms $A_1,A_2,\ldots$. It is known in the literature (see e.g.~\cite[section~2.3]{Berarducci} or
\cite[section~8.1]{Visser02})
that $T_n$ indicates the theory with the first $n$ axioms of $T$, i.e. $A_1,\ldots A_n$. In the following notation, 
 we order the axioms of $\HA$ and $\PA$ in a way which best fit the relevant lemmas and theorems in this paper.

\begin{notation}
From now on, when we are working in first-order language of
arithmetic, for a first-order sentence $A$, $\Box A$ and
$\Box^+A$ are  shorthand for 
$\Prv_\tinysub{\sf HA}(\ulcorner A\urcorner)$ and  $\Prv_\tinysub{\sf PA}(\ulcorner A\urcorner)$ , respectively. 
 Let $i{\Sigma}_1$ be the
theory $\HA$, where the induction principle is restricted to
$\Sigma_1$-formulae. We also define the theories $\HA_x$ to be
the theory with axioms of $\HA$, in which the induction principle
is restricted to  formulas satisfying at least one of the following conditions:
\begin{itemize}
\item formulas of the form $(A\ra B)\ra B$ in which $A$ and $B$ are $\Sigma_1$.
\item formulas with G\"{o}del number less than $x$.
\end{itemize}
We can define similar concept for $\PA_x$. Note that classically,
a formula of the form $(A\ra B)\ra B$ in which $A$ and $B$ are
$\Sigma_1$, is equivalent to the $\Sigma_1$-formula $A\vee B$
and hence $\PA_0$ is the well-known theory $I\Sigma_1$. We also
define $\Box_x A$ and  $\Box^+_x A$ to be $\Prv_{_{{\sf HA}_x}}(\ulcorner A\urcorner) $ 
and $\Prv_{_{{\sf PA}_x}}(\ulcorner A\urcorner)$, respectively.
\end{notation}
\noindent

\noindent
We recall that a function $f$ on $\omega:=\{0,1,2,\ldots\}$ is
recursive iff there exists some $\Sigma_1$-formula
$A_f(\bar{x},y)$ such that $\mathbb{N}\models A_f(\bar{x},y)$ iff
$f(\bar{x})=y$. It is called  provably total in $T$, iff
$T\vdash\forall\bar{x}\exists{y}A_f(\bar{x},y)$.

It is well known that all primitive recursive functions are
provably total in $I\Sigma_1$ with a $\Delta_0$-formula as
defining formula. So we may use primitive recursive function
symbols in the language of arithmetic with their defining axioms
(as far as we work in $I\Sigma_1$).

\begin{lemma}\label{Lemma-Conservativity of HA}
Let $A$, $B$ be  $\Sigma_1$-formulae such that $\PA\vdash A\ra
B$. Then $\HA\vdash A\ra B$.
\end{lemma}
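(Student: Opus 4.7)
The plan is to use the standard combination of the G\"{o}del-Gentzen negative translation with Friedman's $A$-translation, which is the textbook route to $\Pi_2$-conservativity of $\PA$ over $\HA$, specialised here to implications between two $\Sigma_1$-formulas. The only preliminary work is to normalise $A$ and $B$ inside $\HA$ so that they are built from atoms, $\wedge$, $\vee$, bounded quantifiers and an outer existential prefix, with no occurrence of $\bot$, $\neg$ or $\ra$. This is possible because $\Delta_0$-formulas are decidable in $\HA$ (using trichotomy of $<$), so any implication or negation inside the matrix can be rewritten in terms of the decidable atoms. From this point on $A$ and $B$ are $\bot$-free.

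Next I would apply the G\"{o}del-Gentzen translation $(\cdot)^N$ to the hypothesis $\PA\vdash A\ra B$, obtaining $\HA\vdash A^N\ra B^N$. A routine induction on complexity, using the fact that decidable atoms are stable under double negation, gives $\HA\vdash C^N\leftrightarrow \neg\neg C$ for every $\Sigma_1$-formula $C$. In particular
$$\HA\vdash \neg\neg A\ra\neg\neg B.$$

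Finally I would invoke Friedman's $A$-translation: substituting $B$ for every occurrence of $\bot$ (after renaming bound variables to prevent capture of the free variables of $B$) preserves derivability in $\HA$. Writing $C^B$ for the result of this substitution, the displayed statement becomes $\HA\vdash((A^B\ra B)\ra B)\ra((B^B\ra B)\ra B)$. Because $A$ and $B$ are $\bot$-free we have $A^B=A$ and $B^B=B$, and $\HA\vdash B\ra B$ is trivial, so the statement collapses to $\HA\vdash((A\ra B)\ra B)\ra B$; combined with the intuitionistic tautology $A\ra((A\ra B)\ra B)$ this gives $\HA\vdash A\ra B$. There is no genuine obstacle: the two ingredients (G\"{o}del-Gentzen preserving provability from $\PA$ to $\HA$, and Friedman's translation preserving $\HA$-provability) are both classical, and the only care needed is the initial $\bot$-free normalisation, which ensures Friedman's substitution acts trivially on $A$ and $B$.
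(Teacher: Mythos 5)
The paper does not prove this lemma at all: it simply observes that $A\ra B$ with $A,B\in\Sigma_1$ is (equivalent to) a $\Pi_2$-formula and cites the $\Pi_2$-conservativity of $\PA$ over $\HA$ from Troelstra--van Dalen (3.3.4). What you have written is, in outline, the standard proof of that cited conservativity result (G\"odel--Gentzen followed by Friedman's translation), which the paper itself rehearses only for the variant \autoref{Lemma-Godel-Gentzen Translation} about $\PA_0$ and $\HA_0$. So your route is legitimate and self-contained where the paper's is a citation; the steps $\PA\vdash A\ra B\Rightarrow\HA\vdash A^g\ra B^g$ and $\HA\vdash C^g\lr\neg\neg C$ for $\Sigma_1$ formulas $C$ are exactly the properties the paper records.

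There is, however, one technical slip in your use of Friedman's translation. You take it to be the substitution of $B$ for every occurrence of $\bot$, and you rely on this operation preserving derivability in $\HA$ while acting as the identity on $\bot$-free formulas. That version preserves derivability only over \emph{minimal} logic: the ex falso axiom $\bot\ra\psi$ turns into $B\ra\psi[\bot|B]$, which is not provable for atomic $\psi$ other than $\bot$, so closure of $\HA$ under this operation fails (or at least requires arguing that $\HA$ can be re-axiomatised over minimal logic). The translation that provably preserves $\HA$-derivability --- and the one the paper defines --- attaches $\vee\, B$ to \emph{every} atomic subformula, so that $A^B$ is not $A$ but is $\HA$-equivalent to $A\vee B$ for $\Sigma_1$ formulas $A$. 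This costs you nothing: applying it to $\neg\neg A\ra\neg\neg B$ yields $\HA\vdash(((A\vee B)\ra B)\ra B)\ra B$, and assuming $A$ one gets $(A\vee B)\ra B\,\vdash\,B$, hence $((A\vee B)\ra B)\ra B$ and therefore $B$. With this correction your elaborate initial normalisation of the $\Delta_0$ matrices (eliminating $\ra$, $\neg$, $\bot$ via decidability) becomes unnecessary; all that is needed is the recorded equivalence $\HA\vdash C^B\lr(C\vee B)$ for $\Sigma_1$ formulas $C$.
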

\begin{proof}
Let $\PA\vdash A\to B$.  Then as it is well known in classical logic, we have $\PA\vdash \neg A\vee B$. Since $A$
and $B$ are  $\Sigma_1$,
there are some $\Delta_0$ formulas $A'(x)$ and $B'(y)$ such that $A=\exists x A'(x)$ and $B=\exists y B'(y)$. 
We may assume that 
$x$ is not free in $B'$ and $y$ is not free in $A'$.  Hence we may deduce that 
$\PA\vdash  \exists y (\neg A'(x)\vee B'(y))$.
By $\Pi_2$-conservativity of
$\PA$ over $\HA$ \cite{TD}(3.3.4), we can deduce that 
$\HA\vdash  \exists y (\neg A'(x)\vee B'(y))$. Then
we may deduce that $\HA\vdash   \exists y (A'(x)\to B'(y))$ and hence  (since $y$ is not free in $A'$)
$\HA\vdash A'(x)\to B$ and by generalization rule $\HA\vdash \forall{x}(A'(x)\to B)$. This implies that $\HA\vdash A\to B$ 
(since $x$ is not free $B$). 
\end{proof}

\begin{lemma}\label{Lemma-decidability of delta formulae}
For any $\Delta_0$-formula $A(\bar{x})$, we have
$\HA_0\vdash\forall\bar{x}(A(\bar{x})\vee\neg A(\bar{x}))$.
\end{lemma}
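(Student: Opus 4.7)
The plan is to proceed by external induction on the complexity of the $\Delta_0$-formula $A(\bar x)$. Before diving in, I would first observe that the restricted induction schema of $\HA_0$ already delivers the full $\Sigma_1$-induction scheme. By definition, $\HA_0$ has induction on formulas of the form $(C\to D)\to D$ with $C,D$ being $\Sigma_1$. Choose $C$ to be a $\Sigma_1$-sentence provably false in $\HA$, for instance $\exists y\,({\sf S}(y)=0)$, which contradicts Q1. Then $\HA\vdash C\to D$ trivially, so $(C\to D)\to D$ is $\HA$-equivalent to $D$. Hence for every $\Sigma_1$-formula $D$, and in particular every $\Delta_0$-formula, induction on $D$ is available in $\HA_0$. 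This observation drives the rest of the argument.

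For the atomic base case, I would first show $\HA_0\vdash\forall x(x=0\vee x\neq 0)$ by induction on the quantifier-free (hence $\Sigma_1$) formula $x=0\vee x\neq 0$: the base follows from reflexivity and the step from Q1. Then, using a primitive-recursive truncated subtraction $\mathbin{\dot-}$ (definable and provably total in $i\Sigma_1\subseteq \HA_0$) and the distance term $\delta(x,y):=(x\mathbin{\dot-}y)+(y\mathbin{\dot-}x)$, one proves $\HA_0\vdash x=y\leftrightarrow \delta(x,y)=0$, which reduces decidability of $=$ to decidability of $z=0$. Decidability of $<$ is handled analogously, via $x<y\leftrightarrow y\mathbin{\dot-}x\neq 0$.

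The inductive step for the propositional connectives $\wedge,\vee,\to,\neg$ is routine intuitionistic propositional reasoning. For a bounded quantifier $\forall x<t\,B(x,\bar y)$ (and symmetrically $\exists x<t\,B(x,\bar y)$), with decidability of $B$ already supplied by the outer inductive hypothesis, I would fix $\bar y$ and induct internally on $t$ on the formula
\[
\forall x<t\,B(x,\bar y)\;\vee\;\neg\forall x<t\,B(x,\bar y),
\]
which, thanks to the outer IH, is $\HA_0$-provably equivalent to the $\Delta_0$-formula $\forall x<t\,B(x,\bar y)\vee\exists x<t\,\neg B(x,\bar y)$. The base case $t=0$ is vacuous and the step case uses decidability of $B(t,\bar y)$.

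The main obstacle is the preliminary observation: one must verify that the restricted schema ``induction on $(C\to D)\to D$ with $C,D\in\Sigma_1$'' is actually strong enough to carry out the inductions needed in the atomic and bounded-quantifier cases. Once $\Sigma_1$-induction has been extracted from the schema by the trick of taking $C$ provably false, everything else is a standard structural induction of the kind one performs inside $i\Sigma_1$.
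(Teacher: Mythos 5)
Your proof is correct, and it supplies an actual argument where the paper simply writes ``This is well-known in the literature.'' The structural induction itself --- decidability of atomic formulas via truncated subtraction, closure under the propositional connectives, and an internal induction on the bound for the bounded quantifiers --- is the standard folklore proof that $\Delta_0$-formulas are decidable in $i\Sigma_1$. The one step that is genuinely specific to this paper, and which you handle correctly, is the preliminary extraction of full $\Sigma_1$-induction from the restricted schema of $\HA_0$: taking the antecedent $C$ to be a refutable $\Sigma_1$-sentence such as $\exists y\,({\sf S}(y)=0)$ makes $(C\to D)\to D$ intuitionistically equivalent to $D$ over Q1 alone, so every $\Sigma_1$-induction instance is a theorem of $\HA_0$. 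The paper records only the classical version of this observation (that $\PA_0=I\Sigma_1$ because $(A\to B)\to B$ is classically equivalent to $A\vee B$), yet it tacitly relies on $i\Sigma_1\subseteq\HA_0$ elsewhere (for instance in \autoref{Lemma-bounded Sigma completeness} and in its use of primitive recursive function symbols), so making the intuitionistic argument explicit is a genuine improvement. One presentational remark: in the bounded-quantifier case you should run the internal induction directly on the $\Delta_0$-disjunction $\forall x<t\,B\vee\exists x<t\,\neg B$ rather than on the decidability statement itself, since the equivalence of the two is only provable after that induction has been carried out; the decidability statement then follows intuitionistically from the disjunction, exactly as you indicate.
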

\begin{proof}
This is well-known in the literature.
\end{proof}

\noindent
The G\"{o}del-Gentzen translation associates a formula $A^g$ for
any formula $A$ in a first-order language, and is defined
inductively by the following items:
\begin{itemize}
\item $A^g:=A$, for atomic  $A$,
\item $(A\wedge B)^g:=A^g\wedge B^g$,
\item $(A\vee B)^g:=\neg(\neg A^g\wedge\neg B^g)$,
\item $(A\ra B)^g:=A^g\ra B^g$,
\item $(\forall{x}A)^g:=\forall{x}A^g$ ,
\item $(\exists{x}A)^g:=\neg\neg\,\exists{x}\, A^g$.
\end{itemize}

The Friedman translation associates a formula $A^C$, for an
arbitrary formula $C$ and   $A$ in  first-order
language. Roughly speaking, $A^C$ is the result of adding $C$ as a disjunct to all atomic sub-formulas of $A$.
To define $A^C$, we assume that free variables of $C$
do not appear as bound variables of $A$. It is obvious that
we can always take care of this detail by renaming bound variables of
$A$ to  fresh variables.
\begin{itemize}
\item $A^C:=A\vee C$, for atomic $A$,
\item $(A\wedge B)^C:=A^C\wedge B^C$,
\item $ (A\vee B)^C:=A^C\vee B^C$,
\item $(A\ra B)^C:=A^C\ra B^C$,
\item $(\forall{x} A)^C:=\forall{x}A^C$,
\item $(\exists{x} A)^C:=\exists{x}A^C$.
\end{itemize}

As shown in \cite{TD}, we have the following properties for
G\"{o}del-Gentzen and Friedman translations:
\begin{itemize}
\item For each $\Sigma_1$-formula $A$ in the language of arithmetic,
$\HA\vdash A^g\lr\neg\neg A$ and $\HA\vdash A^C\lr (A\vee
C)$.
\item For any $A$ in the language of arithmetic, $ \CQC\vdash A$
implies $\IQC\vdash A^g$.
\item $\HA_0$ is closed under Friedman's translation with respect to
$\Sigma_1$-formulas. i.e. for any $\Sigma_1$-formula $B$ and any
$A$, $\HA_0\vdash A$ implies $\HA_0\vdash A^B$. Actually in
\cite{TD}, this property is proved for $\HA$ instead of $\HA_0$,
but this case is very similar to that one.
\end{itemize}

\noindent
We have the following variant of \Cref{Lemma-Conservativity of
HA}.

\begin{lemma}\label{Lemma-Godel-Gentzen Translation}
For any $\Sigma_1$-formula $A$, $\PA_0\vdash A$ implies
$\HA_0\vdash A$. Hence for any $\Pi_2$-sentence $A$, $\PA_0\vdash
A$ implies $\HA_0\vdash A$.
\end{lemma}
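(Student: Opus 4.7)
The plan is the classical double-translation strategy: a Gödel--Gentzen negative translation to intuitionize the proof, followed by a Friedman $A$-translation to eliminate the resulting double negation. Both steps must be carried out over the weak base $\HA_0$ (respectively $\PA_0$) rather than over $\HA$ (respectively $\PA$).

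First, I would establish the intermediate statement: if $\PA_0\vdash A$ then $\HA_0\vdash A^g$. By the logical transfer from $\CQC$ to $\IQC$ recalled in the bullets above, this reduces to checking that $\HA_0$ proves the GG-translation of every non-logical axiom of $\PA_0$. The axioms Q1--Q8 are essentially atomic, and their translations follow from stability of $\Delta_0$-formulas, which are $\HA_0$-decidable by \autoref{Lemma-decidability of delta formulae}. The delicate point is the induction scheme of $\PA_0$, restricted to formulas of the form $(B\ra C)\ra C$ with $B,C\in\Sigma_1$: its GG-translation is $(B^g\ra C^g)\ra C^g$, and one must derive the corresponding induction instance inside $\HA_0$. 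Using the $\Sigma_1$-bullet $B^g\lr\neg\neg B$, this reduces intuitionistically to an instance of $\HA_0$'s own induction on $(B\ra C)\ra C$ with matching $\Sigma_1$-components.

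Second, since $A\in\Sigma_1$, the equivalence $A^g\lr\neg\neg A$ upgrades the previous step to $\HA_0\vdash\neg\neg A$. I then apply Friedman's translation with the $\Sigma_1$-formula $A$ itself as parameter; since $\HA_0$ is closed under Friedman's translation with respect to $\Sigma_1$-formulas, this yields $\HA_0\vdash(\neg\neg A)^A$. Unfolding the definition,
\[
(\neg\neg A)^A \;=\; \bigl((A\ra\bot)\ra\bot\bigr)^A \;=\; (A^A\ra\bot^A)\ra\bot^A ,
\]
where $\bot^A=\bot\vee A$ is intuitionistically equivalent to $A$, and $A^A\lr A\vee A\lr A$ by the other $\Sigma_1$-bullet. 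Hence the whole formula collapses in $\HA_0$ to $(A\ra A)\ra A$, which is intuitionistically equivalent to $A$; this yields $\HA_0\vdash A$. For a $\Pi_2$-sentence $A=\forall x\, B(x)$ with $B\in\Sigma_1$, from $\PA_0\vdash A$ we obtain $\PA_0\vdash B(x)$ with $x$ free, apply the $\Sigma_1$-case to get $\HA_0\vdash B(x)$, and conclude by universal generalization.

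The main obstacle is the bookkeeping in the first step: the GG-translation of $\PA_0$'s tailored induction scheme is not an instance of $\Sigma_1$-induction (which is not intuitionistically available in $\HA_0$ in its naive form), so one must verify that $\HA_0$'s own induction on $(B\ra C)\ra C$-formulas with $B,C\in\Sigma_1$ is strong enough to recover its own GG-translation. Once this verification is in place, the rest of the argument is a routine unfolding of the two translations.
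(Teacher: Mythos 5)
Your proposal is correct and follows essentially the same route as the paper: a G\"odel--Gentzen translation of the $\PA_0$-proof into $\HA_0$, with the only delicate point being that the translated restricted induction instances are again instances of the $\HA_0$ scheme (the paper makes your ``matching $\Sigma_1$-components'' concrete by noting the translation is provably equivalent to $\neg\neg(B\vee C)$, i.e.\ $((B\vee C)\ra\bot)\ra\bot$), followed by Friedman's $A$-translation to collapse $\neg\neg A$ to $A$. Your unfolding of $(\neg\neg A)^A$ and the reduction of the $\Pi_2$ case to the $\Sigma_1$ case with a free variable are both fine.
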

\begin{proof}
First observe that $\PA_0\vdash B$ implies $\HA_0\vdash B^g$, by
induction on proof of $B$ in $\PA_0$. We refer the reader to
\cite{TD} for a detailed proof of this fact for $\PA$ and $\HA$
instead of $\PA_0$ and $\HA_0$. It should only be noted that for
any instance $B$ of induction over $\Sigma_1$ formulae in $\PA_0$, by
definition   of G\"{o}del-Gentzen translation, $B^g$
belongs to the axioms of $\HA_0$. % since, for $A$ and $B$ in $\Sigma_1$,
%the formula  $(A\to B)\to B$ translates, modulo provable equivalence, to
%$\neg \neg \, (A\vee B)$. 
Hence, we have $\HA_0\vdash
\neg\neg A$, and thus  $\HA_0\vdash (\neg\neg A)^A$. 
This implies
$\HA_0\vdash A$, as desired.
\end{proof}

\noindent
Consider the mapping: 
\[ F: n \mapsto A({ S}^n(0)):=A(\overbrace{{ S\ldots S}}^{n\text{ times}}(0))\]
Let $G$ be the primitive recursive function that assigns to $n$ the G\"odel number of $F(n)$.
Instead of  $G(x)$, we use  the notation $\ulcorner A(\dot{x})\urcorner$ which is common in the literature. 
We may omit the dot over variables when no
confusion is likely. 

\begin{lemma}\label{Lemma-diagonalization lemma}
For every formula $A(x,x_1\ldots,x_n)$ with free variables 
exactly as shown, there exists a formula $B(x_1,\ldots,x_n)$ such
that
$$\HA_0\vdash B(x_1,\ldots,x_n)\lr A(\ulcorner B(\dot{x}_1,\ldots,\dot{x}_n)\urcorner,x_1,\ldots,x_n)$$
Moreover, if the formula $A$ is $\Delta_0$, then $B$ is also
$\Delta_0$.
\end{lemma}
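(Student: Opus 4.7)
The plan is to run the standard Gödel diagonalization argument, but being careful to keep everything inside $\HA_0$ and to verify the $\Delta_0$-refinement. The only nontrivial ingredient is the arithmetization of syntactic substitution, which the paragraph preceding the lemma has already positioned us to use: primitive recursive functions are provably total in $I\Sigma_1$ (hence in $\HA_0$) with $\Delta_0$ defining formulas, so we may freely add them as function symbols.

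First, I would introduce a primitive recursive term $\mathsf{sub}(y,x_1,\ldots,x_n)$ realizing the following operation on codes: if $y$ is the G\"odel number of a formula $D(z,x_1,\ldots,x_n)$ with free variables exactly as shown, then $\mathsf{sub}(y,x_1,\ldots,x_n)$ is the G\"odel number of $D(\overline{y},x_1,\ldots,x_n)$, i.e.\ the result of substituting the numeral $\overline{y}$ for $z$ (and leaving the remaining free variables, which are then to be replaced by their numerals in the G\"odel coding, as expressed by the ``dot'' notation $\ulcorner\cdot(\dot x_1,\ldots,\dot x_n)\urcorner$ introduced just above the lemma). This is primitive recursive in the standard way, so $\mathsf{sub}$ is available as a function symbol with a $\Delta_0$ graph inside $\HA_0$.

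Next, given $A(x,x_1,\ldots,x_n)$, I would set
\[C(z,x_1,\ldots,x_n) \;:=\; A\bigl(\mathsf{sub}(z,x_1,\ldots,x_n),\,x_1,\ldots,x_n\bigr),\]
let $c$ be the G\"odel number of $C$, and finally define
\[B(x_1,\ldots,x_n) \;:=\; C(\overline{c},x_1,\ldots,x_n) \;=\; A\bigl(\mathsf{sub}(\overline{c},x_1,\ldots,x_n),\,x_1,\ldots,x_n\bigr).\]
By the defining property of $\mathsf{sub}$ (which is provable in $\HA_0$ since it concerns a concrete primitive recursive computation on the numeral $\overline{c}$), one has
\[\HA_0 \vdash \mathsf{sub}(\overline{c},x_1,\ldots,x_n) \;=\; \ulcorner C(\overline{c},\dot x_1,\ldots,\dot x_n)\urcorner \;=\; \ulcorner B(\dot x_1,\ldots,\dot x_n)\urcorner.\]
Substituting equals for equals in $B$ then yields
\[\HA_0 \vdash B(x_1,\ldots,x_n) \;\leftrightarrow\; A\bigl(\ulcorner B(\dot x_1,\ldots,\dot x_n)\urcorner,\,x_1,\ldots,x_n\bigr),\]
as required.

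For the ``moreover'' clause, observe that $B$ is by construction $A$ with the term $\mathsf{sub}(\overline{c},x_1,\ldots,x_n)$ plugged into its first argument. Since $\mathsf{sub}$ was introduced as a function symbol via a $\Delta_0$ graph, substitution of this term into a $\Delta_0$ formula yields a $\Delta_0$ formula in the extended language (and, if one prefers to remain in the original language, one may unfold the term using the $\Delta_0$ graph together with bounded quantifiers, which preserves $\Delta_0$). I do not expect any real obstacle here; the only delicate step is the bookkeeping around $\mathsf{sub}$ and the dotted-variable notation, and that has already been set up by the remark on primitive recursive functions just before the lemma.
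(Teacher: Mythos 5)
Your proposal is correct and is exactly the standard fixed-point construction that the paper invokes: its proof consists of the single remark that ``the usual proof of the Fixed Point lemma holds in this setting,'' and what you have written out (the primitive recursive substitution function with $\Delta_0$ graph, the self-application $B:=C(\overline{c},\vec{x})$, and the observation that plugging a provably total $\Delta_0$-defined term into a $\Delta_0$ formula stays $\Delta_0$ after bounded unfolding) is precisely that usual proof carried out with the care the paper leaves implicit. No discrepancy.
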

\begin{proof}
It is easy to see that the usual proof of the fixed point lemma holds in this setting.
\end{proof}
The following lemma states the $\Sigma_1$-completeness of $\HA_0$.
\begin{lemma}\label{Lemma-bounded Sigma completeness}
$\HA_0$  proves all true $\Sigma_1$ sentences. Moreover 
this argument is formalizable and provable in $\HA_0$, i.e. 
for every $\Sigma_1$-formula $A(x_1,\ldots,x_k)$ we have
$\HA_0\vdash{
A(x_1,\ldots,x_k)\ra\Box_0A(\dot{x}_1,\ldots,\dot{x}_k)}$.
\end{lemma}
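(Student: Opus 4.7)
The plan is to prove the lemma by (external) induction on the structure of the $\Sigma_1$-formula $A$, simultaneously establishing the external statement ``$\HA_0$ proves every true $\Sigma_1$-sentence'' and the internal formalization. Since $\HA_0$ contains (as shown in the paragraph just before the lemma) all primitive recursive function symbols with their defining axioms, and since every relevant proof-construction we perform below is primitive recursive, the internal formalization will follow from the external one by reflecting the construction into $\HA_0$.

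First I would handle the atomic case. Numerical identities $s = t$ and $s \neq t$ between closed terms built from $0, {\sf S}, +, \cdot$ are provable in $\HA_0$ whenever true (indeed, already in Robinson's $Q$), and the proof can be produced by a primitive recursive function of the term. Next, for an arbitrary $\Delta_0$-formula $B(\bar{x})$ the decidability lemma (\autoref{Lemma-decidability of delta formulae}) gives $\HA_0 \vdash B(\bar{x}) \vee \neg B(\bar{x})$; the inspection of its proof shows that there is a primitive recursive function $f_B$ taking a tuple of numerals $\bar n$ to the G\"odel number of an $\HA_0$-proof of $B(\bar n)$ if $B(\bar n)$ is true, and of $\neg B(\bar n)$ otherwise. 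Because $\HA_0$ proves the totality and correctness of $f_B$ (this is where we use that induction on $(A \to B) \to B$ with $A, B \in \Sigma_1$ gives the strength of $I\Sigma_1$), we obtain
\[
\HA_0 \vdash B(\bar{x}) \to \Box_0 B(\dot{\bar{x}}),
\]
which settles the $\Delta_0$ case.

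For the inductive step, I would write a $\Sigma_1$-formula $A(\bar{x})$ in the prenex form $\exists y_1\cdots \exists y_m\, B(\bar{y},\bar{x})$ with $B$ in $\Delta_0$, and use the formalized closure of the provability predicate under $\wedge$-introduction and $\exists$-introduction. Explicitly, in $\HA_0$ one proves the schema $\Box_0 C(\dot{y}) \to \Box_0 \exists y\, C(y)$ by exhibiting a primitive recursive function that turns a proof of $C(\bar n)$ into a proof of $\exists y\, C(y)$. Chaining this $m$ times with the $\Delta_0$-case yields
\[
\HA_0 \vdash B(\bar{y},\bar{x}) \to \Box_0 B(\dot{\bar{y}}, \dot{\bar{x}}) \to \Box_0 \exists \bar{y}\, B(\bar{y}, \dot{\bar{x}}),
\]
and then existentially quantifying $\bar{y}$ gives the desired implication for $A$.

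The main obstacle is the formalized $\Delta_0$-case: one must verify that $\HA_0$ can internalize both the decision procedure for $\Delta_0$-formulas and the construction of the resulting $\HA_0$-proof as primitive recursive operations, and that the resulting statement ``$f_B$ outputs a correct proof'' is $\Pi_2$ and provable using only the admissible form of induction. Everything else (the quantifier step, the handling of $\wedge$ and $\vee$ inside $\Sigma_1$) reduces to standard formalizations of the rules of $\HA_0$, for which the required totality and correctness lemmas are provable in $I\Sigma_1$, and hence in $\HA_0$ by the discussion following the definition of $\HA_0$.
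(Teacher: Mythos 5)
Your argument is correct in substance, but it reconstructs from scratch what the paper simply cites: the paper's proof of this lemma consists of the observation that provable $\Sigma_1$-completeness is a well-known, constructively formalizable fact about $i\Sigma_1$, together with the implicit remark that $\HA_0\supseteq i\Sigma_1$ --- which holds because the instance $A=\bot$ of the admissible induction scheme for $(A\to B)\to B$ with $A,B\in\Sigma_1$ yields plain induction for every $\Sigma_1$-formula $B$. Your reconstruction follows the standard route (atomic case, $\Delta_0$ case, then formalized $\exists$-introduction), with one nonstandard detour: rather than proving $B(\bar{x})\to\Box_0 B(\dot{\bar{x}})$ for $\Delta_0$-formulas $B$ directly by induction on the structure of $B$ (handling bounded quantifiers by an internal induction that assembles finitely many proofs into one), you extract a ``decision function'' $f_B$ from the proof of \autoref{Lemma-decidability of delta formulae} and assert that $\HA_0$ proves its totality and correctness. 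That correctness claim is exactly the content of the formalized $\Delta_0$-case and is the only place where real work remains; as written it is asserted rather than carried out, but it amounts to the same internal induction that the direct route requires, so nothing is missing beyond what the paper itself delegates to the literature. What your version buys is an explicit check that the entire construction fits inside $\HA_0$'s restricted induction; what the paper's version buys is brevity and a clean reduction to a citable theorem about $i\Sigma_1$.
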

\begin{proof}
It is a well-known fact that any true (in the standard model
$\mathbb{N}$) $\Sigma_1$-sentence is provable in $i{\Sigma}_1$.
Moreover this argument is constructive and formalizable in
$i{\Sigma}_1$.
\end{proof}

\begin{lemma}\label{Lemma-Reflection}
For every formula $A$, we have $\PA\vdash \forall{x}\
\Box^+(\Box^+_xA\ra A)$ and  $\HA\vdash\forall{x}\
\Box(\Box_xA\ra A)$.
\end{lemma}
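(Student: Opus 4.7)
What is asked is the \emph{provable reflection} of $\PA$ (resp.\ $\HA$) over its sliced subtheories $\PA_x$ (resp.\ $\HA_x$). The statement escapes L\"ob's theorem even for full $\PA$ because we are not claiming $\PA\vdash\forall x(\Box^+_xA\to A)$, but only $\PA\vdash\forall x\,\Box^+(\Box^+_xA\to A)$: the outer $\Box^+$ allows us to reason meta-mathematically about each $\PA_x$ separately. The point is that the induction axioms of $\PA_x$ are all of the form $\Ind(B,\cdot)$ with $\ulcorner B\urcorner<x$ (plus the fixed $\Sigma_1$-scheme $(A\to B)\to B$), so their syntactic complexity is bounded by a primitive-recursive $n(x)$; externally, $\PA_x$ is thus an ``essentially bounded-complexity'' subtheory of $\PA$ whose soundness can be established inside $\PA$ uniformly in $x$ using partial truth predicates.

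My plan is to exhibit a primitive-recursive function $p(x,\ulcorner A\urcorner)$ producing a $\PA$-proof code of the sentence $\Box^+_xA\to A$. Once this is arranged, $\PA$ verifies $\forall x\,\Prf^+(p(x,\ulcorner A\urcorner),\ulcorner\Box^+_xA\to A\urcorner)$ by formalized $\Sigma_1$-completeness (\autoref{Lemma-bounded Sigma completeness}), yielding the desired $\forall x\,\Box^+(\Box^+_xA\to A)$. To construct $p$ I would invoke the standard partial truth predicates: $\PA$ defines, uniformly in $n$, a predicate $\mathsf{Tr}_n(y)$ for formulas of complexity $\leq n$, and $\PA$ proves the Tarski inductive clauses and the soundness of first-order logic for $\mathsf{Tr}_n$ (uniformly in $n$, as a scheme). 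Since every axiom of $\PA_x$ lies inside the scope of $\mathsf{Tr}_{n(x)}$, a routine formalized induction on $\PA_x$-derivations gives, inside $\PA$, that every theorem $B$ of $\PA_x$ of complexity $\leq k:=\max(n(x),\mathrm{cx}(A))$ satisfies $\mathsf{Tr}_k(\ulcorner B\urcorner)$. Specialising $B$ to $A$ and disquoting via $\mathsf{Tr}_k(\ulcorner A\urcorner)\lr A$ produces the desired $A$. Reading off how this derivation depends on $x$ and $\ulcorner A\urcorner$ yields the primitive-recursive $p$.

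The $\HA$ case proceeds along the same blueprint with the intuitionistic variants: $\HA$ likewise defines partial $\Sigma_n$-truth predicates (coded Kripke-satisfaction style) and proves their Tarski clauses together with soundness of the intuitionistic calculus uniformly in $n$, so the same construction produces a primitive-recursive $\HA$-proof of $\Box_xA\to A$ for each $x$. Alternatively one can transfer the classical argument via the G\"odel--Gentzen translation (\autoref{Lemma-Godel-Gentzen Translation}), observing that the outer $\Box$ turns the target into a $\Sigma_1$-statement, on which $\HA$ and $\PA$ agree modulo the translation.

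\textbf{Main obstacle.} The delicate point is carrying out the construction \emph{uniformly} in $x$ without colliding with Tarski's undefinability theorem. One must avoid trying to prove a single internal statement ``for all $n$, $\mathsf{Tr}_n$ is sound for $\PA_n$'', which would collapse to global reflection. Instead, the Tarski clauses for $\mathsf{Tr}_n$ are available as a \emph{scheme} (uniform in $n$, but with finitely many clauses per fixed $n$), and the description of $p(x,\ulcorner A\urcorner)$ plugs in the specific $\mathsf{Tr}_{n(x)}$ and uses only the fixed finite list of clauses and logical-soundness steps needed to verify a proof of bounded shape. The internal verification that $p$ always outputs a valid proof code therefore uses only a fixed finite fragment of the Tarski machinery, applied to a varying $n(x)$ --- which is exactly what $\PA$ (and $\HA$) can handle. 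The remaining work is the standard H\'ajek--Pudl\'ak-style arithmetisation of syntax and complexity bounds, which is routine.
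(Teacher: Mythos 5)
The paper gives no proof of this lemma: it records the $\PA$ case as folklore and refers the $\HA$ case to Smory\'nski and to Theorem~8.1 of Visser's paper on $\Sigma^0_1$-substitutions. Your proposal reconstructs the standard argument underlying those citations --- uniform reflection of $\PA$ (resp.\ $\HA$) over its bounded-complexity subtheories via partial truth predicates, with the outer box absorbing the dependence of the truth predicate on $x$ --- and this is the right shape of proof; in particular you correctly isolate why neither Tarski's theorem nor L\"ob's theorem is violated. Two points need repair. First, the step asserting that $\PA$ verifies $\forall x\,\Prf_{\sf PA}(p(x,\ulcorner A\urcorner),\ulcorner\Box^+_xA\ra A\urcorner)$ ``by formalized $\Sigma_1$-completeness'' is wrong as stated: that sentence is $\Pi_1$, not $\Sigma_1$, so $\Sigma_1$-completeness yields only each numerical instance, not the universally quantified statement. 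What is actually required is that $\PA$ prove, by an internal induction on the construction of $p(x,\ulcorner A\urcorner)$, that its output is a correct proof code for every $x$; this is precisely the uniformity issue you then discuss in your ``main obstacle'' paragraph, so the correct justification is already implicit in your own text and should simply replace the $\Sigma_1$-completeness appeal. Second, the proposed G\"odel--Gentzen shortcut for the $\HA$ case does not work as described: $\Pi_2$-conservativity (\autoref{Lemma-Conservativity of HA}) reduces the target to its provability in $\PA$, but the target still asserts \emph{$\HA$-provability} of $\Box_xA\ra A$, so one must still exhibit intuitionistic proofs of the reflection instances; moreover the Gentzen translation of the classical reflection statement is not the intuitionistic one, since $A^g$ need not be $\HA$-equivalent to $A$. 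The primary route --- intuitionistic partial truth predicates with their Tarski clauses and the soundness of the intuitionistic calculus, carried out uniformly in $x$ as in the sources the paper cites --- is the one to keep; the Kripke-style parenthetical is also a distraction, as the usual construction is a direct compositional truth definition.
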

\begin{proof}
The case of $\PA$ is well known. For the case $\HA$, see
\cite{Smorynski-Troelstra} or  \cite[Theorem 8.1]{Visser02}.
\end{proof}

\subsubsection{Coding of finite sequences}
We use  some fixed 
method for encoding of finite sequences and use $\langle
x_1,\ldots,x_n\rangle$ as the code of the finite sequence
$(x_1,\ldots,x_n)$. We assume here that the encoding is a one-one correspondence between 
natural numbers and the associated finite sequences.  
 For details on coding of finite sequences, we
refer the reader to \cite{Smorynski-Book}, Chapter 0. 

\vspace{.1in}

\noindent Let $x=\langle x_0, x_1,\ldots,x_n\rangle$ and $y=\langle y_0,y_1,\ldots,y_m\rangle$. 
The following notations are used in this paper:
\begin{itemize}
\item ${\sf lth}(x)$ is defined as the length of the sequence with the code $x$, i.e. here ${\sf lth}(x):=n+1$,
\item $x*y:=\langle x_0,\ldots,x_n,y_0,\ldots,y_m\rangle$,
\item $(x)_i$ is defined  (if $i<{\sf lth}(x)$) as the $i$-th element in the sequence with the 
code $x$, i.e. here $(x)_i:=x_{i}$. If also $i\geq{\sf lth}(x)$, we define $(x)_i:=0$,
\item   $\hat{x}$ is  defined as the final element of the sequence with the code $x$, i.e. here $\hat{x}:=(x)_{{\sf lth}(x)\dot{-}1}$,
\item $x$ is an initial segment of $y$ ($x\subseteq_{\sf i}y$) if ${\sf lth}(x)\leq{\sf lth}(y)$ and for all $ j< {\sf lth}(x)$, we have 
$(x)_j=(y)_j$.
\end{itemize}

\subsubsection{Kripke models of \HA}\label{sec-KripkeModelFirstOrder}
A first-order Kripke model for $\HA$ is a triple $\kcal=(K,<,\mathfrak{M})$ such that:
\begin{itemize}
\item  The frame of $\kcal$, i.e. $(K,<)$,  is a non-empty partially ordered set,
\item $\mathfrak{M}$ is  a function from $K$ to the first-order classical structures for the language 
of the arithmetic, i.e. $\mathfrak{M}(\alpha)$ is a first-order classical structure, for each $\alpha\in K$,
\item For any $\alpha\leq\beta\in K$, $\mathfrak{M}(\alpha)$ is a weak  substructure of
$\mathfrak{M}(\beta)$.   
\end{itemize}
For any  $\alpha\in K$ and  first-order formula $A\in\mathcal{L}_\alpha$ (the language of arithmetic augmented with constant symbols $\bar{a}$ for each $a\in|\mathfrak{M}(\alpha)|$),
 we define $\kcal,\alpha\Vdash A$
 (or simply $\alpha\Vdash A$, if no confusion is likely) inductively as follows:
 \begin{itemize}
 \item For atomic $A$, $\alpha\Vdash A$ iff $\mathfrak{M}(\alpha)\models A$. 
 Note that in the structure $\mathfrak{M}(\alpha)$, $\bar{a}$ is 
 interpreted as $a$,
 \item $\kcal,\alpha\Vdash A\vee B$ iff $\kcal,\alpha\Vdash A$ or $\kcal,\alpha\Vdash B$,
 \item $\kcal,\alpha\Vdash A\wedge B$ iff $\kcal,\alpha\Vdash A$ and $\kcal,\alpha\Vdash B$,
 \item $\kcal,\alpha\Vdash A\ra B$ iff for all $\beta\geq\alpha$, $\kcal,\beta\Vdash A$ implies $\kcal,\beta\Vdash B$,
 \item If $A=\forall{x}B$,  $\alpha\Vdash A$ iff for all $\beta\geq\alpha$ and each 
 $b\in|\mathfrak{M}(\beta)|$, we have $\beta\Vdash B[x:\bar{b}]$.
 \end{itemize}
 It is well-known in the literature that $\HA$ is complete for first-order Kripke models. 
 \begin{lemma}\label{Lemma-Sigma-local-global}
 Let $\kcal=(K,<,\mathfrak{M})$ be a Kripke model of $\HA$ and $A$ be an arbitrary $\Sigma_1$-formula.
Then for each $\alpha\in K$, we have $\alpha\Vdash A$ iff $\mathfrak{M}(\alpha)\models A$.
 \end{lemma}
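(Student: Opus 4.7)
The plan is to reduce the problem to the $\Delta_0$ case and prove by induction on complexity the stronger statement that for every $\Delta_0$ formula $C\in\mathcal{L}_\alpha$ one has $\alpha\Vdash C$ iff $\mathfrak{M}(\alpha)\models C$. Once this is in place, the $\Sigma_1$ step is immediate: writing $A=\exists\bar{y}\,B(\bar{y})$ with $B\in\Delta_0$, a forcing witness $\bar{a}\in|\mathfrak{M}(\alpha)|$ for $\alpha\Vdash B(\bar{a})$ gives $\mathfrak{M}(\alpha)\models B(\bar{a})$ by the $\Delta_0$ equivalence, and conversely a classical witness in $|\mathfrak{M}(\alpha)|$ forces $B(\bar{a})$ at $\alpha$ and hence forces $A$ at $\alpha$.

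For the $\Delta_0$ induction, atomic formulas are handled directly by the definition of $\Vdash$, and $\wedge,\vee$ pass through because both $\Vdash$ and $\models$ distribute over them. The real content lies in the cases of implication $D\to E$ and bounded universal $\forall x<t\,F(x)$. The forcing-to-classical direction $\alpha\Vdash D\to E\Rightarrow\mathfrak{M}(\alpha)\models D\to E$ is easy: if $\mathfrak{M}(\alpha)\models D$, then by IH $\alpha\Vdash D$, hence $\alpha\Vdash E$, hence $\mathfrak{M}(\alpha)\models E$ by IH. The classical-to-forcing direction is where I would invoke the ``elementary (weak) substructure'' clause in the definition of a Kripke model of $\HA$: given $\mathfrak{M}(\alpha)\models D\to E$ and $\beta\geq\alpha$ with $\beta\Vdash D$, the IH at $\beta$ gives $\mathfrak{M}(\beta)\models D$; since $D\to E$ is $\Delta_0$ it is preserved up from $\mathfrak{M}(\alpha)$ to $\mathfrak{M}(\beta)$, giving $\mathfrak{M}(\beta)\models E$, and then IH at $\beta$ gives $\beta\Vdash E$. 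Bounded universals are handled identically, noting that $\forall x<t\,F(x)$ is itself $\Delta_0$ and hence preserved along the transitions.

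The main obstacle is exactly this classical-to-forcing direction for $\to$ and $\forall x<t$: a priori, new elements materialising in $\mathfrak{M}(\beta)$ for $\beta>\alpha$ could witness a failure of forcing while $\mathfrak{M}(\alpha)$ still classically satisfies the formula. The elementary substructure hypothesis on the transitions is precisely the ingredient that prevents this pathology by lifting $\Delta_0$-truth upward; without it the equivalence would collapse. No other subtlety is expected, and \autoref{Lemma-decidability of delta formulae} could alternatively be used to streamline the disjunction/implication cases by reducing them to a decidable base.
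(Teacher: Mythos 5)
Your reduction to the $\Delta_0$ case and the treatment of the outer existential quantifier are fine, and the forcing-to-classical direction is unproblematic. The gap is in the classical-to-forcing direction for $\to$ and bounded $\forall$, where you assert that ``$D\to E$ is $\Delta_0$ and hence preserved up from $\mathfrak{M}(\alpha)$ to $\mathfrak{M}(\beta)$'' by the substructure clause. Under the standard reading of the transition condition for Kripke models of arithmetic --- the inclusions are \emph{weak} substructures, so atomic facts are only preserved upward, not reflected --- this step is false in general: weak substructures do not preserve implications, negations, or bounded universals upward, and the upward absoluteness of $\Delta_0$ truth along $\leq$ is precisely the non-trivial content of the lemma, not a hypothesis you may invoke. (If you instead read the clause as full elementarity of the inclusions, your argument does go through, but then the lemma is essentially immediate and \autoref{Lemma-decidability of delta formulae} is never needed; the fact that the paper's proof explicitly routes the $\to$ and $\forall$ cases through that lemma indicates this is not the intended reading.)

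The ingredient you relegate to an optional ``streamlining'' is in fact load-bearing. Since $\kcal\Vdash\HA$ and $\HA_0\vdash\forall\bar{x}(C(\bar{x})\vee\neg C(\bar{x}))$ for $\Delta_0$ formulas $C$ (\autoref{Lemma-decidability of delta formulae}), every node forces $C\vee\neg C$ for every $\Delta_0$ instance with its parameters. In the implication case one then argues: either $\alpha\Vdash D$, in which case the induction hypothesis at $\alpha$ gives $\mathfrak{M}(\alpha)\models D$, hence $\mathfrak{M}(\alpha)\models E$, hence $\alpha\Vdash E$, and $\alpha\Vdash D\to E$ follows by persistence of forcing; or $\alpha\Vdash\neg D$, in which case no $\beta\geq\alpha$ forces $D$ and the implication is forced vacuously. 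No transfer of $\Delta_0$ truth between the classical structures at different nodes is ever used. The bounded universal case is handled in the same spirit, using that $\HA$ proves $\neg\forall x{<}t\,F\to\exists x{<}t\,\neg F$ for $\Delta_0$ formulas $F$. You should restructure your induction around this forced decidability rather than around an elementarity assumption on the transition maps.
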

 \begin{proof}
 Use induction on the complexity of $A$ to show that for each $\alpha\in K$, we have 
 $\alpha\Vdash A$ iff $\mathfrak{M}(\alpha)\models A$. In the inductive step for $\to$ and $\forall$,
 use \Cref{Lemma-decidability of delta formulae}.
 \end{proof}
 
\subsection{q-Realizability and Leivant's principle}
A variant of realizability introduced by Kleene, is
$\q$-realizability (see \cite{TD}) which is defined inductively
for arithmetical formula $A$ as follows:
\begin{itemize}
 \item $x\q A:=A$ for atomic $A$.
 \item $x\q (A_1\wedge A_2):= {\sf j}_1(x)\q A_1\wedge {\sf j}_2(x)\q A_2$,
 \item $x\q (A_1\vee A_2):=({\sf j}_1(x)=0\ra {\sf j}_2(x)\q A_1)\wedge({\sf j}_1(x)\neq0\ra {\sf j}_2(x)\q A_2)$,
 \item $x\q (A_1\ra A_2):=\forall{y}\, (y\q A_1\ra \exists{u}\, ({\sf T}xyu\wedge {\sf U}(u)\q A_2))\wedge (A_1\ra A_2) $,
 \item $x\q \exists{y}A(y):=j_1(x)\q A(j_2(x))$,
 \item $x\q\forall{y}A(y):=\forall{y}\,\exists{u}\, ({\sf T}xyu\wedge {\sf U}(u)\q A(y))$
\end{itemize}
In the y
above definition ${\sf j}_1$ and ${\sf j}_2$ are inverses for a one-to-one onto,
pairing function, {\sf j}, such that  $x={\sf j}({\sf  j}_1(x),{\sf j}_2(x) )$. Also
${\sf T}xyu$ is  Kleene's predicate formalizing ``$u$ is a computation
for the Turing Machine with code $x$ with input $y$", and {\sf U} is
the result extractor function, i.e. if $u$ is a computation for a
Turing Machine, then ${\sf U}(u)$ is its output.

\begin{lemma}\label{Lemma-qrealizability}
 For any formula $A$ we have $\HA_0\vdash x\!\q\! A\ra A$.
\end{lemma}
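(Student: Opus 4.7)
The plan is to prove this by induction on the complexity of $A$ in the meta-theory, verifying for each case that the implication $x\q A\to A$ is derivable in $\HA_0$ without any arithmetical induction on the object level (so that $\HA_0$ suffices).

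The base case (atomic $A$) is immediate, since the definition sets $x\q A:=A$. The conjunction and existential cases are also routine: in each case $x\q A$ is literally built from $y\q A'$ clauses with $A'$ a proper subformula of $A$, and the induction hypothesis plus pure intuitionistic logic (together with the definitional axioms for ${\sf j}_1, {\sf j}_2$) yields $A$. The implication case $A=A_1\to A_2$ is trivial because the definition already places $(A_1\to A_2)$ as the second conjunct of $x\q A$, so the conclusion is directly available.

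The two cases that deserve a little care are disjunction and universal quantification. For $A=A_1\vee A_2$, I would invoke \autoref{Lemma-decidability of delta formulae} to do case analysis on the $\Delta_0$-formula ``${\sf j}_1(x)=0$''; in each case one conjunct of $x\q A$ fires and the induction hypothesis converts the $\q$-realizing clause into the relevant disjunct. For $A=\forall y\, B(y)$, fix an arbitrary $y$; the realizer clause delivers a computation $u$ with ${\sf T}xyu$ and ${\sf U}(u)\q B(y)$, and the induction hypothesis on $B(y)$ gives $B(y)$; generalizing on $y$ yields $\forall y\, B(y)$. The primitive recursive apparatus for ${\sf T}$ and ${\sf U}$ is available in $\HA_0$ per the remark following \autoref{Lemma-Conservativity of HA}.

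There is no real obstacle: the statement is essentially built into the shape of the definition of $\q$, and the only ``logical content'' is the appeal to decidability of a bounded formula in the disjunction case, which is precisely what \autoref{Lemma-decidability of delta formulae} supplies. The proof proceeds entirely within $\HA_0$ because each inductive step uses only propositional/first-order intuitionistic reasoning together with the $\Delta_0$-decidability already established, and no new induction on natural numbers is needed inside the object theory.
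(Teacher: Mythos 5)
Your proof is correct, and it is the standard induction on the complexity of $A$ that the paper simply delegates to its citation of Troelstra--van Dalen rather than writing out. You also correctly isolate the one point where the restriction to $\HA_0$ matters, namely the case analysis on ${\sf j}_1(x)=0$ in the disjunction case, which is covered by \autoref{Lemma-decidability of delta formulae}.
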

\begin{proof}
See \cite{TD}.
\end{proof}

\noindent
In the following,  $\{x\}$ is  partial recursive function of the
Turing Machine with the code $x$. The notation $\{x\}y\!\!\downarrow$
means that ``the function $\{x\}$ is defined on input $y$", or
equivalently ``the Turing machine with the code $x$ halts on the input
$y$". It is well known that $\{x\}y\!\!\downarrow$ is a $\Sigma_1$
sentence. We use terms which contain some Kleene's bracket
notation. In that case, we use $t\!\!\downarrow$ to mean that all
the brackets in $t$ are defined (terminate). 

One immediate consequence of $\q$-realizability, is Church's
Rule for $\HA$:
\begin{lemma}\label{Lemma-Chirch rule}
For every formula $A(x,y)$, if 
$\HA\vdash\forall{x}\,\exists{y}\,A(x,y)$, then there exists some
$n\in \omega$ such that
$\HA\vdash\forall{x}\, ({\{n\}(x)\!\!\downarrow}\wedge A(x,\{n\}(x)))$.
\end{lemma}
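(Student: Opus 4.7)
The plan is to derive Church's Rule from a soundness theorem for $\q$-realizability, in the standard way. Specifically, I would first prove the auxiliary statement that $\HA$ is closed under $\q$-realizability: for every closed formula $B$ with $\HA\vdash B$, there is a natural number $m$ (indeed a closed term) such that $\HA\vdash \bar{m}\q B$. This is proved by induction on the length of the $\HA$-derivation of $B$. The propositional and quantifier axioms of $\IQC$ admit explicit primitive recursive realizers; modus ponens and generalization correspond to composition of partial recursive functions, which is where the use of Kleene's ${\sf T}$-predicate pays off; and each induction axiom ${\sf Ind}(A,x)$ is realized by a primitive recursive recursor definable inside $\HA$ from realizers of $A(0)$ and $\forall x(A(x)\to A({\sf S}(x)))$. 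The only subtlety in this induction is the implication clause in the definition of $\q$: because $x\q(A_1\to A_2)$ also carries the conjunct $A_1\to A_2$ as a ``truth tag'', we must also verify that the truth of the realized formula is preserved under each rule — but this is immediate, since we already have $\HA\vdash B$ at each step.

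Having this soundness lemma, I would apply it to the hypothesis $\HA\vdash \forall x\exists y\,A(x,y)$, obtaining some numeral $\bar{e}$ with
\[
\HA\vdash \bar{e}\q\forall x\exists y\,A(x,y).
\]
Unfolding the two relevant clauses of the definition of $\q$, this says
\[
\HA\vdash \forall x\,\exists u\,\bigl({\sf T}\bar{e}xu\wedge {\sf U}(u)\q\exists y\,A(x,y)\bigr)
\]
and hence
\[
\HA\vdash \forall x\,\exists u\,\bigl({\sf T}\bar{e}xu\wedge {\sf j}_2({\sf U}(u))\q A(x,{\sf j}_1({\sf U}(u)))\bigr).
\]
In particular $\HA\vdash \forall x\,\{\bar{e}\}(x)\!\!\downarrow$, and $\HA\vdash \forall x\,A(x,{\sf j}_1(\{\bar{e}\}(x)))$ by \autoref{Lemma-qrealizability} applied to the atomic realizability for $A$.

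Finally, to produce the required witness $n$, I would invoke the $s$-$m$-$n$ theorem (or, equivalently, the provable closure of partial recursive functions under composition inside $\HA_0$) to obtain a natural number $n$ such that $\HA_0\vdash \forall x\,(\{n\}(x)\!\!\downarrow\leftrightarrow \{\bar{e}\}(x)\!\!\downarrow)$ and $\HA_0\vdash \forall x\,(\{\bar e\}(x)\!\!\downarrow\to \{n\}(x)={\sf j}_1(\{\bar{e}\}(x)))$. Combining this with the previous paragraph yields $\HA\vdash \forall x\,(\{n\}(x)\!\!\downarrow\wedge A(x,\{n\}(x)))$, as desired.

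The main obstacle is the soundness lemma for $\q$-realizability, in particular the verification for the induction rule and the careful bookkeeping of the ``truth tag'' in the $\q$-interpretation of implication. Everything else is either an appeal to the ${\sf T}$-predicate formalism or a routine application of the $s$-$m$-$n$ theorem. Since this is standard material (and the paper defers to \cite{TD} for the analogous \autoref{Lemma-qrealizability}), one can also simply cite the corresponding theorem from Troelstra--van Dalen rather than redoing the induction.
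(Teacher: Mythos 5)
Your proposal is correct and is essentially the argument the paper relies on: the paper's proof of this lemma is simply a citation to Troelstra--van Dalen, and what you have written out is the standard derivation of Church's Rule from the soundness theorem for $\q$-realizability given there. (The only nitpick is that with the paper's convention $x\q\exists y A(y):={\sf j}_1(x)\q A({\sf j}_2(x))$, the witness is extracted by ${\sf j}_2$ and the realizer by ${\sf j}_1$, i.e.\ your two projections are swapped; this is immaterial.)
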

\begin{proof}
See \cite{TD}.
\end{proof}

\noindent
It is easy to observe that ``$\HA\vdash A$" implies ``there
exists some $n$ such that $\HA\vdash n\q A$"(\cite{TD}). The
point of the following lemma is that we can 
refine the above statement in the following way. There exists some recursive function $f$ such that 
``$\HA_m\vdash A(k_1,\ldots,k_l)$" implies 
``there exists some recursive function 
$g$ such that $\HA_{f(m)}\vdash g(k_1,\ldots,k_l)\q A(k_1,\ldots,k_l)$".
Moreover, we can formalize 
this statement in $\HA$.
%actually compute  this
%$n$ from the (code of) proof of $A$ in $\HA$ and moreover we can
%formalize this argument in $\HA$:

\begin{lemma}\label{Lemma-upper bound for realizability}
Suppose that $A(x_1,\ldots,x_m)$ is an arithmetical formula with
free variables as shown. Then, there exists a provably \textup{(}in $\HA$\textup{)}
total recursive function $f$ such that:
$$\HA\vdash\Box_x A(\dot{x}_1,\ldots,\dot{x}_m)\ra\exists{z}\;
\Box_{f(x)}({\{\dot{z}\}\langle \dot{x}_1,\ldots,\dot{x}_m
\rangle\!\!\downarrow}\wedge\{\dot{z}\}\langle \dot{x}_1,\ldots,
\dot{x}_m\rangle\q A(\dot{x}_1,\ldots,\dot{x}_m))$$
\end{lemma}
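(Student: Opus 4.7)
The plan is to formalize inside $\HA$ the classical extraction of a $\q$-realizer from a proof, and then carefully track how much induction the verification of the extraction consumes. I would first construct, externally, a primitive recursive function $g$ that sends the code of a proof in $\HA$ to the code of a Turing machine intended to $\q$-realize the proved formula. The definition of $g$ proceeds by cases on the last rule or axiom: propositional and quantifier rules get the standard combinators (application of partial recursive functions for modus ponens, pairing for conjunction introduction, primitive recursion on the pair $(y,\{r\}(y))$ for $\forall$-introduction, etc.); the arithmetical axioms Q1--Q8 are $\Delta_0$ and hence serve as their own realizers; and the induction scheme on a formula $B(y)$ is realized by a term defined by primitive recursion on the codes of realizers of the base and step cases. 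Since $g$ is primitive recursive, it is $\HA$-provably total.

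Next I would produce a primitive recursive $f$ such that, whenever $d$ codes a proof in $\HA_x$, the statement $\{g(d)\}\downarrow\wedge\{g(d)\}\q A$ is provable already in $\HA_{f(x)}$. For the propositional and quantifier cases the verification needs only bounded induction on subformulas of the proved formula, whose Gödel numbers are bounded by $x$. The delicate case is the induction axiom: verifying that the primitive-recursive realizer for an induction axiom on $B(y)$ actually realizes $\forall y\, B(y)$ requires an external induction on $y$ with induction formula of the shape ``$\{r\}(y)\q B(y)$'', whose Gödel number is bounded by a fixed primitive recursive function of $\ulcorner B\urcorner$. Taking the maximum over all these bounds yields a primitive recursive, hence $\HA$-provably total, function $f$ with the required property.

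Finally, the whole construction can be internalized in $\HA$. Given $\Box_x A(\dot x_1,\ldots,\dot x_m)$, we may fix a proof code $d$ of $A$ in $\HA_x$ as a $\Sigma_1$-witness. Taking $z:=g(d)$, the formalized version of the previous step provides a proof, inside $\HA$, that $\Box_{f(x)}(\{z\}\langle\dot x_1,\ldots,\dot x_m\rangle\downarrow\wedge\{z\}\langle\dot x_1,\ldots,\dot x_m\rangle\q A(\dot x_1,\ldots,\dot x_m))$. Here \autoref{Lemma-bounded Sigma completeness} is used to convert the external $\Sigma_1$ facts about proof codes and computations of $g$ into $\Box_0$-statements, and the monotonicity of $\Box_y$ in $y$ absorbs these into $\Box_{f(x)}$.

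The main obstacle will be the explicit and uniform choice of $f$. Each rule of $\HA$ must be audited to see precisely which induction instances enter the realizability verification, and the bounds on the Gödel numbers of those instances must be collected into a single primitive recursive bound in terms of $x$; the induction axiom itself is the delicate case, since the realizer is produced by primitive recursion and its soundness proof consumes a fresh induction on a formula whose complexity depends on the induction formula of the original proof. Once this uniform bound is in place, the rest of the argument is a routine formalized meta-induction on the proof code.
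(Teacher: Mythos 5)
Your proposal is correct and follows essentially the same route as the paper: an induction on the proof code in which the only delicate case is the induction schema, a realizer for induction built by primitive recursion from the realizers of the base and step, and a primitive recursive bound $f$ obtained by taking the maximum of the G\"odel numbers of the realizability-verification formulas $\{t(u)\}\langle x\rangle\!\!\downarrow\wedge\{t(u)\}\langle x\rangle\q B$ for $\gnumber{B}<n$ (the paper's $B^{\q,x}$), which is exactly your ``audit'' of which induction instances the verification consumes. The only cosmetic difference is that you separate the extraction map $g$ from the bound $f$, while the paper keeps the extracted realizer as an existentially quantified $k$ inside the box.
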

\begin{proof}
The proof is very similar to the proof of the soundness part of \cite[Theorem~4.10]{TD}. 
First define $f(n)$ in this way: 
$$f(n):=\text{max}(\{\gnumber{B^{\q,x}}\ |\ \gnumber{B}< n,\text{ $x$ is a free variable of $B$}\}\cup\{n\})$$
in which, $ B^{\q,x}:=\{ t(u)\}\langle x\rangle\!\!\downarrow \wedge\; \{ t(u)\}\langle x\rangle \q B$, 
 $u\!\neq\! x$ and $t(u)$ is a primitive recursive function that will be defined later in the proof.
 Let's fix some sequence of numbers $\boldsymbol{m}$. 
 With induction on the complexity of the proof 
 $\HA_n\vdash A(\boldsymbol{m})$, we show that  (by $A(\boldsymbol{m})$, we mean $A[\boldsymbol{x}:\boldsymbol{m}]$) by
\begin{center} 
 $\HA\vdash$ ``$\HA_n\vdash A(\boldsymbol{m})$'' $\rightarrow$  $\exists z$
  ``$\HA_{f(n)}\vdash \{z\}\langle\boldsymbol{m}\rangle\!\!\downarrow \wedge \{z\}\langle\boldsymbol{m}\rangle \q A(\boldsymbol{m})$''
 \end{center}
 We only treat the case where $A$ is an instance of induction schema. All the other cases are trivial.
 Assume that $\gnumber{B}<n$ and 
 $$A(\boldsymbol{m})=(B[x:0]\wedge\forall{x}(B\to B[x:S(x)]))\to \forall{x}B$$
 We should find some number $\{z\}\langle\boldsymbol{m}\rangle= k$ such that 
 $$\HA_{f(n)}\vdash k\q [(B(0)\wedge\forall{x}(B(x)\to B(x+1))\to \forall{x}B]$$
  By definition  of $\q$-realizability, we have:
\begin{equation*}
 k\q A(\boldsymbol{m})=\overbrace{\forall{u}[u\q (B(0)\wedge\forall{x}(B(x)\to B(x+1))\to (\{k\}(u)\!\!\downarrow\wedge \{k\}(u)\q \forall{x}B)]}^{C}\wedge A(\boldsymbol{m})
 \end{equation*}
 Since $f(n)\geq n$, we have $\HA_{f(n)}\vdash A(\boldsymbol{m})$. Hence it remains only to show that 
 $\HA_{f(n)}\vdash C$. 
Define the primitive recursive function $t(u)$ in the following way. For any given $u$, $t(u)$ is the code of the Turing Machine that fulfills the following conditions: 
$$
\begin{cases}
\{t(u)\}\langle 0\rangle=j_1(u)\\
\{t(u)\}(x+1)=\{\{j_2(u)\}\langle x\rangle\}\langle\{t(u)\}\langle x\rangle\rangle
\end{cases}
$$
Finally, let $k$ be the code of the Turing Machine that computes  the primitive recursive function $t$. Now it is not difficult to observe 
that, by induction on $B^{\q,x}$, one could deduce $C$ in $\HA_0$, and hence $\HA_{f(n)}\vdash C$. This implies $\HA_{f(n)}\vdash A(\boldsymbol{m})$, as desired. 
\end{proof}

\begin{lemma}\label{Lemma-Reflection refinement}
For every sentence $A$, there exists some provably (in $\HA$)
total recursive function $h_A$ such that
$\HA\vdash\forall{x}\, \Box_{h_{_{\!A}}\!(x)}(\Box_{\dot{x}}A\ra A)$.
\end{lemma}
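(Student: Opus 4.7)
I would refine the proof of \autoref{Lemma-Reflection} by extracting an explicit primitive recursive bound $h_A(x)$ on the induction axioms needed to prove $\Box_x A \to A$. The main engine is \autoref{Lemma-upper bound for realizability}: there is a primitive recursive $f$ with
$$\HA \vdash \Box_x A \to \exists z\, \Box_{f(x)}(\{z\}\!\!\downarrow \wedge \{z\}\q A),$$
and a careful reading of that proof shows this implication is itself provable in $\HA_{g(x)}$ for a primitive recursive $g$. This reduces the task to extracting the actual truth of $A$ from the existence of a $\q$-realizer provably computed in $\HA_{f(x)}$.

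For the fixed sentence $A$, the realizability formula $z \q A$ (with free variable $z$) has syntactic complexity bounded by a constant $c_A$ depending only on $\ulcorner A \urcorner$, and by \autoref{Lemma-qrealizability} we have $\HA_0 \vdash \forall z\, (z \q A \to A)$; moreover, by \autoref{Lemma-bounded Sigma completeness} together with the primitive-recursive nature of numeral instantiation on proofs, $\HA_0 \vdash \forall z\, \Box_0(\dot z \q A \to A)$. To bridge the remaining gap from ``$\HA_{f(x)}$ proves $\dot z \q A$'' to $z \q A$ itself, I would establish a uniform local reflection principle $\HA_{k(x)} \vdash \forall z\, (\Box_{f(x)}(\dot z \q A) \to z \q A)$ for a primitive recursive $k$. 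This is obtainable via a Tarskian partial truth predicate tailored to formulas of complexity at most $c_A$, whose construction and soundness proof use induction only up to a primitive recursive bound in $c_A$ and $f(x)$.

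Setting $h_A(x) := \max(g(x), k(x))$ and chaining the implications in $\HA_{h_A(x)}$ yields
$$\Box_x A \to \exists z\, \Box_{f(x)}(z \q A) \to \exists z\, (z \q A) \to A.$$
Since this construction of a proof of $\Box_x A \to A$ in $\HA_{h_A(x)}$ is primitive recursive in $x$, the statement $\exists y\, \Prf_{h_A(x)}(y, \ulcorner \Box_{\dot x} A \to A\urcorner)$ is a true $\Sigma_1$-formula in the free variable $x$, so by \autoref{Lemma-bounded Sigma completeness} we can formalize the whole argument in $\HA$ to obtain $\HA \vdash \forall x\, \Box_{h_A(x)}(\Box_x A \to A)$.

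The main obstacle is the explicit construction of the partial truth predicate for formulas of complexity $\leq c_A$ together with the verification that each axiom of $\HA_{f(x)}$ is ``true'' under this predicate, with primitive recursive bounds on the induction axioms invoked. While this is a standard technique, the bookkeeping is delicate: one must arrange that the enumeration of $\HA_{f(x)}$-axioms and the verification of induction instances restricted to formulas of Gödel number less than $f(x)$ (as well as the $\Sigma_1$-specific scheme of \autoref{Lemma-Reflection}) are handled uniformly and within a fragment whose index is primitive recursive in $x$.
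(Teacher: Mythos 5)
Your proposal takes a much heavier route than the paper needs, and its central step has a gap. The paper's own proof is two lines: \autoref{Lemma-Reflection} already gives $\HA\vdash\forall{x}\,\Box(\Box_{x}A\ra A)$; since any $\HA$-proof uses only finitely many induction axioms, this unwinds inside $\HA$ to $\HA\vdash\forall{x}\,\exists{y}\,\Box_y(\Box_{\dot x}A\ra A)$, and a single application of Church's Rule (\autoref{Lemma-Chirch rule}) then yields an $n$ with $\HA\vdash\forall{x}\,(\{n\}(x)\!\downarrow\wedge\ \Box_{\{n\}(x)}(\Box_{\dot x}A\ra A))$, so $h_A:=\{n\}$ works. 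The lemma only asks for a \emph{provably total recursive} bound, not a primitive recursive one, so there is no need to re-derive the reflection principle with explicit bounds: Church's Rule extracts the witnessing function from the bare $\forall\exists$ statement that is already available. Your plan in effect re-proves \autoref{Lemma-Reflection} from scratch, with bounds, and misses this shortcut.

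The concrete gap in your construction is the claimed uniform reflection $\HA_{k(x)}\vdash\forall{z}\,(\Box_{f(x)}(\dot z\q A)\to z\q A)$ obtained from a partial truth predicate ``tailored to formulas of complexity at most $c_A$''. A truth predicate of complexity bounded by $c_A$ cannot carry the soundness argument for $\HA_{f(x)}$: that theory contains induction instances for all formulas of G\"odel number below $f(x)$, whose logical complexity is unbounded as $x$ grows, and its proofs of $\dot z\q A$ may pass through cut formulas of complexity far exceeding $c_A$. So the truth predicate, and with it the formula in which the soundness induction is performed, must grow with $x$; one would have to produce a separate reflection proof for each value of $x$, recursively in $x$, and only then appeal to $\Sigma_1$-completeness to formalize the whole family. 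That is exactly the content of \autoref{Lemma-Reflection} (whose intuitionistic version is nontrivial and is cited to Smory\'nski and Visser), and your argument as written does not supply it.
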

\begin{proof}
By  \Cref{Lemma-Reflection} we have
$\HA\vdash\forall{x}\,\exists{y}\, \Box_y(\Box_{\dot{x}}A \ra A)$. 
Now we have the desired result by use of 
\Cref{Lemma-Chirch rule}.
\end{proof}

\begin{lemma}\label{Lemma-Sigma sentences are autoq}
Suppose that $A(x_1,\ldots,x_m)$ is a $\Sigma_1$-formula with
variables as shown. Then there exists some $n_\tinysub{A}\in\mathbb{N}$,
such that
$$\HA\vdash A(x_1,\ldots,x_m)\ra ({\{n_\tinysub{A}\}\langle x_1,\ldots,x_m
\rangle\!\!\downarrow}\wedge\{n_\tinysub{A}\}\langle x_1,\ldots,x_m\rangle\q A(x_1,\ldots,x_m))$$
\end{lemma}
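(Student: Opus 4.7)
The plan is to argue by induction on the structure of $\Sigma_1$-formulas, after first reducing $A$ to prenex normal form. A standard $\HA$-provable reduction brings every $\Sigma_1$-formula to the shape $\exists y\,C(y,\bar{x})$ with $C$ a $\Delta_0$-formula (where $y$ codes a tuple of witnesses). The conclusion of the lemma is preserved under $\HA$-provable equivalence, since realizers for equivalent formulas can be interconverted by fixed primitive recursive programs, so it suffices to handle this normal shape.

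I first handle the $\Delta_0$-base step by a subinduction on the structure of $C$, exhibiting a primitive recursive function $r_C(\bar{x})$ with $\HA_0\vdash C(\bar{x})\to r_C(\bar{x})\q C(\bar{x})$. Atomic stages are vacuous because $y\q C:=C$. Conjunctions combine subrealizers via pairing. Disjunctions use \autoref{Lemma-decidability of delta formulae} to test which disjunct holds and emit a tagged realizer. Bounded quantifiers reduce to iterated $\wedge$ or $\vee$ over the bound. For implications $A_1\to A_2$ between $\Delta_0$-formulas (still $\Delta_0$) the realizer is, via the S-m-n theorem, the index $r_{A_1\to A_2}(\bar{x})$ of a Turing machine that on any input $y$ simply outputs $r_{A_2}(\bar{x})$; correctness uses \autoref{Lemma-qrealizability} to see $y\q A_1\Rightarrow A_1$, hence $A_2$ by the hypothesis $A_1\to A_2$, so the output realizes $A_2$ by inductive hypothesis.

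For the prenex $\exists$-step I take $n_A$ to be the code of a Turing machine that, on input $\langle\bar{x}\rangle$, iterates $y=0,1,2,\ldots$ checking the decidable $\Delta_0$-predicate $C(y,\bar{x})$ via its primitive recursive characteristic function, and on the first witness $y^{*}$ outputs $\langle y^{*},r_C(\bar{x},y^{*})\rangle$. Termination under the hypothesis $A(\bar{x})$ is provable in $\HA$ by $\Sigma_1$-induction on the search bound. The $\q$-clause $j_1(\{n_A\}\langle\bar{x}\rangle)\q C(j_2(\{n_A\}\langle\bar{x}\rangle),\bar{x})$ unfolds, by the definition of $j_1,j_2$ on pairs, to $r_C(\bar{x},y^{*})\q C(y^{*},\bar{x})$, which is exactly the base-case conclusion and hence available in $\HA_0$.

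I expect the bulk of the work to lie in the $\Delta_0$-base step, where the $\q$-clauses for $\to$ and $\forall$ demand computable transformers of realizers rather than mere truth assertions. Producing a single primitive recursive $r_C$ and verifying inside $\HA_0$ that it meets all these clauses requires careful layering of decidability branches together with the S-m-n theorem, and a delicate invocation of \autoref{Lemma-qrealizability} to collapse ``$y\q A_1$'' to $A_1$ at each implication node. By contrast, the prenex $\exists$-step is essentially unbounded search with a decidable test, and its formalization in $\HA$ is routine once the base step is in hand.
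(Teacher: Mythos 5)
Your proposal is correct and is essentially the standard argument that the paper simply delegates to Troelstra--van Dalen (Proposition 4.4.5, stated there for ${\bf r}$-realizability): self-realizability of $\Sigma_1$-formulas via provable decidability of the $\Delta_0$ matrix together with unbounded search for the existential witness, so you have in effect written out the proof the paper leaves to the reader. Two cosmetic nits: per the clause $x\q\exists y A(y):=j_1(x)\q A(j_2(x))$ the realizer component must occupy the $j_1$-slot of the output pair (your displayed pair $\langle y^{*},r_C\rangle$ has the coordinates in the other order), and bounded quantifiers with a variable bound cannot literally be unfolded into an iterated $\wedge$ or $\vee$ --- they need the same uniform S-m-n construction you already use at implication nodes.
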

\begin{proof}
This theorem for $\textbf{r}$-realizability instead of
$\q$-realizability is proved in \cite{TD}(Proposition 4.4.5). The
proof for $\textbf{q}$-realizability is quite similar and we
leave it to the reader.
\end{proof}

It  is well-known that the disjunction property holds for \IPC and
$\HA$, however it is also shown that in case of \HA, the proof is
 not formalizable in $\HA$,
i.e. $\HA\nvdash\Box(A\vee B)\ra(\Box A\vee\Box B)$. But
this is not the end of story! Daniel Leivant in his PhD dissertation
\cite{Leivant-Thesis} showed that $\HA\vdash\Box(A\vee B)\ra\Box(
A\vee\Box B)$. Albert Visser in an unpublished paper showed
that we can extend Leivant's principle to the following version.
For every $\Sigma_1$-sentence $A$,  $\HA\vdash\Box (A\ra
(B\vee C))\ra\Box(A\ra(\Box B\vee C))$. In the following
lemma,  we will show that we can find (constructively) from the
code $x$ of the  proof of $A\ra(B\vee C)$, some $f(x)$ such that
$\Box(A\ra(\Box_{f(x)}B\vee C))$ holds. Although the statement of this theorem
would not be used later in this paper, we bring it here for better understanding of its generalization
in a more technical lemma, i.e. \Cref{Lemma-qtranslation2}.

\begin{theorem}\label{Theorem-Leivant}
For arbitrary sentences $A,B,C$ such that $A\in\Sigma_1$, there
exists a provably (in $\HA$) total recursive function  $f$ such
that
$$\HA\vdash \Box_x (A\ra(B\vee C))\ra \Box_{f(x)}(A\ra(\Box_{f(x)}B\vee C))$$
\end{theorem}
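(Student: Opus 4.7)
The plan is to formalize Visser's unbounded argument for $\HA\vdash\Box(A\to(B\vee C))\to\Box(A\to(\Box B\vee C))$, carefully tracking the proof-length bounds supplied by the lemmas of this section. The fundamental asymmetry exploited is between the two disjuncts of the conclusion: on the $B$-side I will use $\Sigma_1$-completeness (\autoref{Lemma-bounded Sigma completeness}) to promote $B$ to $\Box B$, while on the $C$-side (where $C$ appears unboxed in the conclusion) I will invoke the refined local reflection of \autoref{Lemma-Reflection refinement} to strip a $\Box$ off $C$. The hypothesis $A\in\Sigma_1$ enters precisely to enable \autoref{Lemma-Sigma sentences are autoq}.

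Set $D:=A\to(B\vee C)$. First, \autoref{Lemma-upper bound for realizability} applied to $D$ produces a primitive recursive $f_1$ with $\HA\vdash\Box_xD\to\exists z\,\Box_{f_1(x)}(\{z\}\langle\rangle\q D)$. Because the consequent is $\Sigma_1$, \autoref{Lemma-bounded Sigma completeness} boxes it inside any larger provability bound, and the theorem reduces to establishing, for a suitably chosen provably total recursive $f$ and uniformly in a free variable $z$ (writing $e:=\{z\}\langle\rangle$),
\[
\HA_{f(x)}\vdash\Box_{f_1(x)}(e\q D)\to\bigl(A\to(\Box_{f(x)}B\vee C)\bigr).
\]
Outer box-modus-ponens then assembles the full statement.

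To establish the displayed formula, reason inside $\HA_{f(x)}$: assume $\Box_{f_1(x)}(e\q D)$ and then $A$. Since $A\in\Sigma_1$, \autoref{Lemma-Sigma sentences are autoq} yields a canonical numeral $n_A$ with $n_A\q A$; by \autoref{Lemma-bounded Sigma completeness} this lifts to $\Box_{f_1(x)}(n_A\q A)$. Box-distributivity together with the defining clause of $\q$-realizability for implications delivers $\Box_{f_1(x)}\bigl(w\!\downarrow\wedge\,w\q (B\vee C)\bigr)$ for $w:=\{e\}(n_A)$. Applying a parametric extension of \autoref{Lemma-Reflection refinement} to the $\Sigma_1$-formula $w\!\downarrow$ (with $z,x$ as free parameters) promotes $\Box_{f_1(x)}(w\!\downarrow)$ to $w\!\downarrow$ inside $\HA_{f(x)}$, after which \autoref{Lemma-decidability of delta formulae} decides $j_1w=0$.

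In the case $j_1w=0$, $\Sigma_1$-completeness on this $\Delta_0$ fact plus box-modus-ponens against the conjunct $\Box_{f_1(x)}(j_1w=0\to j_2w\q B)$ and \autoref{Lemma-qrealizability} together produce $\Box_{f_1(x)}B$, hence $\Box_{f(x)}B$ by monotonicity. In the case $j_1w\neq 0$, the parallel chain gives $\Box_{f_1(x)}C$; here I invoke \autoref{Lemma-Reflection refinement} applied to the sentence $C$, which for $f(x)\geq h_C(f_1(x))$ gives $\HA_{f(x)}\vdash\Box_{f_1(x)}C\to C$ and so yields $C$ directly. Choosing $f$ as the provably total recursive maximum of $f_1$, $h_C\circ f_1$, and the parametric reflection bound for $w\!\downarrow$, both cases deliver $\Box_{f(x)}B\vee C$. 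The main obstacle is the asymmetric reflection step on the $C$-side: unlike the $B$-side, where $\Sigma_1$-completeness internalizes the case data, the branch in which the realizer points to $C$ can only be discharged by peeling off a $\Box$ via \autoref{Lemma-Reflection refinement}, which is why the sentence $C$ (not merely $A$) must have a precomputed reflection bound built into $f$.
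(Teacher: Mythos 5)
Your proposal is sound in overall architecture but takes a genuinely different route from the paper's, and two of its steps need repair before it goes through. The paper performs the entire argument \emph{inside a single box}: from $\Box_x(A\ra(B\vee C))$ it derives $\Box_{g_1(x)}(A\ra(t_2\!\downarrow\wedge B'\wedge C'))$ with $B'=(j_1(t_2)=0\ra j_2(t_2)\q B)$ and $C'=(j_1(t_2)\neq 0\ra j_2(t_2)\q C)$, uses $\Sigma_1$-completeness of $A$ once to turn $\Box(A\ra B')$ into $\Box(A\ra\Box B')$, and then decides $j_1(t_2)=0$ \emph{inside} the box --- where $t_2\!\downarrow$ is already available under the hypothesis $A$ --- via \autoref{Lemma-decidability of delta formulae}. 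The $C$-disjunct is never boxed, so the paper needs no reflection at all. You instead externalize the case distinction on $j_1(w)=0$ to the outer level, which forces you to import $w\!\downarrow$, and in the second branch $C$ itself, from under a box; you pay for this with two applications of small reflection. That is legitimate in principle (small reflection over $\HA_y$ is exactly what \autoref{Lemma-Reflection refinement} supplies), but it is strictly more machinery than the theorem requires, and it is where your two genuine gaps sit.

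Concretely: (1) you apply \autoref{Lemma-bounded Sigma completeness} to $n_A\q A$ to obtain $\Box_{f_1(x)}(n_A\q A)$, but $n\q A$ is not in general $\Sigma_1$ even when $A$ is, since the realizability clause for $\ra$ introduces an unbounded $\forall y\,(y\q A_1\ra\cdots)$; the correct move is the paper's, namely to apply $\Sigma_1$-completeness to $A$ itself and separately necessitate the $\HA$-theorem $A\ra(t_0\!\downarrow\wedge\, t_0\q A)$ from \autoref{Lemma-Sigma sentences are autoq} with a fixed bound $n_0$ that is folded into the final bound. (2) Your reflection step for $w\!\downarrow$ requires \autoref{Lemma-Reflection refinement} for a $\Sigma_1$ \emph{formula with free parameters} $x,z$, uniformly, whereas the lemma is stated and proved (via \autoref{Lemma-Chirch rule}) only for sentences; a parametric version is plausible but is an additional lemma you would have to establish. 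Both defects disappear if you keep the case analysis inside the box as the paper does.
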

\begin{proof}
First observe that, by  \Cref{Lemma-Sigma sentences are
autoq}, there exists some finite number $n_\tinysub{A}\in \mathbb{N}$ such
that $\HA\vdash A\ra
({\{n_\tinysub{A}\}\langle\rangle\!\!\downarrow}\wedge\{n_\tinysub{A}\}\langle\rangle\q
A)$. We set $t_0:=\{n_\tinysub{A}\}\langle\rangle$. Hence there exists some
$n_0\in\mathbb{N}$ such that
\begin{equation}\label{Eq Lei1}
\HA\vdash\Box_{n_0}( A\ra ({t_0\!\!\downarrow}\wedge t_0\q A))
\end{equation}

We work inside $\HA$. Assume $\Box_x (A\ra(B\vee C) )$. By
 \Cref{Lemma-upper bound for realizability}, there exists
some $z$ such that
$\Box_{g_{_0}(x)}({\{\dot{z}\}\langle\rangle\!\!\downarrow}\wedge
\{\dot{z}\}\langle\rangle\q (A\ra (B\vee C)))$, in which $g_{_0}$ is
the recursive function  provided by  \Cref{Lemma-upper bound
for realizability}. We define $t_1:=\{\dot{z}\}\langle\rangle$
and hence we have $\Box_{g_{_0}(x)}t_1\!\!\downarrow$. If we set
$g_{_1}(y):=g_{_0}(y)+n_{_0}$, by use of \cref{Eq Lei1}, we can
deduce $\Box_{g_{_1}(x)}(A\ra ({t_0\!\!\downarrow} \wedge
\{t_1\}(t_0)\q (B\vee C)))$. We set $t_2:=\{t_1\}(t_0)$. Then, by
definition of $\q$-realizability, we have:
 \[ \Box_{g_{_1}(x)}(A\ra
({t_2\!\!\downarrow}\wedge (j_1(t_2)=0\ra j_2(t_2)\q B)\wedge
(j_1(t_2)\neq 0\ra j_2(t_2)\q C))).\]
 Let $B':=(j_1(t_2)=0)\ra
j_2(t_2)\q B$ and $C':=(j_1(t_2)\neq 0)\ra j_2(t_2)\q C$. Then we
have ${\Box_{g_{_1}(x)}(A\ra B')}$
 and, hence, by
$\Sigma_1$-completeness (\Cref{Lemma-bounded Sigma
completeness}), we can deduce $\Box_{0}\Box_{g_{_1}(x)}(A\ra
B')$, that again by use of  \Cref{Lemma-bounded Sigma
completeness}, implies $\Box_0(A\ra\Box_{g_{_1}(x)}B')$. Thus
we have 
$$\Box_{g_{_1}(x)}(A\ra ({t_2\!\!\downarrow}\wedge\Box_{g_{_1}(x)}B'\wedge C'))$$
 Again by
 \Cref{Lemma-bounded Sigma completeness} and
\Cref{Lemma-qrealizability}, $\Box_{g_1(x)}(A\ra
({t_2\!\!\downarrow}\wedge(j_1(t_2)=0\ra
\Box_{g_{_1}(x)}B)\wedge(j_1(t_2)\neq0\ra C)))$. Since atomic
formulae are decidable in $\HA$, so for any atomic formulae $D$,
there exists some finite $n_2$ such that in $\HA_{n_2}$ we have
decidability of $D$. Let $\HA_{n_2}+t_2\!\!\downarrow$ decide
$j_1(t_2)=0$. If we set $f(x):=g_{_1}(x)+n_2$, we can deduce
$\Box_{f(x)}(A\ra(\Box_{f(x)}B\vee C))$, as desired.
\end{proof}

%%%%%%%%%%%%%%%%%%%%%%%%%%%%%%%%%%%%%%%%%%%%%%%%%%%%%%%%%%%%%%%%%%%%%%%%%%%
%%%%%%%%%%%%%%%%%%%%%%%%%%%%%%%%%%%%%%%%%%%%%%%%%%%%%%%%%%%%%%%%%%%%%%%%%%%%%

\subsection{The extended Leivant's Principle}\label{Sec-ExLePr}
In this section, we study properties of the extended Leivant's
principle, ${\sf Le}^+$. We prove that for any $\Sigma_1$-
substitution $\sigma$, $\HA\vdash\sigma_{_{\sf HA}}({\sf Le}^+)$.

Define a translation $q_\sigma(A,x)$ recursively for a modal
proposition $A$ and a $\Sigma_1$-substitution $\sigma$, as
follows:
\begin{itemize}
\item $q_\sigma(A,x):=\sigma_{_{\sf HA}}(A)$, if $A$ is atomic or boxed,
\item $q_\sigma(A\wedge B,x):=q_\sigma(A,j_1(x))\wedge q_\sigma(B,j_2(x))$,
\item $q_\sigma(A\vee B,x):=(j_1(x)=0\ra
q_\sigma(A,j_2(x)))\wedge(j_1(x)\neq 0\ra q_\sigma(B,j_2(x)))$,
\item if $A=B\ra C$ and $B\in {\sf NOI}$, we define $q_\sigma(B\ra
C,x):=\sigma_{_{\sf HA}}(B)\ra( \{x\}(n_\tinysub{B})\!\!\downarrow \wedge
q_\sigma(C,\{x\}(n_\tinysub{B})))$, in which $n_\tinysub{B}$ is as in 
\Cref{Lemma-Sigma sentences are autoq}. If $B\not\in {\sf NOI}$,
then define $q_\sigma(A,x):=\sigma_{_{\sf HA}}(A)$.
\end{itemize}
%We have following lemma about this translation:

\begin{lemma}\label{Lemma-qtranslation1}
Let $A$ be a  modal proposition  and
 $t$ be a term in first-order language of arithmetic which possibly contain  Kleene's brackets. Then
\begin{itemize}
\item $\HA_0\vdash x\q\sigma_{_{\sf HA}}(A)\ra q_\sigma(A,x)$,
\item $\HA_0\vdash (t\!\!\downarrow\wedge q_\sigma(A,t))\ra \sigma_{_{\sf HA}}(A)$,
\end{itemize}
\end{lemma}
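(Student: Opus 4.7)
The plan is to prove both items simultaneously by induction on the complexity of the modal proposition $A$. The two assertions are natural duals: the first extracts the translated formula $q_\sigma(A,x)$ from a q-realizer of $\sigma_{_{\sf HA}}(A)$, while the second recovers $\sigma_{_{\sf HA}}(A)$ from any terminating value of $x$ that already satisfies $q_\sigma(A,x)$. The base cases ($A$ atomic or $A = \Box B$) are essentially free, because $q_\sigma(A, x) := \sigma_{_{\sf HA}}(A)$ by definition; the first direction then reduces to the soundness of q-realizability, \autoref{Lemma-qrealizability}, and the second is a tautology after discarding the hypothesis $t\!\!\downarrow$.

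For the conjunction $A = B \wedge C$, I will unfold both sides, matching $q_\sigma(B \wedge C, x) = q_\sigma(B, j_1(x)) \wedge q_\sigma(C, j_2(x))$ against $x \q (\sigma_{_{\sf HA}}(B) \wedge \sigma_{_{\sf HA}}(C)) = j_1(x) \q \sigma_{_{\sf HA}}(B) \wedge j_2(x) \q \sigma_{_{\sf HA}}(C)$, and invoke the inductive hypothesis on $B$ and $C$ with the projection terms $j_1(x), j_2(x)$ (respectively $j_1(t), j_2(t)$, whose convergence is immediate from $t\!\!\downarrow$ since the pairing projections are primitive recursive). The disjunction $A = B \vee C$ is handled analogously, but in the second direction I must split on whether $j_1(t) = 0$; this is legal inside $\HA_0$ by \autoref{Lemma-decidability of delta formulae}, and in each branch the inductive hypothesis on $B$ or $C$ applied to the term $j_2(t)$ delivers the corresponding disjunct of $\sigma_{_{\sf HA}}(B \vee C)$.

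The only case where the design of the translation $q_\sigma$ is genuinely exercised is the implication $A = B \to C$ with $B \in {\sf NOI}$. For the first direction, assume $x \q \sigma_{_{\sf HA}}(B \to C)$ and $\sigma_{_{\sf HA}}(B)$; since $B \in {\sf NOI}$ and $\sigma$ is a $\Sigma_1$-substitution, $\sigma_{_{\sf HA}}(B)$ is itself $\Sigma_1$, so \autoref{Lemma-Sigma sentences are autoq} supplies the numeral $n_B$ with $\sigma_{_{\sf HA}}(B) \to n_B \q \sigma_{_{\sf HA}}(B)$. Instantiating the universal clause inside $x \q \sigma_{_{\sf HA}}(B \to C)$ at $y = n_B$ then gives $\{x\}(n_B)\!\!\downarrow$ together with $\{x\}(n_B) \q \sigma_{_{\sf HA}}(C)$, and the inductive hypothesis on $C$ applied to the term $\{x\}(n_B)$ produces $q_\sigma(C, \{x\}(n_B))$, as required. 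The second direction is symmetric: from $t\!\!\downarrow$, $q_\sigma(B \to C, t)$, and $\sigma_{_{\sf HA}}(B)$, extract $\{t\}(n_B)\!\!\downarrow$ and $q_\sigma(C, \{t\}(n_B))$, then feed these into the inductive hypothesis on $C$ with the term $\{t\}(n_B)$ to recover $\sigma_{_{\sf HA}}(C)$. When $B \notin {\sf NOI}$ the clause degenerates to $q_\sigma(B \to C, x) := \sigma_{_{\sf HA}}(B \to C)$, so the first direction is again \autoref{Lemma-qrealizability} and the second is immediate.

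The main hurdle is administrative rather than mathematical: ensuring that every step remains formalizable in $\HA_0$. The delicate ingredient is \autoref{Lemma-Sigma sentences are autoq} in the implication case, which must produce the constant $n_B$ as a single numeral per modal subformula so that the induction yields one $\HA_0$-proof for each modal $A$; all the other manipulations with pairing terms and Kleene brackets go through using only $\Delta_0$-decidability and the general soundness of q-realizability.
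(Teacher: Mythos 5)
Your proof is correct and is precisely the induction on the complexity of $A$ that the paper intends (its own proof consists of the single line ``by induction on the complexity of $A$''), with the right ingredients in each case: \autoref{Lemma-qrealizability} for the atomic/boxed and non-$\sf NOI$ implication cases, $\Delta_0$-decidability for the disjunction split, and \autoref{Lemma-Sigma sentences are autoq} to manufacture the realizer of $\sigma_{_{\sf HA}}(B)$ in the $\sf NOI$ implication case. No gaps beyond the administrative ones you already flag, which are inherited from the paper's own conventions.
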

\begin{proof}
Proof of both parts are by  induction on the 
complexity of $A$.
\end{proof}

For the next lemma, we need some auxiliary notation $\sigma_{_l}(A,x)$.  Informally speaking, 
$\sigma_{_l}(A,x)$ is going to be $\sigma_{_{\sf HA}}(A^l)$ with one difference. The new added boxes in $A^l$
 should be interpreted as provability in $\HA_x$. More precisely, we define it inductively as the following.
\begin{itemize}
\item $A$ is atomic or boxed. $\sigma_{_l}(A,x):=\sigma_{_{\sf HA}}(A)$,
\item $A= B\wedge C$. then $\sigma_{_l}(A,x):=\sigma_{_l}(B,x)\wedge\sigma_{_l}(C,x)$,
\item $A= B \vee C$. then $\sigma_{_l}(A,x):=\Boxdot_x\sigma_{_l}(B,x)\vee\Boxdot_x\sigma_{_l}(C,x)$,
 in which $\Boxdot_x D$ is defined as $D\wedge\Box_x D$,
 \item $A=B\to C$. Like the definition of $A^l$, we define $\sigma_{_l}(A,x)$ by cases. If $B\in {\sf NOI}$, then we define
 $\sigma_{_l}(A,x):=\sigma_{_{\sf HA}}(B)\to\sigma_{_l}(C,x)$, otherwise we define $\sigma_{_l}(A,x):=\sigma_{_{\sf HA}}(A)$.
\end{itemize}

\begin{lemma}\label{Lemma-prop-sigma_l}
Let $A$ be a modal proposition. Then 
\begin{enumerate}
\item \label{1Lemma-prop-sigma_l}$\HA_0\vdash (x\leq y \wedge \sigma_{_l}(A,x))\to\sigma_{_l}(A,y)$,
\item \label{2Lemma-prop-sigma_l} $\HA_0\vdash \sigma_{_l}(A,x) \to \sigma_{_{\sf HA}}(A^l)$,
\item \label{3Lemma-prop-sigma_l}$\HA_0\vdash \sigma_{_l}(A,x) \to \sigma_{_{\sf HA}}(A)$.
\end{enumerate}
\end{lemma}
\begin{proof}
Use induction on $A$.
\end{proof}

\begin{lemma}\label{Lemma-qtranslation2}
Let $A$ be a  modal proposition, $D$ be any $\Sigma_1$-sentence  and
 $t$ be a term in first-order language of arithmetic which possibly contain  Kleene's brackets. Then there exists 
 a provably total recursive function $f$ such that
 $$\HA\vdash \Box_x(D\ra  (t\!\!\downarrow\wedge q_\sigma(A,t))\ra
\Box_{f(x)}(D\ra\sigma_{_{l}}(A,f(x)))$$
\end{lemma}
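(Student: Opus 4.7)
The plan is to proceed by induction on the complexity of the modal proposition $A$, constructing the recursive function $f$ separately in each case; the argument generalizes the proof of \autoref{Theorem-Leivant}, which provides the template for the intricate disjunction step. For the base case, when $A$ is atomic or boxed, both $q_\sigma(A,t)$ and $\sigma_{_l}(A,y)$ coincide with $\sigma_{_{\sf HA}}(A)$, so $f(x):=x$ suffices and the conclusion drops the $t\!\!\downarrow$ conjunct. The conjunction case $A=B_1\wedge B_2$ is equally direct: split the hypothesis into $\Box_x(D\to(j_i(t)\!\!\downarrow\wedge q_\sigma(B_i,j_i(t))))$ for $i=1,2$, apply the inductive hypothesis with the same $D$ and the terms $j_i(t)$ to obtain $f_1,f_2$, and set $f(x):=\max(f_1(x),f_2(x))$.

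The disjunction case $A=B_1\vee B_2$ is the heart of the argument. Unpacking the definition of $q_\sigma$, the hypothesis yields $\Box_x(D_i\to(j_2(t)\!\!\downarrow\wedge q_\sigma(B_i,j_2(t))))$ for $i=1,2$, where $D_1:=D\wedge t\!\!\downarrow\wedge j_1(t)=0$ and $D_2:=D\wedge t\!\!\downarrow\wedge j_1(t)\neq 0$ are both $\Sigma_1$. The inductive hypothesis applied to each conjunct with these new contexts and the term $j_2(t)$ produces $f_1,f_2$ with $\HA\vdash\Box_{f_i(x)}(D_i\to\sigma_{_l}(B_i,f_i(x)))$. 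To assemble the target $\Box_{f(x)}(D\to\Boxdot_{f(x)}\sigma_{_l}(B_1,f(x))\vee\Boxdot_{f(x)}\sigma_{_l}(B_2,f(x)))$, I would choose $f(x)$ large enough to dominate all auxiliary bounds and exhibit a proof of the inner formula in $\HA_{f(x)}$. Under the assumption $D$: formalized $\Sigma_1$-completeness (\autoref{Lemma-bounded Sigma completeness}) internalizes the hypothesis and the two IH-provabilities; combined with $\Box_x D$ (also by $\Sigma_1$-completeness), the hypothesis delivers $\Box_x t\!\!\downarrow$, and bounded reflection (\autoref{Lemma-Reflection refinement}) then extracts $t\!\!\downarrow$ itself; decidability of $j_1(t)=0$ (\autoref{Lemma-decidability of delta formulae}) branches the proof; in each branch, a fresh application of $\Sigma_1$-completeness to $D_i$ combined with the IH-provability produces $\Box_{f_i(x)}\sigma_{_l}(B_i,f_i(x))$, and a further reflection step yields $\sigma_{_l}(B_i,f_i(x))$ itself; finally monotonicity in the index (\autoref{Lemma-prop-sigma_l}) raises $f_i(x)$ up to $f(x)$ both on the bare formula and under the internal box, delivering $\Boxdot_{f(x)}\sigma_{_l}(B_i,f(x))$ in each branch.

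For the implication case $A=B\to C$ I would case-split on whether $B\in{\sf NOI}$, mirroring the case-split in the definitions of $q_\sigma$ and $\sigma_{_l}$. If $B\notin{\sf NOI}$ then both $q_\sigma(A,t)$ and $\sigma_{_l}(A,y)$ coincide with $\sigma_{_{\sf HA}}(A)$, and $f(x):=x$ works. If $B\in{\sf NOI}$ then $\sigma_{_{\sf HA}}(B)$ is $\Sigma_1$ (atomic and boxed subformulas are interpreted as $\Sigma_1$-formulas under any $\Sigma_1$-substitution, and ${\sf NOI}$ forbids outside implications), so $D':=D\wedge\sigma_{_{\sf HA}}(B)$ is $\Sigma_1$; rearranging the hypothesis as $\Box_x(D'\to(\{t\}(n_B)\!\!\downarrow\wedge q_\sigma(C,\{t\}(n_B))))$ allows the inductive hypothesis to be applied to $C$ with context $D'$ and term $\{t\}(n_B)$, and the desired conclusion follows because $\sigma_{_l}(B\to C,y)=\sigma_{_{\sf HA}}(B)\to\sigma_{_l}(C,y)$ in this sub-case.

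The main obstacle will be the careful bookkeeping inside the disjunction case: $f(x)$ must simultaneously dominate the inductive bounds $f_1(x),f_2(x)$, the indices produced by several layered applications of formalized $\Sigma_1$-completeness, the bounds supplied by \autoref{Lemma-Reflection refinement} (used to justify both $\Box_x t\!\!\downarrow\to t\!\!\downarrow$ and $\Box_{f_i(x)}\sigma_{_l}(B_i,f_i(x))\to\sigma_{_l}(B_i,f_i(x))$ internally), and the index needed to decide $j_1(t)=0$ given $t\!\!\downarrow$. A secondary delicate point is that the $\Sigma_1$-ness of the contexts $D_i$ in the disjunction case and $D'$ in the implication case --- essential for iterating the induction --- rests on $\sigma$ being a $\Sigma_1$-substitution together with the ${\sf NOI}$ condition in the relevant branch of the implication case.
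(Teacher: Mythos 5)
Your proof follows essentially the same route as the paper's: the same induction on $A$, the trivial atomic/boxed/conjunction cases, the reduction of the implication case to the induction hypothesis via the $\Sigma_1$-context $D\wedge\sigma_{_{\sf HA}}(B)$ for $B\in{\sf NOI}$, and the same assembly of the disjunction case from the two inductive bounds via $f(x):=\max(f_1(x),f_2(x))$, formalized $\Sigma_1$-completeness, decidability of $j_1(t)=0$, and the monotonicity clauses of \autoref{Lemma-prop-sigma_l}. The only divergence is that your two appeals to \autoref{Lemma-Reflection refinement} in the disjunction case (to recover $t\!\!\downarrow$ and $\sigma_{_l}(B_i,f_i(x))$ inside the outer box) are detours the paper avoids --- since $f(x)\geq x, f_i(x)$, the implications $D\ra t\!\!\downarrow$ and $D_i\ra\sigma_{_l}(B_i,f_i(x))$ are already theorems of $\HA_{f(x)}$ and can be used directly under $\Box_{f(x)}$, which also sidesteps the minor mismatch that the refinement lemma is stated for sentences while $\sigma_{_l}(B_i,f_i(x))$ carries the free variable $x$.
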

\begin{proof}
We use induction on $A$.
For simplicity of notations, we assume here that $t$ is a normal
term. One can easily build the general case.

 \vspace{.05in}

\noindent{\em Atomic, Boxed or conjunction.}  Trivial.

 \vspace{.05in}

\noindent{\em  Disjunction.} Let $A=B\vee C$. Then by definition
of $q_\sigma$, we have
$$\HA\vdash\Box_x(D\ra q_\sigma(B\vee
C,t))\ra [\Box_x((D\wedge j_1(t)=0)\ra
q_\sigma(B,j_2(t)))\wedge\Box_x((D\wedge j_1(t)\neq 0)\ra
q_\sigma(C,j_2(t))]$$
Hence by  the induction hypothesis, there exists functions $g$ and $h$ such that 
\begin{align*}
\HA\vdash & \Box_x(D\ra q_\sigma(B\vee C,t))\ra 
\\
&\Box_{g(x)}((D\wedge
j_1(t)=0)\ra \sigma_{_l}(B,g(x)))\wedge\Box_{h(x)}((D\wedge j_1(t)\neq 0)\ra \sigma_{_l}(C,h(x)))
\end{align*}
Let $f(x)$ be the maximum of $g(x)$ and $h(x)$. 
One can use the  $\Sigma_1$-completeness of $\HA_0$ (\Cref{Lemma-bounded Sigma completeness}) and
\Cref{Lemma-prop-sigma_l}
 to derive
$$\HA\vdash \Box_x(D\ra q_\sigma(B\vee C,t))\ra
\Box_{f(x)}(D\ra(\Boxdot_{f(x)} \sigma_{_l}(B,f(x))\vee\Boxdot_{f(x)} \sigma_{_l}(C,f(x))))$$

 \vspace{.05in}

\noindent{\em  Implication.} Assume that $A=B\ra C$. If $B\not\in
{\sf NOI}$, by  \Cref{Lemma-qtranslation1}, we are done. So assume that $B\in {\sf NOI}$. By
definition of $q_\sigma$, there exists some term $t_1$ such that 
$$\HA\vdash \Box_x[D\ra q_\sigma(B\ra
C,t)]\ra\Box_x[(D\wedge \sigma_{_{\sf HA}}(B))\ra(t_1\!\!\downarrow\wedge
q_\sigma(C,t_1))]$$
Since $B\in{\sf NOI}$, $\sigma_{_{\sf HA}}(B)$ is a $\Sigma_1$-formula.  Hence by the induction hypothesis,  
there exists some function $f$ such that 
$$\HA\vdash \Box_x(D\ra q_\sigma(A,t))\ra\Box_{f(x)}((D\wedge
\sigma_{_{\sf HA}}(B))\ra \sigma_{_l}(C,f(x)))$$
This by definition of $\sigma_{_l}(B\to C,f(x))$, implies the desired result.
\end{proof}

\begin{lemma}\label{Lemma-sigma_l translation}
For any $\Sigma_1$-substitution $\sigma$ and modal proposition $A$, there exists some provably total recursive function $g$
such that 
$ \HA\vdash \Box_x \sigma_{_{\sf HA}}(A)\to\Box_{g(x)}\sigma_{_l}(A,g(x))$.
\end{lemma}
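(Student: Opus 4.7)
The plan is to chain three previously proved lemmas: \autoref{Lemma-upper bound for realizability}, \autoref{Lemma-qtranslation1}, and \autoref{Lemma-qtranslation2}. Each step either rewrites the content inside a bounded $\Box$ or upgrades the bound by a provably total recursive function, and the final $g$ will just be the composition of the total recursive functions produced along the way.

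Concretely, I would reason inside $\HA$ as follows. Assuming $\Box_x\sigma_{_{\sf HA}}(A)$, \autoref{Lemma-upper bound for realizability}, applied to the closed sentence $\sigma_{_{\sf HA}}(A)$, supplies a provably total recursive $f_1$ together with some $z$ such that $\Box_{f_1(x)}(\{z\}\langle\rangle\!\!\downarrow\wedge\{z\}\langle\rangle\q\sigma_{_{\sf HA}}(A))$. Treating $z$ as a free variable and setting $t:=\{z\}\langle\rangle$, the first clause of \autoref{Lemma-qtranslation1} is already provable in $\HA_0$, hence in $\HA_{f_1(x)}$; propagating it through $\Box_{f_1(x)}$ (formally via \autoref{Lemma-bounded Sigma completeness}) turns the previous line into $\Box_{f_1(x)}(t\!\!\downarrow\wedge q_\sigma(A,t))$.

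Next I would invoke \autoref{Lemma-qtranslation2} with $D:=(0=0)$ and the same $t$. Since $D$ is a trivially true $\Sigma_1$-sentence, $D\to X$ collapses to $X$ under any $\Box_{\bullet}$ for $\bullet\geq 0$, so the hypothesis of \autoref{Lemma-qtranslation2} is precisely what was obtained in the previous step. The lemma then yields a provably total recursive $f_2$ with $\Box_{f_2(f_1(x))}(D\to\sigma_{_l}(A,f_2(f_1(x))))$, which reduces to $\Box_{f_2(f_1(x))}\sigma_{_l}(A,f_2(f_1(x)))$. The variable $z$ does not appear in this conclusion, so the $\exists z$ from the first step discharges cleanly. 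Setting $g:=f_2\circ f_1$ completes the construction, and $g$ is provably total since the class of provably total recursive functions is closed under composition.

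The only mild subtlety, and the spot where I would slow down, is that the term $t$ handed to \autoref{Lemma-qtranslation2} carries the free variable $z$ produced by the first step. This is harmless because the provability assertions in \autoref{Lemma-qtranslation2} implicitly carry universal closure over free variables and the conclusion $\sigma_{_l}(A,f_2(f_1(x)))$ does not depend on $z$; hence discharging $\exists z$ at the end is routine. Apart from this bookkeeping, the entire argument is a straight composition of the preceding technical lemmas.
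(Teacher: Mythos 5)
Your proposal is correct and follows essentially the same route as the paper: apply \autoref{Lemma-upper bound for realizability}, pass to $q_\sigma(A,t)$ via the first item of \autoref{Lemma-qtranslation1}, then invoke \autoref{Lemma-qtranslation2} and compose the resulting provably total recursive functions. Your explicit choice of $D:=(0=0)$ and the remark about discharging the variable $z$ are just bookkeeping details the paper leaves implicit.
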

\begin{proof}
Work inside $\HA$. Assume $\Box_x\sigma_{_{\sf
HA}}(A)$. By  \Cref{Lemma-upper bound for realizability},
there exists some $y$ such that 
$${\Box_{f_0(x)} (t\!\!\downarrow\wedge\  t\q\sigma_{_{\sf HA}}( A))}$$
 in which $t:=\{y\}\langle\rangle$ and $f_0$ is a provably total recursive function 
 as stated in \Cref{Lemma-upper bound for realizability}.
Hence by the first item of \Cref{Lemma-qtranslation1}, 
$\Box_{f_0(x)}(t\!\!\downarrow\wedge q_\sigma(A,t))$. Hence by 
\Cref{Lemma-qtranslation2}, we have the function $f$ such that
 $\Box_{f(f_0(x))}\sigma_{_l}(A,f(f_0(x))$.
\end{proof}

\begin{theorem}\label{Theorem-Soundness of HA for lle+}
For any $\Sigma_1$-substitution $\sigma$, we have $\HA\vdash
\sigma_{_{\sf HA}}({\sf Le}^+)$.
\end{theorem}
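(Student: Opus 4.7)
The plan is to unfold definitions: $\sigma_{_{\sf HA}}({\sf Le}^+)$ is exactly $\Box \sigma_{_{\sf HA}}(A) \to \Box \sigma_{_{\sf HA}}(A^l)$, and we must show this is provable in $\HA$ uniformly in the modal formula $A$. At this point the hard work has already been done in \autoref{Lemma-sigma_l translation} and \autoref{Lemma-prop-sigma_l}; the theorem is essentially a corollary that converts the fine-grained bounds $\Box_{g(x)}$ back into the unbounded $\Box$.

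Working inside $\HA$, I would assume $\Box \sigma_{_{\sf HA}}(A)$. Because every $\HA$-proof uses only finitely many induction axioms, the equivalence $\Box \phi \leftrightarrow \exists x\, \Box_x \phi$ is provable in $\HA$ (by inspection of proof codes and the definition of $\HA_x$), so we may fix some $x$ with $\Box_x \sigma_{_{\sf HA}}(A)$. Applying \autoref{Lemma-sigma_l translation} to this $x$ produces a provably total recursive $g$ together with $\Box_{g(x)} \sigma_{_l}(A, g(x))$.

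Next, the second item of \autoref{Lemma-prop-sigma_l} gives $\HA_0 \vdash \sigma_{_l}(A, y) \to \sigma_{_{\sf HA}}(A^l)$. Formalising this and invoking \autoref{Lemma-bounded Sigma completeness} yields $\Box_0 (\sigma_{_l}(A, g(x)) \to \sigma_{_{\sf HA}}(A^l))$, and hence, by monotonicity of $\Box_{(\cdot)}$ in its subscript, also $\Box_{g(x)} (\sigma_{_l}(A, g(x)) \to \sigma_{_{\sf HA}}(A^l))$. Distributing $\Box_{g(x)}$ over implication and combining with the previous step produces $\Box_{g(x)} \sigma_{_{\sf HA}}(A^l)$; since $\HA_{g(x)} \subseteq \HA$, this in turn gives $\Box \sigma_{_{\sf HA}}(A^l)$, closing the internal argument.

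The substance of the result does not lie in this theorem itself but is packaged in \autoref{Lemma-qtranslation2}/\autoref{Lemma-sigma_l translation}, where the Leivant-style trick, extracting a $q$-realizer of an $\HA$-proof and internalising $\Sigma_1$-completeness for $\HA_0$, was executed. What remains here is only bookkeeping: translating between $\Box$ and $\exists x \, \Box_x$, and lifting a provably-in-$\HA_0$ implication to the corresponding $\Box_{g(x)}$ statement. I therefore expect no real obstacle beyond checking that the subscript bounds compose correctly.
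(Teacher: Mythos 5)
Your proposal is correct and follows exactly the paper's own route: the paper's proof of \autoref{Theorem-Soundness of HA for lle+} is a two-line deduction from \autoref{Lemma-sigma_l translation} and the second item of \autoref{Lemma-prop-sigma_l}, and your write-up simply fills in the same bookkeeping (passing between $\Box$ and $\exists x\,\Box_x$, and internalising the $\HA_0$-implication via \autoref{Lemma-bounded Sigma completeness}) that the paper leaves implicit.
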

\begin{proof}
Let $A$ be a modal proposition. We must show
$\HA\vdash\Box\sigma_{_{\sf HA}}(A)\ra\Box\sigma_{_{\sf
HA}}(A^l)$. Now the desired result may be deduced by 
\Cref{Lemma-sigma_l translation} and  the second item of \Cref{Lemma-prop-sigma_l}.
\end{proof}
Although there are  other ways of  proving the above theorem  (see
\cite{Visser02} or \cite{IemhoffT}), we need its major preliminary lemma (i.e. \Cref{Lemma-sigma_l translation})
in the proof of the completeness theorem. Specially, we use  \Cref{Lemma-sigma_l translation}
in the proof of  \Cref{Lemma-1.7st Properties of Solovay Function}.

%%%%%%%%%%%%%%%%%%%%%%%%%%%%%%%%%%%%%%%%%%%%%%%%%%%%%%%
\subsection{Interpretability}
Let $T$ and $S$ be two first-order theories. Informally speaking,
we say that $T$ interprets $S$ ($T\rhd S$) if there exists a
translation from the language of $S$ to the language of $T$ such
that $T$ proves the translation of all of the theorems of $S$.
For a formal definition see \cite{VisserInterpretability}. It is
well-known that for recursive theories $T$ and $S$ containing $\PA$,
the assertion $T\rhd S$ is formalizable  in first-order language
of arithmetic. For two arithmetical sentences $A$ and $B$, we use the
notation $A\rhd B$ to mean that $\PA+A$ interprets $\PA+B$. The
following theorem due to Orey, first appeared in \cite{Feferman}.

\begin{theorem}\label{Theorem-Orey}
For recursive theories $T$ and $S$ containing $\PA$, we have: 
\[ \PA\vdash (T\rhd S) \lr \forall{x}\, \Box_T {\sf Con}(S^x),\]
 in which $S^x$ is the
restriction of the theory $S$  to axioms with G\"{o}del number
$\leq x$ and ${\sf Con}(U):=\neg\,\Box_U\bot$.
\end{theorem}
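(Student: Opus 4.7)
The plan is to prove the biconditional by treating the two directions separately, with both arguments formalized inside $\PA$.

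For the forward direction, suppose $T\rhd S$ via a translation $\tau$. By the $\PA$-formalized definition of interpretability, $\Box_T\tau(\phi)$ holds for every axiom $\phi$ of $S$, and $\tau$ commutes with the logical connectives. Fix $x$. Because $S$ is recursive, $S^x$ is a \emph{finite} theory whose axioms $\phi_1,\ldots,\phi_{k(x)}$ are enumerated by a primitive recursive function of $x$. Inside $T$, the domain of $\tau$ together with the interpreted primitives forms a class that satisfies each $\tau(\phi_i)$, hence a (relativised) model of $S^x$. Since $T$ extends $\PA$, the arithmetised G\"odel completeness theorem is available in $T$ and turns ``there exists a model of $S^x$'' into ${\sf Con}(S^x)$. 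This gives $\Box_T{\sf Con}(S^x)$, and the whole argument is visibly formalisable in $\PA$.

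For the reverse direction, assume $\PA\vdash\forall x\, \Box_T{\sf Con}(S^x)$. The plan is to build a Henkin-style model of $S$ inside $T$ and read off an interpretation from its term model. Concretely, I would enumerate sentences of the language of $S$ and potential witness constants, extending $S$ stage by stage into a complete, consistent, Henkin-saturated theory, at each stage using consistency of the appropriate finite fragment $S^x$ to justify the extension. The subtlety is that $T$ proves ${\sf Con}(S^x)$ for each $x$ individually, but it need not prove the uniform statement $\forall x\,{\sf Con}(S^x)$, so the unbounded induction that drives the Henkin construction is not available off the shelf. Here I invoke the Solovay shortening trick: pass to a definable cut $J\subseteq\omega$ such that $T$ proves $J$ is closed under the primitive recursive operations involved in the construction and also proves $\forall x\in J\,{\sf Con}(S^x)$. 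Running the Henkin construction along $J$ inside $T$ produces a completed theory whose term model defines the desired translation $\tau$, and $\PA$ certifies that $\tau$ witnesses $T\rhd S$.

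The main obstacle is the reverse direction, which is the classical Orey--H\'ajek construction: producing the shortened cut $J$, verifying that $T$-provable closure properties of $J$ suffice to sustain the entire Henkin construction (decidability of the growing theory, existence of witnessing constants, preservation of consistency), and checking that all of this can be carried out \emph{inside} $\PA$ rather than in some richer metatheory. The forward direction, by contrast, is essentially a direct application of arithmetised completeness to finite theories and should be routine.
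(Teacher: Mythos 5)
The paper does not prove this theorem at all: it is imported from the literature (Feferman, p.~80, and Berarducci), so what you are really attempting is a reconstruction of the classical Orey--H\'ajek argument. Your overall architecture is the right one, but both directions have problems. In the forward direction, the arithmetized completeness theorem runs the wrong way: it produces a definable model \emph{from} consistency, it does not convert ``$T$ defines a structure satisfying $S^x$'' \emph{into} ${\sf Con}(S^x)$. What you actually need there is a formalized soundness argument, and for a definable interpretation the standard route is: a proof of $\bot$ from $S^x$ translates, axiom by axiom, into a proof of $\bot$ in a finite fragment $T_{f(x)}$ of $T$ (since $T\vdash\tau(\phi)$ for each of the finitely many axioms $\phi$ of $S^x$ and $\tau$ commutes with the logical operations), and then essential reflexivity of $T\supseteq\PA$ supplies $T\vdash{\sf Con}(T_{f(x)})$, hence $T\vdash{\sf Con}(S^x)$. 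Without invoking reflexivity (or, equivalently, partial truth definitions) this direction does not close.

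The more serious gap is in the reverse direction. The class $D:=\{x:{\sf Con}(S^x)\}$ is $T$-provably downward closed and contains each standard numeral, but $T$ has no reason to prove it closed under successor, so $D$ is not a cut, and the shortening trick cannot help you: shortening refines a cut you already have into a smaller cut with better closure properties, it does not manufacture a definable cut inside an arbitrary non-inductive downward-closed class. So the object $J$ you ``pass to'' is not known to exist, and the Henkin construction along $J$ never gets off the ground. The classical repair over $\PA$ avoids cuts entirely: replace $S$ by the Feferman-style axiom set $S':=\{\phi\in S:{\sf Con}(S^{\gnumber{\phi}})\}$. Then $T$ proves ${\sf Con}(S')$ outright --- any derivation of $\bot$ from $S'$ uses finitely many axioms, all lying in some $S^m$ with $m\in S'$-witnessed consistency, contradiction --- so the arithmetized completeness theorem, run inside $T$ with its full induction, yields a definable model of $S'$ and hence a translation $\tau$. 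Finally the hypothesis $\forall x\,\Box_T{\sf Con}(S^x)$ gives, for each axiom $\phi$ of $S$, that $\Box_T(\phi\in S')$ and hence $\Box_T\tau(\phi)$, uniformly and provably in $\PA$, which is exactly $T\rhd S$. With that substitution your proof becomes the standard one.
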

\begin{proof}
See \cite{Feferman}. p.80 or \cite{Berarducci}.
\end{proof}

\noindent\textbf{Convention.}
From \Cref{Theorem-Orey}, one can easily observe that $\PA\vdash {(A\rhd B)}\lr{\forall{x}\,\Box^+(A\ra\neg\Box^+_x\neg B)}$.
So from now on, 
$A\rhd B$  means its $\Pi_2$-equivalent
$\forall{x}\,\Box^+(A\ra\neg\Box^+_x\neg B)$, even when we
are working in weaker theories like $\HA$. We remind the reader
that $\Box^+$ stands for provability in $\PA$.

%%%%%%%%%%%%%%%%%%%%%%%%%%%%%%%%%%%%%%%%%%%%%%%%%%%%%%%%%%
\section{Propositional modal logics}\label{sec-propositional}
In this section, we collect all the required notions with propositional flavour. 
This section is mostly
devoted to provide an axiomatic system for the $\Sigma_1$-provability logic of $\HA$, i.e. $\lles$,
 and stating some of its essential properties that we need them later in the proof of soundness  
 (\Cref{Theorem-Soundness})
 or completeness (\Cref{Theorem HA-Completeness}) of $\lles$ for arithmetical $\Sigma_1$-interpretations.
 The following are 
some of  important results that will be used in the proof of completeness theorem.
\begin{itemize}[leftmargin=*]
\item   In \Cref{Sec-Aximatizing-TNNIL},  it is shown  
that the axiomatic system $\lles$ is capable of simplifying any modal proposition   to 
an equivalent $\TNNIL^-$ proposition (\Cref{corollar HA-NNIL approximation is propositionally equivalent}).  
This fact is useful for  proof of the completeness theorem (\Cref{Theorem HA-Completeness}).
\item In \Cref{sec.tnnil.conservativity}, the $\TNNIL$-conservativity of the theory 
$\LC$ 
%($i\GL$ plus completeness axiom schema $A\to\Box A$)
over $\lles$ (\Cref{Theorem-TNNIL Conservativity of LC over LLe+}) is proved. 
This conservativity plays an important role in the proof of completeness theorem.  
As far as working with $\TNNIL$-formulas, we get rid of all those complicated axioms of 
$\lles$ and just use the more handful theory $\LC$.
\item In \Cref{Sec-PropModKripke}, we will prove the 
finite model property for the theory $\LC$ (\Cref{Theorem-Propositional Completeness LC}). 
With the aid of our main theorem in next section (\Cref{Theorem-Main tool}), 
such finite counter-models 
are used to be transformed to a first-order counter-models of $\HA$.
\end{itemize}
\subsection{The NNIL formulae and related topics}\label{sec-nnil}
The class of {\em No Nested Implications to the Left}, \NNIL
formulae in a propositional language was introduced in
\cite{Visser-Benthem-NNIL}, and more explored in \cite{Visser02}. 
The crucial
result of \cite{Visser02} is providing an algorithm that as
input, receives a non-modal proposition $A$ and returns its best \NNIL
approximation $A^*$ from below, i.e., $\IPC\vdash A^*\ra A$ and
for all \NNIL formula $B$ such that $\IPC\vdash B\ra A$, we have
$\IPC\vdash B\ra A^*$. Also for all $\Sigma_1$-substitutions $\sigma$, 
we have $\HA\vdash \sigma_{_{\sf HA}}(\Box A\lr \Box A^*)$ \cite{Visser02}.
\begin{itemize}[leftmargin=*]
\item In \Cref{subsubsec-NNIL-algorithm}, we state Visser's
$\NNIL$-algorithm for computing $A^*$,  and some of its useful properties.  
\item In \Cref{subsubsec-TNNIL-algorithm}, we explain  the extension of this algorithm 
to the modal language (the $\TNNIL$-algorithm), which computes $A^+$ 
 and  is essentially  the same as the $\NNIL$-algorithm with this extra rule: 
treat inside $\Box$ as a fresh proposition, i.o.w. in the inductive definition of the algorithm $(\Box A)^+:=\Box A^+$.   
Then we prove some useful properties of the $\TNNIL$-algorithm: \Cref{Lemma-TNNIL algorithm preserve theorems of iGL} and \Cref{corollar-NNIL properties}.  The best feature of 
$\TNNIL$-algorithm is that for all $\Sigma_1$-substitutions $\sigma$, 
we have $\HA\vdash \sigma_{_{\sf HA}}(\Box A\lr \Box A^+)$ (first part of \Cref{corollar-NNIL properties}).
\item In \Cref{subsubsec-TNNIL^-algorith},
we define another algorithm $\TNNIL^-$ for computing $A^-$, which is essentially the same as the 
$\TNNIL$-algorithm, with this minor difference: Only treat those sub-formulae which are boxed and leave the others. 
With this minor change, we even have a better feature for $A^-$, i.e., for all $\Sigma_1$-substitutions $\sigma$, 
we have $\HA\vdash \sigma_{_{\sf HA}}( A\lr  A^-)$ (\Cref{Lemma HA-NNIL properties minus}).
\end{itemize}
Now we define the class $\NNIL$ of modal propositions precisely by  $\NNIL:= \{A\mid \rho A\leq 1\}$, in which 
the complexity measure $\rho$, is defined inductively as follows:
\begin{itemize}[leftmargin=*]
\item $\rho(\Box A)=\rho(p)= \rho(\bot)=\rho(\top) = 0$, for an arbitrary atomic variables $p$ and modal proposition $A$,
\item $\rho(A\wedge B) = \rho(A\vee B) = \text{max} (\rho A, \rho B)$,
\item $\rho(A\ra B) = \text{max} (\rho A +1, \rho B)$,
\end{itemize}
In the following, we define another complexity measure $\mathfrak{o}(.)$ on modal propositions. 
We need this measure for  termination of the $\NNIL$-algorithm.
\begin{definition}\label{Definition-non-modal complexity}
Let $D$ be a modal proposition. Let  
\begin{itemize}
\item $I(D):=\{E\in {\sf Sub}(D) \mid E \text{ is an implication that is not in the scope of a } \Box\}$.
\item $\mathfrak{i}(D):=\text{\em max}\{|I(E)|\mid E\in I(D)\}$, where $|X|$
is the number of elements of $X$.
\item $\mathfrak{c}D:=$ the number of occurrences of logical connectives which are not in the scope of a $\Box$.
\item $\mathfrak{d}D:=$ the maximum number of nested boxes. To be more precise,
\begin{itemize}
\item $\mathfrak{d}D:=0$ for atomic $D$,
\item $\mathfrak{d}D:=\text{\em max}\{\mathfrak{d}D_1,\mathfrak{d}D_2\}$, where $D = D_1\circ D_2$
and $\circ\in\{\wedge, \vee, \ra\}$,
\item $\mathfrak{d}\Box D:=\mathfrak{d}D+1$,
\end{itemize}
\item $\mathfrak{o}D:=(\mathfrak{d}D,\mathfrak{i}D,\mathfrak{c}D)$.
\end{itemize}
We order the measures  $\mathfrak{o}D$ lexicographically, i.e., 
$(d,i,c)<(d',i',c')$ iff $d<d'$ or $d=d'
, i<i'$ or $d=d', i=i' , c<c'$.
\end{definition}
For definition of $\NNIL$-algorithm, we use the bracket notation   $[A]B$ from \cite{Visser02}.  
We also use a variant of this notation, $[A]'B$:
\begin{definition}\label{definition-braket}
For any two modal propositions $A$ and $B$, we define $[A]B$ and  $[A]'B$ 
 by induction on the complexity of $B$:
\begin{itemize}
\item $[A]B = [A]'B =B$, for atomic or boxed $B$,
\item $[A](B_1\circ B_2) = [A](B_1)\circ [A](B_2)$, $[A]'(B_1\circ B_2) =[A]'(B_1)\circ [A]'(B_2)$
for $\circ\in\{\vee,\wedge\}$,
\item $[A](B_1\ra B_2) = A\ra (B_1\ra B_2)$, $[A]'(B_1\ra B_2)=A'\to( B_1\ra B_2)$,
in which $A' = {A[B_1\ra B_2\mid B_2]}$, i.e., replace each outer
occurrence of $B_1 \ra B_2$ (by outer occurrence we mean that it is not in the scope of any $\Box$)  in $A$ by $B_2$,
%\item $[A](\Box B_1) = ???$ and $[A]'(\Box B_1) = ???.$
\end{itemize}
For a set $X$ of modal propositions, we also define $[A]X:=\bigvee_{B\in X}A[B]$ and $[A]'X:=\bigvee_{B\in X}[A]'B$.
\end{definition}
\begin{remark}\label{Remark0}
It is easy to observe that $[A]B$ and $[A]'B$ are equivalent in $\IPC_\Box$.
\end{remark}

\subsubsection{The $\NNIL$-algorithm}\label{subsubsec-NNIL-algorithm}
For each modal proposition $A$, the proposition 
$A^*$ is defined by induction on $\mathfrak{o}A$ as follows \cite{Visser02}:
\begin{enumerate}[leftmargin=*]
\item $A$ is atomic or boxed, take $A^*:=A$.
%\item $A=\Box B$ : $(\Box B)^*:=\Box B$
\item $ A=B \wedge C$, take $A^*:=B^*\wedge C^*$.
\item $ A=B\vee C$, take $A^*:=B^*\vee C^*$.
\item $ A=B \ra C $, we have several sub-cases. In the following, 
an occurrence of $E$ in $D$ is called an {\em outer
occurrence}, if $E$ is neither in the scope of an implication nor
in the scope of a boxed formula.
\begin{enumerate}[leftmargin=*]
\item $C$ contains an outer occurrence of a conjunction. In this
case, there is some formula $J(q)$ such that
\begin{itemize}
\item $q$ is a propositional variable not occurring in $A$.
\item $q$ is outer in $J$ and occurs exactly once.
\item $C=J[q|(D\wedge E)]$.
\end{itemize}
Now set $C_1:=J[q|D], C_2:=J[q|E]$ and
$A_1:=B\ra C_1, A_2:=B\ra C_2$ and finally, define
$A^*:=A_1^*\wedge A_2^*$.
\item $B$ contains an outer occurrence of a disjunction. In this
case, there is some formula $J(q)$ such that
\begin{itemize}
\item $q$ is a propositional variable not occurring in $A$.
\item $q$ is outer in $J$ and occurs exactly once.
\item $B=J[q|(D\vee E)]$.
\end{itemize}
 Now set $B_1:=J[q|D], B_2:=J[q|E]$
and ${A_1:=B_1\ra C}, {A_2:=B_2\ra C}$ and finally, define
$A^*:=A_1^*\wedge A_2^*$.
\item  $B=\bigwedge X$ and $C=\bigvee Y$ and $X,Y$ are sets of
implications or atoms. We have several sub-cases:
\begin{enumerate}[leftmargin=*]
\item $X$ contains atomic variables or boxed formula $E$. We
set $D:=\bigwedge(X\setminus\{E\})$ and take
 ${A^*:=E^*\ra(D\ra C)^*}$.
\item $X$ contains $\top$. Define
$D:=\bigwedge(X\setminus\{\top\})$ and take $A^*:=(D\ra C)^*$.
\item $X$ contains $\bot$. Take $A^*:=\top$.
\item $X$ contains only implications. For any $D=E\ra F\in X$,
define
$$B\!\downarrow\! D:=\bigwedge((X\setminus\{D\})\cup\{F\}).$$
Let $Z:=\{E\mid E\ra F\in X\}\cup\{C\}$ and define: 
%and
%$A_0:=[B]Z:=\bigvee\{[B]E\mid E\in Z\}$.
%If
%$\mathfrak{o}([B]E)<\mathfrak{o}A$ for any $E\in Z$, we take
\begin{align*}
A^*:=\bigwedge\{((B\!\downarrow\! D)\ra C)^*|D\in X\}\wedge \bigvee \{([B]'E)^*\mid E\in Z\}
%&\left(\bigvee\{([B]E)^*\mid E\in Z,
%\mathfrak{o}([B]E)<\mathfrak{o}A\}\vee\bigvee \{([B]'E)^*\mid E\in Z,
%\mathfrak{o}([B]E)\geq\mathfrak{o}A\}\right),
\end{align*}
We should show $\mathfrak{o}([B]'E)<\mathfrak{o}A$. 
For a proof of this fact see \cite{Visser02}. 
%otherwise:
%%, first set $A_1:=[B]'Z$ and then take
%$$A^*:=\bigwedge\{((B\!\downarrow\! D)\ra C)^*|D\in X\}\wedge A_1^*$$
\end{enumerate}
\end{enumerate}
\end{enumerate}

\begin{remark}\label{rem1}
{\em In fact in \cite{Visser02}, the $\NNIL$-algorithm  is only for non-modal propositions. 
One may also compute the best $\NNIL$-approximation for modal propositions,
in the following way. 
Let $A$ be a given modal proposition. 
Let $B_1,\ldots,B_n$ be all
boxed sub-formulae of $A$ which are not in the scope of any
other boxes. Let $A'(p_1,\ldots,p_n)$ be the unique non-modal
proposition such that $\{p_i\}_{1\leq i\leq n}$ are fresh atomic
variables not occurring in $A$ and $A=A'[p_1|B_1,\ldots,p_n|B_n]$.
Let $\gamma(A):=(A')^*[p_1|B_1,\ldots,p_n|B_n]$. Then it is easy to
observe that $\IPC_{\Box}\vdash\gamma(A)\lr A^*$.}
\end{remark}

The above defined algorithm is not deterministic, however from the
following theorem we know that $A^*$ is unique up to
$\IPC_{\Box}$ equivalence. Notation
$A\vartriangleright_{_{{\sf IPC}_{\Box},{\sf NNIL}}}B$ 
($A$, $\NNIL $-preserves $B$) from \cite{Visser02}, means that
for each $\NNIL $ modal proposition $C$, if
$\IPC_{\Box}\vdash C\ra A$, then $\IPC_{\Box}\vdash C\ra B$, in
which $A, B$ are modal propositions.

\begin{theorem}\label{Theorem-NNIL Crucial Properties}
For each modal proposition $A$,
\begin{enumerate}
\item  \label{1Theorem-NNIL Crucial Properties}The $\NNIL $ algorithm with  input $A$ terminates and
the output formula $A^*$, is an $\NNIL $ proposition such
that $\IPC_{\Box}\vdash A^*\ra A$.
\item \label{2Theorem-NNIL Crucial Properties} $\IPC_{\Box}\vdash A^*\ra B$ iff
$A\vartriangleright_{_{{\sf IPC}_{\Box},{\sf NNIL}}}B$.
\item \label{3Theorem-NNIL Crucial Properties} $A^*$ is the best $\NNIL $ approximation of $A$ from below i.e.
$\IPC_{\Box}\vdash A^*\ra A$ and for each $\NNIL $
proposition $B$, with $\IPC_{\Box}\vdash B\ra A$, we have
$\IPC_{\Box}\vdash B\ra A^*$.
\item \label{4Theorem-NNIL Crucial Properties}$\IPC_{\Box}\vdash A_1\ra A_2$ implies $\IPC_{\Box}\vdash A_1^*\ra A_2^*$.
\item \label{5Theorem-NNIL Crucial Properties}$\IPC_{\Box}\vdash A\lr B$ implies $\IPC_{\Box}\vdash A^*\lr B^*$.
\item \label{6Theorem-NNIL Crucial Properties}For each $\Sigma_1$-substitution $\sigma$,
$\HA\vdash\Box\sigma_{_{\sf HA}}(A)\lr\Box\sigma_{_{\sf HA}}(A^*)$.
\end{enumerate}
\end{theorem}
\begin{proof}
\begin{enumerate}
\item Direct consequence of \cite[Theorem~7.1]{Visser02}.
First assume $A'[p_1,\ldots,p_n]$ be as in   \Cref{rem1}. By
\cite[Theorem~7.1]{Visser02}, we have $\IPC\vdash (A')^*\ra
A'$, and hence by  \Cref{Lemma-boxed as atomic},
$\IPC_{\Box}\vdash (A')^*[p_1|B_1,\ldots,p_n|B_n]\ra A$.
\item Direct consequence of \cite[Theorem~7.2]{Visser02}.
First suppose that $\IPC_{\Box}\vdash A^*\ra B$. Let $A', B'$ be
non-modal propositions as defined in   \Cref{rem1}, i.e,
$A=A'[p_1|C_1,\ldots,p_n|C_n], B=B'[p_1|C_1,\ldots,p_n|C_n]$.
Then by  \Cref{Lemma-boxed as atomic}, $\IPC\vdash(A')^*\ra
B'$. Now by \cite[Theorem~7.2]{Visser02}, we have
$A'\vartriangleright_{\IPC,\NNIL}B'$, and then by 
\Cref{Lemma-boxed as atomic},
$A\vartriangleright_{\IPC_{\Box},\NNIL }B$. For the proof of the other way around, 
note that all of the previous deductions are reversible.
\item Suppose  $\IPC_{\Box}\vdash B\ra A$ and $B$ is  $\NNIL $.
Since $\IPC_{\Box}\vdash A^*\ra A^*$, from \cref{2Theorem-NNIL Crucial Properties} above, we get
$A\vartriangleright_{\IPC_{\Box},\NNIL }A^*$. By
$\IPC_{\Box}\vdash B\ra A$ and $B\in\NNIL $, we have
$\IPC_{\Box}\vdash B\ra A^*$.
\item Suppose that $\IPC_{\Box}\vdash A_1\ra A_2$.
By part 1, $\IPC_{\Box}\vdash A_1^*\ra A_2$ and hence by \cref{3Theorem-NNIL Crucial Properties},
${\IPC_{\Box}\vdash A_1^*\ra A_2^*}$.
\item Direct consequence of \cref{4Theorem-NNIL Crucial Properties}.
\item First suppose that $A$ is a non-modal proposition.
Combining Theorem 10.2 and Corollary 7.2 from \cite{Visser02},
implies that $\IPC\vdash A^*\ra B$ iff $A\mid\!\sim^{\sf
HA}_{{\sf HA},\Sigma}B$, in which $A\mid\!\sim^{{\sf HA}}_{{\sf
HA},\Sigma}B$ means that for each $\Sigma_1$-substitution
$\sigma$, we have $\HA\vdash\Box\sigma_{_{\sf HA}}(A)\ra\Box\sigma_{_{\sf HA}}(B)$.
This implies that
${\HA\vdash\Box\sigma_{_{\sf HA}}(A)\lr\Box\sigma_{_{\sf HA}}(A^*)}$. Now for a
modal proposition $A$, suppose that $A'(p_1,\ldots,p_n)$ and
$B_1,\ldots,B_n$ be such that $A=A'[p_1|B_1,\ldots,p_n|B_n]$, in
which $A'$ is a non-modal proposition and $p_1,\ldots,p_n $ are
fresh atomic variables (not occurred in $A$). Let $\sigma'$ be
the substitution defined by $\sigma'(p_i):=\sigma_{_{\sf HA}}(B_i)$, for each
$1\leq i\leq n$, and for any other atomic variable $q$,
$\sigma'(q) = \sigma(q)$. Clearly, $\sigma'$ is again a
$\Sigma_1$-substitution and hence we have
$\HA\vdash\Box\sigma'_{_{\sf HA}}(A')\lr\Box\sigma'_{_{\sf HA}}((A')^*)$. This
implies  ${\HA\vdash\Box\sigma_{_{\sf HA}}(A)\lr\Box\sigma_{_{\sf HA}}(A^*)}$.
\end{enumerate}
\end{proof}
\subsubsection{The $\TNNIL$-algorithm}\label{subsubsec-TNNIL-algorithm}

\begin{definition}\label{Def-TNNIL-Propositions}
$\TNNIL$ (Thoroughly $\NNIL$) is the
smallest class of propositions such that
\begin{itemize}
\item $\TNNIL$ contains all atomic propositions,
\item if $A, B\in\TNNIL$, then $A\vee B, A\wedge B,\Box A\in\TNNIL$,
\item if all $\ra$ occurring in $A$ are contained in the scope of a $\Box$ (or equivalently
$A\in {\sf NOI}$) and $A,
B\in\TNNIL$, then $A\ra B\in\TNNIL$.
\end{itemize}
Let  $\TNNIL^-$  indicates the set of all the propositions
like $  A(\Box B_1,\ldots,\Box B_n)$, such that
$A(p_1,\ldots,p_n)$ is an arbitrary non-modal proposition and
$B_1,\ldots,B_n\in\TNNIL$.
\end{definition}

Here we define $A^+$  to be the   $\TNNIL$-formula approximating $A$. The major difference between 
$A^+$ and $A^*$ is that $\IPC_\Box\vdash A^+\to A$ may not hold any more.
Informally speaking, to find $A^+$, we first compute $A^*$ and
then replace all outer boxed formula $\Box B$ in $A$ by
$\Box B^+$. To be more accurate, we  define $A^+$ by induction
on $\mathfrak{d}A$. Suppose that for all $B$ with
$\mathfrak{d}B<\mathfrak{d}A$, we have defined $B^+$. Now suppose
that $A'(p_1,\ldots,p_n)$ and $\Box B_1,\ldots,\Box B_n$ are
such that $A=A'[p_1|\Box B_1,\ldots,p_n|\Box B_n]$, where
$A'$ is a non-modal proposition and $p_1,\ldots,p_n $ are fresh
atomic variables (not occurred in $A$). It is clear that
$\mathfrak{d}B_i<\mathfrak{d}A$ and then we can define
$A^+:=(A')^*[p_1|\Box B_1^+,\ldots,p_n|\Box B_n^+]$.

\begin{lemma}\label{Lemma-TNNIL algorithm preserve theorems of iGL}
For every modal proposition $A$,
\begin{enumerate}
\item \label{item1-Lemma-TNNIL algorithm preserve theorems of iGL} 
If ${\sf iGL}\vdash A$ then ${\sf iGL}\vdash A^+$.
\item \label{item2-Lemma-TNNIL algorithm preserve theorems of iGL}
If ${\sf iK4}\vdash A$ then ${\sf iK4}\vdash A^+$.
\end{enumerate}
\end{lemma}
\begin{proof}
We prove the first part by induction on the complexity of proof
${\sf iGL}\vdash A$. Proof of the second part is similar to the first
one.
\begin{itemize}[leftmargin=*]
\item $A$ is an axiom.
\begin{itemize}[leftmargin=*]
\item $A$ is L\"{o}b's axiom, i.e., $A=\Box(\Box B\ra B)\ra\Box B$.
Then $A^+=\Box(\Box B^+\ra B^+)\ra\Box B^+$, that is
valid also in ${\sf iGL}$.
\item $A=\Box B\ra\Box\Box B$. Then $A^+=\Box B^+\ra\Box\Box
B^+$, that is valid in ${\sf iGL}$.
\item $A=(\Box(B\ra C)\wedge\Box B)\ra\Box C$. Then
$ A^+=(\Box (B\ra C)^+\wedge\Box B^+)\ra\Box C^+$. On
the other hand, $\IPC_{\Box}\vdash (B\wedge(B\ra C))\ra C$ and
hence $\IPC_{\Box}\vdash (B\wedge(B\ra C))^*\ra C^*$, by 
\Cref{Theorem-NNIL Crucial Properties} \cref{4Theorem-NNIL Crucial Properties}. Now we can infer
$\IPC_{\Box}\vdash( B^+\wedge (B\ra C)^+)\ra C^+$, by definition of
$\TNNIL$-algorithm and  \Cref{Lemma-boxed as atomic}.
Finally, by the necessitation rule in ${\sf iGL}$, we have
${{\sf iGL}\vdash(\Box B^+\wedge\Box (B\ra C)^+)\ra \Box C^+}$.
\end{itemize}
\item $A$ is a theorem of $\IPC_{\Box}$. Then $\IPC_{\Box}\vdash
A^+$, by  \Cref{Theorem-NNIL Crucial Properties} \cref{5Theorem-NNIL Crucial Properties} and
 \Cref{Lemma-boxed as atomic}.
\item $A = \Box B$ and $A$ is derived by applying the necessitation rule.
Let ${\sf iGL}\vdash B$. By induction hypothesis, ${\sf iGL}\vdash B^+$ and
then ${\sf iGL}\vdash\Box B^+$.
\item $A$ is derived by modus ponens. Let ${\sf iGL}\vdash B$ and ${\sf iGL}\vdash B\ra A$.
From these, we have ${\sf iGL}\vdash B^+\wedge(B\ra A)^+$ and then 
${\sf iGL}\vdash(B\wedge(B\ra A))^+$. Since $\IPC_{\Box}\vdash
(B\wedge(B\ra A))\ra A$, then by  
 \Cref{Theorem-NNIL Crucial Properties} \cref{4Theorem-NNIL Crucial Properties} we have $\IPC_{\Box}\vdash (B\wedge(B\ra
A))^*\ra A^*$. Then by   \Cref{Lemma-boxed as atomic},
$\IPC_{\Box}\vdash (B\wedge(B\ra A))^+\ra A^+$ and hence
${\sf iGL}\vdash A^+$ as desired.
\end{itemize}
\end{proof}

\begin{corollary}\label{corollar-NNIL properties}
For any modal proposition $A$,
\begin{enumerate}
\item \label{1corollar-NNIL properties} For all $\Sigma_1$-substitution $\sigma$ we have
$\HA\vdash\Box\sigma_{_{\sf HA}}(A)\lr\Box\sigma_{_{\sf HA}}(A^+)$ and  hence $\HA\vdash\sigma_{_{\sf HA}}(A)$
 iff $\HA\vdash\sigma_{_{\sf HA}}(A^+) $,
\item \label{2corollar-NNIL properties}${\sf iGL}\vdash A_1\ra A_2$ implies ${\sf iGL}\vdash A_1^+\ra A_2^+$, and
${\sf iK4}\vdash A_1\ra A_2$ implies ${\sf iK4}\vdash A_1^+\ra A_2^+$,
\item \label{3corollar-NNIL properties}${\sf iGL}\vdash A_1\lr A_2$ implies ${\sf iGL}\vdash A_1^+\lr A_2^+$, and
${\sf iK4}\vdash A_1\lr A_2$ implies ${\sf iK4}\vdash A_1^+\lr A_2^+$.
\end{enumerate}
\end{corollary}
\begin{proof}
The first assertion can be deduced simply by induction on
$\mathfrak{d}A$ and using  \Cref{Theorem-NNIL Crucial Properties} \cref{6Theorem-NNIL Crucial Properties}.

To prove the second part, first note that by 
\Cref{Theorem-NNIL Crucial Properties} \cref{4Theorem-NNIL Crucial Properties}, if $\IPC_{\Box}\vdash
A_1\ra A_2$, then $\IPC_{\Box}\vdash A_1^*\ra A_2^*$. By 
\Cref{Lemma-boxed as atomic}, we can replace each outer occurrence
of boxed formulae by arbitrary propositions, in particular, by
their $\TNNIL$ approximations. We should take care of these replacements to be such that  equal 
propositions be substituted by equal approximations and unequal propositions substituted by unequal ones.  
 Then by definition of $A_i^+$, we
have $\IPC_{\Box}\vdash A_1^+\ra A_2^+$. 

Now suppose that
${\sf iGL}\vdash A_1\ra A_2$ (${\sf iK4}\vdash A_1\ra A_2$). 
Let $A=A_1\to A_2$. This implies
$\IPC_{\Box}\vdash (A\wedge A_1 )\ra A_2$. Then $\IPC_{\Box}\vdash (A\wedge A_1)^+\ra  A_2^+$, 
and hence by $\TNNIL$-algorithm,
$\IPC_{\Box}\vdash (A^+\wedge A_1^+)\ra A_2^+$. This
implies  $\IPC_{\Box}+A^+\vdash A_1^+\ra A_2^+ $ and by 
\Cref{Lemma-TNNIL algorithm preserve theorems of iGL}, ${\sf iGL}\vdash
A_1^+\ra A_2^+$ (${\sf iK4}\vdash A_1^+\ra A_2^+$).

Proof of the third part is a direct consequence of the second part.
\end{proof}

\subsubsection{The $\TNNIL^-$-algorithm}\label{subsubsec-TNNIL^-algorith}
\begin{corollary}\label{corollar HA-NNIL properties}
There exists a $\TNNIL^-$-algorithm such that for any modal
proposition $A$, it halts and produces a proposition
$A^-\in\TNNIL^-$ such that $\IPC_\Box\vdash A^+\ra A^-$.
\end{corollary}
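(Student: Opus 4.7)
The plan is to define the $\TNNIL^-$-algorithm by applying the $\TNNIL$-approximation only inside the outermost boxes of $A$, leaving the non-modal skeleton intact; this mirrors the definition of $(\cdot)^+$ but omits its outermost $\NNIL$-approximation step. Concretely, write $A = A'[p_1\mid \Box B_1,\ldots,p_n\mid \Box B_n]$, where $A'(p_1,\ldots,p_n)$ is a non-modal proposition, the formulas $\Box B_1,\ldots,\Box B_n$ are the maximal boxed sub-formulas of $A$ (those not occurring in the scope of any other $\Box$), and $p_1,\ldots,p_n$ are fresh atomic variables not in $A$; this decomposition is uniquely determined up to renaming of the fresh variables. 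Then set
\[ A^- := A'[p_1\mid \Box B_1^+,\ldots,p_n\mid \Box B_n^+]. \]

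Termination is immediate once termination of the $\TNNIL$-algorithm is in hand (\autoref{Theorem-NNIL Crucial Properties}(1)), since the only recursive calls of the $\TNNIL^-$-algorithm are the ones producing the $B_i^+$. Membership $A^-\in\TNNIL^-$ also follows directly: $A'$ is non-modal by construction, and each $B_i^+$ lies in $\TNNIL$ by definition of $(\cdot)^+$, which is precisely the shape demanded by $\TNNIL^-$.

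It then remains to establish $\IPC_\Box\vdash A^+\to A^-$. Comparing the two definitions,
\[ A^+ = (A')^*[p_1\mid \Box B_1^+,\ldots,p_n\mid \Box B_n^+], \qquad A^- = A'[p_1\mid \Box B_1^+,\ldots,p_n\mid \Box B_n^+], \]
so the two differ only by having $(A')^*$ versus $A'$ at the outer non-modal skeleton, with identical substitution applied inside. Hence the desired implication reduces to $\IPC\vdash (A')^*\to A'$, which is \autoref{Theorem-NNIL Crucial Properties}(1); substituting the fresh $p_i$ by the boxed formulas $\Box B_i^+$, and invoking \autoref{Lemma-boxed as atomic} to transfer the $\IPC$-entailment to $\IPC_\Box$, yields the conclusion. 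I do not foresee any real obstacle; the only delicate point is to fix the decomposition $A = A'[p_i\mid \Box B_i]$ in exactly the same manner used in the definition of $(\cdot)^+$, so that the two algorithms remain aligned and the comparison between $A^+$ and $A^-$ reduces literally to the single step $(A')^*\to A'$.
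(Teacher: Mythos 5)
Your proposal is correct and follows essentially the same route as the paper: the same decomposition $A=A'[p_i\mid\Box B_i]$, the same definition $A^-:=A'[p_i\mid\Box B_i^+]$, and the same appeal to \autoref{Theorem-NNIL Crucial Properties}(1) together with \autoref{Lemma-boxed as atomic} (the paper phrases this as the observation that $A^+=(A^-)^*$ and applies part 1 directly to $A^-$, which is the same computation). No gaps.
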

\begin{proof}
Let $A:=B(\Box C_1,\ldots,\Box C_n)$, and $B(p_1,\ldots,p_n)$ is non-modal.
Clearly  such $B$ exists. Then define $A^-:=B(\Box C_1^+,\ldots,\Box C_n^+)$.
Now definition of $A^+$ implies  $A^+=(A^-)^*$ and
 hence   \Cref{Theorem-NNIL Crucial Properties}  \cref{1Theorem-NNIL Crucial Properties}
 implies that $A^-$ has desired property.
\end{proof}

\begin{lemma}\label{Lemma HA-NNIL properties minus}
For each modal proposition $A$ and  $\Sigma_1$-substitution
$\sigma$,  $\HA\vdash \sigma_{_{\sf HA}}A\lr\sigma_{_{\sf HA}}A^-$.
\end{lemma}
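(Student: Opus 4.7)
The plan is to exploit the explicit form of $A^-$ given in the proof of \autoref{corollar HA-NNIL properties}. Write $A = B(\Box C_1,\ldots,\Box C_n)$ where $B(p_1,\ldots,p_n)$ is a non-modal proposition and $\Box C_1, \ldots, \Box C_n$ enumerate all outer boxed sub-formulae of $A$. By definition we then have $A^- = B(\Box C_1^+,\ldots,\Box C_n^+)$, so $\sigma_{_{\sf HA}}(A)$ and $\sigma_{_{\sf HA}}(A^-)$ share exactly the same Boolean skeleton $\sigma_{_{\sf HA}}(B)(\cdot,\ldots,\cdot)$; they differ only in the $n$ ``atomic'' inputs, which are $\Prv_{_{\sf HA}}(\ulcorner\sigma_{_{\sf HA}}(C_i)\urcorner)$ on one side and $\Prv_{_{\sf HA}}(\ulcorner\sigma_{_{\sf HA}}(C_i^+)\urcorner)$ on the other.

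Next, I would invoke the first clause of \autoref{corollar-NNIL properties}, which says $\HA\vdash\Box\sigma_{_{\sf HA}}(C_i)\lr\Box\sigma_{_{\sf HA}}(C_i^+)$, i.e., the corresponding pairs of ``atoms'' are provably equivalent in $\HA$. It then suffices to observe the purely propositional fact that if $\varphi_i\lr\psi_i$ is derivable for each $i$, then so is $B(\varphi_1,\ldots,\varphi_n)\lr B(\psi_1,\ldots,\psi_n)$ for any propositional (non-modal) $B$; this is a routine induction on the structure of $B$ inside $\IPC$, hence available in $\HA$. Combining these two ingredients gives $\HA\vdash\sigma_{_{\sf HA}}(A)\lr\sigma_{_{\sf HA}}(A^-)$, as required.

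There is no real obstacle here; the content of the lemma is essentially that the $\TNNIL^-$-algorithm only rewrites the interior of outer boxes and the $\TNNIL$-algorithm already preserves provable equivalence under $\Box$ (the hard work having been done earlier in \autoref{Theorem-NNIL Crucial Properties}(6) and propagated to $(\cdot)^+$ in \autoref{corollar-NNIL properties}). The only point that requires minor care is making sure the decomposition $A=B(\Box C_1,\ldots,\Box C_n)$ is unique enough to commute with $\sigma_{_{\sf HA}}$ in the expected way, which is immediate from the fact that $B$ is non-modal and the outer boxes of $A$ are preserved by $\sigma_{_{\sf HA}}$.
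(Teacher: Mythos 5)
Your proposal is correct and follows exactly the paper's (very terse) proof: unfold the definition $A^-=B(\Box C_1^+,\ldots,\Box C_n^+)$ and apply \autoref{corollar-NNIL properties}(1) to get $\HA\vdash\Box\sigma_{_{\sf HA}}(C_i)\lr\Box\sigma_{_{\sf HA}}(C_i^+)$, then replace equivalents inside the non-modal skeleton $B$. You have merely spelled out the routine substitution-of-equivalents step that the paper leaves implicit.
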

\begin{proof}
Use definition of $(.)^-$ and  \Cref{corollar-NNIL properties} \cref{1corollar-NNIL properties}.
\end{proof}

\begin{remark}
{\em Note that $\LC\vdash A\lr B$ does not imply $\LC\vdash
A^+\lr B^+$. A counterexample is $A:=\neg\neg p$ and
$B:=\neg\Boxdot(\neg p)$. We have $A^+=A^*=p$ and 
$ B^+=(\Box\neg p\ra p)$. Now one can use Kripke models to
show $\LC\nvdash (\Box\neg p\ra p)\to p$.}
\end{remark}

\begin{remark}\label{Remark-New def for TNNIL algorithm}{\em
In the algorithm produced for $\NNIL$, let's change the step (1) in this way (and use new symbol $(.)^\dag$
instead of $(.)^*$)
\begin{enumerate}
\item  $A^\dag:= A$ for atomic  $A$,  and $(\Box B)^\dag:=\Box B^\dag$,
\end{enumerate}
Then the new algorithm also halts, and for any modal
proposition $A$, we have ${\sf iK4}\vdash A^\dag\lr A^+$.}
\end{remark}

\subsection{The Box Translation}\label{sec.box.translation}
 The following definition of the box-translation, is
essentially from \cite[Definition~4.1]{Visser82}. The
box-translation extends the well-known G\"odel-McKinsey-Tarski translation.
In this subsection, we prove that ${\sf iGL}$ is closed under box-translation (\Cref{Proposition-propositional properties of Box translation}).
\begin{definition}\label{Definition-Box translation}
For every proposition $A$ in the modal propositional language, we
associate a proposition $A^\Box$, called  the box-translation
of $A$, in the following way:
\begin{itemize}
\item $A^\Box:= A\wedge\Box A$, for atomic $A$, 
\item $(A\circ B)^\Box:=A^\Box\circ B^\Box$, for $\circ\in\{\vee,\wedge\}$,
\item $(A\ra B)^\Box:=(A^\Box\ra B^\Box)\wedge\Box(A^\Box\ra B^\Box)$,
\item $(\Box A)^\Box:=\Box(A^\Box)$.
\end{itemize}
%The box-translation can be extended to first-order arithmetical\redcomment{Suspicious} 
%formulae $A$, as follows:
%\begin{itemize}
%\item $(\forall{x}A)^\Box:=\Box(\forall{x}A^\Box)\wedge\forall{x}A^\Box$,
%\item $(\exists{x}A)^\Box:=\exists{x}A^\Box$.
%\end{itemize}
\end{definition}

\begin{lemma}\label{Lemma-Box-translation-prop-0}
For any modal proposition $A$, we have ${\sf iK4}\vdash A^\Box\to\bo A^\bo$.
\end{lemma}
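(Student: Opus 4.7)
The plan is to proceed by induction on the complexity of the modal proposition $A$, using essentially two ingredients from ${\sf iK4}$: the axiom $\Box B \to \Box\Box B$, and the fact that $\Box$ distributes over conjunction (i.e.\ ${\sf iK4}\vdash \Box B \wedge \Box C \to \Box(B\wedge C)$), together with the monotonicity of $\Box$ under derivability (which follows from necessitation and the $\mathsf{K}$-axiom). Each inductive step is a short calculation; there is no real obstacle, since the box-translation is deliberately designed so that at every clause one can extract an outer $\Box$.

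First I would verify the base case. For atomic $A$ we have $A^\Box = A \wedge \Box A$, and from $\Box A$ we get $\Box\Box A$ by the axiom $\Box A \to \Box\Box A$, hence $\Box A \wedge \Box\Box A$, which in ${\sf iK4}$ implies $\Box(A \wedge \Box A) = \Box A^\Box$. So $A^\Box \to \Box A^\Box$. For $A = \Box B$, we have $A^\Box = \Box B^\Box$, and again $\Box B^\Box \to \Box\Box B^\Box$ gives the conclusion directly.

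Next, for $A = B \wedge C$, the induction hypothesis gives ${\sf iK4}\vdash B^\Box \to \Box B^\Box$ and similarly for $C$; then $A^\Box = B^\Box \wedge C^\Box \to \Box B^\Box \wedge \Box C^\Box \to \Box(B^\Box \wedge C^\Box) = \Box A^\Box$. For $A = B \vee C$, $A^\Box = B^\Box \vee C^\Box$, and in each disjunct we apply the induction hypothesis and then weaken $\Box B^\Box$ to $\Box(B^\Box \vee C^\Box)$ by monotonicity, and similarly on the other side; so $A^\Box \to \Box A^\Box$.

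Finally, for $A = B \to C$ we have $A^\Box = (B^\Box \to C^\Box) \wedge \Box(B^\Box \to C^\Box)$, and the key observation is that the outer conjunct $\Box(B^\Box \to C^\Box)$ already gives us access to the axiom $\Box D \to \Box\Box D$ with $D := B^\Box \to C^\Box$; combining $\Box D$ with $\Box\Box D$ yields $\Box(D \wedge \Box D) = \Box A^\Box$. This clause is precisely why the box-translation inserts an extra $\Box$ in front of each implication. The induction hypothesis on $B,C$ is in fact not even needed for the implication step, which is what makes the lemma go through cleanly in ${\sf iK4}$ (no L\"ob axiom required).
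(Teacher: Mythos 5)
Your proof is correct and follows exactly the route the paper intends (the paper's own proof is just ``easy induction over the complexity of $A$''): induction on $A$, using the 4-axiom, distribution of $\Box$ over conjunction, and monotonicity of $\Box$. All cases check out, including the observation that the implication clause needs no induction hypothesis because the translation already supplies the outer $\Box$.
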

\begin{proof}
Easy induction over the complexity of $A$.
\end{proof}
\noindent In the following lemma we state some properties of $\Boxdot$.
\begin{lemma}\label{Lemma-Properties of boxdot}
For any modal proposition $A$, the following
propositions are provable in ${\sf iK4}$:
\begin{enumerate}
%\item $\Boxdot\Boxdot A\lr\Boxdot A$.
%\item $\Boxdot(A\wedge B)\lr(\Boxdot A\wedge \Boxdot B)$.
\item \label{1Lemma-Properties of boxdot}$\Box\Boxdot A\lr\Box A\lr \Boxdot \Box A$,
%\item $A/\Boxdot A$.
\item \label{2Lemma-Properties of boxdot}$\Boxdot A^\Box\lr A^\Box$.
%\item $\Boxdot(\Boxdot B\ra C)\lr\Boxdot(\Boxdot B\ra\Boxdot C)$.
\end{enumerate}
\end{lemma}
\begin{proof}
The first part is easily deduced in ${\sf iK4}$.  For the second part use \Cref{Lemma-Box-translation-prop-0}.
\end{proof}

%\noindent Following Visser's definition of the notion of a {\em base} in
%arithmetical theories \cite{Visser82}, we define

We say that a modal theory $T$ is  {\em closed under
box-translation} if for every proposition $A$, $T\vdash A$ implies
$T\vdash A^\Box$.

\begin{proposition}\label{Proposition-propositional properties of Box translation}
The theory ${\sf iGL}$ is
closed under  the box-translation.
\end{proposition}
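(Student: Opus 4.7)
The plan is to prove \autoref{Proposition-propositional properties of Box translation} by induction on the length of the derivation $\mathsf{iGL}\vdash A$. The induction hypothesis says that whenever a shorter derivation establishes some formula $C$, we already have $\mathsf{iGL}\vdash C^\Box$. The two rules and each axiom scheme then have to be checked separately, and the proof is essentially a bookkeeping argument once the right auxiliary facts are in place.

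For the two rules the check is almost immediate from the shape of the translation. For necessitation, if $\mathsf{iGL}\vdash A$ then by IH $\mathsf{iGL}\vdash A^\Box$ and one application of necessitation yields $\mathsf{iGL}\vdash\Box A^\Box=(\Box A)^\Box$. For modus ponens, note that $(A\to B)^\Box=(A^\Box\to B^\Box)\wedge\Box(A^\Box\to B^\Box)$ has $A^\Box\to B^\Box$ as a conjunct, so from IH-produced derivations of $A^\Box$ and $(A\to B)^\Box$ one ordinary modus ponens produces $B^\Box$.

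For the axiom cases the crucial input is \autoref{Lemma-Box-translation-prop-0}, which gives $\mathsf{iK4}\vdash A^\Box\to\Box A^\Box$ uniformly, together with \autoref{Lemma-Properties of boxdot}. The former supplies the single fact that makes the intuitionistic axioms go through: for instance, to verify $(A\to(B\to A))^\Box$ one needs $A^\Box\to\Box(B^\Box\to A^\Box)$, which is obtained by first deducing $\Box A^\Box$ from $A^\Box$ and then distributing the box over the tautology $A^\Box\to(B^\Box\to A^\Box)$ via the K-axiom; a final necessitation supplies the outer box required by the definition of $(\ldots\to\ldots)^\Box$. The same pattern handles all the remaining intuitionistic schemes and the K and 4 axioms in a routine way, using $(\Box A)^\Box=\Box A^\Box$ and $(\Box\Box A)^\Box=\Box\Box A^\Box$.

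The one case that is not completely mechanical is Löb's axiom $\Box(\Box A\to A)\to\Box A$. Here one has to show that $\bigl(\Box(\Box A\to A)\bigr)^\Box$, which unfolds to $\Box\bigl[(\Box A^\Box\to A^\Box)\wedge\Box(\Box A^\Box\to A^\Box)\bigr]$, implies $\Box A^\Box=(\Box A)^\Box$. The key step is to use $\Box\Boxdot C\leftrightarrow\Box C$ from \autoref{Lemma-Properties of boxdot} with $C:=\Box A^\Box\to A^\Box$, which collapses the inner conjunction so that the hypothesis becomes exactly $\Box(\Box A^\Box\to A^\Box)$; then Löb's axiom applied to $A^\Box$ in $\mathsf{iGL}$ yields $\Box A^\Box$, and a final necessitation gives the outer boxed conjunct of the translation. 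I expect this to be the main obstacle, since verifying the translation of Löb's axiom is the only place where the genuinely Gödel-Löb content of the system is invoked, and one has to be careful to collapse $\Box(C\wedge\Box C)$ to $\Box C$ at the right point before Löb can be applied.
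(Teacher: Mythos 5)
Your proof is correct and follows essentially the same route as the paper's: induction on the derivation, with $A^\Box\to\Box A^\Box$ (\autoref{Lemma-Box-translation-prop-0}) handling the intuitionistic axioms and the outer boxed conjuncts, and the K and 4 cases being immediate. Your treatment of L\"ob's axiom is just a more explicit version of the paper's terse step 3 (which asserts that the translation of an instance of $\L$ is again an instance of $\L$), since you spell out the needed collapse of $\Box\Boxdot C$ to $\Box C$ via \autoref{Lemma-Properties of boxdot}.
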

\begin{proof}
The proof can be carried out in three steps:
\begin{enumerate}[leftmargin=*]
\item For any proposition $A$ first we show that
$\IPC_\Box\vdash A$ implies ${\sf iK4}\vdash A^\Box$. This can
be done by a routine induction on the length of the proof in
$\IPC$. Note that for any axiom $A$ of $\IPC$, we have
${\sf iK4}\vdash A^\Box$. As for the rule of modus ponens, suppose
that $\IPC_\Box\vdash A$ and $\IPC_\Box\vdash A\ra B$. By
induction hypothesis, then ${\sf iK4}\vdash A^\Box$ and
${\sf iK4}\vdash(A^\Box\ra B^\Box)\wedge\Box(A^\Box\ra
B^\Box)$ and so ${\sf iK4}\vdash B^\Box$.
\item Next observe that
$$(\Box A\ra\Box\Box A)^\Box=\Box A^\Box\ra\Box\Box A^\Box$$ and also
$${\sf iK4}\vdash[(\Box(A\ra B)\wedge \Box A)\ra\Box B]^\Box\lr
[(\Box(A^\Box\ra B^\Box)\wedge \Box A^\Box)\ra\Box B^\Box]$$
\item Observe that the box translation of an instance of L\"ob's axiom $\L$, is 
also an instance of $\L$.
\end{enumerate}
\end{proof}

\subsection{Axiomatizing the $\TNNIL$-algorithm}\label{Sec-Aximatizing-TNNIL}

In this subsection we present axioms which we need for the $\TNNIL^-$-algorithm
$(.)^-$. More precisely, we will find some axiom set $X$  
such that ${X\vdash A^-\lr A}$. 

To do that,  we use some relation $\brt$ on modal propositions. A variant of this relation for non-modal case, first 
appeared in \cite{Visser02}.
 The relation $\brt$ is defined to be the
smallest relation on modal propositions satisfying the following conditions:
\begin{itemize}
 \item[A1.] If ${\sf iK4}\vdash A\ra B$, then $A\brt B$,
% \redcomment{ definition of $\brt$ removed (which was used for $\HA^*$) and replaced by definition of $\brt_\sigma$. So we only have one relation $\brt$ which is previously used with the symbol $\brt_\sigma$.}
 \item[A2.] If $A\brt B$ and $B\brt C$, then $A\brt C$,
 \item[A3.] If $C\brt A$ and $C\brt B$, then $C\brt A\wedge B$,
 \item[A4.] If $A\brt B$, then $\bo A\brt \bo B$,
 \item[B1.] If $A\brt C$ and $B\brt C$, then $A\vee B\brt C$,
% \item B2. Let $X$ be a set of implications, $B:=\bigwedge X$ and $A:=B\ra C$.
% Also assume that ${Z:=\{E | E\ra F\in X\}\cup \{C\}}$. Then $A\wedge\Box B\brt [B]Z $,
  \item[B2.]   Let $X$ be a set of implications, $B:=\bigwedge X$ and $A:=B\ra C$.
 Also assume that $Z:={\{E | E\ra F\in X\}}\cup \{C\}$. Then $A\brt [B]Z $,
% \item B2$''$. With notation of B2$'$, $A\brt \{B\}Z $,
 \item[B3.] If $A\brt B$, then for any  atomic or boxed $C$ we have $C\ra A\brt C\ra B$.
% We also define B3$'$ to be same as B3 restricted to boxed case only.
% \item B4. $A\brt\Box A$.
\end{itemize}
\begin{remark}\label{Remark1}
Let $A$, $B$ and $Z$ be as in {\em B2}. Then the relation $\brt$ has the following additional property: $$A\brt [B]'Z$$
The reason goes as follows. 
First observe, by induction on $E$ and using A1-A3, that $[B]E\brt [B]'E$. That by use of A1-A3 and B1, implies that 
$[B]Z\brt[B]'Z$. Hence by A2 and B2, we have $A\brt [B]'Z$.
%This is enough to show that $[B]Z\brt[B]'Z$. Then we can deduce the desired result by use of A2. To show that 
%$[B]Z\brt[B]'Z$, this is enough to show 
\end{remark}
The notation $A\blrt B$ means $A\brt B$ and $B\brt A$. Let us define the theory
%$\LCS:={\sf iGL}+\CP+\{\Box A\ra\Box B| A\brt B\}$ and
 $$\lles:=\lle+\CP_{a}+\{\Box A\ra\Box B| A\brt B\}$$ 
% in which $\CP_{\sf a}$ is
%the Completeness Principle restricted to atomic propositions.
Note that by A1, the relation $\brt$ contains all the pairs $(A,B)$ such that ${\sf iK4}\vdash A\to B$.  But 
 it worth mentioning that the inclusion is strict. The axiom which makes $\brt$ strictly superset of 
 $\{(A,B):{\sf iK4}\vdash A\to B\}$ is B2, i.e. in the absence of B2, the relation $\brt$ is the same as provable implications 
 in ${\sf iK4}$. 
 However, with B2 the story is different,  e.g. one can observe that $\neg\neg p\brt p$, for any atomic $p$, holds while
 ${\sf iK4}\nvdash \neg\neg p\to p$.

\vspace{.1in}

\noindent\textbf{Notation.} In the rest of the paper, we use $A\equiv B$
as a shorthand for ${\sf iK4}\vdash A\lr B$.

\vspace{.1in}

\noindent The following theorem, shows that A1-A4 and B1-B3, axiomatize  the $\TNNIL$ algorithm: 
\begin{theorem}\label{Theorem HA-NNIL approximation is propositionally equivalent}
For any modal proposition $A$, we have $A\blrt A^+$.
\end{theorem}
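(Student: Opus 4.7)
The plan is to prove $A \blrt A^+$ by induction on the modal depth $\mathfrak{d}A$, decomposing $A$ modally and then reducing to a statement about the non-modal $\NNIL$-algorithm. Writing $A = A'[\vec p\mid\Box \vec B]$ with $A'$ non-modal, the inductive hypothesis applied to each $B_i$ (valid since $\mathfrak{d}B_i < \mathfrak{d}A$) yields $B_i \blrt B_i^+$, and axiom A4 upgrades this to $\Box B_i \blrt \Box B_i^+$. The proof then splits into a \emph{key lemma} that performs the outer $\NNIL$-reduction, followed by a short \emph{congruence step} that substitutes each $\Box B_i^+$ for $\Box B_i$ inside the already-reduced skeleton.

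The key lemma I would prove is: for every non-modal $E$ and every substitution $\theta$ sending atomic variables to atomic-or-boxed formulas, $E\theta \blrt E^*\theta$. It is proved by induction on $\mathfrak{o}(E)$, following the case analysis of the $\NNIL$-algorithm. Atomic cases are trivial; the conjunction and disjunction cases use A3 and B1 respectively, together with A1 for the easy inclusions; cases $4(a)$, $4(b)$, and $4(c)(\text{i--iii})$ are IPC-equivalences reducing to smaller $\mathfrak{o}$-instances, handled by A1, the induction hypothesis, and B3 (for sub-case (i), whose new antecedent becomes atomic or boxed). The crucial sub-case is $4(c)(\text{iv})$, where $E = B\to C$ with $B = \bigwedge X$, $C = \bigvee Y$, and $E^* = \bigwedge\{((B\downarrow D)\to C)^* \mid D \in X\} \wedge A_0^*$ with $A_0 = [B]Z$. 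A routine induction shows $([A]B)\theta = [A\theta](B\theta)$, so axiom B2 applies directly at the substituted level to give $E\theta \brt A_0\theta$; the induction hypothesis together with A2 then yields $E\theta \brt A_0^*\theta$. For each $D = F\to H \in X$ the IPC fact $\IPC_\Box \vdash (B\to C) \to ((B\downarrow D)\to C)$ (valid because $F \vdash D$) combined with A1, the induction hypothesis on $(B\downarrow D)\to C$, and A2 delivers the corresponding conjunct $E\theta \brt ((B\downarrow D)\to C)^*\theta$. Then A3 assembles the full $\brt$ into $E^*\theta$. The converse $E^*\theta \brt E\theta$ is immediate from A1 together with part (1) of \autoref{Theorem-NNIL Crucial Properties}, which gives $\IPC_\Box \vdash E^*\theta \to E\theta$.

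Back to the main induction, applying the key lemma with $E = A'$ and $\theta\colon p_i \mapsto \Box B_i$ yields $A \blrt (A')^*[\vec p\mid\Box \vec B]$. Since $(A')^*$ is an $\NNIL$-proposition, every implication inside it has an atomic or boxed antecedent, so substituting the $\blrt$-equivalent $\Box B_i^+$ for $\Box B_i$ preserves $\blrt$ by an easy congruence induction on the structure of $(A')^*$ using A3, B1 (with A1), and B3. Transitivity A2 then concludes $A \blrt (A')^*[\vec p\mid\Box \vec B^+] = A^+$.

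The main obstacle is the sub-case $4(c)(\text{iv})$ when the algorithm switches from $A_0 = [B]Z$ to $A_1 = [B]'Z$ because $\mathfrak{o}(A_0) \not< \mathfrak{o}(A)$: one must verify both that $\mathfrak{o}(A_1) < \mathfrak{o}(A)$ (this is the termination content established in Visser~\cite{Visser02}) and that an IPC-derivable passage between $[B]Z$ and $[B]'Z$, combined with B2, A1, and A2, still delivers $E\theta \brt A_1^*\theta$ at the substituted level. Apart from this technical point, the entire proof is systematic bookkeeping, since axioms A1--A4 and B1--B3 are tailored precisely to mirror each atomic transformation performed by the $\NNIL$/$\TNNIL$-algorithm.
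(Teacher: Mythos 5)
Your proposal is correct and, at its core, follows the same strategy as the paper: both arguments walk through the case analysis of the $\NNIL$-algorithm and match each reduction step with A1--A4, B1--B3, with B2 carrying the weight in the critical sub-case where the antecedent is a conjunction of implications, and both defer the same technical points (the $[B]'Z$ branch and the termination measure) to Visser's analysis. The organization differs, though: the paper runs a single induction on $\mathfrak{o}(A)$ over modal propositions, so the boxed case is absorbed via A4 and the replacement of $\Box B_i$ by $\Box B_i^+$ is buried inside case 3(c)(ii); you instead factor the proof into an outer induction on $\mathfrak{d}A$, a substitution-closed key lemma $E\theta\blrt E^*\theta$ for non-modal $E$, and a final congruence step. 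Your decomposition tracks the official definition of $A^+$ more faithfully, and your observation that the direction $E^*\theta\brt E\theta$ comes for free from A1 together with $\IPC_\Box\vdash E^*\ra E$ (part 1 of \autoref{Theorem-NNIL Crucial Properties}) is cleaner than the paper's hand-derivation of the analogous ${\sf iK4}$-implication in \autoref{Eq-100}.

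One step needs more care than you give it: the congruence step where $\Box B_i$ is replaced by $\Box B_i^+$ \emph{in antecedent position}. B3 only rewrites the consequent under a fixed atomic-or-boxed antecedent; it does not let you pass from $\Box B_i^+\ra G$ to $\Box B_i\ra G$ merely because $\Box B_i\blrt\Box B_i^+$, and A1 does not apply since ${\sf iK4}\nvdash \Box B_i\ra\Box B_i^+$ in general (take $B_i=\neg\neg p$, so $B_i^+=p$). The replacement is nevertheless derivable: set $Y:=(\Box B_i^+\ra G)\wedge\Box B_i$. Then ${\sf iK4}\vdash(\Box B_i^+\ra G)\ra(\Box B_i\ra Y)$ gives $(\Box B_i^+\ra G)\brt(\Box B_i\ra Y)$ by A1; from $Y\brt\Box B_i\brt\Box B_i^+$ (A1, A4, A2) and $Y\brt(\Box B_i^+\ra G)$ (A1) one gets $Y\brt G$ by A3, A1 and A2; and B3 (the antecedent $\Box B_i$ is boxed) then yields $(\Box B_i\ra Y)\brt(\Box B_i\ra G)$, so A2 closes the loop, and symmetrically for the converse. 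You should state and prove this small lemma explicitly, since without it your congruence induction does not close at implication nodes; to be fair, the paper's own proof is equally silent on this point, hiding it in the phrase ``similar to the previous case'' at step 3(c)(ii).
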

\begin{proof}
We prove the desired result by induction on $\mathfrak{o}(A)$. Suppose we
have the desired result for each  proposition $B$ with
$\mathfrak{o}(B)<\mathfrak{o}(A)$. We treat $A$
by the following cases.

\begin{enumerate}[leftmargin=*]
\item (A1) $A$ is atomic. Then $A^+=A$  by definition,
and the result holds trivially.
\item (A1-A4, B1) $ A=\Box B , A=B\wedge C, A=B\vee C$. All these cases hold
by induction hypothesis. In boxed case, we use induction
hypothesis and A4. In conjunction, we use  A1-A3 and in
disjunction we use A1, A2 and B1.
\item $ A=B \ra C $. There are several sub-cases. Similar to the
definition of the $\NNIL$-algorithm, an occurrence of a sub-formula $B$ of $A$ is
said to be an {\em outer occurrence} in $A$, if it is neither in
the scope of a $\Box$ nor in the scope of $\ra$.
\begin{enumerate}[leftmargin=*]
\item (A1-A3) $C$ contains an outer occurrence of a conjunction.
We can treat this case using induction hypothesis and
$\TNNIL$-algorithm.
\item (A1-A3) $B$ contains an outer occurrence of a disjunction.
We can treat this case by induction hypothesis and
$\TNNIL$-algorithm.
\item $B=\bigwedge X$ and $C=\bigvee Y$, where $X$ and  $Y$ are sets
of implications, atoms and boxed formulae. We have several
sub-cases:
\begin{enumerate}[leftmargin=*]
\item (A1, A2, B3)  $X$ contains atomic variables. Let $p$ be an atomic
variable in $X$. Set $D:=\bigwedge(X\setminus\{p\})$. Then $A^+ \equiv p\to (D\to C)^+$.
On the other hand, we have by induction hypothesis and A2 and B3, that
$p\ra (D\ra C)^+\blrt p\ra(D\ra C)$. Finally by A1 and A2, we have $A^+\blrt A$.
\item (A1, A2, B3) $X$ contains boxed formula. Similar
to the previous case.
\item (A1, A2) $X$ contains $\top$ or $\bot$. Trivial.
\item (A1-A3, B1-B3) $X$ contains only implications. This case needs the
axiom B2 and it seems to be the interesting case. we have: 
$$A^+\equiv \bigwedge\left\lbrace\left(B\!\downarrow\!
D\ra C\right)^+ \mid D\in
X\right\rbrace\wedge\bigvee\{([B]'E)^+:E\in Z\} \quad$$
%$$A^+\equiv \bigwedge\left\lbrace\left(B\!\downarrow\!
%D\ra C\right)^+ \mid D\in
%X\right\rbrace\wedge\left([B]'Z\right)^+\quad \text{\ \ }$$
%We only treat the first case, i.e. when ${\mathfrak{o}([B]Z)<\mathfrak{o}(A)}$. The other case is similar and left to the reader. 
By the argument in \cite{Visser02}, we have $\mathfrak{o}\left(B\!\downarrow\! D\ra C\right)<\mathfrak{o}(A)$ and 
${\mathfrak{o}([B]'E)<\mathfrak{o}(A)}$ and hence one can apply induction hypothesis on $B\!\downarrow\! D\ra C$ and $[B]'E$.
Then by induction hypothesis, A1-A3, B1 and B3, we have:
\begin{align*}
A^+&\blrt \bigwedge\left\lbrace B\!\downarrow\!
D\ra C \mid D\in
X\right\rbrace\wedge [B]'Z
\end{align*}

First we show that for each $E\in Z$,
\begin{equation}\label{Eq-100}
{\sf iK4}\vdash\left(\bigwedge\{(B\!\downarrow\! D)\ra C\mid D\in X\}\wedge [B]'E\right)\ra A
\end{equation} 
Since $[B]E$ and $[B]'E$ are $\IPC_\Box$-equivalent (\Cref{Remark0}), it's enough to show that 
\begin{equation}\label{Eq-101}
{\sf iK4}\vdash\left(\bigwedge\{(B\!\downarrow\! D)\ra C\mid D\in X\}\wedge [B]E\right)\ra A
\end{equation} 
If $E=C$, we are done by $\IPC_\Box\vdash[B]C\ra(B\ra C)$. So let 
$E$ be the antecedent of  some $E\ra F\in X$. We reason in ${\sf iK4}$. Assume
$\bigwedge\{(B\!\downarrow\! D\ra C\mid D\in X\}$, $[B]E$ and $B$ as the hypothesis. We
want to derive $C$.  From $B$ and $[B]E$, we
derive $E$. Also from $B$, we derive $E\ra F$, and so $F$. Hence
we have $\bigwedge(X\setminus\{E\ra F\})\wedge F$, which implies
$C$, as desired.

Now  \cref{Eq-101} by use of A1 and A2 implies $A^+\brt A$.  

To show the other way around, i.e.  $A\brt A^+$, 
we first show
\begin{equation}\label{Eq-101}
 A\brt \bigwedge\left\lbrace
B\!\downarrow\! D\ra C \mid D\in X\right\rbrace\wedge[B]'Z
\end{equation}
 and then  by use of induction hypothesis and A2, we can deduce 
$A\brt A^+$, as desired. So it remains to show that \cref{Eq-101} holds. 
We have $\IPC_\bo\vdash A\ra \bigwedge\left\lbrace B\!\downarrow\!
D\ra C \mid D\in X\right\rbrace$, and hence by A1, 
$A\brt \bigwedge\left\lbrace B\!\downarrow\! D\ra C \mid D\in X\right\rbrace $. 
On the other hand, by \Cref{Remark1}, we have  $A\brt [B]'Z$. Now A3 implies \cref{Eq-101}, as desired.
\end{enumerate}
\end{enumerate}
\end{enumerate}
\end{proof}

\begin{corollary}\label{corollar HA-NNIL approximation is propositionally equivalent}
$\lles\vdash A^-\lr A$.
\end{corollary}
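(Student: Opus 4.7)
The plan is to reduce this corollary directly to \autoref{Theorem HA-NNIL approximation is propositionally equivalent} plus the defining axiom schema of $\lles$. First I would unpack the definition of $(\,)^-$ from \autoref{corollar HA-NNIL properties}: write $A = B(\Box C_1,\ldots,\Box C_n)$ where $B(p_1,\ldots,p_n)$ is a non-modal proposition and $\Box C_1,\ldots,\Box C_n$ enumerate the outermost boxed subformulas of $A$ (with fresh $p_i$'s). By definition, $A^- = B(\Box C_1^+,\ldots,\Box C_n^+)$. So to prove $\lles\vdash A\lr A^-$ it suffices to show, for each $i$, that $\lles\vdash \Box C_i\lr\Box C_i^+$, and then conclude by substituting equivalents inside the non-modal propositional context $B$.

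The first step comes for free: \autoref{Theorem HA-NNIL approximation is propositionally equivalent} gives $C_i\blrt C_i^+$, i.e.\ both $C_i\brt C_i^+$ and $C_i^+\brt C_i$. Now the definition of $\lles$ includes every schema instance of the form $\Box X\ra\Box Y$ whenever $X\brt Y$; applying this in both directions with $X=C_i$, $Y=C_i^+$ yields $\lles\vdash\Box C_i\lr\Box C_i^+$ for each $i$.

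For the final step, the equivalences $\Box C_i\lr\Box C_i^+$ propagate through the non-modal context $B$ by pure propositional reasoning. Concretely, since $B$ is a non-modal formula in the variables $p_1,\ldots,p_n$, one has the replacement principle $\IPC_\Box\vdash \bigwedge_i(q_i\lr r_i)\ra(B(q_1,\ldots,q_n)\lr B(r_1,\ldots,r_n))$; instantiating with $q_i:=\Box C_i$ and $r_i:=\Box C_i^+$ and combining with the equivalences produced in the previous step gives $\lles\vdash B(\Box C_1,\ldots,\Box C_n)\lr B(\Box C_1^+,\ldots,\Box C_n^+)$, which is exactly $\lles\vdash A\lr A^-$.

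There is essentially no obstacle, because all the real work is concentrated in \autoref{Theorem HA-NNIL approximation is propositionally equivalent}: the axioms $\{\Box X\ra\Box Y\mid X\brt Y\}$ were added to $\lles$ precisely so that a $\blrt$-equivalence under a box becomes a boxed biconditional in $\lles$, after which a standard propositional substitution finishes the proof.
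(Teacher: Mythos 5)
Your proof is correct and follows essentially the same route as the paper: decompose $A$ as $B(\Box C_1,\ldots,\Box C_n)$, use \autoref{Theorem HA-NNIL approximation is propositionally equivalent} together with the axiom schema $\{\Box X\ra\Box Y\mid X\brt Y\}$ of $\lles$ to get $\lles\vdash\Box C_i\lr\Box C_i^+$, and finish by propositional replacement inside the non-modal context $B$. The only difference is that you spell out the appeal to the $\brt$-axioms explicitly, which the paper leaves implicit.
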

\begin{proof}
Let $A=B(\Box C_1,\Box C_2,\ldots,\Box C_n)$
where $ B(p_1,\ldots,p_n)$ is a non-modal proposition. 
 By definition of
$A^-$, we have $A^-=B(\Box
C_1^+,\ldots,\Box C_n^+)$. 
%Now by Corollary
%\Cref{corollar-NNIL properties}, we can deduce that ${\sf iK4}\vdash
%B(\Box C_1^+,\ldots,\Box C_n^+)\lr B(\Box
%(C_1^\Box)^+,\ldots,\Box (C_n^\Box)^+)$. 
Then 
\Cref{Theorem HA-NNIL approximation is propositionally equivalent}
implies that 
${\lles\vdash \Box (C_i)^+\lr \Box C_i}$. Hence $\lles\vdash A^-\lr A$, as desired.
\end{proof}

%%%%%%%%%%%%%%%%%%%%%%%%%%%%%%%%%%%%%%%%%%%%%%%%%%%%%%%%%%%%%%%

\subsection{$\TNNIL$-Conservativity of $\LC$ over $\lle$}\label{sec.tnnil.conservativity}
It is clearly the case that $\LC\supseteq\lle$. One can use
Kripke models (from the next section) to show
$\neg\neg\Box\bot\in\LC\setminus\lle$. This implies that the
inclusion is strict.  As we will see later in this section, $\LC$
and $\lle$ have the same $\TNNIL$-theorems. To prove this, we need
some lemmas.

\begin{lemma}
${\sf iK4}+{\sf Le}^+\vdash {\sf Le}$.
\end{lemma}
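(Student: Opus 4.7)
The statement to prove is ${\sf iK4} \vdash {\sf Le}^+ \to {\sf Le}$, i.e., from $\Box A \to \Box A^l$ (for all $A$) we can derive $\Box(B \vee C) \to \Box(\Box B \vee C)$ for all $B, C$. The plan is to specialize ${\sf Le}^+$ to $A := B \vee C$ and then simplify the right-hand side using a general fact about Leivant's translation.

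First I would verify the auxiliary fact that $\IPC_\Box \vdash A^l \to A$ for every modal proposition $A$, by induction on the complexity of $A$. The atomic/boxed case is trivial since $A^l = A$; the conjunction and the two implication cases follow directly from the inductive hypothesis; and in the disjunction case $(A_1 \vee A_2)^l = \Boxdot A_1^l \vee \Boxdot A_2^l$ implies $A_1^l \vee A_2^l$ (because $\Boxdot X \to X$), and then $A_1 \vee A_2$ by IH. None of these steps uses anything beyond $\IPC_\Box$.

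Next, instantiate ${\sf Le}^+$ with $A := B \vee C$ to obtain
$$\Box(B \vee C) \to \Box(\Boxdot B^l \vee \Boxdot C^l).$$
Using the auxiliary fact above together with necessitation and the K axiom of ${\sf iK4}$, we have $\Box B^l \to \Box B$, so $\Boxdot B^l = B^l \wedge \Box B^l$ entails $\Box B$ (taking the second conjunct). On the other hand, $\Boxdot C^l \to C^l \to C$ (taking the first conjunct and applying the auxiliary fact). Hence ${\sf iK4} \vdash \Boxdot B^l \vee \Boxdot C^l \to \Box B \vee C$. Applying necessitation and K once more yields
$$\Box(\Boxdot B^l \vee \Boxdot C^l) \to \Box(\Box B \vee C).$$
Chaining this with the instance of ${\sf Le}^+$ above gives $\Box(B \vee C) \to \Box(\Box B \vee C)$, which is exactly ${\sf Le}$.

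There is no real obstacle here; the only point that needs care is the asymmetric use of the two conjuncts of $\Boxdot$ on the $B$-side versus the $C$-side (the $B$-side needs $\Box B^l \to \Box B$ while the $C$-side needs $C^l \to C$), which is precisely the reason Leivant's translation puts $\Boxdot$ in front of disjuncts in the first place.
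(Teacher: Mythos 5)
Your proof is correct and follows essentially the same route as the paper: instantiate ${\sf Le}^+$ at $A:=B\vee C$ and then, using necessitation and distribution, reduce $\Box(\Boxdot B^l\vee\Boxdot C^l)$ to $\Box(\Box B\vee C)$ via the fact that $A^l\to A$ is derivable. The paper's own proof is terser (it silently identifies $\Boxdot B^l\vee\Boxdot C^l$ with $\Boxdot B\vee\Boxdot C$ and says the rest holds ``inside ${\sf iK4}$''), whereas you spell out the asymmetric use of the two conjuncts of $\Boxdot$; that is a faithful filling-in of the same argument, not a different one.
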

\begin{proof}
Assume some axiom instance of ${\sf Le}$, $\Box(B\vee
C)\ra\Box(\Box B\vee C)$. Let $A:=B\vee C$. By axiom schema
${\sf Le}^+$, we have $\Box A\ra\Box A^l$, which is
$\Box(B\vee C)\ra\Box(\Boxdot B\vee \Boxdot C)$. This
implies (inside ${\sf iK4}$) $\Box(B\vee C)\ra\Box(\Box B\vee
C)$ .
\end{proof}

\begin{lemma}\label{Lemma-leivant's translation properties}
For each modal proposition $A$,
\begin{enumerate}
\item \label{1Lemma-leivant's translation properties}If $A\in{\sf NOI}$, then ${\sf iK4}+{\sf CP}_{\sf a}\vdash A\ra\Box A$.
\item \label{2Lemma-leivant's translation properties}${\sf iK4}\vdash A^l\ra A$.
\item \label{3Lemma-leivant's translation properties}If $A\in {\sf NOI}$, then ${\sf iK4}+\CP_{\sf a}\vdash A^l\lr A$.
\item \label{4Lemma-leivant's translation properties}${\sf LLe}^+\vdash \Box A^l \lr\Box A$.
\end{enumerate}
\end{lemma}
\begin{proof}
Proofs of  \cref{1Lemma-leivant's translation properties,2Lemma-leivant's translation properties,3Lemma-leivant's translation properties} are routine by induction on $A$. \Cref{4Lemma-leivant's translation properties}
is deduced from \cref{2Lemma-leivant's translation properties}, i.e. we have $\Box A^l\ra\Box A$, by
\cref{2Lemma-leivant's translation properties} and $\Box A^l\leftarrow\Box A$ is exactly ${\sf
Le}^+$.
\end{proof}

\begin{lemma}\label{Lemma-leivant's translation vs box translation}
For any $\TNNIL$ formula $A$, we have
\begin{enumerate}
\item \label{1Lemma-leivant's translation vs box translation}$\lle\vdash\Boxdot A^l\lr\Boxdot A^\Box$,
\item \label{2Lemma-leivant's translation vs box translation}$\lle\vdash A^\Box\ra A$,
\item \label{3Lemma-leivant's translation vs box translation}If $A\in {\sf NOI}$, then $\lle\vdash A^\Box\lr A$,
\item \label{4Lemma-leivant's translation vs box translation}$\lle\vdash \Box A\lr\Box A^\Box$.
\end{enumerate}
\end{lemma}
\begin{proof}
We prove all items by induction on the complexity of $A$,
simultaneously. In the middle of proof, when we are using
induction hypothesis of item $i, 1\leq i\leq 4$, we mention the
number in parenthesis that number and also when we deduce some
item of lemma, we also mention the number of that part in
parentheses as well.

\vspace{.1in}

\noindent{\em Atomic:} For atomic $A$, we have $A^l=A$ and
$A^\Box=\Boxdot A $, hence by properties of $\Boxdot$ 
(\Cref{Lemma-Properties of boxdot} \cref{1Lemma-Properties of boxdot}), 
${\sf iK4}\vdash \Boxdot A^l\lr\Boxdot A^\Box$ (\cref{1Lemma-leivant's translation vs box translation}) and  
$\lle\vdash A^\Box \lr A$ (\cref{2Lemma-leivant's translation vs box translation,3Lemma-leivant's translation vs box translation}),
 which by necessitation, that implies
$\lle\vdash\Box A^\bo\lr\bo A$ (\cref{4Lemma-leivant's translation vs box translation}).

\vspace{.1in}

\noindent{\em Boxed:} Let $A=\Box B$. Then by definition,
$A^l=A$ and $A^\Box=\Box B^\Box$. Hence by induction
hypothesis (\cref{1Lemma-leivant's translation vs box translation}), 
$\lle\vdash \Boxdot B^l\lr\Boxdot
B^\Box$. Then $\lle\vdash \Boxdot\Box B^l\lr\Boxdot\Box
B^\Box$. On the other hand, by  \Cref{Lemma-leivant's translation properties} 
\cref{4Lemma-leivant's translation properties}, $\lle\vdash \Box B^l\lr\Box B$.
Hence $\lle\vdash \Boxdot\Box B\lr\Boxdot\Box B^\Box$
(\cref{1Lemma-leivant's translation vs box translation}). Also by induction hypothesis 
(\cref{4Lemma-leivant's translation vs box translation}), $\lle\vdash
\Box B\lr\Box B^\Box$. Hence $\lle\vdash A\lr A^\Box$
(\cref{2Lemma-leivant's translation vs box translation,3Lemma-leivant's translation vs box translation}). That, by necessitation, implies $\lle\vdash \bo
A\lr \bo A^\Box$ (\cref{4Lemma-leivant's translation vs box translation}).

\vspace{.1in}

\noindent{\em Conjunction: } This case is trivial.

\vspace{.1in}

\noindent{\em Disjunction: } Assume $A=B\vee C$. If $A\in {\sf
NOI}$, then $B,C\in {\sf NOI}$ and hence induction hypothesis for
$B$ and $C$  (\cref{3Lemma-leivant's translation vs box translation}) 
implies $\lle\vdash A^\Box\lr A$ (\cref{3Lemma-leivant's translation vs box translation}).
 For the other parts, we have, by definition, $A^l=\Boxdot
B^l\vee\Boxdot C^l$. Hence by induction hypothesis (\cref{1Lemma-leivant's translation vs box translation}),
$\lle\vdash A^l\lr(\Boxdot B^\Box\vee\Boxdot C^\Box)$.
Hence, by  \Cref{Lemma-Properties of boxdot} \cref{2Lemma-Properties of boxdot}, we derive
 $\lle\vdash \Boxdot
A^l\lr\Boxdot A^\Box$ (\cref{1Lemma-leivant's translation vs box translation}). 
To prove the \cref{2Lemma-leivant's translation vs box translation}, 
note that, by induction hypothesis (\cref{2Lemma-leivant's translation vs box translation}), $\lle\vdash
B^\Box\ra B$ and $\lle\vdash C^\bo\ra C$. Hence $\lle\vdash
(B\vee C)^\bo\ra (B\vee C)$ (\cref{2Lemma-leivant's translation vs box translation}). 
To prove \cref{4Lemma-leivant's translation vs box translation}, we note
that, by \cref{1Lemma-leivant's translation vs box translation} for $A$, we have  $\lle\vdash \Boxdot
A^l\lr\Boxdot A^\Box$. Hence $\lle\vdash \bo\Boxdot
A^l\lr\bo\Boxdot A^\Box$, which implies $\lle\vdash \bo
A^l\lr\bo A^\Box$ (by  \Cref{Lemma-Properties of boxdot} \cref{1Lemma-Properties of boxdot}). 
Now  \Cref{Lemma-leivant's translation properties} \cref{4Lemma-leivant's translation properties}
 implies $\lle\vdash \bo A\lr\bo A^\Box$ (\cref{4Lemma-leivant's translation vs box translation}).

\vspace{.1in}

\noindent{\em Implication: } Assume $A=B\ra C$. Clearly $A\not\in
{\sf NOI}$ and $B\in {\sf NOI}$. We only show induction claim for
\cref{1Lemma-leivant's translation vs box translation}. 
The other items can be shown easily. By induction
hypothesis (\cref{3Lemma-leivant's translation vs box translation}), $\lle\vdash B^\Box\lr B$, and also by
 \Cref{Lemma-leivant's translation properties} \cref{1Lemma-leivant's translation properties}, $\lle\vdash
\Boxdot B\lr B$. Note that $\Boxdot A^l = \Boxdot(B\ra C^l)$ and
hence $\lle\vdash \Boxdot A^l\lr\Boxdot(\Boxdot B\ra C^l)$. By
 \Cref{Lemma-leivant's translation properties}  \cref{3Lemma-leivant's translation properties}, $\lle\vdash
B^l\lr B$, hence $\lle\vdash \Boxdot A^l\lr\Boxdot(\Boxdot B^l\ra
C^l)$. Now properties of $\Boxdot$ implies that $\lle\vdash\Boxdot
A^l\lr\Boxdot(\Boxdot B^l\ra\Boxdot C^l) $, and induction
hypothesis (\cref{1Lemma-leivant's translation vs box translation}), implies $\lle\vdash \Boxdot
A^l\lr\Boxdot(\Boxdot B^\Box\ra\Boxdot C^\Box)$. This
implies, again by properties of $\Boxdot $, the desired result,
$\lle\vdash \Boxdot A^l\lr\Boxdot A^\Box$ (\cref{1Lemma-leivant's translation vs box translation}).
\end{proof}

\begin{lemma}\label{Lemma-LLe+-versus-iGL}
If $\LC\vdash A$, then ${\sf iGL}\vdash A^\Box$.
\end{lemma}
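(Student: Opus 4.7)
The natural approach is induction on the length of a proof of $A$ in $\LC$. By \autoref{Proposition-propositional properties of Box translation}, we already know that ${\sf iGL}$ is closed under box-translation, and in particular the induction cases for the axioms of $\IPC_\Box$, the $\K4$-axioms, L\"ob's axiom, and the inference rules (modus ponens and necessitation) have all been handled there. So essentially the only new thing to verify is that the box-translation of an instance of ${\sf CP}$ is provable in ${\sf iGL}$.

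Fix a formula $B$ and consider the ${\sf CP}$ instance $B\to\Box B$. Unfolding the definition,
\[(B\to\Box B)^\Box=\bigl(B^\Box\to\Box B^\Box\bigr)\wedge\Box\bigl(B^\Box\to\Box B^\Box\bigr).\]
The first conjunct is exactly the content of \autoref{Lemma-Box-translation-prop-0}, which tells us ${\sf iK4}\vdash B^\Box\to\Box B^\Box$, and hence ${\sf iGL}\vdash B^\Box\to\Box B^\Box$. Applying the necessitation rule then produces the second conjunct, so ${\sf iGL}\vdash(B\to\Box B)^\Box$, as required.

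Putting this together with the axiom/rule cases already treated in \autoref{Proposition-propositional properties of Box translation}, the induction goes through: for every step of the proof in $\LC$, the box-translation of the conclusion is derivable in ${\sf iGL}$. I do not anticipate any serious obstacle here, since the only structurally new case beyond the box-translation closure proof for ${\sf iGL}$ is ${\sf CP}$, and that case is immediately handled by \autoref{Lemma-Box-translation-prop-0} followed by necessitation.
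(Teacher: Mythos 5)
Your proof is correct and matches the paper's argument in substance: both reduce the claim to the closure of ${\sf iGL}$ under box-translation together with the observation that $(B\to\Box B)^\Box$ is derivable from \autoref{Lemma-Box-translation-prop-0} plus necessitation. The paper merely packages this differently, writing $\LC\vdash A$ as ${\sf iGL}\vdash\bigwedge_i\Boxdot(B_i\to\Box B_i)\to A$ and applying \autoref{Proposition-propositional properties of Box translation} to that single implication before discharging the translated ${\sf CP}$-hypotheses, rather than re-running the induction with an extra case as you do.
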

\begin{proof}
From $\LC\vdash A$ we have 
${\sf iGL}\vdash\bigwedge_i\Boxdot(B_i\ra\Box B_i)\ra A$. Hence
by \Cref{Proposition-propositional properties of Box translation}, we have 
${\sf iGL}\vdash[\bigwedge_i\Boxdot(B_i\ra\Box B_i)\ra A]^\Box$.  This implies 
${\sf iGL}\vdash\bigwedge_i\Boxdot(B_i^\Box\ra \Box B_i^\Box)\ra A^\Box$.
 Now \Cref{Lemma-Properties of boxdot} \cref{2Lemma-Properties of boxdot} implies ${\sf iGL}\vdash A^\Box$, as desired.
\end{proof}

\begin{theorem}\label{Theorem-TNNIL Conservativity of LC over LLe+}
$\LC$ is $\TNNIL$-conservative over $\lle$.
\end{theorem}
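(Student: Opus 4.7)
The plan is to combine two preceding lemmas to obtain the conservativity result in essentially one step, so the real work has already been done in setting up the box-translation machinery.

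First I would fix an arbitrary $\TNNIL$ formula $A$ and assume $\LC\vdash A$. The immediate move is to invoke \autoref{Lemma-LLe+-versus-iGL}, which gives ${\sf iGL}\vdash A^\Box$. Since $\lle$ is an extension of ${\sf iGL}$ (by \autoref{Def-Axiom schema and modal theories}), we also have $\lle\vdash A^\Box$. At this point the hypothesis that $A$ is $\TNNIL$ becomes essential: part 2 of \autoref{Lemma-leivant's translation vs box translation} tells us that $\lle\vdash A^\Box\ra A$ for every $\TNNIL$ formula $A$. Combining these two facts with modus ponens yields $\lle\vdash A$, as required.

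The key point to emphasize is that conservativity is where the $\TNNIL$ restriction is actually used: the implication $A^\Box\ra A$ is not in general provable in $\lle$ for arbitrary modal formulas, but the inductive analysis of $\TNNIL$-formulas carried out in \autoref{Lemma-leivant's translation vs box translation} shows that it holds throughout this class. Thus there is no further obstacle to overcome in the proof itself; the entire content of the theorem has been packaged into \autoref{Lemma-LLe+-versus-iGL} (which transfers $\LC$-theorems to ${\sf iGL}$-theorems via the box translation, leaning on \autoref{Proposition-propositional properties of Box translation}) and into the $\TNNIL$-specific translation lemma.

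If I were writing the proof for publication I would present it in two short lines: apply \autoref{Lemma-LLe+-versus-iGL} to move from $\LC$ to ${\sf iGL}\subseteq\lle$ at the cost of replacing $A$ by $A^\Box$, then apply \autoref{Lemma-leivant's translation vs box translation}(2) to erase the $\Box$-translation. No new constructions or case analyses are needed.
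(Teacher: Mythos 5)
Your proposal is correct and coincides with the paper's own proof: the paper likewise applies \autoref{Lemma-LLe+-versus-iGL} to obtain ${\sf iGL}\vdash A^\Box$, notes $\lle\vdash A^\Box$, and concludes with \autoref{Lemma-leivant's translation vs box translation}(2). No differences in approach.
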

\begin{proof}
Let $\LC\vdash A$. 
 From \Cref{Lemma-LLe+-versus-iGL} we have  
${\sf iGL}\vdash A^\Box $
and hence $\lle\vdash A^\Box$. Now  
\Cref{Lemma-leivant's translation vs box translation} \cref{2Lemma-leivant's translation vs box translation} 
implies that $\lle\vdash A$.
\end{proof}

%%%%%%%%%%%%%%%%%%%%%%%%%%%%%%%%%%%%%%%%%%%%%%%%%%%%%%%%%%%%%%%%%%%%%%%%%%%%%%%%%%%%%%%%%%
\subsection{Kripke semantics for \LC}\label{Sec-PropModKripke}

Let us first review results and notations from \cite{IemhoffT}
which will be used here. Assume two binary relations $R$ and $S$
on a set. Define $\alpha(R\circ S)\gamma$ iff there exists some $\beta$ such that 
$\alpha R \beta $ and $\beta S \gamma$.

A Kripke model $\mathcal{K}$, for intuitionistic modal logic, is
a quadruple $(K,<,\R,V)$, such that $K$ is a set (we call
its elements as nodes), $(K,<)$ is a partial ordering, $\R$ is a
binary relation on $K$ such that $(\leq\circ\, \R)\subseteq\;\R$, and
$V$ is a binary relation between nodes and atomic
variables such that $\alpha V p$ and $\alpha \leq \beta$ implies $\beta V
p$. Then we can extend $V $ to the modal language with $\R$
corresponding to $\Box$ and $\leq $ for intuitionistic $\ra$.
More precisely, we define $\Vdash$ inductively as an extension of $V$ as follows:
\begin{itemize}
 \item $\kcal,\alpha\Vdash p$ iff $\alpha Vp$, for atomic variable $p$,
 \item $\kcal,\alpha\Vdash A\vee B$ iff $\kcal,\alpha\Vdash A$ or $\kcal,\alpha\Vdash B$,
 \item $\kcal,\alpha\Vdash A\wedge B$ iff $\kcal,\alpha\Vdash A$ and $\kcal,\alpha\Vdash B$,
 \item $\kcal,\alpha\nVdash\bot$ and $\kcal,\alpha\Vdash\top$,
 \item $\kcal,\alpha\Vdash A\ra B$ iff for all $\beta\geq\alpha$, $\kcal,\beta\Vdash A$ implies $\kcal,\beta\Vdash B$,
 \item $\kcal,\alpha\Vdash \bo A$ iff for all $\beta$ with $\alpha\,\R\, \beta$, we have $\kcal,\beta\Vdash A$.
\end{itemize}
If also we assume that $\R$ is empty and restrict our attention to non-modal language, we have the usual 
Kripke models for intuitionistic (non-modal) logic.
In the rest of paper, we may simply  write $\alpha\Vdash A$ instead of $\kcal,\alpha\Vdash A$,
 if no confusion is likely.
By an induction on the complexity of $A$, one can observe that $\alpha\Vdash A$
implies $\beta\Vdash A$ for all $A$ and $\alpha\leq \beta$. We define the
following notions.
\begin{itemize}
\item If $\alpha\leq\beta$, $\beta$ is called to be above $\alpha$ and
$\alpha$ is beneath $\beta$. If  $\alpha\;\R\;\beta$, $\beta $ is
called to be a successor of $\alpha$. We define $\R(\alpha)$ to be
the set of all successors of $\alpha$.
\item A Kripke model is finite if its set of nodes is finite.
A Kripke model is tree-frame if its set
of nodes with ordering $\leq$ is a tree.
\item A Kripke model $\mathcal{K}=(K,<,\R,V)$ is
reverse well-founded iff $K$ is well-founded with the ordering
$\R^{-1}$.
\item $\mathcal{K}$ is called {\em neat} iff  $\alpha\,\R\,\gamma$ and $\alpha\leq\beta\leq\gamma$ implies $\alpha\,\R\,\beta$
or $\beta\,\R\,\gamma$.
\item  $\mathcal{K}$ is called {\em brilliant} iff $(\R\;\circ \leq)\subseteq\R$. (\cite{IemhoffT})
\item  $\mathcal{K}$ is called {\em perfect} iff it is
brilliant, reverse well-founded and $R\subseteq\,<$. 
\item Suppose $X$ is a set of propositions that is closed under
sub-formulae (we call such $X$ to be adequate). An $X$-saturated set of
propositions $\Gamma$ with respect to some theory $T$ is a subset
of $X$ that
\begin{itemize}
 \item For each $A\vee B\in X $, $T+\Gamma\vdash A\vee B$ implies $A\in\Gamma$ or $B\in \Gamma$.
 \item For each $A\in X$, $T+\Gamma\vdash A$ implies $A\in\Gamma$.
\end{itemize}
\end{itemize}
\begin{lemma}\label{Lemma-saturation}
Let $T\nvdash A$ and let $X$ be an adequate set. Then there is an $X$-saturated  set $\Gamma$ such that 
$T\cap X\subseteq\Gamma\nvdash A$.
\end{lemma}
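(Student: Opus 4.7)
The plan is to build $\Gamma$ by a standard Lindenbaum-style maximalization inside $X$. Let $\mathcal{F}$ be the collection of all subsets $\Delta\subseteq X$ satisfying $T\cap X\subseteq\Delta$ and $T+\Delta\nvdash A$. By hypothesis, $T\cap X$ itself belongs to $\mathcal{F}$, so $\mathcal{F}$ is nonempty. Since derivability in $\IPC_\Box$ is compact (only finitely many premises are ever used in any given proof), any chain in $\mathcal{F}$ (ordered by inclusion) has its union in $\mathcal{F}$. Therefore, by Zorn's lemma, $\mathcal{F}$ contains a maximal element $\Gamma$.

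I would then verify the two saturation clauses from maximality. For the closure clause, suppose $B\in X$ and $T+\Gamma\vdash B$. If $B\notin\Gamma$, then $\Gamma\cup\{B\}\subseteq X$ and trivially $T+\Gamma\cup\{B\}\nvdash A$ (since it is derivationally equivalent to $T+\Gamma$), contradicting the maximality of $\Gamma$. Hence $B\in\Gamma$.

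For the disjunction clause, suppose $B\vee C\in X$ and $T+\Gamma\vdash B\vee C$, but neither $B$ nor $C$ lies in $\Gamma$. By maximality, $\Gamma\cup\{B\}$ and $\Gamma\cup\{C\}$ both fail to be in $\mathcal{F}$, which forces $T+\Gamma+B\vdash A$ and $T+\Gamma+C\vdash A$. By the deduction theorem and disjunction elimination in $\IPC_\Box$, this yields $T+\Gamma\vdash(B\vee C)\to A$, and combining with $T+\Gamma\vdash B\vee C$ we obtain $T+\Gamma\vdash A$, contradicting $\Gamma\in\mathcal{F}$. So one of $B,C$ must already lie in $\Gamma$.

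There is no real obstacle here; the only delicate points are (i) noting that the adequacy of $X$ guarantees $B,C\in X$ whenever $B\vee C\in X$, so the candidate extensions $\Gamma\cup\{B\}$ and $\Gamma\cup\{C\}$ stay inside $X$, and (ii) invoking compactness to close $\mathcal{F}$ under unions of chains (if the chain's union proved $A$, some finite initial segment would already do so). If one prefers to avoid Zorn's lemma and $X$ is countable (which suffices for all applications in this paper), one can instead enumerate $X$ as $D_0,D_1,\ldots$ and inductively set $\Gamma_{n+1}:=\Gamma_n\cup\{D_n\}$ whenever this keeps $A$ underivable and $\Gamma_{n+1}:=\Gamma_n$ otherwise, then take $\Gamma:=\bigcup_n\Gamma_n$; the saturation properties follow by the same argument as above.
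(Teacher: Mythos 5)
Your proof is correct and is exactly the standard Lindenbaum-style saturation argument; the paper itself gives no proof here but simply cites Iemhoff's thesis, where this same maximalization-plus-deduction-theorem argument appears. The two delicate points you flag (adequacy keeping $B,C$ inside $X$, and compactness for chains) are indeed the only ones that need checking, and your handling of them is fine.
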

\begin{proof}
See \cite{IemhoffT}.
\end{proof}

%There is a well-known technique that for each adequate $X$, and
%each theory $T\nvdash A$, we can construct some $X$-saturated set
%$\Gamma$ such that  $T\cap X\subseteq\Gamma\nvdash A$. For a
%proof, see \cite{IemhoffT}.
%\end{itemize}

\begin{theorem}\label{Theorem-Propositional Completeness LC}
$\LC$ is sound and complete for finite neat perfect Kripke
models with tree frames.
\end{theorem}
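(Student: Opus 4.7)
The plan is to prove soundness and completeness separately, with completeness built on a tree-shaped canonical model made from $X$-saturated sets. For soundness, I would proceed by induction on proofs in $\LC$, verifying each axiom and rule is validated in any perfect tree model $\kcal = (K, <, \R, V)$. The theorems of $\IPC_\Box$ are immediate. The normal modal axiom $\Box(A \to B) \to (\Box A \to \Box B)$ uses the standing assumption $(\leq; \R) \subseteq \R$. The transitivity axiom $\Box A \to \Box\Box A$ holds because $\R$ is actually transitive in any perfect model: if $\alpha \R \beta \R \gamma$, then $\R \subseteq <$ gives $\beta \leq \gamma$, and brilliance $(\R; \leq) \subseteq \R$ yields $\alpha \R \gamma$. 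L\"ob's axiom is handled by the standard descending-chain argument using reverse well-foundedness of $\R$. The completeness principle $A \to \Box A$ follows because $\R \subseteq <$ combined with persistence of forcing along $\leq$ means every $\R$-successor is a $\leq$-successor and so inherits any forced formula. Necessitation and modus ponens are standard.

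For completeness, suppose $\LC \nvdash A$. Let $X$ be an adequate set containing $A$, closed under subformulas and under the operation $B \mapsto \Box B$ for subformulas $B$. By \autoref{Lemma-saturation}, pick an $X$-saturated $\Gamma_0$ with $\LC \cap X \subseteq \Gamma_0$ and $A \notin \Gamma_0$. I would take as nodes finite sequences $\bar\Gamma = \langle \Gamma_0, \Gamma_1, \ldots, \Gamma_n \rangle$ of $X$-saturated sets with $\Gamma_i \subseteq \Gamma_{i+1}$, each transition marked as either a $\leq$-step or an $\R$-step. A legal $\R$-step from $\Gamma_i$ to $\Gamma_{i+1}$ requires both that every $\Box B \in \Gamma_i$ contributes $B$ and $\Box B$ to $\Gamma_{i+1}$, and that some $\Box C \in \Gamma_{i+1}$ does not lie in $\Gamma_i$. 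Define $<$ as strict prefix order and let $\bar\Gamma \R \bar\Delta$ iff $\bar\Delta$ extends $\bar\Gamma$ and at least one transition along the extension is an $\R$-step. Set $\bar\Gamma \, V \, p$ iff $p \in \Gamma_n$.

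One then checks that the frame is a tree by construction, that $\R \subseteq <$ is immediate, and that both $(\leq;\R) \subseteq \R$ and brilliance $(\R;\leq) \subseteq \R$ hold because tacking $\leq$-steps onto a path that already contains an $\R$-step preserves the presence of an $\R$-step. Reverse well-foundedness follows from the new-box condition together with the finiteness of $X$. The Truth Lemma $\bar\Gamma \Vdash B \Leftrightarrow B \in \Gamma_n$ for $B \in X$ is proved by induction on $B$: implication uses saturation in the usual intuitionistic way; for $B = \Box C$, if $\Box C \notin \Gamma_n$, then L\"ob plus the saturation of $\Gamma_n$ show that $\{D, \Box D : \Box D \in \Gamma_n\} \cup \{\Box C\} \cup \{\neg C\}$ is $\LC$-consistent and, applied to $X$, yields via \autoref{Lemma-saturation} a legal $\R$-step extension refuting $C$. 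Since there are only finitely many $X$-saturated subsets, the construction can be pruned to a finite tree, refuting $A$ at $\langle \Gamma_0 \rangle$.

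The main obstacle is arranging brilliance together with the tree structure while still producing the $\R$-successors the Truth Lemma demands. Defining $\R$ on paths as ``contains at least one $\R$-step'' solves brilliance and the propagation conditions cleanly, but then the $\Box$-case of the Truth Lemma must produce not just any extension refuting $C$ but specifically one whose first new transition is a legitimate $\R$-step with a fresh boxed witness; this is exactly where the closure of $X$ under the relevant $\Box$-applications, together with L\"ob's axiom, is used to keep the required consistent set inside the adequate fragment.
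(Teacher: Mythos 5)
Your argument is correct and rests on the same two pillars as the paper's proof---the saturation lemma (\autoref{Lemma-saturation}) applied to the adequate set $X=\{B,\Box B\mid B\in{\sf Sub}(A)\}$, and the L\"ob argument showing that $\{D,\Box D\mid\Box D\in\Gamma_n\}\cup\{\Box C\}$ cannot derive $C$ when $\Box C\notin\Gamma_n$, with the fresh witness $\Box C$ delivering reverse well-foundedness---but you package the construction differently. The paper works in two stages: it first builds the canonical model on the finite set of \emph{all} $X$-saturated sets ordered by inclusion, proves it is perfect (using the completeness principle to get $\R\subseteq<$), establishes the truth lemma there, and only afterwards unravels into a tree via strictly increasing sequences, transporting forcing along the last-element map. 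You build the tree in one pass from labelled paths, which makes brilliance, $(\leq;\R)\subseteq\R$, $\R\subseteq<$ and the tree property immediate; the cost is shifted to two points you leave implicit. First, finiteness: the collection of all labelled sequences with $\Gamma_i\subseteq\Gamma_{i+1}$ is infinite (stationary $\leq$-steps can be iterated), and the promised pruning has to be checked to be compatible with the truth lemma; the clean fix is to require strict inclusion at every transition, after which the finiteness of the set of $X$-saturated sets bounds both depth and branching, and the implication case still goes through because when the only saturated $\Delta\supseteq\Gamma_n\cup\{B\}$ with $\Delta\nvdash C$ equals $\Gamma_n$ the current node already refutes $B\ra C$. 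Second, your legal-$\R$-step presupposes $\Gamma_n\subseteq\Delta$ for the successor built in the box case, and this does not follow from the box-transfer clauses alone: it needs the completeness principle exactly as in the paper's verification of $\R\subseteq<$ (for $B\in\Gamma_n\cap{\sf Sub}(A)$ one has $\Box B\in X$, hence $\Box B\in\Gamma_n$ by $\CP$ and saturation, hence $B\in\Delta$). With these two details supplied, your construction is a valid and slightly more direct alternative to the paper's build-then-unravel route.
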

\begin{proof}
Soundness part can easily be proved by induction on the complexity
of formulae. For the completeness, we first find some finite
perfect Kripke counter-model for each $A$ with $\LC\nvdash A$,
and then convert it to a perfect Kripke model with finite 
tree frame.

Assume $\LC\nvdash A$. Let ${\sf Sub}(A)$ be the set of sub-formulae of
$A$. Then define  $$X:=\{B,\Box B \ | \ B\in {\sf Sub}(A)\}$$ It is
obvious that $X$ is a finite adequate set. We define
$\mathcal{K}=(K,<,\R,V)$ as follows. Take $K$ as the set of
all $X$-saturated sets with respect to $\LC$, and $\leq$ is the
subset relation over $K$. Define $\alpha\,\R\, \beta$ iff for all $\Box
B\in X$, $ \Box B\in \alpha$ implies  $B\in \beta$, and also there
exists some $\Box C\in \beta\setminus \alpha$. Finally define $\alpha V
p$ iff $p\in \alpha$,  for atomic $p$. \\
It only remains to show that $\mathcal{K}$ is a finite perfect
Kripke model that refutes $A$. To do this, we first show by
induction on $B\in X$ that $B\in \alpha$ iff $\alpha\Vdash B$, for each
$\alpha\in K$. The only non-trivial case is $B=\Box C$. Let
$\Box C\not\in \alpha$. We must show $\alpha\nVdash \Box C$. The other
direction is easier to prove and we leave it to reader. Let
$\beta_0:=\{D\in X\ |\ \alpha\vdash\Box D\}$. If $\beta_0,\Box C\vdash
C$, then, by definition of $\beta_0$, we have $\alpha\vdash\Box \beta_0$ and
hence by L\"{o}b's axiom,  $\alpha\vdash \Box C$, contradicting
$\Box C\not\in \alpha$. Hence $\beta_0,\Box C\nvdash C$ and so there
exists some $X$-saturated set $\beta$ such that $\beta\nvdash C$,
$\beta\supseteq \beta_0\cup\{\Box C\}$. Hence $\beta\in K$ and $\alpha\,\R\, \beta$.
Then by induction hypothesis, $\beta\nVdash C$ and hence $\alpha\nVdash
\Box C$.

Since $\LC\nvdash A$, by \Cref{Lemma-saturation}, there exists some
$X$-saturated set $\alpha\in K$ such that $\alpha\nvdash A$, and hence by
the above argument we have $\alpha\nVdash A$.
\\
$\mathcal{K}$ trivially satisfies all the properties of perfect
Kripke model. As a sample, we show that why $\R\subseteq\, <$ holds.
Assume $\alpha\,\R\, \beta$ and let $B\in \alpha$. If $B$ is boxed formula, like
$C$, then by definition, $C\in \beta$ and hence $\beta\vdash B$ and we are
done. So assume  $B$ is not a boxed formula. Then by definition
of $X$, we have $\Box B\in X$ and  by the completeness axiom in
$\LC$, we have  $\alpha\vdash\Box B$ and hence by definition of
$\R$, it is the case that $B\in \beta$. This shows $\alpha\subseteq \beta$ and
hence $\alpha\leq \beta$. But $\alpha$ is not equal to $\beta$, because $\alpha\,\R\, \beta$
implies existence of some $\Box C\in \beta\setminus \alpha$. Hence $\alpha<
\beta$, as desired.

Now we explain how to convert $\mathcal{K}$ to a 
 Kripke model $\mathcal{T}:=(T,<_t,\R_{t},V_t)\nVdash A$
 with a neat tree frame. Let $T$ be the set
of all finite (excluding empty sequence) sequences $\langle
\alpha_1, \ldots,\alpha_n\rangle$ such that ${\alpha_1 <\ldots<\alpha_n}$. Let
$\leq_t$ be the initial segment relation. 
Then  define $\langle \alpha_1, \ldots,\alpha_n\rangle\;\R_{t}\;\langle\alpha_1, \ldots,\alpha_{n+k}\rangle$
 iff $\alpha_{n+i} \,\R\, \alpha_{n+i+1}$ for some $0\leq i<k$.
Finally, define $\langle \alpha_1, \ldots,\alpha_n\rangle\; V_t \; p$, for atomic $p$, iff 
$\alpha_n\;V\;p$. Now one can prove by induction on
$B$, that for any $\alpha=\langle \alpha_1, \ldots,\alpha_n\rangle\in T$, $\mathcal{T},\alpha\Vdash B$ iff
$\kcal,\alpha_n\Vdash B$. Hence $\mathcal{T}\nVdash A$.
\end{proof}
\noindent Since $\LC$ has finite model property, as it is expected, we can easily deduce the decidability of $\LC$:
\begin{corollary}\label{Corollary-decidability of LC}
$\LC$ is decidable.
\end{corollary}
\begin{proof}
Let $A$ be given. Assume that $n$ is the number of elements of
$X$ defined in the above proof. It shows us that we should only
check if for all  Kripke models $\mathcal{K}$ with $2^n$
nodes (only over atomic variables that appear in $A$), we have
$\mathcal{K}\Vdash A$. If that was the case, we say ``yes" to
$\LC\vdash A$?, otherwise the answer is ``no"  to $\LC\vdash A$?.
\end{proof}

\subsubsection*{Relation to intuitionistic non-modal Kripke models}

The usual intuitionistic non-modal Kripke models are the same
Kripke models as is defined above, without the additional
relation $\R$. Extending it to all non-modal propositions is the
same as the  one for modal language. It is well-known that $\IPC$ is sound and
complete for non-modal Kripke models. We have the following
conservativity result.

\begin{theorem}\label{Theorem-nonmodal conservativity of LC over IPC}
$\LC+\bo\bot$ is conservative over $\IPC$ in non-modal language,
i.e. for any non-modal proposition $A$, if 
$\LC+\bo\bot\vdash A$, then $\IPC\vdash A$.
\end{theorem}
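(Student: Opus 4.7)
The plan is to argue contrapositively: I show that any non-modal $A$ with $\IPC\nvdash A$ admits a Kripke model of $\LC+\bo\bot$ at which $A$ fails. So assume $\IPC\nvdash A$ and, by standard Kripke completeness of $\IPC$ for non-modal intuitionistic Kripke models, fix a Kripke model $(K,<,V)$ with a node $\alpha_0\in K$ such that $\alpha_0\nVdash A$.

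Next I would expand this to a propositional modal Kripke model $\kcal=(K,<,\R,V)$ by taking $\R:=\emptyset$. The confluence requirement $(\leq;\R)\subseteq\R$ on Kripke models of intuitionistic modal logic is then vacuously satisfied, and trivially $\kcal$ is even perfect in the sense of \autoref{Sec-PropModKripke} (it is brilliant, reverse well-founded, and $\R\subseteq <$). By a one-line induction on the complexity of any modal formula $B$, one sees that $\beta\Vdash \bo B$ for every $\beta\in K$, since the condition ``for all $\gamma$ with $\beta\R\gamma$\ldots'' is vacuous. In particular, $\bo\bot$ is globally valid in $\kcal$.

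From the fact that every boxed formula is valid, all axioms of $\LC+\bo\bot$ become globally valid in $\kcal$: the distribution axiom, the transitivity axiom $\bo B\to\bo\bo B$, L\"ob's axiom $\bo(\bo B\to B)\to\bo B$, the completeness principle $B\to\bo B$, and the extra axiom $\bo\bot$ all have trivially valid consequents at every node. The rules (modus ponens and necessitation) clearly preserve global validity in $\kcal$, so by induction on the length of derivations, $\kcal$ validates every theorem of $\LC+\bo\bot$.

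Finally, since $A$ is non-modal, a routine induction on its complexity shows that the forcing relation on $\kcal$ restricted to non-modal formulas agrees with the forcing relation on the original $\IPC$ Kripke model $(K,<,V)$; in particular $\kcal,\alpha_0\nVdash A$. Combined with the preceding paragraph, this contradicts $\LC+\bo\bot\vdash A$. There is no real obstacle here, in the sense that the whole point is that adjoining $\bo\bot$ trivialises the modal layer; the only thing to be pedantic about is that one is allowed to use the empty accessibility relation, which is why I verify the confluence condition $(\leq;\R)\subseteq\R$ explicitly.
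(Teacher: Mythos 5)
Your proof is correct and follows essentially the same route as the paper: both take an $\IPC$ countermodel for $A$, adjoin the empty accessibility relation $\R:=\emptyset$ so that every boxed formula (in particular $\bo\bot$) is vacuously forced, and conclude via soundness of $\LC+\bo\bot$ for such models that $\LC+\bo\bot\nvdash A$. The only difference is cosmetic: the paper cites its soundness theorem (\autoref{Theorem-Propositional Completeness LC}) where you verify the validity of the axioms and closure under the rules directly.
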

\begin{proof}
We reason contrapositively. Assume that $\IPC\nvdash A$. Then by
completeness, there exists some non-modal Kripke model
$\mathcal{K}=(K,\leq,V)\nVdash A$. Let $\R:=\emptyset$ and
$\mathcal{K}':=(K,\leq,\R,V)$. It is easy to observe that
$\mathcal{K}'$ is a Kripke model and for all non-modal
proposition $B$ and $u\in K$, we have $\mathcal{K},u\Vdash B$ iff
$\mathcal{K}',u\Vdash B$. Hence we have $\mathcal{K}'\nVdash A$.
Now soundness theorem (\Cref{Theorem-Propositional Completeness LC})
implies $\LC+\bo\bot\nvdash A$.
\end{proof}

%%%%%%%%%%%%%%%%%%%%%%%%%%%%%%%%%%%%%%%%%%%%%%%%%%%%%%%%%%%%%%%%%%%%%%%%%%%%%%%%%%%%%%%%%%%%%%%%%%%%%%%%%%%
\section{Transforming Kripke models}\label{sec-transforming}
Smor\'ynski (\cite{Smorynski-Thesis}) showed that one could simulate the behaviour of a propositional non-modal Kripke model by  a first-order Kripke model of $\HA$.  
Also Solovay (\cite{Solovay}) showed that one could simulate the behaviour of 
a Kripke model of classical modal logic inside $\PA$. 
However, the combination of these two ideas could be assumed  as major
obstacle towards the  characterization of the provability logic of $\HA$. 
In this section, we will show that one could simulate the behaviour of perfect Kripke models  by 
first-order Kripke models of $\HA$.  This would lead us to the characterization of the 
$\Sigma_1$-provability logic of $\HA$.
More precisely, we will prove the following theorem. 

\begin{theorem}\label{Theorem-Main tool}
Let  $\kcal_0=(K_0,\R_{0},<_0,V_0)$ be a finite neat perfect Kripke model with tree frame
 and  $\Gamma\subseteq\TNNIL^-$ be a finite set. Then  there exists some arithmetical $\Sigma_1$-substitution 
$\sigma$ and a Kripke model $\kcal_1=(K_0,<_0,\mathfrak{M})$ such that for all $A\in \Gamma$ and $\alpha\in K_0$ we have  $\kcal_0,\alpha\Vdash A$ iff $\kcal_1,\alpha\Vdash \sigma_{_{\sf HA}}(A)$.
\end{theorem}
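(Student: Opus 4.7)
The plan is to adapt Solovay's self-referential construction to the intuitionistic setting, combining it with Smorynski's technique of building first-order Kripke models of \HA. Set $K_0^{*} := K_0 \cup \{r\}$ where $r$ is a fresh root with $r <_0 \alpha$ for every $\alpha \in K_0$, and use the diagonalization lemma (\autoref{Lemma-diagonalization lemma}) to define a primitive recursive Solovay function $F:\omega \to K_0^{*}$ with $F(0)=r$ that can move only upward in the tree. Its dynamics are designed so that $F$ transitions from its current state $\beta$ to some $\gamma$ with $\beta \R_0 \gamma$ whenever it witnesses, by stage $n$, a bounded-depth \HA-proof of a $\Sigma_1$-statement that would force the limit of $F$ to lie outside the $\R_0$-subtree of $\beta$. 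For $\alpha \in K_0$ set $L_\alpha := \exists x\, F(x)=\alpha$, a $\Sigma_1$-formula, and define the substitution $\sigma(p) := \bigvee\{L_\alpha : \alpha V_0 p\}$, which is again $\Sigma_1$.

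Next, construct the first-order Kripke model $\kcal_1=(K_0,<_0,\mathfrak{M})$ by choosing at each node $\alpha$ a classical structure $\mathfrak{M}(\alpha)$ that models true arithmetic and satisfies $L_\alpha$ together with the appropriate $L_\beta$ for $\beta$ in the $\R_0$-closure of $\alpha$, while refuting the $L_\gamma$ for $\gamma$ outside that subtree. The elementary-substructure condition along $<_0$ holds because the Solovay function's transitions propagate monotonically up the tree; that $\kcal_1 \Vdash \HA$ follows by standard first-order Kripke-model techniques. The correspondence $\kcal_0,\alpha \Vdash A \iff \kcal_1,\alpha \Vdash \sigma_{\sf HA}(A)$ is then proved by induction on the structure of $A \in \TNNIL^-$: atomic and Boolean cases are immediate from the choice of $\sigma$ and the design of $\mathfrak{M}(\alpha)$, and the top-level intuitionistic implications permitted in the non-modal skeleton of a $\TNNIL^-$ formula transfer automatically because the two Kripke models share the frame $(K_0,<_0)$.

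The essential case is $\Box B$ with $B \in \TNNIL$. Since $\sigma_{\sf HA}(\Box B)=\Prv_{\sf HA}(\ulcorner \sigma_{\sf HA}(B)\urcorner)$ is $\Sigma_1$, \autoref{Lemma-Sigma-local-global} reduces the forcing question to $\mathfrak{M}(\alpha) \models \sigma_{\sf HA}(\Box B)$. The direction from right to left, assuming $\kcal_0,\beta \Vdash B$ for every $\beta$ with $\alpha \R_0 \beta$, uses the inductive hypothesis and the design of $\mathfrak{M}(\alpha)$ together with the Solovay dynamics to extract an \HA-proof of $\sigma_{\sf HA}(B)$. The converse direction, where an \HA-proof of $\sigma_{\sf HA}(B)$ is available, uses the refined Leivant principle of \autoref{Lemma-sigma_l translation} to convert that proof into data that forces $F$ into the $\R_0$-subtree of $\alpha$ and simultaneously refutes $\sigma_{\sf HA}(B)$ there, producing a contradiction with the inductive hypothesis unless $B$ is indeed forced at every $\R_0$-successor of $\alpha$.

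The main obstacle is designing $F$ so that its provable arithmetical properties align with two competing demands: the Smorynski-style model $\kcal_1$ must genuinely force \HA, which constrains what facts about $F$ can hold at each node; yet $F$ must also be sensitive enough to \HA-provability for $\sigma$ to faithfully encode the modal relation $\R_0$ of $\kcal_0$, even though $\kcal_1$ only has direct access to the intuitionistic $<_0$. Resolving this tension is exactly where the extended Leivant principle developed in \autoref{Sec-Arithmetic}, and in particular \autoref{Lemma-sigma_l translation}, becomes indispensable.
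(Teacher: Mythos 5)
Your overall architecture --- a diagonally defined Solovay function on the tree with a fresh root, a $\Sigma_1$-substitution reading off where $F$ goes, a Smory\'nski-style first-order model on the frame $(K_0,<_0)$, and an induction on the $\TNNIL^-$ skeleton with the refined Leivant principle carrying the hard $\Box B$ case --- is the same as the paper's. But two of your steps, as described, would fail. The first concerns the dynamics of $F$: you let it move only along $\R_0$. The first-order model lives on $(K_0,<_0)$, and Smory\'nski's construction (\autoref{Theorem-Smorynski's general method of Kripke model construction}) needs an I-frame, i.e.\ $T_\beta\rhd T_\gamma$ for \emph{every} $\beta<_0\gamma$. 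In a perfect model one can have $\beta<_0\gamma$ without $\beta\R_0\gamma$ (already in \autoref{example2}), and if $F$ can never leave the $\R_0$-cone of $\beta$ there is no way to obtain $\PA+L=\beta\rhd\PA+L=\gamma$. This is why the paper's function has a second transition clause ($\chi_2$ in \autoref{Eq1}), allowing jumps to $\preccurlyeq$-successors that are not $\R$-successors, ranked by the Berarducci function $r_{_\theta}(\alpha,x)$ comparing $\PA_k$-proofs of $\neg(L=\alpha\wedge\Box\varphi_\alpha^\lozenge)$; interpretability is then extracted through Orey's characterization (\autoref{Theorem-Orey}) in \autoref{Lemma-1.5st Properties of Solovay's Function} and \autoref{corollar-limit is root}. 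Without that clause the I-frame cannot be assembled and $\kcal_1\Vdash\HA$ is not available.

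The second problem is your choice of $\mathfrak{M}(\alpha)$: a structure cannot ``model true arithmetic and satisfy $L_\alpha$'', because $\mathbb{N}\models L=\alpha_0$ --- the function in fact never moves (\autoref{Lemma-limit is root}) --- so $\PA+L=\alpha$ is false in the standard model for every $\alpha\in K_0$. One must first prove each $T_\alpha$ consistent (\autoref{Lemma-5st}, itself resting on L\"ob's theorem and the $\Pi_2$-conservativity of $\PA$ over $\HA$) and then take the nonstandard models supplied by the interpretability chain; the weak-elementary-substructure condition along $<_0$ is not a consequence of ``monotone propagation'' of $F$ but is precisely what Smory\'nski's theorem delivers from the I-frame. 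Finally, while you correctly attribute the direction $\alpha\nVdash\Box B\ \Rightarrow\ T_\alpha\vdash\neg\Box B^\lozenge$ to the Leivant machinery, as stated it compresses the entire induction of \autoref{Lemma-1.84st Properties of Solovay Function}, with the auxiliary translation $d(A,\alpha,x)$, the notion $\alpha\mrefute B$, and the bounding functions $f_B$, $g^\alpha_{_B}$, $h_\beta$; this is the technical core of the theorem and cannot be treated as routine.
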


Before we continue with the rather long proof of \Cref{Theorem-Main tool}, that will take up all of this section,
let us explain the outline of the proof. 
 
First we define a recursive function $F$ (the Solovay function) with the domain of natural numbers. 
$F(0)$ is defined to be some fresh node $\alpha_0$. The function $F$, always climbs over the frame ${(K_0,\R_{0},<_0)}$, 
but it is reluctant to do so. It only goes to some node $\beta$ at some stage $n+1$ (i.e. $F(n+1)=\beta$), if $n+1$ is a witness  (in some sense which would be clarified in this section) for this statement 
\begin{center}
``$F$ is not going to stay 
in $\beta$ forever or  $\neg\sigma_{_{\sf HA}}(\Box\varphi_{_\beta})$".
\end{center}
In this  definition, $\varphi_{_\beta}$ is the conjunction of all sentences $B$ such that $ \Box B\in{\sf Sub}(\Gamma)$
and  $\beta\Vdash\Box B$. Here, ${\sf Sub}(\Gamma)$ is the set of sub-formulae 
 of  some formula in $\Gamma$. The most interesting (and difficult part to prove as well) property of the function $F$
 is that this function actually (in the standard model of arithmetic $\mathbb{N}$) does not climb over tree at all, i.e. 
 the function $F$ is constant,  $\mathbb{N}\models\forall{x}F(x)=\alpha_0$. In contrast with the classical case, proving this fact for the intuitionistic case is rather complicated.
 % and almost all
 %the effort we will do in this section is to provide a proof of this fact (\Cref{Lemma-limit is root}), 
 %the same fact about the Solovay function
 %in classical case could be proven straightforwardly. 
 
  Let $L\succcurlyeq\alpha$ denote ``the function $F$ would go above $\alpha$ or remain equal to $\alpha$".
 Then we define the substitution $\sigma(p):=\bigvee_{\alpha\Vdash p}L\succcurlyeq\alpha$.  
 Then we define the $I$-frame $\mathcal{I}=(K_0,<_0,T)$, where $T_\alpha$ is defined to be $\PA$  plus the following statement:
 ``The limit of the function $F$ is $\alpha$".  Finally, with the aid of 
 \Cref{Theorem-Smorynski's general method of Kripke model construction} 
 we find the desired Kripke model $\kcal_1$, by assigning an appropriate classical model of $T_\alpha$ to the node $\alpha$.
 We will show that $T_\alpha\vdash \sigma_{_{\sf HA}}( \Box\varphi_{_\alpha})$
  (\Cref{corollar-4st&2st}) and also $T_\alpha\vdash \sigma_{_{\sf HA}}(\neg\Box B)$ for any 
  $\Box B\in {\sf Sub}(\Gamma)$ and $\alpha\nVdash\Box B$ 
  (\Cref{Lemma-2st Properties of Solovay's Function}). In this way, we can simulate the role of modal operator $\Box$  
  in the first-order Kripke model $\kcal_1$. 
 
 \vspace{.3cm}
 \noindent {\bf Notation.} In the rest of this section, we fix the Kripke model $\kcal_0=(K_0,\R_{0},<_0,V_0)$ 
 and the set $S:=\{B\in{\sf Sub}(\Gamma) \mid B\in\TNNIL\}$. We also assume that $\alpha_0\not\in K_0$
 and define 
 $$
 \R:=\R_{0} \cup \{ (\alpha_0,\alpha)\mid\alpha\in K_0\}
 \quad   \quad  
 <\; :=\; <_0\cup\; \{ (\alpha_0,\alpha)\mid\alpha\in K_0\}
 \quad  \quad  
 K:=K_0\cup \{\alpha_0\}
$$
In other words, we add $\alpha_0$ in  beneath of all nodes of $\kcal_0$. Finally we define $\kcal:=(K,\R,<,V_0)$.

\subsection{Definition of the Solovay function} \label{subsection-solovay}
Solovay used some  special recursive function (here we call it the {\em Solovay function}) to prove the completeness 
of $\GL$ (The G\"odel-L\"ob logic) for arithmetical interpretations in $\PA$ (See \cite{Solovay}). 
The Solovay function in \cite{Solovay},  is a function $G:\mathbb{N}\longrightarrow X$, in which $X$ is a finite partially ordered set ordered by $\preccurlyeq$. The recursive definition of $G$ is such that $G$ climbs over $X$, i.e. $G(x)\preccurlyeq G(x+1)$ and moreover, it goes to some new node iff there exists a witness  that $G$ would not remain there. More precisely, $G(x+1)\neq G(x)$ iff $x+1$ is the code of a proof (in $\PA$) for the fact that the limit of the function $G$ is not $G(x+1)$.
Although it is true (in the standard model) that $G$ will not climb over $X$ (i.e. $G$ is a constant function), $\PA$ 
can't prove this fact.
In this subsection, we define  a similar recursive function  (we call it  $F$) for the proof of our main theorem
 (\Cref{Theorem-Main tool}), and state and prove some of its properties.

For technical reasons, we first define the set of all codes of sequences $z=\langle F(0),\ldots,F(x)\rangle$ by an arithmetical formula 
$\theta(z)$, and then define  $\phi_\theta(x,y):=\exists{z}({\sf lth}(z)=x+1\wedge \theta(z)\wedge \hat{z}=y)$ 
as the graph of a function $F_\theta$ and finally, let $F:=F_\theta$.
It is clear that we can also define $\theta_F(z)$ from the function $F$ in the following way. 

$$\theta_F(z):=\exists{x}(z=\langle F(0),\ldots,F(x)\rangle) \text{ or equivalently }
\theta_F(z):=\forall{\,x \!<\! {\sf lth}(z)}(F(x)=(z)_{x})$$

To be able to speak about $\mathcal{K}$ inside $\HA$, we need
some conventions. Suppose that
$K=\{\alpha_0,\alpha_1,\ldots,\alpha_k\}$. Hence for each
$\alpha\in K$, there exists a unique  index $0\leq i\leq k$ such that
$\alpha=\alpha_i$. We define $\overline{\alpha}$ to be $\bar{i}$
($\bar{n}$ is  $n$-th numeral in the language of arithmetic, i.e.
$\bar{i}:=S^i(0)$). We may simply use $\alpha$ instead of
$\overline{\alpha}$, if no confusion is likely. The following notations for arbitrary terms $t$ and $s$ in the language of
arithmetic will be used later.
\begin{itemize}
\item $\overline{K}(t):=\bigvee_{\alpha\in K}(t=\overline{\alpha})$,
\item $t\prec s:=\bigvee_{\alpha\lneqq\beta}(t=\overline{\alpha}\wedge
s=\overline{\beta})$, $t\preccurlyeq
s:=\bigvee_{\alpha\leq\beta}(t=\overline{\alpha}\wedge
s=\overline{\beta})$,
\item $t\;\Rbar\; s:=\bigvee_{\alpha\R\beta}(t=\alphabar\wedge s=\betabar)$,
\item $\varphi_{_\alpha}:=\bigwedge_{B\in {\sf Sub}(\Gamma), B\in{\sf TNNIL},\alpha\Vdash \Box B}B$. 
\end{itemize}
In the following definition, $L_\theta=y$ as the arithmetical formula equivalent to  ``The limit of the function $F_\theta$ 
is equal to $y$". Similarly, define $\alpha\prec L_\theta$ and so on.
\begin{definition}
Let $\theta(z)$ be a  $\Sigma_1$-formula  in the language of arithmetic. Then
\begin{itemize}
\item $L_\theta=y$ is a shorthand for $\exists{u}\forall{z}(\theta(u*z)\to \hat{z}=y)$ in which $\hat{z}$ is the 
final element of the sequence with the code $z$,
\item For each $\alpha\in K$, $\alpha\preccurlyeq L_\theta$, $\alpha\prec
L_\theta$ and $\alpha\,\R\, L_\theta$ are shorthands for
$\bigvee_{\alpha\leq\beta}\exists{x}(\theta(x)\wedge \hat{x}=\beta)$,
$\bigvee_{\alpha\lneqq\beta}\exists{x}(\theta(x)\wedge \hat{x}=\beta)$ and
$\bigvee_{\alpha\R\beta}\exists{x}(\theta(x)\wedge \hat{x}=\beta)$, respectively,
\item The arithmetical substitution $\sigma$ is defined on propositional
variable $p$ by
$$\sigma(p):=\bigvee_{\beta\Vdash{p}}\beta\preccurlyeq L_\theta.$$
and finally, we extend $\sigma$ to all propositions by
interpreting $\Box$ as provability in \HA, i.e.,  
$\sigma_\theta:=\sigma_{_{\sf HA}}$, in which $\sigma_{_{\sf  HA}}$ is defined from $\sigma$ as 
in \Cref{Definition-Arithmetical substitutions},
\item Let $g$ be a recursive function with $\theta_g(z)$ as the formula $\exists{x}(z=\langle g(0),\ldots,g(x)\rangle)$. 
We define $L_g=y$, $L_g\succ \alpha$, $L_g\succeq \alpha$, $\alpha\,\R\, L_g$
and $\sigma_\tinysub{g}$ to be $L_{\theta_g}=y$, $L_{\theta_g}\succ \alpha$,
$L_{\theta_g}\succeq \alpha$, $\alpha\,\R\, L_{\theta_g}$ and $\sigma_{_{\theta_g}}$, respectively.
\end{itemize}
\end{definition}

Following Berarducci (\cite{Berarducci}), we define a primitive
recursive function as follows:
$$r_{_{\theta}}(\bar{\alpha},x)={\sf min}\left(\{ k \mid \exists{u\leq x}{\sf Proof}_\tinysub{{\sf PA}_k}
(u,\ulcorner\neg(L=\alpha\wedge\Box\sigma_{_\theta}(\varphi_{_\alpha}))\urcorner)\}\cup\{x+1\}\right)$$
Note that $r_{_\theta}(x,y)$ depends also on a $\Sigma_1$-formula which is 
appeared in the subscript of $\sigma$. We also should note that 
$L=\alpha$ is defined in reference to $\theta$ as well.
 We may omit subscripts of the  interpretation $\sigma_{_\theta}$ and the 
function $r_{_\theta}$ when no confusion is likely.

A variant of this function  was first appeared in
\cite{Berarducci}, to define Solovay functions for characterizing
interpretability logic of $\PA$. It is easy to observe that
$r(\alpha,x)$ is always equal or less than $x+1$, and
$r(\alpha,x)\leq x$ iff
$$\exists{y\leq x}{\sf Proof}_\tinysub{\sf PA}(y,\ulcorner\neg(L=\alpha\wedge\Box\sigma_{_{F}}(\varphi_{_\alpha}))\urcorner)$$

Now we are in a position to define the Solovay-like function for
$\mathcal{K}$. Informally speaking, ${F:\mathbb{N}\ra K}$ is
defined in such a way that fulfils  the following conditions.
$F(0):=\alpha_0$, and
\begin{equation}\label{SolovayFunction}
F(x+1):=
\begin{cases}
\beta& \text{  if}\  (x+1)_0=\langle1,\beta\rangle, F(x)R\beta \ \text{and} \
r_{_F}(\beta,x+1)\leq x+1, \\
\gamma & \text{  if } (x+1)_0=\langle2,\gamma\rangle, \neg F(x)R\gamma \text{ and } F(x)\leq\gamma \text{ and }\\ &\ \  \ \  r_{_F}(\gamma,x+1)<r_{_F}(F(x),x+1)\ \text{and}
 \quad F(r_{_F}(\gamma,x+1))\,\R\,\gamma,\\
F(x)& \text{  otherwise.}
\end{cases}
\end{equation}
As it is clear from the definition, $F$ is used in its own definition, i.e. 
we are in a loop. This will be overcome by the 
Diagonalization lemma. To be able to define $F$, we first define
 $\theta(z)$ and then define $F(x)=y$ (the graph of the function $F$) as 
 $$\exists{z}({\sf lth}(z)=x+1\wedge\theta(z)\wedge (z)_{x}=y)$$

By Diagonalization lemma  (\Cref{Lemma-diagonalization lemma}), we
find a $\Delta_0$ formula $\theta(y)$ such that

\begin{equation}\label{Eq1}
\HA_0\vdash\theta(y)\lr ({\sf lth}(y)\geq 1\wedge (y)_0=\overline{\alpha_0}\wedge\forall{x< {\sf lth}(y)}(x\neq0\ra \chi(x,y)))
\end{equation}

in which $\chi(x,y)$ is defined as disjunction of the following
three formulae:
\begin{align}
\nonumber &\chi_1:= \bigvee_{\beta\in K} [(x)_0=\langle1,\bar{\beta}\rangle
\wedge (y)_x=\bar{\beta}\wedge (y)_{x\dot{-}1}\, \bar{\R}\,\bar{\beta} \wedge {r}(\bar{\beta},x)\leq x]\\
\nonumber & \chi_2:= \bigvee_{\beta\in K} [(x)_0=\langle2,\bar{\beta}\rangle\wedge (y)_x=\bar{\beta}\wedge  \neg (y)_{x\dot{-}1}\,\bar{\R}\,\bar{\beta}\wedge (y)_{x\dot{-}1}\preccurlyeq \bar{\beta} \wedge {r}((y)_x,x)< {r}((y)_{x\dot{-}1},x))\\ \nonumber & \ \ \ \ \ \wedge (y)_{{r}((y)_x,x)}\,\bar{\R}\, (y)_x)]\\
\nonumber &\chi_3:= [(y)_x=(y)_{x\dot{-}1}]
\wedge
\bigwedge_{\beta\in K}\neg[ (x)_0=\langle1,\bar{\beta}\rangle\wedge  
(y)_{x\dot{-}1}\,\bar{\R}\, \bar{\beta} 
\wedge {r}(\bar{\beta},x)\leq x] \wedge\\
\nonumber & \bigwedge_{\beta\in K}\neg[(y)_{x\dot{-}1}\preccurlyeq \bar{\beta}
\wedge \neg (y)_{x\dot{-}1}\,\bar{\R}\,\bar{\beta} 
\wedge  (x)_0=\langle2,\bar{\beta}\rangle \wedge {r}(\bar{\beta},x)<{r}((y)_{x\dot{-}1},x)
\wedge (y)_{{r}(\bar{\beta},x)}\,\bar{\R}\,\bar{\beta})]
\end{align}
In the above formulae, $r(x,y)$ is $r_{_\theta}(x,y)$. 
Now we show that a provably total recursive function $F$ can be defined from $\theta(y)$.

\begin{lemma}\label{Lemma-1st Solovay}
The formula $\theta$ is $\Delta_0$ and
\begin{enumerate}
\item \label{1Lemma-1st Solovay}$\HA_0\vdash ({\sf lth}(y_1)\neq 0\wedge \theta(y_1*y_2))\to\theta(y_1)$,
\item \label{2Lemma-1st Solovay}$\HA_0\vdash (\theta(y_1)\wedge\theta(y_2)\wedge{\sf lth}(y_1)={\sf lth}(y_2))\ra y_1=y_2 $,
\item \label{3Lemma-1st Solovay}$\HA_0\vdash\forall{x}\exists{y}({\sf lth}(y)=x+1 \wedge \theta(y))$.
\end{enumerate}
\end{lemma}
\begin{proof}
It is not difficult to observe that the first item holds by definition of $\theta$ in \cref{Eq1}.
To prove the other items, it is enough to show
$\HA_0\vdash\forall{x}\exists!{y}({\sf lth}(y)=x+1\wedge\theta(y))$, in which $!\exists$, as usual, is  
the {\em uniqueness} existential quantifier. This
can be simply  done by induction on $x$. 
\end{proof}

Now, let us define 
$\phi(x,y):=\exists{z}(\theta(z)\wedge{\sf lth}(z)=x+1\wedge \hat{z}=y)$. 
Note that $\phi(x,y)$ is actually a 
$\Delta_0$ formula. The reason is the following.  
we can bound existential quantifier by the primitive recursive
function $h(z)$ with the following primitive recursive definition:
\begin{itemize}
 \item $h(0):=\langle k\rangle$, in which $k$ is the number of nodes of Kripke model,
 \item $h(z+1):=h(z)*\langle k\rangle$.
\end{itemize}
Hence $\HA_0\vdash\phi(x,y)\lr \exists{z\leq h(z)}[{\sf lth}(z)=x+1\wedge (z)_{x+1}=y\wedge\phi(z)]$.

\begin{notation}
{\em The above lemma (\Cref{Lemma-1st Solovay}) says that $\phi(x,y)$ is
the graph of a  $\Delta_0$- function $F$. In the rest of the
paper, we use $F$ as a function symbol with the graph $\phi(x,y)$.
We use $\sigma$ and $L$ instead of $\sigma_\tinysub{\theta}$ and $L_\theta$,
respectively. For simplicity of notations, when we work in the first-order language of arithmetic, instead of $\sigma_{_{\sf HA}}(B)$,
we may use the notation $B$. For instance assume that $p$ is an atomic variable in the propositional language.
When we write down the formula $\HA\vdash \Box (\Box p\to p)\to\Box p$, 
we actually mean $\HA\vdash \sigma_{_{\sf HA}}(\Box (\Box p\to p)\to\Box p)$. This abuse of notations, 
wipes out many unimportant symbols from the rest of \Cref{sec-transforming}.}
\end{notation}

\noindent One can observe that the function $F$ fulfils the recursive conditions of \cref{SolovayFunction}.

\subsection{Elementary properties of the Solovay function}
In this part, we will see some elementary properties of the function $F$. 

\begin{lemma}\label{Lemma-Properties of Solovay's Function}
The function $F$ has the following properties:
\begin{enumerate}
\item \label{1Lemma-Properties of Solovay's Function}$\HA_0\vdash\forall{x,y}({F}(x)\preccurlyeq {F}(x+y))$,
\item \label{2Lemma-Properties of Solovay's Function}For any $\alpha\in K$,
$\PA\vdash\exists{x}{F}(x)=\alpha\ra\bigvee_{\alpha\leq\beta}L=\beta$,
\item \label{3Lemma-Properties of Solovay's Function}For any $\alpha\in K$, $\PA\vdash \alpha\prec
L\leftrightarrow\bigvee_{\alpha< \beta}L=\beta$ and $\PA\vdash
\alpha\,\bar{\R}\,L\leftrightarrow\bigvee_{\alpha\R\beta}L=\beta$.
\end{enumerate}
\end{lemma}
\begin{proof}
\begin{enumerate}[leftmargin=*]
\item By recursive definition of $F$, 
$\HA_0\vdash{F}(x)\preccurlyeq {F}(x+1)$. Let
${A(y):=\bar{F}(x)\preccurlyeq \bar{F}(x+y)}$ and use induction on $y$ in $A(y)$.
\item We prove this fact by induction (in meta-language) on
the tree $(K,\lneqq)$ with reverse order. Suppose that for all
$\beta\gneqq\alpha$, we have
$\PA\vdash\exists{x}{F}(x)=\beta\ra\bigvee_{\beta\leq \gamma}L=\gamma$.
Then 
$$\PA\vdash{F}(x)=\alpha\ra(\forall{y\geq
x}{F}(y)=\alpha\vee\exists{y\geq x}{F}(y)\neq\alpha)$$
 By
part 1 and definition of $L=\alpha$, we get $\PA\vdash
{F}(x)=\alpha\ra(L=\alpha\vee\exists{y\geq
x}(\alpha\prec {F}(y)))$. Now  induction hypothesis implies
$\PA\vdash{F}(x)=\alpha\ra\bigvee_{\beta\geq\alpha}L=\beta$.
\item Proof of this part is an immediate consequence of part 2 and perfectness of $\mathcal{K}$.
\end{enumerate}
\end{proof}

\begin{lemma}\label{Lemma-Properties of Solovay's Function.4}
 For any $\alpha, \beta\in K$ with $\alpha\,\R\,\beta$, $\HA_0\vdash L=\alpha\ra\neg\Box^+
\neg(L=\beta\wedge\Box\varphi_{_\beta}\emptycommand)$.
\end{lemma}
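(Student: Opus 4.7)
The plan is to argue by contradiction inside $\HA_0$: assuming both $L=\alpha$ and $\Box^+\neg(L=\beta\wedge\Box\varphi_\beta^\lozenge)$, I will exhibit a stage $x$ at which the recursive specification of $F$ is forced to take the value $\beta$, which together with $L=\alpha$ gives a contradiction.

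First I unfold $L=\alpha$ to obtain a witness $u$ with $\forall z\,(\theta(u*z)\to \hat z=\alpha)$; combining this with the existence, uniqueness and initial-segment properties of $\theta$-sequences proved in \autoref{Lemma-1st Solovay}, I conclude $F(x)=\alpha$ for every $x\geq \ell:={\sf lth}(u)$. Next I unfold $\Box^+\neg(L=\beta\wedge\Box\varphi_\beta^\lozenge)$ to extract a $\PA$-proof code $y_0$ of $\neg(L=\beta\wedge\Box\varphi_\beta^\lozenge)$. Since any such proof uses only finitely many axioms, each with G\"odel number bounded by $y_0$, there is some $k_0\leq y_0$ for which $y_0$ is already a $\PA_{k_0}$-proof, and the definition of $r$ then yields $r(\bar\beta,x)\leq k_0\leq x$ for every $x\geq y_0$.

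I then select, primitive recursively from the standard sequence coding, some $x\geq\max(\ell+1,y_0)$ with $(x)_0=\langle 1,\bar\beta\rangle$; arbitrarily large $x$ of this shape exist. At this stage all three preconditions of the disjunct $\chi_1$ in the definition of $\theta$ are satisfied: $(x)_0=\langle 1,\bar\beta\rangle$; $F(x-1)=\alpha$ together with $\alpha\R\beta$ gives $F(x-1)\Rbar\bar\beta$; and $r(\bar\beta,x)\leq x$. Hence the unique $\theta$-sequence $y$ of length $x+1$ has $(y)_x=\bar\beta$, i.e.\ $F(x)=\beta$. But the first paragraph forces $F(x)=\alpha$, and perfectness of $\kcal_0$ (the condition $\R\subseteq{<}$) yields $\alpha\neq\beta$, producing the desired contradiction.

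The main thing to watch is that each step is formalizable in the weak theory $\HA_0$: the extractions of $u$ and $y_0$ from the existential hypotheses are immediate, the identification ``every length-$(x+1)$ $\theta$-sequence with $x\geq\ell$ extends $u$ and therefore ends at $\alpha$'' uses only the $\Delta_0$-decidability of $\theta$ and the uniqueness item of \autoref{Lemma-1st Solovay}, and the construction of a large $x$ with $(x)_0=\langle 1,\bar\beta\rangle$ is primitive recursive. No serious obstacle is expected; the one minor bookkeeping point is to keep the informal recursive clauses for $F$ in step with their formal realization via $\theta$ and $\chi_1$, and this is handled uniformly by the Diagonalization Lemma.
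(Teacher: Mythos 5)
Your proposal is correct and follows essentially the same route as the paper: assume $L=\alpha$ together with a $\PA$-proof of $\neg(L=\beta\wedge\Box\varphi_\beta^\lozenge)$, pick a sufficiently large stage $x$ whose code triggers the relevant clause of the recursion, and derive $F(x)=\beta$ against $F(x)=\alpha$. Your write-up is in fact slightly more careful than the printed proof, which (i) tacitly assumes $F(y)=\alpha$ for an arbitrary $y>x$ rather than a sufficiently large one, and (ii) invokes the trigger $\langle 2,\beta\rangle$ where, since the hypothesis is $\alpha\R\beta$, the applicable clause is the first one with trigger $\langle 1,\bar\beta\rangle$ exactly as you use it.
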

\begin{proof}
We argue inside $\HA_0$. Assume $L=\alpha$ and ${\sf Proof}_\tinysub{\sf PA}
(x,\ulcorner\neg(L=\beta\wedge\Box\varphi_{_\beta}\emptycommand)\urcorner)$.
Let $y>x$ such that $(y+1)_0=\langle2,\beta\rangle$. Then because
$L=\alpha$, we have $F(y)=\alpha$. On the other hand, by
recursive definition of $F$, $F(y+1):=\beta$, a contradiction.
\end{proof}

\begin{lemma}\label{Lemma-1.5st Properties of Solovay's Function}
For any $\delta, \alpha, \beta\in K$ with $\delta\,\R\,\alpha\leq
\beta $,
$\HA_0+L=\delta\vdash(L=\alpha\wedge\Box\varphi\emptycommand_\alpha)\rhd
(L=\beta\wedge\Box\varphi\emptycommand_\beta)$.
\end{lemma}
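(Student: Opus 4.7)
My plan is to unfold the interpretability symbol using the convention stated after \autoref{Theorem-Orey}: the goal reduces to showing, for each $x\in\omega$, that
\[
 \HA_0 + L=\delta \vdash \Box^+\bigl[(L=\alpha\wedge\Box\varphi^\lozenge_\alpha)\ra \neg\Box^+_x\neg(L=\beta\wedge\Box\varphi^\lozenge_\beta)\bigr].
\]
First I would observe that, by perfectness of $\kcal_0$ (the clause $(\R;\leq)\subseteq\R$), the hypothesis $\delta\R\alpha\leq\beta$ yields $\delta\R\beta$. Under the outer assumption $L=\delta$, monotonicity of $F$ (\autoref{Lemma-Properties of Solovay's Function}.1) provides a standard numeral $\bar N$ with $F(\bar N)=\bar\delta$; this $\Sigma_1$-fact will be imported into the inner $\PA$ reasoning via \autoref{Lemma-bounded Sigma completeness}, yielding $\Box^+(F(\bar N)=\bar\delta)$.

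Fixing $x$, I would then argue inside $\PA$ under the hypotheses $L=\alpha$, $\Box\varphi^\lozenge_\alpha$, and $\Box^+_x\neg(L=\beta\wedge\Box\varphi^\lozenge_\beta)$, aiming at a contradiction with $L=\alpha$. A code $u_0$ of a $\PA_x$-proof of $\neg(L=\beta\wedge\Box\varphi^\lozenge_\beta)$ gives $r(\beta,y)\leq x$ for all $y\geq u_0$; from $L=\alpha$, fix $M$ with $F(y)=\alpha$ for all $y\geq M$. I would then select $y\geq\max(u_0,M,N)$ whose first component $(y)_0$ encodes the appropriate jump instruction: $\langle 1,\bar\beta\rangle$ when $\alpha\R\beta$ (invoking clause $\chi_1$) and $\langle 2,\bar\beta\rangle$ when $\neg\alpha\R\beta$ (invoking clause $\chi_2$). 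In either scenario the aim is to conclude $F(y+1)=\beta\neq\alpha$, contradicting $L=\alpha$.

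The case $\alpha\R\beta$ is immediate, since $F(y)\R\beta$ and $r(\beta,y+1)\leq y+1$ are the only conditions of $\chi_1$. The case $\neg\alpha\R\beta$ requires verifying two additional $\chi_2$-subconditions: $r(\beta,y+1)<r(\alpha,y+1)$ and $F(r(\beta,y+1))\R\beta$. For the first, I would reason contrapositively: if $r(\alpha,y+1)\leq x$ then $\PA_x\vdash\neg(L=\alpha\wedge\Box\varphi^\lozenge_\alpha)$, which combined with $\Sigma_1$-completeness on $\Box\varphi^\lozenge_\alpha$ and an application of \autoref{Lemma-Properties of Solovay's Function.4} to $\delta\R\alpha$ under the outer $L=\delta$ is to be shown to yield a contradiction. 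For the second subcondition I would use the imported fact $F(\bar N)=\bar\delta$, perfectness $\delta\R\beta$, and the stabilization of $r(\beta,\cdot)$ at its limit value $\leq x$, arguing that the $y$ can be chosen so that $r(\beta,y+1)$ is located on the portion of the history where $F$ coincides with $\bar\delta$ (or with a node already shown to $\R$-precede $\beta$).

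The main obstacle is the verification of these two $\chi_2$-subconditions, and particularly $F(r(\beta,y+1))\R\beta$: the outer assumption $L=\delta$ is essential here, because without it the inner $\PA$-reasoning has no control over $F(r(\beta,y+1))$, and the perfectness clause $(\R;\leq)\subseteq\R$ does not by itself lift arbitrary $\R$-predecessors of $\alpha$ to $\R$-predecessors of $\beta$. The delicate coordination between the standard witness $\bar N$ for $L=\delta$, the stabilization stage of $r(\beta,\cdot)$, and the choice of $y$ is where the technical heart of the argument lies.
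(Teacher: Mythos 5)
Your global architecture matches the paper's: unfold $\rhd$ as $\forall x\,\Box^+[\,\cdot\ra\neg\Box^+_x\neg\,\cdot\,]$, reason inside $\Box^+$ under the hypotheses $L=\alpha$, $\Box\varphi_\alpha^\lozenge$ and $\Box^+_x\neg(L=\beta\wedge\Box\varphi_\beta^\lozenge)$, obtain $r(\beta,y)\leq x$ for all large $y$, and force a jump of $F$ to $\beta$ via $\chi_1$ or $\chi_2$, contradicting $L=\alpha$. You also correctly isolate the two $\chi_2$-subconditions as the crux. However, both of your plans for discharging them have genuine gaps.

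For $r(\beta,y+1)<r(\alpha,y+1)$: the paper argues that if instead $r(\alpha,y+1)\leq r(\beta,y+1)\leq x$, then $\Box^+_x\neg(L=\alpha\wedge\Box\varphi_\alpha^\lozenge)$ holds, and the \emph{formalized} reflection principle of \autoref{Lemma-Reflection} --- $\PA\vdash\forall{x}\,\Box^+(\Box^+_xA\ra A)$, so that $\Box^+_xA\ra A$ is available \emph{inside} the box --- yields $\neg(L=\alpha\wedge\Box\varphi_\alpha^\lozenge)$, flatly contradicting the inner hypotheses. Your substitute, \autoref{Lemma-Properties of Solovay's Function.4} applied to $\delta\R\alpha$ at the outer level, cannot do this job: its conclusion $\neg\Box^+\neg(L=\alpha\wedge\Box\varphi_\alpha^\lozenge)$ is a $\Pi_1$ statement living outside $\Box^+$, so it cannot be imported under the box by $\Sigma_1$-completeness, and inside $\PA$ the conjunction $\Box^+\neg A\wedge A$ is not a contradiction without reflection. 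For $F(r(\beta,y+1))\R\beta$: since $r(\beta,\cdot)$ is non-increasing and stabilizes, you cannot steer where $r(\beta,y+1)$ lands by choosing $y$; all you know is that it is $\leq x$. The paper's move is to choose the outer parameter $x$ itself so large that $F(x)=\delta$ (legitimate, because $\Box^+_x\neg C\ra\Box^+_{x'}\neg C$ for $x\leq x'$ makes the claim for larger $x$ stronger), to note that then $F(u)\preccurlyeq\delta$ and $\delta\R\beta$, hence $F(u)\R\beta$, for \emph{every} $u\leq x$, and to import this entire bounded $\Sigma_1$ fact under $\Box^+$ by \autoref{Lemma-bounded Sigma completeness}; a single standard instance $F(\bar N)=\bar\delta$ is not enough. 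These are the two missing ideas; with them your outline closes, and your uniform treatment of the case $\alpha\R\beta$ via $\chi_1$ inside the box (where the paper instead disposes of it separately by \autoref{Lemma-Properties of Solovay's Function.4} and $\Sigma_1$-completeness) is unobjectionable.
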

\begin{proof}
If $\alpha\,\R\,\beta$, by  \Cref{Lemma-Properties of Solovay's
Function.4}, $\HA_0\vdash
L=\alpha\ra\neg\Box^+\neg(L=\beta\wedge\Box\varphi_{_\beta}\emptycommand)$.
So  
$${\mathbb{N}\models
\Box^+(L=\alpha\ra\neg\Box^+\neg(L=\beta\wedge\Box\varphi_{_\beta}\emptycommand))}$$
and hence by  \Cref{Lemma-bounded Sigma completeness} ($\Sigma_1$-completeness of $\HA_0$), we can
deduce $\HA_0\vdash
L=\alpha\rhd(L=\beta\wedge\Box\varphi_{_\beta}\emptycommand)$. So
assume $\alpha\;\NR\;\beta$ and $\alpha\neq\beta$. By definition of $A\rhd B$, we must show
$$\HA_0+L=\delta\vdash
\forall{x}\Box^+[(L=\alpha\wedge\Box\varphi\emptycommand_\alpha)\ra
\neg\Box^+_x\neg(L=\beta\wedge\Box\varphi\emptycommand_\beta)].$$
We work inside $\HA_0$. Assume $L=\delta$ and fix some large enough $x$ such that $F(x)=\delta$. Then
for each $u\leq x$, we have $F(u)\,\R\,\beta$. Now work in the scope
of $\Box^+$. By $\Sigma$-completeness of $\PA$, we have
$\forall{u}\leq x F(u)\,\R\,\beta$. Assume $L=\alpha$,
$\Box\varphi_{_\alpha}\emptycommand$ and 
$\Box^+_x\neg(L=\beta\wedge\Box\varphi_{_\beta}\emptycommand)$. We
should deduce $\bot$. By
$\Box^+_x\neg(L=\beta\wedge\Box\varphi_{_\beta}\emptycommand)$,
for sufficiently large $y$ (larger than
$\langle2,\beta\rangle*z$, in which $z$ is a proof code in
$\PA_x$ for $\neg(L=\beta\wedge\Box\varphi_{_\beta}\emptycommand)$),
we have $r(\beta,y)\leq x$. If $r(\alpha,y)\leq r(\beta,y)$, then
$\Box^+_x\neg(L=\alpha\wedge\Box\varphi_{_\alpha}\emptycommand)$,
and hence by  \Cref{Lemma-Reflection}, we have
$\neg(L=\alpha\wedge\Box\varphi_{_\alpha}\emptycommand)$, a
contradiction. If $r(\alpha,y)> r(\beta,y)$,  since
$r(\beta,y)\leq x$, then $F(r(\beta,y))\,\R\,\beta$. So by
recursive definition of $F$, there exists some $z\geq y$ such
that $F(z)=\beta$, contradicting $L=\alpha$.
\end{proof}

%%%%%%%%%%%%%%%%%%%%%%%%%%%%%%%%%%%%%%%%%%%%%%%%%%%%%%%%%%%%%%%%%%%%%%%%%%%%%%%%%%%%%%%%%%%%%%%%%%%%%%%%%%%%
\subsection{Deciding the boxed formulas}\label{subsection-proofofmaintheorem}
In this subsection, we will show that  $\HA+L=\alpha+\Box\varphi_{_\alpha}\emptycommand$ can decide boxed propositions in 
${\sf Sub}(\Gamma)$. More precisely,  for all $\Box B\in {\sf Sub}(\Gamma)$ and $\alpha\in K$, 
$$
\begin{cases}
\HA+L=\alpha+\Box\varphi_{_\alpha}\emptycommand\vdash \Box B\emptycommand \ \ \ & \text{ if } \alpha\Vdash \Box B \\
\HA+L=\alpha+\Box\varphi_{_\alpha}\emptycommand\vdash \neg\Box B\emptycommand \ \ \ & \text{ if } \alpha\nVdash \Box B
\end{cases}
$$
Note that by definition of $\varphi_{_\alpha}$, if $\alpha\Vdash \Box B$,  
then $B$ is a conjunct of $\varphi_{_\alpha}$. Hence in case $\alpha\Vdash \Box B$, 
we obviously have $\HA+\Box\varphi_{_\alpha}\emptycommand\vdash\Box B\emptycommand$. 
Moreover we will show in  \Cref{sec-54} (\Cref{corollar-4st&2st}) that  
 $\HA\vdash L=\alpha \to \Box \varphi_{_\alpha}\emptycommand$ for $\alpha\in K_0$, 
and then the following improvement of the above
equation  holds: 
$$
\begin{cases}
\HA+L=\alpha \vdash \Box B\emptycommand \ \ \ & \text{ if } \alpha\Vdash \Box B \\
\HA+L=\alpha \vdash \neg\Box B\emptycommand \ \ \ & \text{ if } \alpha\nVdash \Box B
\end{cases}
$$
\begin{lemma}\label{Lemma-1.6st Properties of Solovay Function}
Let $B\in {\sf Sub}(\Gamma)$ be such that all occurrences of  $\ra$ in $B$ are in the scope of some $\Box$
{\em ($B\in{\sf NOI}$)}, $\alpha\in K$ and $\alpha\Vdash B$. Then 
$\HA_0\vdash(L=\alpha\wedge\Box\varphi_{_\alpha}\emptycommand)\ra B\emptycommand$. Moreover, this argument is formalizable and 
provable in $\HA_0$, i.e. $\HA_0\vdash\Box_0((L=\alpha\wedge\Box\varphi_{_\alpha}\emptycommand)\ra B\emptycommand)$.
\end{lemma}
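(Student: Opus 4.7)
The plan is to prove the main implication by induction on the complexity of $B$. Because $B\in{\sf NOI}$, no implication appears outside the scope of a $\Box$; in particular $B$ itself is not of the form $B_1\to B_2$. So the cases reduce to: atomic, boxed, conjunction, and disjunction. The moreover part will then be a free consequence of $\Sigma_1$-completeness (\autoref{Lemma-bounded Sigma completeness}).

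First I would handle the atomic case $B=p$. Since $\alpha\Vdash p$, the disjunct $\alpha\preccurlyeq L$ appears in $\sigma(p)=\bigvee_{\beta\Vdash p}\beta\preccurlyeq L$. So it suffices to show $\HA_0\vdash L=\alpha\to\alpha\preccurlyeq L$. Unfolding definitions, $\alpha\preccurlyeq L$ contains $\exists x(\theta(x)\wedge\hat x=\alpha)$ as a disjunct (taking $\beta=\alpha$), so using the totality clause $\HA_0\vdash\forall x\exists y({\sf lth}(y)=x{+}1\wedge\theta(y))$ from \autoref{Lemma-1st Solovay}, together with the definition $L=\alpha:=\exists u\forall z(\theta(u*z)\to\hat z=\alpha)$, one extracts a witness $y$ with $\theta(y)$ and $\hat y=\alpha$. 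This is all inside $\HA_0$.

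Next, the boxed case $B=\Box C$. Since $B\in S$ and $S$ consists of $\TNNIL$ sub-formulas of $\Gamma$, we have $C\in\TNNIL$ (by the closure clauses of $\TNNIL$) and $C\in{\sf Sub}(\Gamma)$, hence $C\in S$. From $\alpha\Vdash\Box C$ and the definition $\varphi_\alpha=\bigwedge_{D\in S,\,\alpha\Vdash\Box D}D$, $C$ is one of the conjuncts of $\varphi_\alpha$, so $\HA_0\vdash\varphi_\alpha\to C$. Applying the substitution $\sigma$ distributively and then necessitation, $\HA_0\vdash\Box\varphi_\alpha^\lozenge\to\Box C^\lozenge=B^\lozenge$, which is stronger than what is needed. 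The conjunction and disjunction cases are routine: for $B=B_1\wedge B_2$, both conjuncts are in ${\sf NOI}\cap S$ and forced at $\alpha$, so the two induction hypotheses combine; for $B=B_1\vee B_2$, pick the disjunct $B_i$ with $\alpha\Vdash B_i$ and apply the induction hypothesis, then weaken to the disjunction on the conclusion.

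Finally, the moreover part follows because the proof above yields, for each fixed $B$, a concrete derivation showing $\HA_0\vdash(L=\alpha\wedge\Box\varphi_\alpha^\lozenge)\to B^\lozenge$. The statement $\Box_0\bigl((L=\alpha\wedge\Box\varphi_\alpha^\lozenge)\to B^\lozenge\bigr)$ is $\Sigma_1$ and true in $\mathbb{N}$, hence provable in $\HA_0$ by \autoref{Lemma-bounded Sigma completeness}. No step here is genuinely hard, because the restriction $B\in{\sf NOI}$ eliminates the only potentially problematic case (implication), which would otherwise force us to reason under an arbitrary $\Sigma_1$ hypothesis and would be dealt with later using the machinery of \autoref{Lemma-qtranslation2} and \autoref{Lemma-sigma_l translation}. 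The only point requiring care is the atomic case, where one must not confuse the shorthand $L=\alpha$ with the disjunct $\exists x(\theta(x)\wedge\hat x=\alpha)$ of $\alpha\preccurlyeq L$ and must invoke totality of $F$ to bridge them.
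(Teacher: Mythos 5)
Your proposal is correct and follows essentially the same route as the paper, which proves the implication by induction on the complexity of $B$ (the ${\sf NOI}$ hypothesis removing the implication case, the boxed case going through the fact that $C$ is a conjunct of $\varphi_\alpha$) and then obtains the formalized version from \autoref{Lemma-bounded Sigma completeness}. You have merely filled in the case analysis that the paper leaves implicit.
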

\begin{proof}
One can prove $\HA_0\vdash
(L=\alpha\wedge\Box\varphi_{_\alpha}\emptycommand)\ra B\emptycommand$, by
induction on the complexity of $B$. Then by 
\Cref{Lemma-bounded Sigma completeness}, we derive its formalized form in $\HA_0$.
\end{proof}

\begin{notation}
We say that $\alpha\mrefute A$ if $\alpha\nVdash A$ and for all $\beta\gneqq\alpha$ we have  $\beta\Vdash A$.  
\end{notation}
\noindent We have the following observations:
\begin{itemize}
\item $\alpha\mrefute B\to C$ iff ``$\alpha\Vdash B$ and $\alpha\mrefute C$'',
\item $\alpha\mrefute B\vee C$ implies ``$\alpha\nVdash B$ and $\alpha\nVdash C$'',
\item $\alpha\mrefute B\wedge C$ iff  ``$\alpha\mrefute B$ or $\alpha\mrefute C$''.
\end{itemize}

\noindent Let $A$ be a $\TNNIL$-formula such that $\alpha\mrefute A$.
%that means that $\alpha\nVdash A$ and for all $\beta\gneqq\alpha$, $\beta\Vdash A$. 
In \Cref{Lemma-1.7st Properties of Solovay Function} and \Cref{Lemma-1.84st Properties of Solovay Function}, we need to put $\Box_x$ before all occurrences of subformulas 
$B$  in the right of $\ra$, when it is not the case that $\alpha\mrefute B$. This is the content of the following definition.
\begin{definition}
\em 
Let $A$ be a modal proposition, $\alpha\in K$ and $x$ be a variable.
 We define the first-order sentence $d(A,\alpha,x)$, by induction on $A$.
If this is not the case that $\alpha\mrefute A$, then we define
$d(A,\alpha,x):=\Box_x \sigma_{_{\sf HA}}(A)$, and if $\alpha\mrefute A$,
we define the formula $d(A,\alpha,x)$ by cases:
  \begin{itemize}
  \item $A$ is atomic or boxed. $d(A,\alpha,x):=\sigma_{_{\sf HA}}(A)$,
  \item $A=B\to C$. Define $d(A,\alpha,x)$ by cases. If $B\not\in{\sf NOI}$, then let $d(A,\alpha,x):=\sigma_{_{\sf HA}}(A)$,
   otherwise let   $d(A,\alpha,x):=\sigma_{_{\sf HA}}(B)\to d(C,\alpha,x)$,
  \item $A=B\wedge C$. If $\alpha\mrefute B$ then $d(A,\alpha,x):=d(B,\alpha,x)$, else $d(A,\alpha,x):=d(C,\alpha,x)$,
  \item $A=B\vee C$. $d(A,\alpha,x):=d(B,\alpha,x)\vee d(C,\alpha,x)$.
\end{itemize}   
\end{definition}
\noindent In the following lemma, we use definition of $\sigma_{_l}(A,x)$ from \Cref{Sec-ExLePr}:
\begin{lemma}\label{Lemma-1.69st Properties of Solovay Function}
 Let $A$ be a modal proposition, $\alpha\in K$ such that $\alpha\mrefute A$. Then 
 $$\HA_0\vdash  \sigma_{_l}(A,x)\to d(A,\alpha,x)$$
\end{lemma}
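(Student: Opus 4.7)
The plan is to prove the statement by induction on the structure of $A$, treating $\alpha$ as fixed (and noting that the ambient variable $x$ is free throughout). The base case, where $A$ is atomic or boxed, is immediate: both $\sigma_l(A,x)$ and $d(A,\alpha,x)$ collapse to $A^\lozenge$ under the hypothesis $\alpha\mrefute A$.

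For the conjunction case $A=B\wedge C$, I would first extract from $\alpha\mrefute (B\wedge C)$ a Kripke-semantic fact: since $\alpha$ fails $A$ yet every proper extension forces both conjuncts, at least one of $B,C$ is itself max-refuted at $\alpha$. Say $\alpha\mrefute B$; then by definition $d(A,\alpha,x)=d(B,\alpha,x)$, and the induction hypothesis for $B$ combined with $\sigma_l(A,x)\to\sigma_l(B,x)$ closes this case. The implication case $A=B\to C$ requires a similar semantic observation: $\alpha\mrefute (B\to C)$ forces $\alpha\Vdash B$ and $\alpha\mrefute C$ (one checks that any ``witness'' node $\gamma\geq\alpha$ for the failure of $B\to C$ must be $\alpha$ itself, since strict extensions force $B\to C$). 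Since the lemma is used on $\TNNIL$-formulas, $B\in{\sf NOI}$ holds, hence $\sigma_l(A,x)=B^\lozenge\to\sigma_l(C,x)$ and $d(A,\alpha,x)=B^\lozenge\to d(C,\alpha,x)$; then the induction hypothesis on $C$ immediately yields the desired implication in $\HA_0$.

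The disjunction case is where the real work lies. For $A=B\vee C$ with $\alpha\mrefute A$, we have $\sigma_l(A,x)=\Boxdot_x\sigma_l(B,x)\vee\Boxdot_x\sigma_l(C,x)$ and $d(A,\alpha,x)=d(B,\alpha,x)\vee d(C,\alpha,x)$. It suffices to show $\HA_0\vdash \Boxdot_x\sigma_l(B,x)\to d(B,\alpha,x)$ (and analogously for $C$). Note that $\alpha\nVdash B$, but $\alpha\mrefute B$ may or may not hold. If $\alpha\mrefute B$, the induction hypothesis gives $\sigma_l(B,x)\to d(B,\alpha,x)$, and $\Boxdot_x\sigma_l(B,x)\to\sigma_l(B,x)$ suffices. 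If $\alpha\not\mrefute B$ (so there is some $\beta\gneq\alpha$ still failing $B$), then $d(B,\alpha,x)=\Box_x B^\lozenge$, and we must extract $\Box_x B^\lozenge$ from $\Box_x\sigma_l(B,x)$.

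The main obstacle is this last step, which requires lifting \autoref{Lemma-prop-sigma_l}(3) (which gives $\HA_0\vdash\sigma_l(B,x)\to B^\lozenge$) into the scope of $\Box_x$. My plan is to invoke the formalized $\Sigma_1$-completeness of $\HA_0$ (as in \autoref{Lemma-bounded Sigma completeness}): since $\HA_0$ proves the universal closure $\forall x(\sigma_l(B,x)\to B^\lozenge)$, there is a fixed proof code, so $\HA_0\vdash\Box_0\forall x(\sigma_l(B,x)\to B^\lozenge)$, and by formalized universal instantiation $\HA_0\vdash\Box_0(\sigma_l(B,\dot x)\to B^\lozenge)$. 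Since $\HA_x\supseteq\HA_0$ uniformly in $x$ (this inclusion is itself provable in $\HA_0$), we upgrade to $\HA_0\vdash\Box_x(\sigma_l(B,x)\to B^\lozenge)$, and closure of $\Box_x$ under modus ponens delivers $\Box_x\sigma_l(B,x)\to\Box_x B^\lozenge$, completing the case. The delicate part throughout is keeping track of the free variable $x$ inside and outside the provability predicates, ensuring that the formalization goes through uniformly.
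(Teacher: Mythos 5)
Your proposal is correct and takes essentially the same route as the paper: the paper's entire proof consists of the remark that $\HA_0\vdash\sigma_{_l}(A,x)\to A^\lozenge$ (the third part of \autoref{Lemma-prop-sigma_l}) followed by ``induction on $A$'', and your case analysis---in particular the disjunction case, where that lemma must be formalized inside $\Box_x$ to convert $\Box_x\sigma_{_l}(B,x)$ into $\Box_x B^\lozenge$---is precisely the expansion of that one-line argument. The semantic observations you extract from $\alpha\mrefute A$ in the conjunction and implication cases are the right ones, and you correctly identify the free-variable bookkeeping under $\Box_x$ as the only delicate point.
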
 
\begin{proof}
Use induction on $A$. 
%By   \Cref{Lemma-prop-sigma_l} \cref{3Lemma-prop-sigma_l}, we have $\HA_0\vdash  \sigma_{_l}(A,x)\to A\emptycommand$. 
%Then the result follows by induction on $A$.
\end{proof}
\begin{lemma}\label{Lemma-1.7st Properties of Solovay Function}
Let $A$ be a modal proposition. Then there exists some provably {\em (}in
$\HA${\em)} total recursive function $g_{_A}$ such that   for any $\alpha\in K$ with $\alpha\mrefute A$  we have
$$\HA\vdash\Box_x A\emptycommand \ra\Box_{g_{_A}(x)}d(A,\alpha,g_{_A}(x))$$
\end{lemma}
\begin{proof}
Use   \Cref{Lemma-sigma_l translation} and \Cref{Lemma-1.69st Properties of Solovay Function}.
\end{proof}

\noindent 
Let $\HA\vdash A$, for arbitrary $A$ in the language of arithmetic. Then  by the compactness theorem, one could deduce 
that $\HA_n\vdash A$ for some $n\in \omega$. In the following definition of the $n_1$ and $n_2$, we make use of this fact.
\noindent Define $m\in\omega$ as the maximum of the following $n_i$'s:
\begin{itemize}
\item $n_1$. By  \Cref{Lemma-1.7st Properties of Solovay Function} and the compactness theorem,
we can find some $n_1$ such that for each $B\in {\sf Sub}(\Gamma)$, $g_{_B}$ is provably total in
$\HA_{n_1}$.
\item $n_2$. For each $\alpha\in K$ and $B\in {\sf Sub}(\Gamma)$ such that $\alpha\mrefute
B$, by  \Cref{Lemma-1.7st Properties of Solovay Function} and the compactness theorem,
there exists some $n$ such that $\HA_n$ proves the desired
sentence of the Lemma. Let $n_2$ be the maximum of such $n$.
\item $n_3$. By  \Cref{Lemma-Reflection refinement}, for each
$\alpha\in K$, there exists some provably (in $\HA$) total
function $h_\alpha$, such that $h_\alpha(x)\geq x$ and $\HA\vdash
\Box_{h_\alpha(x)}(\Box_x\neg(L=\alpha\wedge\Box\varphi_{_\alpha}\emptycommand)\ra
\neg(L=\alpha\wedge\Box\varphi_{_\alpha}\emptycommand))$. Hence by the compactness theorem, there exists
some $n_\alpha\in\omega$ such that $h_\alpha$ is provably total
in $\HA_{n_\alpha}$ and 
$${\HA_{n_\alpha}\vdash
\Box_{h_\alpha(x)}(\Box_x\neg(L=\alpha\wedge\Box\varphi_{_\alpha}\emptycommand)\ra
\neg(L=\alpha\wedge\Box\varphi_{_\alpha}\emptycommand))}$$ Let
$n_3:=\text{max}\{n_\alpha | \alpha\in K\}$.
\end{itemize}
Then define $\hat{g}_{_B}(x)$ as the maximum of $g_{_B}(x)$, $m$ and $x$. Assume some $B\in {\sf Sub}(\Gamma)$. We define the provably (in $\HA_m$)
total recursive function $f_{_B}$,  by induction on
the complexity of $B$:
$$f_{_B}(x):=
\begin{cases}
\text{max}(X)&  \text{\ \ if }X=\{h_\alpha(f_{_C}(\hat{g}_{_B}(x)))\ |\  
C\in {\sf Sub}(B) , C\neq B, \alpha\in K \}\neq\emptyset\\
\hat{g}_{_B}(x) &  \text{\ \ else}
\end{cases}
$$
where $h_\alpha$ is as we stated in definition of
$n_3$. From the above definition, one can observe that for each
atomic $C\in {\sf Sub}(\Gamma)$,  the set $X$ is empty. Hence we have $f_{_C}(x)=x$. 
Since each non-atomic
formula $B$ has some atomic sub-formula $C$, one can deduce that
$f_{_B}(x)\geq \hat{g}_{_B}(x)\geq x,m$.
Moreover,
all of the above functions are provably total in $\HA_m$.

\begin{lemma}\label{Lemma-1.84st Properties of Solovay Function}
Let $B,E\in {\sf Sub}(\Gamma)\cap\TNNIL$ and  $\beta\in K$,  such that $\beta\mrefute B$, $\beta\mrefute E$ and 
${B\in{\sf Sub}(E)}$. Then

\begin{equation}\label{Eq-7}
\HA_m\vdash 
[
F(f_{_E}(x))\,\R\,\beta\wedge\Box_x E\emptycommand
]
\to \Box_{f_{_E}(x)}
\left(  
(L=\beta\wedge\Box\varphi_{_\beta}\emptycommand)\to\neg d(B,\beta,\hat{g}_{_E}(x))
\right)
\end{equation}
\end{lemma}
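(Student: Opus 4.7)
The plan is to proceed by induction on the complexity of $B$ as a subformula of $E$, with $E$ (and the derived arithmetic bounds $f_E,f^\beta_E,\ldots$) treated as parameters. I would work inside $\HA_m$, adopt the outer hypotheses $F(f_E(x))\,\R\,\beta$ and $\Box_x E^\lozenge$, and enter the scope of $\Box_{f_E(x)}$; both hypotheses are $\Sigma_1$, so by \autoref{Lemma-bounded Sigma completeness} they remain available inside this scope, where I also assume $L=\beta\wedge\varphi_\beta^\lozenge$ and aim to derive $\neg d(B,\beta,f^\beta_E(x))$. The choice of $m$ as the maximum of $n_1,n_2,n_3$ set just before the lemma is designed precisely so that the coming appeals to \autoref{Lemma-1.7st Properties of Solovay Function}, \autoref{Lemma-Reflection refinement} and \autoref{Lemma-1.5st Properties of Solovay's Function} all land inside $\HA_m$.

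For $B$ atomic, $d(B,\beta,y)=B^\lozenge=\bigvee_{\gamma\Vdash B}\gamma\preccurlyeq L$ becomes vacuous under $L=\beta$ by persistence of atomic forcing, giving $\HA_0\vdash(L=\beta)\to\neg B^\lozenge$. For the Boolean connectives the structural definition of $d$ reduces to the induction hypothesis: in $B=B_1\wedge B_2$ one selects the conjunct $B_i$ maximally refuted at $\beta$; in $B=B_1\vee B_2$ and $B=B_1\to B_2$ one decomposes $\neg d(B,\beta,y)$ and handles each piece either by direct IH (when the relevant subformula is $\mrefute$-ed at $\beta$) or by a climbing step---finding a maximal $\delta\gneqq\beta$ that still refutes the offending subformula and reducing to IH at $(\delta,\text{subformula})$, using brilliance of $\kcal_0$ to propagate $F(f_E(x))\,\R\,\delta$. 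For the antecedent $B_1^\lozenge$ required in the implication case one uses that $B_1\in\TNNIL\cap{\sf NOI}$ makes $B_1^\lozenge$ essentially $\Sigma_1$ and reads off its truth from $L=\beta\wedge\varphi_\beta^\lozenge$ by monotonicity plus $\Sigma_1$-completeness on the boxed sub-translations.

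The main obstacle is the case $B=\Box C$, where $d(B,\beta,y)=\Box C^\lozenge$ and the target is the $\Pi_1$-statement $\neg\Box C^\lozenge$. My strategy: from $\beta\nVdash\Box C$ pick $\gamma$ with $\beta\,\R\,\gamma$ and $\gamma\nVdash C$, enlarge to a maximal $\gamma'\geq\gamma$ with $\gamma'\nVdash C$ so that $\gamma'\mrefute C$; brilliance gives $\beta\,\R\,\gamma'$, and transitivity of $\R$ together with the outer $F(f_E(x))\,\R\,\beta$ yields $F(f_E(x))\,\R\,\gamma'$, enabling IH at $(C,\gamma')$:
\[
\Box_{f_E(x)}\bigl[(L=\gamma'\wedge\varphi_{\gamma'}^\lozenge)\to\neg d(C,\gamma',f^{\gamma'}_E(x))\bigr].
\]
I then suppose for contradiction $\Box C^\lozenge$, instantiate $\Box_y C^\lozenge$ for some $y$, and apply \autoref{Lemma-1.7st Properties of Solovay Function} at $(C,\gamma')$ to obtain $\Box_{g^{\gamma'}_C(y)}d(C,\gamma',g^{\gamma'}_C(y))$; by the design of $f_E$ and $m$ this can be rebounded as $\Box_{f^{\gamma'}_E(x)}d(C,\gamma',f^{\gamma'}_E(x))$ inside $\HA_{f_E(x)}$. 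Meanwhile \autoref{Lemma-1.5st Properties of Solovay's Function} applied with $\delta=\beta$ along the chain $\beta\,\R\,\gamma'\leq\gamma'$ interprets $L=\gamma'\wedge\Box\varphi_{\gamma'}^\lozenge$ inside the current context, and reflection (\autoref{Lemma-Reflection refinement}) inside that interpretation lifts the bounded box to $d(C,\gamma',f^{\gamma'}_E(x))$, contradicting the IH's consequent.

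The hard part will be aligning all the arithmetic bounds---$g^\alpha_A$, $f^\alpha_A$, $f_A$, $h_\alpha$, $m$---so that interpretability, reflection, $\Sigma_1$-completeness, and the induction hypothesis compose cleanly within $\HA_{f_E(x)}$. The explicit maxima in the definitions of $f_E$ and $m$ above were engineered precisely for this alignment; modulo that bookkeeping the argument is the case analysis sketched above, with the $\Box C$ case being the one that genuinely invokes the full Solovay-function machinery built in this subsection.
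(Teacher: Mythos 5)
Your overall skeleton---induction on the complexity of $B$ following the case structure of $d(B,\beta,x)$, with the atomic, conjunction and implication cases handled essentially as in the paper---is right, but there is a genuine gap at the crux of the lemma, and you have misidentified where that crux lies. The case that ``genuinely invokes the full Solovay-function machinery'' is not $B=\Box C$ but $B=C\vee D$ in the sub-case where some $\gamma\gneqq\beta$ refutes the non-$\mrefute$-ed disjunct $D$ maximally but $\beta\NR\gamma$. Your proposed ``climbing step'' reduces everything to an induction hypothesis at a node reached by $\R$ from $\beta$ (or from $F(f_E(x))$), and your contradiction mechanism in the boxed case rests on the fact that $L=\beta$ refutes $\Box^+\neg(L=\gamma\wedge\Box\varphi_\gamma^\lozenge)$ whenever $\beta\R\gamma$ (\autoref{Lemma-Properties of Solovay's Function.4}). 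When $\gamma$ is merely $\leq$-above $\beta$ and not an $\R$-successor, that consistency statement is simply unavailable, and brilliance does not manufacture $\beta\R\gamma$ for you. This is exactly why the recursive definition of $F$ has its second clause and why the function $r$ exists: in that sub-case one must argue inside $\Box_{f_E(x)}$ that, given $\Box_{t(x)}\neg(L=\gamma\wedge\Box\varphi_\gamma^\lozenge)$ with $t(x)=f_D(f^\beta_E(x))$, either $r(\beta,y+1)\leq t(x)$ --- whence $\Box_{t(x)}\neg(L=\beta\wedge\Box\varphi_\beta^\lozenge)$ and the reflection function $h_\beta$ (built into $f_E(x)\geq h_\beta(t(x))$) yields $\bot$ --- or $r(\gamma,y+1)\leq t(x)<r(\beta,y+1)$, in which case the second clause of the definition of $F$ forces $F(y+1)=\gamma$, contradicting $L=\beta$. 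Nothing in your sketch performs this comparison of $r$-values, and without it the disjunction case does not close.

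A secondary problem is your treatment of $B=\Box C$: the appeal to \autoref{Lemma-1.5st Properties of Solovay's Function} (an interpretability statement) followed by ``reflection inside that interpretation'' is not a valid inference --- interpretability of $\PA+L=\gamma'+\Box\varphi_{\gamma'}^\lozenge$ does not let you import its theorems into the ambient context. What is actually needed there is much less: from the induction hypothesis and \autoref{Lemma-1.7st Properties of Solovay Function} one gets $(L=\beta\wedge\Box C^\lozenge)\to\Box\neg(L=\gamma\wedge\Box\varphi_\gamma^\lozenge)$ outright in $\HA_m$, and since $\beta\R\gamma$ here, \autoref{Lemma-Properties of Solovay's Function.4} immediately refutes the consequent under $L=\beta$; one then boxes the resulting negation by $\Sigma_1$-completeness. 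So the boxed case is in fact the easy one, and the machinery you reserved for it belongs to the disjunction case you glossed over.
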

\begin{proof}
We prove \cref{Eq-7} by induction on the complexity of $B$. As induction hypothesis, assume that for any 
sub-formula $C$ of $B$ ($C\neq B$) and any $E'\in {\sf Sub}(\Gamma)\cap \TNNIL$ and $\gamma\in K$, such that $C\in{\sf Sub}(E')$ and  $\gamma\mrefute C,E'$,
 we have
\begin{equation*}
\HA_m\vdash 
\left(
F(f_{_{E'}}(x))\,\R\,\gamma\wedge\Box_x E'\emptycommand
\right) 
\to \Box_{f_{_{E'}}(x)}
\left(  
(L=\gamma\wedge\Box\varphi_{_\gamma}\emptycommand)\to\neg d(C,\gamma,\hat{g}_{_{E'}}(x))
\right)
\end{equation*}
We consider different cases.
\begin{itemize}[leftmargin=*]
\item $B$ is atomic. Then  $d(B,\beta,\hat{g}_{_B}(x))=\sigma(B)$
 and the desired result holds by definition of the substitution $\sigma$ and $\beta\nVdash B$ 
 and  also by 
 \Cref{Lemma-Properties of Solovay's Function}  \cref{1Lemma-Properties of Solovay's Function}, 
 \item $B=\Box C$. Then $d(B,\beta,\hat{g}_{_B}(x))=\sigma_{_{\sf HA}}(B)=\Box \sigma_{_{\sf HA}}(C)$. 
Since $\beta\mrefute\Box C$, there exists some 
$\gamma$ such that $\beta\,\R\,\gamma\mrefute C$. Then, by induction hypothesis,  
$$\HA_m\vdash\left(F\left(f_{_C}\left(x\right)\right)\,\R\,\gamma \wedge\Box_x C\emptycommand\right)\ra 
\Box_{f_{_C}(x)} \left( (L=\gamma\wedge\Box\varphi_{_\gamma}\emptycommand)
\to \neg d(C,\gamma,\hat{g}_{_C}(x))
 \right)$$
 By \Cref{Lemma-1.7st Properties of Solovay Function}, we have 
 $\HA_m\vdash \Box_x C\emptycommand\to\Box_{f_{_C}(x)}d(C,\gamma,\hat{g}_{_C}(x))$. 
Hence 
$$\HA_m\vdash{\left(L=\beta \wedge\Box C\emptycommand\right)}\ra
 {\Box \neg\left(L=\gamma\wedge\Box\varphi_{_\gamma}\emptycommand\right)}$$
 By \Cref{Lemma-Properties of Solovay's Function.4}, we have 
 $\HA_m\vdash \neg \left(L=\beta \wedge\Box C\emptycommand\right)$. 
Hence by \Cref{Lemma-bounded Sigma completeness},  
$\HA_0 \vdash \Box_m \neg \left(L=\beta \wedge\Box C\emptycommand\right)$. Since $f_{_B}(x)\geq m$, we have 
$\HA_m\vdash \Box_{f_{_B}(x)}  
\left((L=\beta \wedge\Box \varphi_{_\beta}\emptycommand)\to \neg d(B,\beta,\hat{g}_{_B}(x))\right)$, 
which implies \cref{Eq-7}.
\item $B=C\to D$. In this case  $\beta\Vdash C\in{\sf NOI}$,  $\beta\mrefute D$ 
 and 
$d(B,\beta,\hat{g}_{_B}(x))=\sigma_{_{\sf HA}}(C)\to d(D,\beta,\hat{g}_{_B}(x))$. 
Hence, by induction hypothesis,  
\begin{equation*}%\label{Eq-12}
\HA_m\vdash\left(F\left(f_{_E}\left(x\right)\right)\,\R\,\beta \wedge\Box_x E\emptycommand\right)\ra  
 \Box_{f_{_E}(x)}  	\left( (L=\beta\wedge\Box\varphi_{_\beta}\emptycommand) \to 
\neg d(D,\beta,\hat{g}_{_E}(x)) 
 \right)
\end{equation*}
Then  by \Cref{Lemma-1.6st Properties of Solovay Function},  
 $$ \HA_m\vdash\left(F\left(f_{_E}\left(x\right)\right)\,\R\,\beta \wedge\Box_x E\emptycommand\right)
 \ra 
 \Box_{f_{_E}(x)}\left( 
 (L=\beta\wedge\Box\varphi_{_\beta}\emptycommand)\to\neg(C\emptycommand\to d(D,\beta,\hat{g}_{_E}(x)) \right)$$
  \item $B=C\wedge D$. Since $\beta\mrefute B$, either $\beta\mrefute C$ or $\beta\mrefute D$ holds. 
 We only treat the case that ${\beta\mrefute C}$. The other case is similar. Assume that $\beta\mrefute C$.
Then by definition, $d(B,\beta,y)=d(C,\beta,y)$. 
 Now the induction hypothesis for $C$, directly implies the desired result, i.e. \cref{Eq-7}.
 \item $B=C\vee D$. This case is the interesting one. We have 4 sub-cases:  (1) $\beta\mrefute C$ and $\beta\mrefute D$, 
 (2)\nolinebreak \ not $\beta\mrefute C$ and $\beta\mrefute D$, (3) $\beta\mrefute C$ and not $\beta\mrefute D$, 
 (4) not $\beta\mrefute C$ and not $\beta\mrefute D$. We only treat the case (3) here. Other cases can be treated 
 similarly. Assume that the case (3) occurs. By definition,  
 ${d(B,\beta,\hat{g}_{_E}(x))}={d(C,\beta,\hat{g}_{_E}(x))\vee\Box_{\hat{g}_{_E}(x)}D\emptycommand}$.
From the induction hypothesis for $C$,  
\begin{equation}\label{Eq-17}
\HA_m\vdash \left( F(f_{_E}(x))\,\R\,\beta\wedge\Box_x E\emptycommand\right) \to 
 \Box_{f_{_E}(x)} \left((L=\beta\wedge\Box\varphi_{_\beta}\emptycommand)\to\neg d(C,\beta,\hat{g}_{_E}(x)) \right)
\end{equation}
So it is enough to show that 
\begin{equation}\label{Eq-18}
\HA_m\vdash \left( F(f_{_E}(x))\,\R\,\beta\wedge\Box_x E\emptycommand\right) \to 
 \Box_{f_{_E}(x)} \left((L=\beta\wedge\Box\varphi_{_\beta}\emptycommand)\to\neg \Box_{\hat{g}_{_E}(x)} D\emptycommand \right)
\end{equation}
Since $\beta\nVdash D$ and not $\beta\mrefute D$, there exists some $\gamma\gneqq\beta$ such that $\gamma\mrefute D$.
If $\beta\,\R\,\gamma$, then we can repeat the reasoning as in the  case $B=\Box C$. So assume that 
$\beta\NR\gamma$. By the induction hypothesis for $D$ and $\gamma$, we have 
\begin{equation*}
\HA_m\vdash \left( F(f_{_D}(x))\,\R\,\gamma\wedge\Box_x D\emptycommand\right) \to 
 \Box_{f_{_D}(x)} \left((L=\gamma\wedge\Box \varphi_{_\gamma}\emptycommand)\to\neg d(D,\gamma,\hat{g}_{_D}(x)) \right)
\end{equation*}
On the other hand, by \Cref{Lemma-1.7st Properties of Solovay Function},  we have 
\begin{equation*}
\HA_m\vdash \Box_x D\emptycommand\to \Box_{f_{_D}(x)}d(D,\gamma,\hat{g}_{_D}(x))
\end{equation*}
Hence
\begin{equation}\label{Eq-19}
\HA_m\vdash \left( F(f_{_D}(x))\,\R\,\gamma\wedge\Box_x D\emptycommand\right) \to 
 \Box_{f_{_D}(x)} \neg(L=\gamma\wedge\Box\varphi_{_\gamma}\emptycommand)
\end{equation}
We argue inside $\HA_m$. Assume $F(f_{_E}(x))\,\R\,\beta$ and $\Box_x E\emptycommand$.
Since $f_{_E}(x)\geq f_{_D}(\hat{g}_{_E}(x))$, by the
assumption of $F(f_{_E}(x))\,\R\,\beta$, we have $F(f_{_D}(\hat{g}_{_E}(x)))\,\R\,\gamma$, and by 
\Cref{Lemma-bounded Sigma completeness}, 
we get $\Box_m(F(f_{_D}(\hat{g}_{_E}(x)))\,\R\,\gamma)$. Hence 
if we replace $\hat{g}_{_E}(x)$ for $x$ in \cref{Eq-19},  we may deduce 
\begin{equation}\label{Eq-21}
\Box_m\left(\Box_{\hat{g}_{_E}(x)}D\emptycommand\ra
\Box_{f_{_D}(\hat{g}_{_E}(x))}\neg(L=\gamma\wedge\Box\varphi_{_\gamma}\emptycommand)\right)
\end{equation}
Now we work inside $\Box_{f_{_E}(x)}$. We
have $F(f_{_E}(x))\,\R\,\beta$. Assume
$\Box_{\hat{g}_{_E}(x)}D\emptycommand$ and $L=\beta$ and
$\Box\varphi_{_\beta}\emptycommand$. We should deduce $\bot$. From 
$\Box_{\hat{g}_{_E}(x)}D\emptycommand$ and \cref{Eq-21}, we have 
$\Box_{t(x)}\neg(L=\gamma\wedge\Box\varphi_{_\gamma}\emptycommand)$, in
which $t(x):=f_{_D}(\hat{g}_{_E}(x))$. So there exists some
$y_1$ such that 
${\sf Proof}_{{\sf HA}_{t(x)}}(y_1,\ulcorner \neg(L=\gamma\wedge\varphi_{_\gamma}\emptycommand) \urcorner)$. 
Also by $L=\beta$, there exists some $y_2\geq y_1$ such that
$\forall{z\geq y_2}F(z)=\beta$. Let some $y$ greater than 
$\langle2,\gamma\rangle^*y_2$ and $t(x)$. If $r(\beta,y+1)\leq t(x)$, then  
$\Box_{t(x)}\neg(L=\beta\wedge\Box\varphi_{_\beta}\emptycommand)$. Now,
since $ f_{_E}(x)\geq h_\beta(t(x))$ and we are working in
$\Box_{f_{_E}(x)}$, by  \Cref{Lemma-Reflection refinement}, we have
$\neg(L=\beta\wedge\Box\varphi_{_\beta}\emptycommand)$ and hence 
$\bot$. If $t(x)<r(\beta,y+1)$, since $r(\gamma,y+1)\leq t(x)$,  by recursive definition of $F$,
then  $F(y+1)=\gamma$, which contradicts with $L=\beta$.
\end{itemize}
\end{proof}
\begin{corollary}\label{Lemma-1.9st Properties of Solovay Function}
For each $B\in {\sf Sub}(\Gamma)\cap\TNNIL$ and $\beta\in K$ such that
$\beta\mrefute B$,
$$\HA_m\vdash\left(F\left(f_{_B}\left(x\right)\right)\,\R\,\beta \wedge\Box_x B\emptycommand\right)\ra \Box_{f_{_B}(x)}
\neg\left(L=\beta\wedge\Box\varphi_{_\beta}\emptycommand\right)$$
\end{corollary}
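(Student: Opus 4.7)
This is stated as an immediate corollary of the preceding two lemmas; the plan is to specialize \autoref{Lemma-1.84st Properties of Solovay Function} to the choice $E := B$, combine it with the ``positive'' direction coming from \autoref{Lemma-1.7st Properties of Solovay Function}, and close everything inside the box $\Box_{f_B(x)}$.

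First I would apply \autoref{Lemma-1.84st Properties of Solovay Function} with $E := B$. The three hypotheses $B \in {\sf Sub}(E)$, $\beta \mrefute E$, and $\beta \mrefute B$ all collapse to the single assumption $\beta \mrefute B$ that we already have. This yields
\begin{equation*}
\HA_m \vdash \bigl(F(f_B(x)) \R \beta \wedge \Box_x B^\lozenge\bigr) \to \Box_{f_B(x)}\bigl((L=\beta \wedge \Box\varphi_\beta^\lozenge) \to \neg d(B,\beta,f^\beta_B(x))\bigr).
\end{equation*}

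Next I would invoke \autoref{Lemma-1.7st Properties of Solovay Function} applied to $A := B$ and the node $\beta$ (which is legitimate since $\beta \mrefute B$). By the choice of $m$ via the constants $n_1$ and $n_2$ introduced just before \autoref{Lemma-1.84st Properties of Solovay Function}, and since $f^\beta_B(x) \geq g^\beta_B(x)$, its conclusion strengthens, inside $\HA_m$, to
\begin{equation*}
\HA_m \vdash \Box_x B^\lozenge \to \Box_{f^\beta_B(x)} d(B,\beta,f^\beta_B(x)).
\end{equation*}
By the definition of $f_B$ we have $f_B(x) \geq f^\beta_B(x)$, so monotonicity of $\Box_{(\cdot)}$ turns this into $\Box_x B^\lozenge \to \Box_{f_B(x)} d(B,\beta,f^\beta_B(x))$.

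The final step is purely propositional inside the box: assuming $F(f_B(x)) \R \beta$ and $\Box_x B^\lozenge$, we obtain inside $\Box_{f_B(x)}$ both the formula $d(B,\beta,f^\beta_B(x))$ and the implication $(L=\beta \wedge \Box\varphi_\beta^\lozenge) \to \neg d(B,\beta,f^\beta_B(x))$, from which $\neg(L=\beta \wedge \Box\varphi_\beta^\lozenge)$ follows by modus tollens, yielding the desired conclusion. I do not anticipate any genuine obstacle here; the only bookkeeping is matching the subscripts of $\Box_{(\cdot)}$, and this is exactly what the inequality $f_B(x) \geq \max(m, f^\beta_B(x))$ has been set up to guarantee.
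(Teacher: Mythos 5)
Your proposal is correct and follows exactly the route of the paper's own (one-line) proof: specialize \autoref{Lemma-1.84st Properties of Solovay Function} to $E=B$, feed in \autoref{Lemma-1.7st Properties of Solovay Function} to obtain $\Box_{f_B(x)}d(B,\beta,f^\beta_B(x))$, and conclude by modus tollens under the box. The subscript bookkeeping via $f_B(x)\geq f^\beta_B(x)\geq m$ is precisely the justification the paper leaves implicit.
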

\begin{proof}
Use \Cref{Lemma-1.84st Properties of Solovay Function} for $E=B$. Then by \cref{Eq-7}
and  \Cref{Lemma-1.7st Properties of Solovay Function}, one can deduce the  desired result.
\end{proof}

\begin{theorem}\label{Lemma-2st Properties of Solovay's Function}
For each $B\in {\sf Sub}(\Gamma)\cap\TNNIL$ and $\alpha\in K$ such that $\alpha\nVdash
\Box B$,
$$\HA\vdash L\!=\!\alpha\ra\neg\Box B\emptycommand $$
\end{theorem}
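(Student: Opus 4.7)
The plan is to reduce the statement to \autoref{Lemma-1.9st Properties of Solovay Function} by replacing the given successor witness of $\alpha\nVdash\Box B$ with a \emph{maximal} refuter in the finite tree frame. Since $\alpha\nVdash\Box B$, there is some $\beta\in K_0$ with $\alpha\R\beta$ and $\beta\nVdash B$. Walking up the tree from $\beta$ and, at each step, picking a $<_0$-successor that still refutes $B$---a process that must terminate because $K_0$ is finite---I obtain some $\gamma\geq\beta$ with $\gamma\mrefute B$. Brilliance $(\R;\leq)\subseteq\R$ of the perfect Kripke model then yields $\alpha\R\gamma$.

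Next I reason inside $\HA$, assuming toward a contradiction that $L=\alpha$ and $\Box B^\lozenge$. From $L=\alpha$ one obtains some $u$ with $F(z)=\alpha$ for all $z\geq u$, and from $\Box B^\lozenge$ one obtains some $x_0$ with $\Box_{x_0}B^\lozenge$. Picking $x\geq x_0$ large enough that $f_B(x)\geq u$---possible since $f_B(x)\geq x$ by the definition of $f_B$---gives $F(f_B(x))=\alpha$ and hence $F(f_B(x))\R\gamma$ from the external fact $\alpha\R\gamma$ and the definition of $\bar{\R}$. Then \autoref{Lemma-1.9st Properties of Solovay Function}, applied with $\beta:=\gamma$, yields $\Box_{f_B(x)}\neg(L=\gamma\wedge\Box\varphi_\gamma^\lozenge)$, so in particular $\Box\neg(L=\gamma\wedge\Box\varphi_\gamma^\lozenge)$. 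Since $\HA\subseteq\PA$ and this inclusion is internally provable in $\HA$, this upgrades to $\Box^+\neg(L=\gamma\wedge\Box\varphi_\gamma^\lozenge)$.

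On the other hand, \autoref{Lemma-Properties of Solovay's Function.4} applied to the pair $\alpha\R\gamma$ delivers $\HA_0\vdash L=\alpha\to\neg\Box^+\neg(L=\gamma\wedge\Box\varphi_\gamma^\lozenge)$; combined with the standing assumption $L=\alpha$, this directly contradicts the previous paragraph and closes the argument by reductio. The substantive work has already been absorbed into \autoref{Lemma-1.9st Properties of Solovay Function}, where the refined Leivant principle was invoked; the only delicate points remaining in the present proof are the external lifting $\beta\mapsto\gamma$ (handled by brilliance and finiteness of the tree) and the arithmetic bookkeeping of choosing a single $x$ that simultaneously supplies the $\Sigma_1$-witness for $\Box B^\lozenge$ and guarantees that $F$ has already stabilised at $\alpha$. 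Both points are elementary once the structural properties of $F$ established in \autoref{Lemma-Properties of Solovay's Function} are in hand.
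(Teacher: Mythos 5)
Your proposal is correct and follows essentially the same route as the paper's proof: both pass to a maximal refuter $\gamma$ with $\alpha\R\gamma$ and $\gamma\mrefute B$ (the paper asserts its existence directly, you supply the finiteness-plus-brilliance justification), apply \autoref{Lemma-1.9st Properties of Solovay Function} to obtain $\Box\neg(L=\gamma\wedge\Box\varphi_\gamma^\lozenge)$ under $L=\alpha\wedge\Box B^\lozenge$, and contradict \autoref{Lemma-Properties of Solovay's Function.4}. The only cosmetic imprecision is your appeal to $f_B(x)\geq x$, which is not literally immediate from the definition of $f_B$ and is also unnecessary: under $L=\alpha$ one has $F(z)\preccurlyeq\alpha$ for every $z$ by monotonicity of $F$, so $F(f_B(x))\R\gamma$ already follows from $(\leq;\R)\subseteq\R$ without any lower bound on $f_B(x)$.
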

\begin{proof}
From $\alpha\nVdash\Box B$, we conclude that there exists some $\beta\in K$
such that $\alpha\,\R\,\beta$ and $\beta\mrefute B$. Now 
\Cref{Lemma-1.9st Properties of Solovay Function} implies
$\HA\vdash (L=\alpha\wedge\Box B\emptycommand)\ra
\Box\neg(L=\beta\wedge\Box\varphi_{_\beta}\emptycommand)$. 
On the other hand,  by \Cref{Lemma-Properties of Solovay's Function.4},  
$\HA\vdash L=\alpha\to \neg \Box\neg(L=\beta\wedge\Box\varphi_{_\beta}\emptycommand)$.
Hence $\HA\vdash (L=\alpha\wedge\Box B\emptycommand)\ra\bot$, as desired.
\end{proof}

\subsection{The Solovay function is a constant function}\label{sec-54}
In this subsection,  we will show that $L=\alpha_0$ is a true statement in the standard model (\Cref{Lemma-limit is root}).
This fact is necessary for showing that for any $\alpha\in K$, the theory $L=\alpha+\PA$ 
is  consistent. 
\begin{lemma}\label{Lemma-3th Properties of Solovay's Function}
For each $\alpha\lneqq\beta\in K$ with $\alpha\NR\beta$,
$$\HA\vdash\exists{x}F(x)=\alpha\ra\Box^+\neg(L=\beta\wedge\Box\varphi_{_\beta}\emptycommand)$$
\end{lemma}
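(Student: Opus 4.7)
The plan is to combine a persistence property of the relation $\NR\beta$ with the transition analysis of the Solovay function $F$, and then close the loop using reflection. The persistence property follows from the defining condition $(\leq;\R)\subseteq\R$ of perfect Kripke models: its contrapositive gives $\gamma\NR\beta$ for every $\gamma\in K$ with $\alpha\leq\gamma$. Combined with the monotonicity of $F$ (\autoref{Lemma-Properties of Solovay's Function}(1)), this yields $F(y)\NR\beta$ for every $y\geq x_0$ once we assume $F(x_0)=\alpha$.

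Next I would analyse how $F$ could ever land on $\beta$. Since $F(x_0)=\alpha<\beta$, any $y$ with $F(y)=\beta$ must satisfy $y>x_0$; take the least such, so $F(y-1)\neq\beta$ and $F(y)=\beta$, and either Clause~1 or Clause~2 of the recursive definition of $F$ applies. Clause~1 demands $F(y-1)\R\beta$, immediately ruled out by persistence. Clause~2 therefore applies, and its built-in side condition $F(r_{_F}(\beta,y))\R\beta$ forces $r_{_F}(\beta,y)<x_0$: otherwise $F(r_{_F}(\beta,y))\geq\alpha$ and persistence gives $F(r_{_F}(\beta,y))\NR\beta$, a contradiction. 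Hence any transition to $\beta$ past $x_0$ comes accompanied by a $\PA$-proof of $\neg(L=\beta\wedge\Box\varphi_\beta^\lozenge)$ at a level strictly below $x_0$, i.e.\ $\Box^+_{x_0\dot-1}\neg(L=\beta\wedge\Box\varphi_\beta^\lozenge)$.

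To pass from this to the statement of the lemma I would internalise the whole argument in $\PA$. Working inside $\PA$ and pushing the hypothesis $F(x_0)=\alpha$ in via \autoref{Lemma-bounded Sigma completeness}, one shows that from the $\Sigma_1$-consequence $\exists y\,F(y)=\beta$ of $L=\beta$ together with $\Box\varphi_\beta^\lozenge$, the transition analysis above produces $\Box^+_{x_0\dot-1}\neg(L=\beta\wedge\Box\varphi_\beta^\lozenge)$. This conclusion is itself $\Sigma_1$; a second application of \autoref{Lemma-bounded Sigma completeness} pushes it inside another $\Box^+$, and \autoref{Lemma-Reflection} together with $\Box^+$-distribution then upgrades it to $\Box^+\neg(L=\beta\wedge\Box\varphi_\beta^\lozenge)$. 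Necessitation and distribution over $\Box^+$ finally transport the resulting $\PA$-theorem back to $\HA$ and yield the advertised implication.

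The main obstacle is the last step of removing the auxiliary hypothesis that $F$ actually reaches $\beta$: since $L=\beta$ is only $\Sigma_2$ it cannot be transferred inside $\Box^+$ directly, so one has to work with its $\Sigma_1$-consequence $\exists y\,F(y)=\beta$ and nest the $\Box^+$-levels carefully, essentially mirroring the delicate manoeuvre already used in the proof of \autoref{Lemma-1.5st Properties of Solovay's Function}. The key point is to arrange the bookkeeping so that \autoref{Lemma-Reflection} closes the gap between $\Box^+_{x_0\dot-1}$ and $\Box^+$ without invoking uniform reflection for $\PA$ itself, which would be Löb-incompatible.
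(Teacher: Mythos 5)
Your first two paragraphs reproduce the combinatorial core of the paper's own argument: persistence of $\NR\beta$ above $\alpha$ via $(\leq;\R)\subseteq\R$, and the minimal-transition analysis showing that Clause~1 is blocked and Clause~2 forces $r_{_F}(\beta,y)<x_0$, hence an actual $\PA_{x_0}$-proof of $\neg(L=\beta\wedge\Box\varphi_\beta^\lozenge)$. That part is correct and is exactly what the paper does.

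The assembly in your last two paragraphs has two genuine problems. First, the transfer to $\HA$: ``necessitation and distribution over $\Box^+$'' does not transport $\PA\vdash\exists{x}F(x)=\alpha\ra\Box^+\neg(\cdots)$ back to $\HA$; it yields $\HA\vdash\exists{x}F(x)=\alpha\ra\Box^+\Box^+\neg(\cdots)$, with an extra box you cannot strip off. The correct tool is $\Pi_2$-conservativity of $\PA$ over $\HA$ (\autoref{Lemma-Conservativity of HA}): the implication is of the form $\Sigma_1\ra\Sigma_1$, so it suffices to prove it in $\PA$, which is how the paper opens its proof. Second, the ``main obstacle'' you describe is not actually there, and the workaround you sketch (a second application of $\Sigma_1$-completeness pushing $\Box^+_{x_0\dot{-}1}\neg(\cdots)$ under yet another $\Box^+$, then reflection plus distribution) either inserts a superfluous nesting level or, read literally, produces the contradiction one box too deep to discharge the reductio hypothesis. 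The clean structure is: in $\PA$, fix $x_0$ with $F(x_0)=\alpha$; go under a \emph{single} $\Box^+$ and assume $L=\beta\wedge\Box\varphi_\beta^\lozenge$ as the hypothesis to be refuted --- then $\exists{y}F(y)=\beta$ is available inside the box for free, so nothing $\Sigma_2$ ever needs to be transferred; import only the true $\Delta_0$ facts about $F$ up to $x_0$ by provable $\Sigma_1$-completeness; run your transition analysis inside the box to obtain $\Box^+_{x_0}\neg(L=\beta\wedge\Box\varphi_\beta^\lozenge)$; and apply \autoref{Lemma-Reflection} once, inside that same box (the lemma has the form $\forall{x}\,\Box^+(\Box^+_xA\ra A)$, so the needed implication already sits at the right level), to reach the contradiction and close the reductio.
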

\begin{proof}
By $\Pi_2$ conservativity of $\PA$ over $\HA$, it is enough to
prove the above assertion in $\PA$ instead of $\HA$. We work
inside $\PA$. Fix some $x$ such that $F(x)=\alpha$. Then for each
$y\leq x$, we have $F(y)\preccurlyeq\alpha$. Now, work inside $\Box^+$.
Assume $L=\beta$ and $\Box\varphi_{_\beta}\emptycommand$. Then there
exists some minimum $z$ such that $F(z+1)=\beta$. So there exists
some $\delta$ such that $F(z)=\delta$. Since $F(x)=\alpha$, we
have  $\beta\gneqq\delta\geq\alpha$. Hence $\delta\NR\beta$.
So by recursive definition of $F$, $r(\beta,z+1)<r(\delta,z+1)$
and $F(r(\beta,z+1))\,\R\,\beta$. Since $\alpha\NR\beta$, we have
 $F(r(\beta,z+1))\precneqq F(x)=\alpha$,
  which implies $r(\beta,z+1)< x$. Since $x\leq z$, we have $r(\beta,z+1)<z$   and hence
$\Box_x\neg(L=\beta\wedge\Box\varphi_{_\beta}\emptycommand)$. Thus
by   \Cref{Lemma-Reflection},
$\neg(L=\beta\wedge\Box\varphi_{_\beta}\emptycommand)$, that is a
contradiction.
\end{proof}
\begin{lemma}\label{Lemma-1st}
For any $\beta\in K$ and $B\in {\sf Sub}(\Gamma)\cap\TNNIL$,
\begin{itemize}
\item if $\beta\Vdash B$, then $\HA\vdash (L=\beta\wedge\Box\varphi_{_\beta}\emptycommand)\ra
B\emptycommand$,
\item if $\beta\nVdash B$ and any occurrence of $\ra$ in $B$ is in the scope of some $\Box$
{\em($B\in{\sf NOI}$)}, then ${\HA\vdash(L=\beta
\wedge\Box\varphi_{_\beta}\emptycommand)\ra\neg B\emptycommand}$.
\end{itemize}
\end{lemma}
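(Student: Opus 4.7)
The plan is to prove both clauses by a simultaneous induction on the complexity of $B \in S \subseteq \TNNIL$. Clause~1 will be established for every $B \in S$, while clause~2 is needed only when $B \in {\sf NOI}$; these two restrictions dovetail exactly, since in the critical case $B = C \to D$ of clause~1, membership of $B$ in $\TNNIL$ forces $C \in {\sf NOI}$, which will make the clause~2 inductive hypothesis available on the antecedent.

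For the base case, when $B$ is atomic, clause~1 is immediate: $B^\lozenge = \bigvee_{\gamma \Vdash B}\gamma\preccurlyeq L$ contains the disjunct $\beta \preccurlyeq L$, which follows at once from $L=\beta$. For clause~2 with $\beta \nVdash B$, persistence of forcing in $\kcal_0$ ensures $\gamma \nleq \beta$ for every $\gamma \Vdash B$, and then \autoref{Lemma-Properties of Solovay's Function} together with uniqueness of the limit (which is provable in $\HA_0$ from \autoref{Lemma-1st Solovay}) will show in $\HA$ that each disjunct $\gamma \preccurlyeq L$ of $B^\lozenge$ is inconsistent with $L=\beta$. The cases $B = C_1 \wedge C_2$ and $B = C_1 \vee C_2$ reduce directly to the inductive hypothesis; for clause~2 one uses that ${\sf NOI}$ is closed under both of these connectives.

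For $B = \Box C$, clause~1 holds because $\beta \Vdash \Box C$ places $C$ among the conjuncts of $\varphi_\beta$, so $\Box\varphi_\beta^\lozenge \to \Box C^\lozenge$ is a propositional consequence. Clause~2 in this case is precisely the content of \autoref{Lemma-2st Properties of Solovay's Function}, which already supplies $\HA \vdash L=\beta \to \neg\Box C^\lozenge$ whenever $\beta \nVdash \Box C$; the assumption $\Box \varphi_\beta^\lozenge$ is not even required here.

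The decisive case, and the one I expect to be the main obstacle, is $B = C \to D$ in clause~1 (this subcase cannot arise in clause~2 since such $B$ fails to lie in ${\sf NOI}$). The proof will reason inside $\HA$ under $L=\beta \wedge \Box\varphi_\beta^\lozenge \wedge C^\lozenge$ and derive $D^\lozenge$, splitting meta-theoretically on whether $\beta \Vdash C$. If $\beta \Vdash C$, then $\beta \Vdash C \to D$ yields $\beta \Vdash D$, and the clause~1 hypothesis applied to $D$ (which is itself in $\TNNIL$) delivers $D^\lozenge$. If $\beta \nVdash C$, then since $C \in {\sf NOI}$, the clause~2 hypothesis applied to $C$ produces $\neg C^\lozenge$, contradicting the standing assumption $C^\lozenge$ and giving $D^\lozenge$ vacuously. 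The subtle point is precisely this coordination between the two clauses through the $\TNNIL$ discipline: without the restriction that antecedents of implications in $\TNNIL$ lie in ${\sf NOI}$, clause~2 could not be invoked on $C$, and the inductive argument would fail.
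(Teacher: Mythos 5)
Your proposal is correct and follows essentially the same route as the paper's own proof: induction on the complexity of $B$, with the atomic case handled via the monotonicity of the Solovay function, the boxed case via the definition of $\varphi_\beta$ and \autoref{Lemma-2st Properties of Solovay's Function}, and the implication case by a meta-level split on $\beta\Vdash C$ that invokes clause~2 on the ${\sf NOI}$ antecedent $C$. The coordination between the two clauses that you single out as the subtle point is exactly how the paper's argument is organized.
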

\begin{proof}
We prove both items by induction on the complexity of $B$.
\begin{itemize}
\item $B$ is atomic. Then, by definition of the substitution $\sigma$,
$\HA\vdash B\emptycommand\lr\bigvee_{\gamma\Vdash
B}\exists{x}{F}(x)=\gamma$. If $\beta\Vdash B$, then
$\HA\vdash L=\beta\ra B\emptycommand$. If $\beta\nVdash B$, then for
each $\gamma\Vdash B$, we have $\gamma\nleq\beta$, and hence by
 \Cref{Lemma-Properties of Solovay's Function} \cref{1Lemma-Properties of Solovay's Function}, $\HA\vdash
L=\beta\ra\neg\exists{x}{F}(x)=\gamma$ . Hence $\HA\vdash L=\beta\ra\neg B\emptycommand$.
\item $B$ is a conjunction or disjunction. We have the desired conclusions by the induction hypotheses.
\item $B=\Box C$. First assume $\beta\Vdash\Box C$. Then, by definition of
$\varphi_{_\beta}\emptycommand$, \ $C\emptycommand$ is a conjunct of
$\varphi_{_\beta}\emptycommand$, and then
$\HA\vdash(L=\beta\wedge\Box\varphi_{_\beta}\emptycommand)\ra
B\emptycommand$. For the other side, assume $\beta\nVdash\Box C$.
Then  \Cref{Lemma-2st Properties of Solovay's
Function} implies  ${\HA\vdash  L=\beta\ra\neg\Box C\emptycommand}$.
\item $B=C\ra D$. Since $B$ is $\TNNIL$, we have $C\in{\sf NOI}$. 
First assume that $\beta\Vdash C\ra D$.
If $\beta\Vdash C$, then $\beta\Vdash D$, and
hence by the induction hypothesis,
$$\HA\vdash(L=\beta\wedge\Box\varphi_{_\beta}\emptycommand)\ra(C\emptycommand\ra
D\emptycommand).$$ If $\beta\nVdash C$, then again by the induction
hypothesis,
$\HA\vdash(L=\beta\wedge\Box\varphi_{_\beta}\emptycommand)\ra\neg
C\emptycommand$, and hence
$\HA\vdash(L=\beta\wedge\Box\varphi_{_\beta}\emptycommand)\ra(C\emptycommand\ra
D\emptycommand)$.
\end{itemize}
\end{proof}

\begin{lemma}\label{Lemma-1.5st}
Let $\alpha \in K$ and for each $\beta\geq\alpha$, we have
$\HA\vdash\beta\,\R\, L\ra \varphi_{_\beta}\emptycommand$. Then for each
$\beta\geq\alpha$ and $\gamma\gneqq\beta$ such that $\beta\NR\gamma$, we have
$$\HA\vdash\exists{x}F(x)=\beta\ra\Box^+L\not=\gamma$$
\end{lemma}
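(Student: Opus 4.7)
The strategy is to refine \autoref{Lemma-3th Properties of Solovay's Function}, which already furnishes the weaker
\[
\HA\vdash\exists x\, F(x)=\beta\to\Box^+\neg\bigl(L=\gamma\wedge\Box\varphi_\gamma^\lozenge\bigr),
\]
and then to use the extra hypothesis $\HA\vdash\beta'\R L\to\varphi_{\beta'}^\lozenge$ (for $\beta'\geq\alpha$) to absorb the extraneous conjunct $\Box\varphi_\gamma^\lozenge$, leaving just $L\neq\gamma$.

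First I would invoke \autoref{Lemma-3th Properties of Solovay's Function} on the pair $\beta\lneqq\gamma$, $\beta\NR\gamma$ (these are precisely the hypotheses we have) to obtain the displayed formula above. Working classically inside $\Box^+$ under the antecedent $\exists x\, F(x)=\beta$, the goal $L\neq\gamma$ will then follow as soon as we establish the auxiliary implication
\[
\HA\vdash L=\gamma\to\Box\varphi_\gamma^\lozenge,
\]
because, since $\HA\subseteq\PA$, its formalization inside $\Box^+$ combined with the previous display rules out $L=\gamma$.

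The core work is therefore to prove $\HA\vdash L=\gamma\to\Box\varphi_\gamma^\lozenge$. Expanding $\varphi_\gamma^\lozenge=\bigwedge_{B\in S,\,\gamma\Vdash\Box B}B^\lozenge$ and distributing $\Box$ over the finite conjunction, this reduces to showing $\HA\vdash L=\gamma\to\Box B^\lozenge$ for each relevant $B$. The main tool is the hypothesis: for each $\beta'\geq\alpha$ we have $\HA\vdash\beta'\R L\to\varphi_{\beta'}^\lozenge$, and since $\beta'\R L$ is $\Sigma_1$, an application of \autoref{Lemma-bounded Sigma completeness} together with necessitation and the K-axiom upgrade this to $\HA\vdash\beta'\R L\to\Box\varphi_{\beta'}^\lozenge$. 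To deploy this under the assumption $L=\gamma$, I would analyse the recursive definition \autoref{SolovayFunction} of $F$: since $\gamma\gneqq\beta$ and $\beta\NR\gamma$, the function $F$ can stabilise at $\gamma$ only via Case~2, which forces $F(r(\gamma,x+1))\R\gamma$ at the critical stage $x+1$. Thus some $\R$-predecessor $\delta$ of $\gamma$ with $\delta\geq\alpha$ is visited by $F$, so $\delta\R L$ holds whenever $L=\gamma$, and the upgraded hypothesis yields $\Box\varphi_\delta^\lozenge$, which must then be promoted to $\Box\varphi_\gamma^\lozenge$ using the transitivity of $\R$ in the perfect Kripke model.

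The hard part will be this last promotion. Because $\{B\in S:\delta\Vdash\Box B\}\subseteq\{B\in S:\gamma\Vdash\Box B\}$ (by transitivity of $\R$: every $\R$-successor of $\gamma$ is an $\R$-successor of $\delta$), the conjunction $\varphi_\gamma$ in general has strictly more conjuncts than $\varphi_\delta$, so $\Box\varphi_\delta^\lozenge\to\Box\varphi_\gamma^\lozenge$ is not immediate. I expect that closing this gap will require an additional reverse induction on the tree $(K,<)$ ranging over $\gamma\geq\alpha$, in the spirit of the proof of \autoref{Lemma-Properties of Solovay's Function}(2), combined with \autoref{Lemma-Properties of Solovay's Function.4} and the internal $\Sigma_1$-completeness lemma to harvest the missing conjuncts one at a time.
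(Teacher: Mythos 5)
Your opening moves match the paper's: invoke \autoref{Lemma-3th Properties of Solovay's Function} to obtain $\Box^+\neg(L=\gamma\wedge\Box\varphi_\gamma^\lozenge)$, and observe that it then suffices to prove (and formalize inside $\Box^+$) the implication $L=\gamma\to\Box\varphi_\gamma^\lozenge$. The gap lies in how you propose to prove that implication, and you have located it yourself: the ``promotion'' from $\Box\varphi_\delta^\lozenge$ to $\Box\varphi_\gamma^\lozenge$ goes from a weaker to a strictly stronger statement, since for $\delta\R\gamma$ the conjunction $\varphi_\gamma$ contains every conjunct of $\varphi_\delta$ and in general more; no information gathered at $\R$-predecessors of $\gamma$ will supply the conjuncts $B^\lozenge$ with $\gamma\Vdash\Box B$ but $\delta\nVdash\Box B$. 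Indeed, the auxiliary statement you are after is essentially \autoref{Lemma-2st}, which the paper derives \emph{from} the present lemma, so attacking it from below is the wrong direction. (A secondary problem with your route: the $\R$-predecessor $\delta$ of $\gamma$ visited by $F$ at the critical stage need not satisfy $\delta\geq\alpha$ --- it could be $\alpha_0$ or any node visited before $F$ reaches $\beta$ --- so the lemma's hypothesis is not available at $\delta$.)

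The missing idea is L\"ob's axiom, combined with a reverse induction on $\gamma$ that you only gesture at as a possible patch but which is in fact the organizing structure of the whole argument. The paper's induction hypothesis is $\HA\vdash\exists x\,F(x)=\beta\to\Box^+L\neq\gamma_0$ for the relevant $\gamma_0\gneqq\gamma$. Working inside the box, from $\exists x\,F(x)=\gamma$ and $\Box\varphi_\gamma^\lozenge$ one gets $L\neq\gamma$ (the formalized \autoref{Lemma-3th Properties of Solovay's Function}), hence $L\succ\gamma$, and the induction hypothesis upgrades this to $\gamma\R L$; the lemma's hypothesis applied at $\beta'=\gamma$ \emph{itself} (not at a predecessor) then yields $\varphi_\gamma^\lozenge$. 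This establishes $\Box\bigl(\exists x\,F(x)=\gamma\to(\Box\varphi_\gamma^\lozenge\to\varphi_\gamma^\lozenge)\bigr)$, and L\"ob's axiom together with $\Sigma_1$-completeness converts this into $\Box(\exists x\,F(x)=\gamma\to\Box\varphi_\gamma^\lozenge)$, which is exactly the formalized auxiliary implication you need. So it is the self-referential reflection step, not a node-by-node propagation along $\R$, that closes your gap.
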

\begin{proof}
Fix some $\beta\geq\alpha$. We use induction on $\gamma$.
Suppose that for each $ \gamma_0\gneqq\gamma\gneqq\beta$ with
$\beta\NR\gamma_0$, we have
$\HA\vdash{\exists{x}F(x)=\beta}\ra{\Box^+L\not=\gamma_0}$. 
Then 
\begin{align*}
\HA &\vdash \exists{x}F(x)=\beta \ra
\Box^+\neg(L=\gamma\wedge\Box\varphi_{_\gamma}\emptycommand)
 &\text{\Cref{Lemma-3th Properties of Solovay's Function}}&\\
\HA &\vdash {\exists{x}F(x)=\beta} \ra
{\Box^+((\exists{x}F(x)=\gamma\wedge\Box\varphi_{_\gamma}\emptycommand)\ra
L\neq\gamma)}& &\\
\HA&\vdash
\exists{x}F(x)=\beta\ra
\Box^+((\exists{x}F(x)=\gamma\wedge\Box\varphi_{_\gamma}\emptycommand)\ra
\gamma\,\R\, L) & \text{induction hypothesis and neatness}&\\
\HA &\vdash
\exists{x}F(x)=\beta\ra
\Box(\exists{x}F(x)=\gamma\ra(\Box\varphi_{_\gamma}\emptycommand\ra
\varphi_{_\gamma}\emptycommand)) & \text{hypothesis of lemma and 
\Cref{Lemma-Conservativity of HA}}&\\
\HA &\vdash
\exists{x}F(x)=\beta\ra \Box(\exists{x}F(x)=\gamma\ra
\Box\varphi_{_\gamma}\emptycommand) & \text{L\"{o}b's axiom,
$\Sigma_1$-completeness of $\HA$}&
\end{align*}
This in combination with $\HA\vdash
\exists{x}F(x)=\beta\ra
\Box^+\neg(L=\gamma\wedge\Box\varphi_{_\gamma}\emptycommand)$
implies $$\HA\vdash {\exists{x}F(x)=\beta\ra\Box^+L\neq\gamma}$$
\end{proof}

\begin{lemma}\label{Lemma-1.7st}
For any $\gamma\in K_0$, 
$\PA\vdash \exists{x}F(x)=\gamma\ra\Box^+\neg(L=\gamma\wedge\Box\varphi_{_\gamma}\emptycommand)$.
\end{lemma}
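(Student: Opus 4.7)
The plan is to trace back, inside $\PA$, to the first moment the Solovay function $F$ takes the value $\gamma$, and to extract from the defining clauses $\chi_1$ or $\chi_2$ a witness that $\PA$ proves $\neg(L=\gamma \wedge \Box\varphi_\gamma^\lozenge)$.

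First I would reason inside $\PA$ and assume $\exists x\; F(x)=\gamma$. Since $F$ is provably total (\autoref{Lemma-1st Solovay}) and the formula $F(x)=\gamma$ is $\Delta_0$, $\PA$ can apply the least number principle to pick the minimum $x_0$ with $F(x_0)=\gamma$. Because $\gamma\in K_0$ means $\gamma\neq\alpha_0$, and $F(0)=\alpha_0$, we have $x_0\geq 1$ and $F(x_0-1)\neq\gamma=F(x_0)$.

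Next I would unwind the defining equation \autoref{Eq1} at stage $x_0$. The disjunct $\chi_3$ forces $F(x_0)=F(x_0-1)$, which is excluded by the previous step, so either $\chi_1$ or $\chi_2$ holds at $(x_0,\gamma)$. Both disjuncts include a clause asserting $r(\gamma,x_0)\leq x_0$: directly in $\chi_1$, and as a consequence in $\chi_2$, since $r(\gamma,x_0)<r(F(x_0-1),x_0)\leq x_0+1$ forces $r(\gamma,x_0)\leq x_0$. From the explicit definition of $r_\theta$, the inequality $r(\gamma,x_0)\leq x_0$ unfolds to the existence of $k\leq x_0$ and $u\leq x_0$ with $\mathsf{Proof}_{\PA_k}(u,\ulcorner\neg(L=\gamma\wedge\Box\varphi_\gamma^\lozenge)\urcorner)$. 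This yields $\Box^+_{x_0}\neg(L=\gamma\wedge\Box\varphi_\gamma^\lozenge)$ and hence $\Box^+\neg(L=\gamma\wedge\Box\varphi_\gamma^\lozenge)$, as required.

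I expect the only delicate point to be the formalization of the unfolding step: one must be sure that the fixed-point character of $\theta$ really does let $\PA$ read off the clauses $\chi_1,\chi_2,\chi_3$ from $F(x_0)$ and $F(x_0-1)$, and that the bound $r(\gamma,x_0)\leq x_0$ is $\Delta_0$ enough to be translated into an actual $\Sigma_1$ proof-existence statement. Both facts follow from the $\Delta_0$-character of $\theta$ (\autoref{Lemma-1st Solovay}) and from the primitive recursive definition of $r_\theta$, so no further machinery beyond standard $\Sigma_1$-completeness should be needed.
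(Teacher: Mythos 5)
Your proposal is correct and follows essentially the same route as the paper's proof: take the minimal $x_0$ with $F(x_0)=\gamma$, rule out the $\chi_3$ clause, and observe that both $\chi_1$ and $\chi_2$ yield $r(\gamma,x_0)\leq x_0$ (the latter via $r(\gamma,x_0)<r(F(x_0\dot{-}1),x_0)\leq x_0+1$), which unfolds by the definition of $r$ into the required $\Box^+\neg(L=\gamma\wedge\Box\varphi_\gamma^\lozenge)$. Your explicit check that $x_0\geq 1$ (since $\gamma\neq\alpha_0$) is a detail the paper leaves implicit but changes nothing of substance.
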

\begin{proof}
We work inside $\PA$. Assume $\exists{x}{F}(x)=\gamma$. There
exists a minimum $x_0$ such that $F(x_0)=\gamma$. Then by
recursive definition of $F$, we have $F(x)\prec F(x_0)$ for all
$x< x_0$, and $F(x_0\dot{-}1)=\beta$, and one of the following
cases holds:
\begin{enumerate}
\item $\beta\,\R\,\gamma$ and $r(\gamma,x_0)\leq x_0$, by
definition of $r$, we can deduce 
$$\exists{x\leq x_0}\,{\sf Proof}_{_{\sf PA}}(x,\ulcorner
\neg(L=\gamma\wedge\varphi_{_\gamma}\emptycommand)\urcorner)$$
and then 
$\Box^+\neg(L=\gamma\wedge\Box\varphi_{_\gamma}\emptycommand)$. 

\item $\beta\NR\gamma, \beta\prec\gamma$ and
$r(\gamma,x_0)<r(\beta,x_0)$. Because $r(\beta,x_0)\leq x_0+1$, we
can deduce $r(\gamma,x_0)\leq x_0$. By repeating the above
argument, we get
$\Box^+\neg(L=\gamma\wedge\Box\varphi_{_\gamma}\emptycommand)$.
\end{enumerate}
\end{proof}

\begin{lemma}\label{Lemma-2st}
Let $\beta \in K_0$ and for each $\gamma\geq \beta$, we have
$\HA\vdash\gamma\,\R\, L\ra \varphi_{_\gamma}\emptycommand$. Then for
each $\gamma\geq \beta$, we have
$\HA\vdash\exists{x}{F}(x)=\gamma\ra\Box\varphi_{_\gamma}\emptycommand$.
\end{lemma}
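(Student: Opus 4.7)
The proof closely parallels that of \autoref{Lemma-1.5st Properties of Solovay's Function}. Fix $\gamma\geq\beta$. The strategy is to establish $\HA\vdash \exists x\,\bar F(x)=\gamma\to\Box\varphi_\gamma^\lozenge$ by a L\"ob-style argument, after first assembling an auxiliary statement from the lemmas already in place.

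First, following the Step~3 analogue in the proof of \autoref{Lemma-1.5st Properties of Solovay's Function}, I would derive
\begin{equation*}
\HA\vdash \exists x\,\bar F(x)=\gamma\to \Box^+\!\left((\exists x\,\bar F(x)=\gamma\wedge\Box\varphi_\gamma^\lozenge)\to\gamma\R L\right).
\end{equation*}
The ingredients are: (i)~\autoref{Lemma-1.7st}, applied to $\gamma$, which gives $\Box^+\neg(L=\gamma\wedge\Box\varphi_\gamma^\lozenge)$, so that under the extra assumption $\Box\varphi_\gamma^\lozenge$ the case $L=\gamma$ is excluded inside $\Box^+$; (ii)~\autoref{Lemma-1.5st Properties of Solovay's Function}, applied with $\beta$ of the present lemma in the role of its $\alpha$ and with $\gamma$ in the role of its $\beta$, ruling out $L=\gamma_0$ inside $\Box^+$ for every $\gamma_0\gneqq\gamma$ with $\gamma\NR\gamma_0$; and (iii)~\autoref{Lemma-Properties of Solovay's Function}(2) together with $\Sigma_1$-completeness, giving $\Box^+(\gamma\preccurlyeq L)$ from $\exists x\,\bar F(x)=\gamma$. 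Combining these three ingredients inside $\Box^+$ leaves only $\gamma\R L$ as a possibility for $L$.

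Next, invoking the hypothesis of the present lemma $\HA\vdash\gamma\R L\to\varphi_\gamma^\lozenge$ (which by necessitation yields $\Box^+(\gamma\R L\to\varphi_\gamma^\lozenge)$) and chaining inside $\Box^+$, and then applying \autoref{Lemma-Conservativity of HA} exactly as in Step~4 of the proof of \autoref{Lemma-1.5st Properties of Solovay's Function}, I would arrive at
\begin{equation*}
\HA\vdash\exists x\,\bar F(x)=\gamma\to\Box\!\left(\exists x\,\bar F(x)=\gamma\to(\Box\varphi_\gamma^\lozenge\to\varphi_\gamma^\lozenge)\right).
\end{equation*}

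Finally, since $\exists x\,\bar F(x)=\gamma$ is $\Sigma_1$, a single application of L\"ob's axiom together with $\Sigma_1$-completeness of $\HA$---mirroring Step~5 of the proof of \autoref{Lemma-1.5st Properties of Solovay's Function}---delivers the desired $\HA\vdash\exists x\,\bar F(x)=\gamma\to\Box\varphi_\gamma^\lozenge$. The main obstacle is the conservativity step converting $\Box^+$ to $\Box$; it goes through precisely because the hypothesis of the lemma is an $\HA$-theorem rather than merely a $\PA$-theorem, so that after chaining, the resulting implication with $\Box^+$ falls within the scope of \autoref{Lemma-Conservativity of HA}, after which the L\"ob step absorbs the extra $\Box\varphi_\gamma^\lozenge\to\varphi_\gamma^\lozenge$ premise.
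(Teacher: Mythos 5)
Your overall route is exactly the paper's: use \autoref{Lemma-1.7st}, \autoref{Lemma-1.5st} and \autoref{Lemma-Properties of Solovay's Function} to pin the limit down to $\gamma\R L$ inside $\Box^+$, then pass to $\Box$ by conservativity, feed in the hypothesis, and close with L\"ob. The one step that fails as you have written it is the order of the two moves before the L\"ob step. You propose to chain $\Box^+(\gamma\R L\to\varphi_\gamma^\lozenge)$ into the $\Box^+$ first and only afterwards apply \autoref{Lemma-Conservativity of HA} to $\Box^+\bigl(\exists x\,\bar F(x)=\gamma\to(\Box\varphi_\gamma^\lozenge\to\varphi_\gamma^\lozenge)\bigr)$. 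But \autoref{Lemma-Conservativity of HA} only transfers implications between $\Sigma_1$-formulas (equivalently, $\Pi_2$-sentences) from $\PA$ to $\HA$, and after chaining, $\varphi_\gamma^\lozenge$ sits in positive position. Since $\varphi_\gamma^\lozenge$ is the $\Sigma_1$-substitution of a conjunction of $\TNNIL$-formulas, which in general contain implications, it is not $\Sigma_1$, so the sentence you want to transfer is not $\Pi_2$. Your closing remark that ``after chaining, the resulting implication with $\Box^+$ falls within the scope of \autoref{Lemma-Conservativity of HA}'' is precisely the point that does not hold.

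The repair is to swap the two moves, which is what the paper does: apply \autoref{Lemma-Conservativity of HA} to $\Box^+\bigl((\exists x\,\bar F(x)=\gamma\wedge\Box\varphi_\gamma^\lozenge)\to\gamma\R L\bigr)$ --- here the inner sentence is of the form $\Sigma_1\to\Sigma_1$, since $\gamma\R L$ is a disjunction of $\Sigma_1$-sentences, so the lemma applies and yields the same sentence under $\Box$ --- and only then chain, inside $\Box$, with $\Box(\gamma\R L\to\varphi_\gamma^\lozenge)$ obtained by necessitating the hypothesis in $\HA$. This is also where your correct observation that the hypothesis is an $\HA$-theorem (not merely a $\PA$-theorem) actually gets used. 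After the swap, your L\"ob step goes through verbatim, and the rest of your proposal --- \autoref{Lemma-1.7st} to exclude $L=\gamma$, \autoref{Lemma-1.5st} (with $\beta$ in the role of its $\alpha$) to exclude $L=\gamma_0$ for $\gamma_0\gneqq\gamma$ with $\gamma\NR\gamma_0$, and \autoref{Lemma-Properties of Solovay's Function}(2) for $L\succcurlyeq\gamma$ --- matches the paper's argument.
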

\begin{proof}
By  \Cref{Lemma-1.7st}, $\PA\vdash\exists{x}{F}(x)=\gamma\ra \Box^+\neg(L=\gamma\wedge\Box\varphi_{_\gamma}\emptycommand)$. 
Then   \Cref{Lemma-Conservativity of HA} implies 
$$
\HA\vdash\exists{x}{F}(x)=\gamma\ra \Box^+\neg(L=\gamma\wedge\Box\varphi_{_\gamma}\emptycommand)
 $$
Hence
\begin{align*}
\HA\vdash\exists{x}{F}(x)=\gamma
&
\ra\Box^+(\Box\varphi_{_\gamma}\emptycommand\ra L\not=\gamma)\\
& \ra\Box^+(\Box\varphi_{_\gamma}\emptycommand\ra L\succ \gamma) \tag{by $\Sigma_1$-completeness and
 \Cref{Lemma-Properties of Solovay's Function}} \\
& \ra\Box^+(\Box\varphi_{_\gamma}\emptycommand\ra \gamma\,\R\, L)
\tag{by \Cref{Lemma-1.5st} and \Cref{Lemma-Properties of Solovay's Function}}\\
& \ra\Box(\Box\varphi_{_\gamma}\emptycommand\ra \gamma\,\R\, L)
\tag{by $\Pi_2$-conservativity of $\PA$ over $\HA$}\\
& \ra\Box(\Box\varphi_{_\gamma}\emptycommand\ra\varphi_{_\gamma}\emptycommand) \tag{by hypothesis} \\
& \ra\Box\varphi_{_\gamma}\emptycommand \tag{L\"{o}b's axiom}
\end{align*}
\end{proof}

\begin{lemma}\label{Lemma-3st}
Let $\beta \in K_0$ and let for each
$\gamma\geq \beta$, we have $\HA\vdash\gamma\,\R\, L
\ra\varphi_{_\gamma}\emptycommand$. Then for each $\gamma\geq\beta$ 
 and  $B\in {\sf Sub}(\Gamma)\cap\TNNIL$,  $\gamma\Vdash B$ implies
$\HA\vdash\exists{x}{F}(x)\!=\!\gamma\ra B\emptycommand$.
\end{lemma}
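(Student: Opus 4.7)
The natural approach is induction on the complexity of $B\in S$, keeping $\gamma\geq \beta$ with $\gamma\Vdash B$ arbitrary, to prove $\HA\vdash\exists x\,\bar F(x)=\gamma\to B^\lozenge$. The easy cases each follow from a single earlier result: for atomic $B$, unfolding $\sigma$ gives $B^\lozenge=\bigvee_{\delta\Vdash B}\delta\preccurlyeq L$, and \autoref{Lemma-Properties of Solovay's Function}(2) supplies the needed disjunct $\gamma\preccurlyeq L$; conjunctions and disjunctions reduce immediately by applying the induction hypothesis to the components; and for $B=\Box C$, since the hypothesis of our lemma is exactly the one required by \autoref{Lemma-2st}, that lemma yields $\HA\vdash\exists x\,\bar F(x)=\gamma\to\Box\varphi_\gamma^\lozenge$, and because $\gamma\Vdash\Box C$ (with $C\in S$) the formula $C^\lozenge$ is a conjunct of $\varphi_\gamma^\lozenge$, so $\Box C^\lozenge$ follows.

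The real content is in the implication case $B=C\to D$. Here $B\in\TNNIL$ forces $C\in{\sf NOI}$ and $C,D\in\TNNIL\cap{\sf Sub}(\Gamma)=S$, so the outer induction hypothesis is available for $D$ and the second bullet of \autoref{Lemma-1st} is available for $C$. I would reason inside $\HA$: assume $\exists x\,\bar F(x)=\gamma$; by \autoref{Lemma-Properties of Solovay's Function}(3) one has the finite intuitionistic disjunction $\bigvee_{\delta\geq \gamma}L=\delta$, and the target $C^\lozenge\to D^\lozenge$ will be dispatched by disjunction elimination on it. For each meta-fixed $\delta\geq \gamma$ (and hence $\delta\geq \beta$), split meta-theoretically on whether $\delta\Vdash C$. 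If $\delta\Vdash C$, persistence gives $\delta\Vdash D$, and the outer IH on $D$ together with the routine $\HA\vdash L=\delta\to\exists y\,\bar F(y)=\delta$ (unpacking the witness in the definition of $L$ and using the totality clause of \autoref{Lemma-1st Solovay}) delivers $D^\lozenge$. If $\delta\nVdash C$, then since $C\in{\sf NOI}\cap S$ the second bullet of \autoref{Lemma-1st} yields $\HA\vdash (L=\delta\wedge\Box\varphi_\delta^\lozenge)\to\neg C^\lozenge$, while \autoref{Lemma-2st} applied at $\delta$ supplies $\Box\varphi_\delta^\lozenge$ from $L=\delta$, so this branch refutes $C^\lozenge$ and in particular proves $C^\lozenge\to D^\lozenge$. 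Reassembling the disjuncts closes the case.

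The principal obstacle is orchestrating this meta-level partition of $\{\delta:\delta\geq \gamma\}$ into $\delta$'s that force $C$ and $\delta$'s that do not, so that the two distinct strategies (outer IH in one branch, contradiction via the \autoref{Lemma-1st}/\autoref{Lemma-2st} cascade in the other) can be glued into a single $\HA$-derivation via intuitionistic disjunction elimination on $\bigvee_{\delta\geq \gamma}L=\delta$. A subsidiary but clean point is the small observation $\HA\vdash L=\delta\to\exists y\,\bar F(y)=\delta$, which should be isolated; once that is in hand, the rest is assembly of results already in the paper.
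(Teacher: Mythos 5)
Your skeleton (induction on the complexity of $B$, with the atomic, $\wedge$, $\vee$ and $\Box$ cases handled essentially as in the paper) is fine, but the implication case has a genuine gap: the disjunction $\bigvee_{\delta\geq\gamma}L=\delta$ on which you want to perform disjunction elimination is not available in $\HA$. \autoref{Lemma-Properties of Solovay's Function}(2)--(3) are theorems of $\PA$, and their proof rests on the classical case split $\forall{y\geq x}\,\bar{F}(y)=\gamma\ \vee\ \exists{y\geq x}\,\bar{F}(y)\neq\gamma$; since $L=\delta$ is a $\Sigma_2$-statement, the implication $\exists{x}\bar{F}(x)=\gamma\to\bigvee_{\delta\geq\gamma}L=\delta$ is not $\Pi_2$ and cannot be pulled back to $\HA$ by \autoref{Lemma-Conservativity of HA}. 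Asserting intuitionistically that the nondecreasing $F$ has a limit amounts to deciding, for each node, whether $F$ eventually leaves it --- exactly the kind of $\Sigma_1\vee\Pi_1$ decision $\HA$ does not make. (A symptom of the same conflation appears in your atomic case: $\gamma\preccurlyeq L$ is by definition the $\Sigma_1$-statement ``$F$ reaches some node $\geq\gamma$'', which follows trivially from $\exists{x}\bar{F}(x)=\gamma$ without invoking the $\PA$-only part (2); it is not the limit statement.) So the meta-level partition of $\{\delta\mid\delta\geq\gamma\}$ cannot be glued inside $\HA$ along your chosen disjunction, and the target $C^\lozenge\to D^\lozenge$ is in general not $\Pi_2$ either (as $D$ may contain outer implications), so you cannot rescue the argument by working in $\PA$ throughout.

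The paper avoids this with two devices you are missing. First, it wraps your complexity induction in an outer \emph{reverse induction on the frame} $(K,\lneqq)$, so that the lemma for the whole formula $C\to D$ is available as an induction hypothesis at every $\gamma_0\gneqq\gamma$. Second, it splits at the meta level only on whether $\gamma\Vdash C$: if $\gamma\Vdash C$ then $\gamma\Vdash D$ and the inner induction hypothesis finishes; if $\gamma\nVdash C$, \autoref{Lemma-1st} and \autoref{Lemma-2st} give $\HA\vdash L=\gamma\to\neg C^\lozenge$, this is moved to $\PA$ and combined with \autoref{Lemma-Properties of Solovay's Function}(3) to obtain $(\exists{x}\bar{F}(x)=\gamma\wedge C^\lozenge)\to\gamma\prec L$, and \emph{this} implication is $\Sigma_1\to\Sigma_1$ (because $C\in{\sf NOI}$ makes $C^\lozenge$ a $\Sigma_1$-formula and $\gamma\prec L$ is the $\Sigma_1$-disjunction $\bigvee_{\gamma_0\gneqq\gamma}\exists{x}\bar{F}(x)=\gamma_0$), hence $\Pi_2$ and importable into $\HA$. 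Disjunction elimination is then performed on that $\Sigma_1$-disjunction, with the frame induction hypothesis supplying $(C\to D)^\lozenge$ at each $\gamma_0\gneqq\gamma$. Without the frame induction and without restricting the $\PA$-to-$\HA$ traffic to $\Pi_2$ sentences, your argument does not go through.
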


\begin{proof}
We prove this by induction on the frame $(K,\lneqq)$ with reverse order. 
Let some $\gamma\geq\beta$ and 
as  the (first) induction hypothesis, assume  that for
each $\gamma_0\gneqq \gamma$ and $B\in {\sf Sub}(\Gamma)\cap\TNNIL$, if $\gamma_0\Vdash B$, then
${\HA\vdash\exists{x}F(x)=\gamma_0\ra B\emptycommand}$. We will show
that for each $B\in {\sf Sub}(\Gamma)\cap\TNNIL$, if $\gamma\Vdash B$, then
${\HA\vdash\exists{x}F(x)=\gamma\ra B\emptycommand}$. We prove this by a (second)
induction on the complexity of $B\in {\sf Sub}(\Gamma)\cap\TNNIL$. Let some  $B\in {\sf Sub}(\Gamma)\cap\TNNIL$
 and $\gamma\Vdash B$ and 
as the (second) induction
hypothesis,  assume that  for each
$C\in {\sf Sub}(\Gamma)\cap\TNNIL$ with lower complexity than \nolinebreak$B$ (i.e. $C$ is a strict sub-formula of $B$) 
such that $\gamma\Vdash
C$, we have $\HA\vdash\exists{x}F(x)=\gamma\ra C\emptycommand$. We
will show  $\HA\vdash\exists{x}F(x)=\gamma\ra B\emptycommand$. We
have following cases.
\begin{itemize}[leftmargin=*]
\item $B$ is atomic. It is trivial by definition of $B\emptycommand$.
\item $B$ is conjunction or disjunction. The result follows easily by (second) induction hypothesis.
\item $B=\Box C$. Suppose that $\gamma\Vdash\Box C$. Then by
definition of $\varphi_{_\gamma}$, we have
$\HA\vdash\Box\varphi_{_\gamma}\emptycommand\ra\Box C\emptycommand$.
Now the result is a consequence of  \Cref{Lemma-2st}.
\item $B=C\ra D\in {\sf Sub}(\Gamma)\cap\TNNIL$ and $C$ does not have an occurrence of
implication which is not in the scope of any box. Suppose that
$\gamma\Vdash C\ra D$. There are two sub-cases.
\begin{enumerate}[leftmargin=*]
\item $\gamma\Vdash C$. Then, by (second) induction hypothesis, we can derive
$\HA\vdash\exists{x}{F}(x)=\gamma\ra D\emptycommand$, and hence
$\HA\vdash\exists{x}{F}(x)=\gamma\ra (C\emptycommand\ra
D\emptycommand)$.
\item $\gamma\nVdash C$. Then  \Cref{Lemma-1st} implies that $\HA\vdash
(L=\gamma\wedge\Box\varphi_{_\gamma}\emptycommand)\ra\neg C\emptycommand$,
and by  \Cref{Lemma-2st}, ${\HA\vdash L=\gamma\ra\neg
C\emptycommand}$. Now we have $\HA\vdash C\emptycommand\ra L\not=\gamma$
and hence 
$${\HA\vdash(\exists{x}{F}(x)=\gamma\wedge
C\emptycommand)\ra L\not=\gamma}$$ 
So ${\PA\vdash(\exists{x}{F}(x)=\gamma\wedge C\emptycommand)\ra L\not=\gamma}$. 
Then by  \Cref{Lemma-Properties of Solovay's Function} \cref{3Lemma-Properties of Solovay's Function} 
$${\PA\vdash(\exists{x}{F}(x)=\gamma\wedge
C\emptycommand)\ra L\succ \gamma}$$
 On the other hand, $C$ is
implication-free and $C\emptycommand$ is $\Sigma_1$, so by 
\Cref{Lemma-Conservativity of HA}, we have
$\HA\vdash(\exists{x}{F}(x)=\gamma\wedge C\emptycommand)\ra
L\succ \gamma$. For arbitrary $\gamma_0\gneqq \gamma$, we have
$\gamma_0\Vdash C\ra D$. So by the first induction hypothesis, we
can derive $\HA\vdash\exists{x}F(x)=\gamma_0\ra (C\ra
D)\emptycommand$. By definition of $\gamma\prec  L$, we have
$\HA\vdash\gamma\prec  L\ra (C\ra D)\emptycommand$. Hence
$\HA\vdash(\exists{x}{F}(x)=\gamma\wedge C\emptycommand)\ra
(C\emptycommand\ra D\emptycommand)$, which implies
$\HA\vdash\exists{x}{F}(x)=\gamma\ra (C\emptycommand\ra
D\emptycommand)$.
\end{enumerate}
\end{itemize}
\end{proof}

\begin{lemma}\label{Lemma-4st}
For any $\alpha\in K$, \ $\HA\vdash
\alpha\,\R\, L \ra\varphi_{_\alpha}\emptycommand$.
\end{lemma}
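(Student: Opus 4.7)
The plan is to use backward (reverse well-founded) induction on the finite tree $(K,<)$, feeding the inductive hypothesis directly into \autoref{Lemma-3st}. Because the Kripke model is perfect, we have $\R\subseteq{<}$, so every $\R$-successor of a node $\alpha$ lies strictly above $\alpha$ in $(K,<)$, and in particular belongs to $K_0$ (since $\alpha_0$ is the root). At a leaf of the tree there are no $\R$-successors at all, so $\alpha\R L$ is provably false in $\HA_0$ and the base case of the induction is vacuous.

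For the inductive step, fix $\alpha\in K$ and suppose, as induction hypothesis, that $\HA\vdash\gamma\R L\to\varphi_\gamma^\lozenge$ for every $\gamma\gneqq\alpha$. Pick any $\beta$ with $\alpha\R\beta$; then $\alpha<\beta$, so every $\gamma\geq\beta$ is strictly above $\alpha$ and falls under the induction hypothesis. This is precisely the premise needed to invoke \autoref{Lemma-3st} at $\beta$, which yields, for every $B\in S$ with $\beta\Vdash B$, the provable implication $\HA\vdash\exists{x}\,F(x)=\beta\to B^\lozenge$, and hence $\HA\vdash L=\beta\to B^\lozenge$ by the definition of the limit.

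To finish, recall that $\varphi_\alpha=\bigwedge\{B\in S\mid\alpha\Vdash\Box B\}$. For each such conjunct $B$, the assumption $\alpha\Vdash\Box B$ together with $\alpha\R\beta$ forces $\beta\Vdash B$, so the previous paragraph gives $\HA\vdash L=\beta\to B^\lozenge$. Conjoining over $B$ delivers $\HA\vdash L=\beta\to\varphi_\alpha^\lozenge$, and since $\alpha\R L$ unfolds by definition to $\bigvee_{\alpha\R\beta}L=\beta$, disjunction-elimination yields $\HA\vdash\alpha\R L\to\varphi_\alpha^\lozenge$, completing the induction.

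No genuine obstacle remains --- all the arithmetical work has already been absorbed into \autoref{Lemma-3st} and its prerequisites (\autoref{Lemma-2st}, \autoref{Lemma-1.5st}, \autoref{Lemma-1st}). The present proof is a clean bookkeeping induction whose only subtle points are (i) checking that the hypothesis of \autoref{Lemma-3st} is genuinely available at every invocation, which follows from $\R\subseteq{<}$, and (ii) verifying $\beta\in K_0$, which follows because $\alpha_0$ sits below every element of $K_0$ in the tree.
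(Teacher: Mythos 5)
Your overall strategy is exactly the paper's: reverse induction on the finite tree, with the induction hypothesis supplying precisely the premise of \autoref{Lemma-3st} at each $\R$-successor $\beta$ of $\alpha$, and the observation that $\alpha\Vdash\Box B$ and $\alpha\R\beta$ force $\beta\Vdash B$, hence $\beta\Vdash\varphi_\alpha$. Up to the point where you obtain $\HA\vdash\exists{x}\,F(x)=\beta\to\varphi_\alpha^\lozenge$ for each $\beta$ with $\alpha\R\beta$, everything matches the paper and is correct.

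The final step, however, contains a genuine error. You weaken $\exists{x}\,F(x)=\beta\to\varphi_\alpha^\lozenge$ to $L=\beta\to\varphi_\alpha^\lozenge$ and then assert that ``$\alpha\R L$ unfolds by definition to $\bigvee_{\alpha\R\beta}L=\beta$.'' That is not the definition: $\alpha\R L$ is defined as $\bigvee_{\alpha\R\beta}\exists{x}(\theta(x)\wedge\hat{x}=\beta)$, i.e.\ $\bigvee_{\alpha\R\beta}\exists{x}\,F(x)=\beta$. The equivalence $\alpha\R L\lr\bigvee_{\alpha\R\beta}L=\beta$ is established only over $\PA$ (part 3 of \autoref{Lemma-Properties of Solovay's Function}); the direction you need, $\alpha\R L\to\bigvee_{\alpha\R\beta}L=\beta$, passes from a $\Sigma_1$ statement to a disjunction of limit statements and is not available in $\HA$, nor can it be imported via $\Pi_2$-conservativity, since $\varphi_\alpha^\lozenge$ is in general not $\Pi_2$. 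So as written the last inference does not go through. The repair is immediate and is what the paper does: do not weaken to $L=\beta$ at all. Disjoining $\HA\vdash\exists{x}\,F(x)=\beta\to\varphi_\alpha^\lozenge$ over all $\beta$ with $\alpha\R\beta$ yields $\HA\vdash\bigvee_{\alpha\R\beta}\exists{x}\,F(x)=\beta\to\varphi_\alpha^\lozenge$, which is literally $\HA\vdash\alpha\R L\to\varphi_\alpha^\lozenge$ by the definition of $\alpha\R L$.
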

\begin{proof}
Our proof is by reverse induction on  the frame $(K,\lneqq)$. As
the induction hypothesis, assume that for each $\beta \gneqq \alpha$, we
have $\HA\vdash\beta\,\R\, L\ra\varphi_{_\beta}\emptycommand$. For each
$\beta$ with $\alpha\,\R\,\beta$, by definition of $\varphi_{_\alpha}$,
we have $\beta\Vdash\varphi_{_\alpha}$. Hence by the induction
hypothesis and  \Cref{Lemma-3st},
$\HA\vdash\exists{x}{F}(x)=\beta\ra\varphi_{_\alpha}\emptycommand$.
Then
$\HA\vdash\bigvee_{\alpha\,\R\,\beta}\exists{x}{F}(x)=\beta\ra\varphi_{_\alpha}\emptycommand$,
which implies $\HA\vdash \alpha\,\R\, L\ra\varphi_{_\alpha}\emptycommand$.
\end{proof}

As a direct consequence of \Cref{Lemma-2st} and \Cref{Lemma-4st} we have the following result.

\begin{corollary}\label{corollar-4st&2st}
For any $\alpha\in K_0$,  $\HA\vdash \exists{x}F(x)=\alpha\to\Box\varphi_{_\alpha}\emptycommand$.
\end{corollary}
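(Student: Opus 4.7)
The plan is to combine the two preceding lemmas directly, since the corollary is essentially a specialization of \autoref{Lemma-2st} whose hypothesis is exactly what \autoref{Lemma-4st} supplies.

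First I would fix $\alpha \in K_0$ and observe that by \autoref{Lemma-4st}, for every $\gamma \in K$ (and in particular for every $\gamma \geq \alpha$), we have $\HA \vdash \gamma \R L \to \varphi_\gamma^\lozenge$. This is precisely the hypothesis needed to invoke \autoref{Lemma-2st} with the distinguished node taken to be $\alpha$.

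Then I would apply \autoref{Lemma-2st} with $\beta := \alpha$ and $\gamma := \alpha$. The conclusion of that lemma gives $\HA \vdash \exists{x}\, \bar{F}(x) = \alpha \to \Box \varphi_\alpha^\lozenge$, which is exactly the statement of the corollary.

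The only subtlety worth double-checking is that the hypothesis ``$\gamma \geq \beta$ implies $\HA \vdash \gamma \R L \to \varphi_\gamma^\lozenge$'' in \autoref{Lemma-2st} is indeed supplied in full generality by \autoref{Lemma-4st}, which proves this for every $\alpha \in K$ (not just for $\gamma$ above a specified node). Thus there is no obstacle; the proof is a one-line composition of the two lemmas and requires no further calculation.
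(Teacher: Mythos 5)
Your proof is correct and coincides with the paper's own argument: the corollary is stated there precisely as a direct consequence of \autoref{Lemma-2st} and \autoref{Lemma-4st}, with \autoref{Lemma-4st} discharging the hypothesis of \autoref{Lemma-2st} exactly as you describe. Nothing further is needed.
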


%Let $\mathbb{N}$ indicates the standard model of arithmetic.

\begin{lemma}\label{Lemma-4.5st}
Let $\alpha\in K$ and $ \mathbb{N}\models L=\alpha$. Then $\HA\vdash\alpha\prec  L\ra \alpha\,\R\, L$.
\end{lemma}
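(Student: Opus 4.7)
The plan is to establish $\PA\vdash\alpha\prec L\ra\alpha\R L$ first, and then transfer this to $\HA$ via \autoref{Lemma-Conservativity of HA}, exploiting that both $\alpha\prec L$ and $\alpha\R L$ are finite disjunctions of $\Sigma_1$-sentences (each disjunct being $\exists x(\theta(x)\wedge\hat{x}=\beta)$ for some $\beta$, with $\theta$ being $\Delta_0$).

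By \autoref{Lemma-Properties of Solovay's Function}(3), $\PA\vdash\alpha\prec L\lr\bigvee_{\alpha<\delta}L=\delta$ and $\PA\vdash\alpha\R L\lr\bigvee_{\alpha\R\delta}L=\delta$. Since $\R\,{\subseteq}\,<$ in the perfect frame, the disjunction $\bigvee_{\alpha<\delta}L=\delta$ splits as $\bigvee_{\alpha\R\delta}L=\delta$ together with the disjuncts $L=\delta$ for those $\delta$ with $\alpha\lneqq\delta$ and $\alpha\NR\delta$. The first of these is precisely $\alpha\R L$, so the target implication reduces, inside $\PA$, to proving $\PA\vdash L\neq\delta$ for each such $\delta$.

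Fix one such $\delta$. The hypothesis $\mathbb{N}\models L=\alpha$ entails $\mathbb{N}\models\exists x\,F(x)=\alpha$, a true $\Sigma_1$-sentence, so by $\Sigma_1$-completeness (\autoref{Lemma-bounded Sigma completeness}) $\HA\vdash\exists x\,F(x)=\alpha$, whence $\PA\vdash\exists x\,F(x)=\alpha$. Meanwhile \autoref{Lemma-4st} supplies the hypothesis of \autoref{Lemma-1.5st} at every node, so instantiating \autoref{Lemma-1.5st} with $\alpha$ in both the outer slots and $\delta$ in the $\gamma$-slot yields $\HA\vdash\exists x\,F(x)=\alpha\ra\Box^+L\neq\delta$. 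Combining the two gives $\PA\vdash\Box^+L\neq\delta$, and since $\Box^+L\neq\delta$ is a $\Sigma_1$-sentence, external soundness of $\PA$ in $\mathbb{N}$ delivers $\PA\vdash L\neq\delta$, as required.

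Putting the pieces together yields $\PA\vdash\alpha\prec L\ra\alpha\R L$, and \autoref{Lemma-Conservativity of HA} then transfers this to $\HA$. The only real subtlety is that the passage from $\HA\vdash\Box^+L\neq\delta$ to $L\neq\delta$ cannot be carried out internally by a reflection principle, since $L\neq\delta$ is $\Pi_2$ rather than $\Sigma_1$; however, we only need the $\PA$-provability of $L\neq\delta$, which follows externally from soundness of $\PA$ in the standard model.
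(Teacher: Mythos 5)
Your proposal is correct and follows essentially the same route as the paper: reduce via \autoref{Lemma-Properties of Solovay's Function}(3) and $\R\subseteq\,<$ to showing $\PA\vdash L\neq\delta$ for each $\delta$ with $\alpha\lneqq\delta$ and $\alpha\NR\delta$, obtain $\HA\vdash\exists x\,F(x)=\alpha\ra\Box^+L\neq\delta$ from \autoref{Lemma-4st} and \autoref{Lemma-1.5st}, use $\mathbb{N}\models L=\alpha$ to conclude $\mathbb{N}\models\Box^+L\neq\delta$ and hence $\PA\vdash L\neq\delta$, and finish with \autoref{Lemma-Conservativity of HA}. Your small detour through $\Sigma_1$-completeness to get $\PA\vdash\Box^+L\neq\delta$ before invoking soundness is an inessential variant of the paper's direct evaluation of the implication in $\mathbb{N}$.
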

\begin{proof}
Assume $\alpha\lneqq \beta$ and $\alpha\NR\beta$. 
\Cref{Lemma-4st} implies that for each $\gamma\geq \alpha$,
$\HA\vdash\gamma\,\R\, L\ra\varphi_{_\gamma}\emptycommand$. So  
\Cref{Lemma-1.5st} implies  $\HA\vdash L=\alpha\ra \Box^+
L\neq\beta$. Then  $\mathbb{N}\models L=\alpha\ra
\Box^+L\neq\beta$, which implies $\PA\vdash L\neq\beta$. Hence
$\PA\vdash \alpha\prec  L\ra\alpha\,\R\, L$. Now by
$\Pi_2$-conservativity of $\PA$ over $\HA$, $\HA\vdash
\alpha\prec  L\ra\alpha\,\R\, L$.
\end{proof}

\begin{lemma}\label{Lemma-5st}
Let $\alpha\in K$ and $\mathbb{N}\models L\!=\!\alpha$. Then
$\PA\cup\{L\!=\!\alpha,\Box\varphi_{_\alpha}\emptycommand\}$ is consistent.
\end{lemma}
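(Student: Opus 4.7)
The plan is to argue by contradiction. Suppose that $\PA\cup\{L=\alpha,\Box\varphi_\alpha^\lozenge\}$ is inconsistent, so that $\PA\vdash \neg(L=\alpha\wedge\Box\varphi_\alpha^\lozenge)$. Because $\mathbb{N}$ is a model of $\PA$ and we are given $\mathbb{N}\models L=\alpha$, this would force $\mathbb{N}\models \neg\Box\varphi_\alpha^\lozenge$; that is, $\HA\nvdash \varphi_\alpha^\lozenge$. The plan is then to derive a contradiction by showing that in fact $\HA$ does prove $\varphi_\alpha^\lozenge$.

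For the principal case $\alpha\in K_0$, \autoref{corollar-4st&2st} gives exactly $\HA\vdash \exists x\,(F(x)=\alpha)\to\Box\varphi_\alpha^\lozenge$. Unfolding the definition of $L=\alpha$, namely $\exists u\,\forall z\,(\theta(u*z)\to \hat z=\alpha)$, the hypothesis $\mathbb{N}\models L=\alpha$ immediately yields $\mathbb{N}\models \exists x\,(F(x)=\alpha)$; hence $\mathbb{N}\models \Box\varphi_\alpha^\lozenge$, and therefore $\HA\vdash \varphi_\alpha^\lozenge$, contradicting the previous conclusion. For the edge case $\alpha=\alpha_0$, the witness $F(0)=\alpha_0$ is provable trivially, and the analogue of \autoref{corollar-4st&2st} at $\alpha_0$ can be extracted by mimicking the proof of \autoref{Lemma-2st} from \autoref{Lemma-4st}; the only step of that proof which was restricted to $K_0$, namely the appeal to \autoref{Lemma-1.7st}, becomes vacuous here, since $F$ reaches $\alpha_0$ at stage zero with no move to justify.

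The main obstacle has in fact already been discharged in the preceding subsections: it was the careful verification of \autoref{corollar-4st&2st}, which required the whole Solovay-function apparatus together with the refined Leivant-style translation. Once that is in hand, the present lemma is essentially a one-line soundness argument — the standard model $\mathbb{N}$ itself witnesses the consistency — and the only remaining work is the small bookkeeping for the boundary node $\alpha_0$ sketched above.
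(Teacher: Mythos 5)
Your treatment of the case $\alpha\in K_0$ is correct and is actually a genuinely different (and shorter) route than the paper's: given $\mathbb{N}\models L=\alpha$ you get $\mathbb{N}\models\exists{x}\,F(x)=\alpha$, then \autoref{corollar-4st&2st} together with the soundness of $\HA$ yields $\mathbb{N}\models\Box\varphi_\alpha^\lozenge$, so the standard model itself satisfies $\PA\cup\{L=\alpha,\Box\varphi_\alpha^\lozenge\}$; no L\"ob argument is needed. (There is no circularity, since \autoref{corollar-4st&2st} is established independently of \autoref{Lemma-5st}.) The gap is in the case $\alpha=\alpha_0$, which is exactly the instance the construction ultimately needs in \autoref{Lemma-Kripke simulation} to ground the consistency of the whole interpretability chain. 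The ``analogue of \autoref{corollar-4st&2st} at $\alpha_0$'' that you invoke would amount to $\HA\vdash\Box\varphi_{\alpha_0}^\lozenge$, i.e.\ $\HA\vdash\varphi_{\alpha_0}^\lozenge$, and this is false in general: if some atom $p$ with $\Box p\in S$ is forced at every node of $K_0$, then $p$ is a conjunct of $\varphi_{\alpha_0}$ and $p^\lozenge=\bigvee_{\beta\Vdash p}\beta\preccurlyeq L$ is false in $\mathbb{N}$ (since $F$ never leaves $\alpha_0$, which follows from \autoref{Lemma-limit is root}, itself needing \autoref{Lemma-5st} only for $\alpha\in K_0$), so $\HA\nvdash\varphi_{\alpha_0}^\lozenge$ by soundness. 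Consequently $\mathbb{N}\not\models\Box\varphi_{\alpha_0}^\lozenge$: the standard model does \emph{not} witness the consistency of $\PA\cup\{L=\alpha_0,\Box\varphi_{\alpha_0}^\lozenge\}$, and a nonstandard model is unavoidable here.

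Your proposed repair also misreads the role of \autoref{Lemma-1.7st} in the proof of \autoref{Lemma-2st}: that appeal does not ``become vacuous'' at $\alpha_0$, it supplies the premise $\Box^+\neg(L=\gamma\wedge\Box\varphi_\gamma^\lozenge)$ from which the entire chain (through $\Pi_2$-conservativity, \autoref{Lemma-1.5st}, \autoref{Lemma-4st} and L\"ob's axiom) is launched; deleting it leaves nothing from which to derive $\Box\varphi_{\alpha_0}^\lozenge$. The paper's proof handles $\alpha_0$ by a different mechanism: from the reductio hypothesis it extracts $\PA\vdash\Box\varphi_{\alpha_0}^\lozenge\ra L\neq\alpha_0$, pushes this through $\Sigma_1$-completeness, $\Pi_2$-conservativity, \autoref{Lemma-4.5st} and \autoref{Lemma-4st} to obtain $\HA\vdash\Box\varphi_{\alpha_0}^\lozenge\ra\varphi_{\alpha_0}^\lozenge$, and only then applies L\"ob's theorem to conclude $\HA\vdash\varphi_{\alpha_0}^\lozenge$ --- a conclusion available only \emph{under} the assumed inconsistency, which then contradicts $\mathbb{N}\models L=\alpha_0$. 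Some such L\"ob step is essential for the root node; it cannot be replaced by an unconditional verification in $\mathbb{N}$.
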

\begin{proof}
Suppose not, i.e., 
$\PA\vdash(L=\alpha\wedge\Box\varphi_{_\alpha}\emptycommand)\ra\bot$.
Then $\PA\vdash\Box\varphi_{_\alpha}\emptycommand\ra L\not=\alpha$. By
$\Sigma_1$-completeness of $\PA$, we have 
$\PA\vdash\exists{x}{F}(x)=\alpha$ and then by 
\Cref{Lemma-Properties of Solovay's Function},
$\PA\vdash\Box\varphi_{_\alpha}\emptycommand\ra L\succ \alpha$. By
$\Pi_2$-conservativity of $\PA$ over $\HA$,
$\HA\vdash\Box\varphi_{_\alpha}\emptycommand\ra L\succ \alpha$. 
\Cref{Lemma-4.5st} implies
$\HA\vdash\Box\varphi_{_\alpha}\emptycommand\ra \alpha\,\R\, L$. Then by
 \Cref{Lemma-4st},
$\HA\vdash\Box\varphi\emptycommand_\alpha\ra\varphi\emptycommand_\alpha$.
Then  by L\"{o}b's theorem,
$\HA\vdash\varphi_{\alpha}\emptycommand$. Hence
$\HA\vdash\Box\varphi_{_\alpha}\emptycommand$. That implies $PA\vdash
L\not=\alpha$, a contradiction with $\mathbb{N}\models L=\alpha$.
\end{proof}

\begin{theorem}\label{Lemma-limit is root}
$\mathbb{N}\models L=\alpha_0$.
\end{theorem}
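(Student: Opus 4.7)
The plan is to argue by contradiction, exploiting the previously proved \autoref{Lemma-5st} as the crucial ingredient. Since the frame $(K,<)$ is finite and $F$ is $\preccurlyeq$-monotone by \autoref{Lemma-Properties of Solovay's Function}(1), the function $F$ must eventually stabilise in $\mathbb{N}$ at some node, so there exists $\alpha\in K$ with $\mathbb{N}\models L=\alpha$. Suppose towards a contradiction that $\alpha\neq\alpha_0$, so that $\alpha\in K_0$.

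Since $F(0)=\alpha_0\neq\alpha$ but $F$ eventually takes the value $\alpha$, pick the least $x_0$ with $F(x_0+1)=\alpha$; then $F(x_0)\neq\alpha$, so the transition at step $x_0+1$ was \emph{not} made by the third clause of the recursive definition \autoref{SolovayFunction}. Hence either case 1 or case 2 applied. In case 1, directly $r(\alpha,x_0+1)\leq x_0+1$; in case 2, $r(\alpha,x_0+1)<r(F(x_0),x_0+1)\leq x_0+1$, so again $r(\alpha,x_0+1)\leq x_0+1$.

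Unfolding the definition of $r_{_F}$, this inequality means there exist natural numbers $u,k\leq x_0+1$ such that $\mathbb{N}\models {\sf Proof}_{{\sf PA}_k}(u,\ulcorner\neg(L=\alpha\wedge\Box\varphi_{_\alpha}^\lozenge)\urcorner)$, a true $\Sigma_1$ statement about concrete numerals. Consequently $\PA_k\vdash\neg(L=\alpha\wedge\Box\varphi_{_\alpha}^\lozenge)$ and therefore $\PA\vdash\neg(L=\alpha\wedge\Box\varphi_{_\alpha}^\lozenge)$. This directly contradicts \autoref{Lemma-5st}, which asserts that $\PA\cup\{L=\alpha,\Box\varphi_{_\alpha}^\lozenge\}$ is consistent under the hypothesis $\mathbb{N}\models L=\alpha$.

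There is no real obstacle at this stage, since all the work has been done previously: \autoref{Lemma-5st} packages together the use of \autoref{Lemma-4st}, L\"ob's theorem and $\Pi_2$-conservativity of $\PA$ over $\HA$ to extract the consistency statement we need. The only thing to be careful about is that in the very first transition of $F$ away from $\alpha_0$, one is genuinely in case 1 or case 2 of \autoref{SolovayFunction} (not in the default branch), which is immediate from the choice of $x_0$ as the least step at which $F$ attains $\alpha$. The conclusion $\mathbb{N}\models L=\alpha_0$ follows.
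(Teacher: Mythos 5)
Your proof is correct and follows essentially the same route as the paper: the paper's argument also reduces to the observation that once $F$ actually reaches $\alpha\in K_0$, the transition clause forces $r(\alpha,\cdot)$ to be small, yielding a genuine $\PA$-proof of $\neg(L=\alpha\wedge\Box\varphi_{_\alpha}^\lozenge)$, which contradicts \autoref{Lemma-5st}. The only cosmetic difference is that the paper packages this unwinding of the recursion as the separately stated \autoref{Lemma-1.7st} (proved inside $\PA$) and then instantiates it at the standard model, whereas you re-derive just the standard-model instance directly.
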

\begin{proof}
Suppose not, i.e., $\mathbb{N}\models L\not=\alpha_0$. Then by
 \Cref{Lemma-Properties of Solovay's Function} 
 \cref{2Lemma-Properties of Solovay's Function,3Lemma-Properties of Solovay's Function},
$\mathbb{N}\models L=\alpha$, for some $\alpha>\alpha_0$. By
 \Cref{Lemma-1.7st},
$\mathbb{N}\models\exists{x}F(x)=\alpha\ra\Box^+
\neg(L=\alpha\wedge\Box\varphi_{_\alpha}\emptycommand)$. This implies
$\PA\vdash\neg(L=\alpha\wedge\Box\varphi_{_\alpha}\emptycommand)$, a
contradiction with  \Cref{Lemma-5st}.
\end{proof}

\begin{corollary}\label{corollar-limit is root}
For any $\alpha\leq\beta\in K$ we have $(L=\alpha\wedge \varphi_{_\alpha}\emptycommand) \rhd (L=\beta\wedge \varphi_{_\beta}\emptycommand)$.
\end{corollary}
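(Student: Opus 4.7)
The plan is to derive this corollary from \autoref{Lemma-1.5st Properties of Solovay's Function} by instantiating $\delta$ with the fresh root $\alpha_0$, using \autoref{Lemma-limit is root} to discharge the resulting hypothesis $L=\alpha_0$, and then swapping the boxed $\Box\varphi^\lozenge$ appearing there for the unboxed $\varphi^\lozenge$ needed here, on either side, with the help of \autoref{corollar-4st&2st} and \autoref{Lemma-4st}.

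By construction of the extended frame $(K,<,\R)$, we have $\alpha_0\R\gamma$ for every $\gamma\in K_0$, so in particular $\alpha_0\R\alpha$ whenever $\alpha\in K_0$. An application of \autoref{Lemma-1.5st Properties of Solovay's Function} with $\delta:=\alpha_0$ then yields
\[
\HA_0+L=\alpha_0\;\vdash\;(L=\alpha\wedge\Box\varphi_\alpha^\lozenge)\;\rhd\;(L=\beta\wedge\Box\varphi_\beta^\lozenge).
\]
By \autoref{Lemma-limit is root}, $\mathbb{N}\models L=\alpha_0$; and the $\rhd$-assertion, upon unfolding via \autoref{Theorem-Orey}, is a $\Pi_2$-statement of the form $\forall x\,\Box^+(\cdots)$. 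Thus the premise $L=\alpha_0$ can be discharged and the interpretability assertion $(L=\alpha\wedge\Box\varphi_\alpha^\lozenge)\rhd(L=\beta\wedge\Box\varphi_\beta^\lozenge)$ holds outright (in $\mathbb{N}$).

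Next, I would replace the boxed formulas by their unboxed analogs. On the antecedent side, \autoref{corollar-4st&2st} gives $\HA\vdash L=\alpha\to\Box\varphi_\alpha^\lozenge$, so $\PA+L=\alpha+\varphi_\alpha^\lozenge$ already proves $\Box\varphi_\alpha^\lozenge$ and therefore extends $\PA+L=\alpha+\Box\varphi_\alpha^\lozenge$; since $\rhd$ is monotone under strengthening the antecedent theory, the interpretability is preserved, yielding $(L=\alpha\wedge\varphi_\alpha^\lozenge)\rhd(L=\beta\wedge\Box\varphi_\beta^\lozenge)$. On the consequent side, I would unpack the Solovay-style interpretation furnished by the proof of \autoref{Lemma-1.5st Properties of Solovay's Function}: that interpretation drives the simulated Solovay function to land at an $\R$-successor of $\beta$, so $\beta\R L$ holds inside the interpretation, whence \autoref{Lemma-4st} delivers $\varphi_\beta^\lozenge$ as a further theorem of the interpreted theory. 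The degenerate case $\alpha=\beta=\alpha_0$ is handled by the identity interpretation, whose applicability reduces to the consistency of $\PA+L=\alpha_0+\varphi_{\alpha_0}^\lozenge$, which is witnessed by $\mathbb{N}$ in view of \autoref{Lemma-limit is root}.

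The hard part will be the consequent-side swap: monotonicity of $\rhd$ in the consequent is unavailable, so one cannot simply appeal to an ambient extension argument. The remedy is to inspect the concrete Solovay interpretation built inside the proof of \autoref{Lemma-1.5st Properties of Solovay's Function} and verify that it forces a strict $\R$-jump past $\beta$, at which point \autoref{Lemma-4st} supplies $\varphi_\beta^\lozenge$ for free and the desired $(L=\beta\wedge\varphi_\beta^\lozenge)$ is secured.
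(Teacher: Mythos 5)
Your main line --- instantiate \autoref{Lemma-1.5st Properties of Solovay's Function} at $\delta:=\alpha_0$, discharge the hypothesis $L=\alpha_0$ by \autoref{Lemma-limit is root}, and note that the $\rhd$-assertion has the form $\forall x\,\Box^+(\cdots)$ so that it transfers to $\mathbb{N}$ --- is exactly the paper's argument. But you have omitted one case of the statement: the corollary quantifies over all $\alpha\leq\beta\in K$, and $K=K_0\cup\{\alpha_0\}$ with $\alpha_0<\beta$ for every $\beta\in K_0$. When $\alpha=\alpha_0$ and $\beta\in K_0$ there is no $\delta$ with $\delta\R\alpha_0$, so \autoref{Lemma-1.5st Properties of Solovay's Function} does not apply, and your ``degenerate case'' covers only $\alpha=\beta=\alpha_0$. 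This is precisely why the paper additionally cites \autoref{Lemma-Properties of Solovay's Function.4}: since $\alpha_0\R\beta$, that lemma gives $\HA_0\vdash L=\alpha_0\to\neg\Box^+\neg(L=\beta\wedge\Box\varphi_\beta^\lozenge)$, and the argument from the first case of the proof of \autoref{Lemma-1.5st Properties of Solovay's Function} (provability in $\PA$ plus $\Sigma_1$-completeness) converts this into the required interpretability statement.

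Second, your consequent-side ``de-boxing'' step would fail as described. You propose that inside the interpretation ``$\beta\R L$ holds'', so that \autoref{Lemma-4st} yields $\varphi_\beta^\lozenge$. But the target theory contains $L=\beta$, which is incompatible with $\beta\R L$: by \autoref{Lemma-Properties of Solovay's Function} and perfectness, if $F$ ever reaches an $\R$-successor of $\beta$ then the limit lies strictly above $\beta$. So \autoref{Lemma-4st} delivers $\varphi_\beta^\lozenge$ only under a hypothesis that is refutable exactly where you need the conclusion; moreover the proof of \autoref{Lemma-1.5st Properties of Solovay's Function} works entirely through the Orey characterization $\forall x\,\Box^+(A\to\neg\Box^+_x\neg B)$, so there is no concrete interpretation to ``inspect''. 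Indeed $\PA+L=\beta+\Box\varphi_\beta^\lozenge\vdash\varphi_\beta^\lozenge$ is false in general (a conjunct $B$ of $\varphi_\beta$ satisfies $\beta\Vdash\Box B$ but need not satisfy $\beta\Vdash B$). The paper does not attempt this swap: it takes the corollary as the direct consequence of the cited lemmas, i.e.\ with $\Box\varphi^\lozenge$ in both components, and that boxed form is all that is used later --- in \autoref{Lemma-Kripke simulation} the theories are $T_\beta=\PA+L=\beta$, which already prove $\Box\varphi_\beta^\lozenge$ by \autoref{corollar-4st&2st}. The notational wrinkle you spotted is real, but your repair of it is not; the correct move is to prove and use the boxed version.
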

\begin{proof}
We should show that $\mathbb{N}\models (L=\alpha\wedge \varphi_{_\alpha}\emptycommand) \rhd (L=\beta\wedge \varphi_{_\beta}\emptycommand)$.
This is a direct consequence of \Cref{Lemma-limit is root}, \Cref{Lemma-1.5st Properties of Solovay's Function}
and \Cref{Lemma-Properties of Solovay's Function.4}.
\end{proof}

\subsection{Proof of the main theorem}
In this subsection, we will prove \Cref{Theorem-Main tool}.

With the general method of constructing Kripke models for $\HA$, invented by Smory\'nski \cite{Smorynski-Troelstra},
 interpretability of theories containing $\PA$ plays an important role
in constructing Kripke models of \nolinebreak$\HA$.  
%Roughly speaking, the following theorem 
%(\Cref{Theorem-Smorynski's general method of Kripke model construction}) states that 
%
%

\begin{definition}\label{Definition-Iframe}
A triple $\mathcal{I}:=(K,<,T)$ is called an I-frame
iff it has the following properties:
\begin{itemize}
\item $(K,<)$ is a finite tree,
\item $T$ is a function from $K$ to arithmetical r.e. consistent
theories containing $\PA$,
\item if $\beta<\gamma$, then $T_\beta$ interprets $T_\gamma$  $(\, T_\beta\rhd T_\gamma\,)$.
\end{itemize}
\end{definition}

\begin{theorem}\label{Theorem-Smorynski's general method of Kripke model construction}
For every I-frame $\mathcal{I}:=(K,<,T)$ there exists
a first-order Kripke model  $\kcal={(K,<,\mathfrak{M})}$ such that  
$\kcal\Vdash \HA$ and moreover $\mathfrak{M}(\alpha)\models T_\alpha$,  for any $\alpha\in K$.
 Note that both of the I-frame and  Kripke model are sharing the same frame $(K,<)$.
\end{theorem}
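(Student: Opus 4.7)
The plan is to build the Kripke model $\kcal = (K,<,\mathfrak{M})$ by induction along the tree $(K,<)$, starting at the root and moving outwards. At the root $\alpha_0$ I would pick any countable model $\mathfrak{M}(\alpha_0) \models T_{\alpha_0}$; this exists because $T_{\alpha_0}$ is a consistent r.e.\ theory. The inductive step is: given $\mathfrak{M}(\alpha) \models T_\alpha$ already constructed, and an immediate successor $\beta$ of $\alpha$ in the tree (so $T_\alpha \rhd T_\beta$ by the I-frame property), produce a countable $\mathfrak{M}(\beta) \models T_\beta$ having $\mathfrak{M}(\alpha)$ as a weak (i.e.\ $\Delta_0$-elementary) substructure. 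Since the tree is finite, iterating this finishes the construction.

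The core technical ingredient is the following extension lemma: whenever $M$ is a countable model of an r.e.\ theory $T \supseteq \PA$ and $T \rhd S$ with $S \supseteq \PA$ likewise r.e., there exists a countable $N \models S$ such that $M$ is a weak substructure of $N$. To prove this, I would invoke Orey's theorem (\autoref{Theorem-Orey}): since $M \models T$, we have $M \models \forall x\, \Box^+ \mathrm{Con}(S^x)$, so by overspill inside $M$ there is a nonstandard $a \in M$ with $M \models \mathrm{Con}(S^a)$, and $S^a$ externally contains every standard axiom of $S$. I would then show that the theory $S \cup \mathrm{Diag}_{\Delta_0}(M)$, in the language of arithmetic augmented by a constant $\bar m$ for each $m\in|M|$, is consistent: any finite contradiction would involve only finitely many atomic statements $\psi_1(\bar m),\ldots,\psi_k(\bar m)$ true in $M$ and finitely many axioms of $S$. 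Using $\Sigma_1$-completeness (\autoref{Lemma-bounded Sigma completeness}) to internalize the true atomic facts as $\PA$-provable statements inside $M$, this external contradiction would translate into an $M$-internal proof of $\neg\,\mathrm{Con}(S^a)$, contradicting the choice of $a$. A Henkin-style construction then yields the desired countable $N \models S$ extending $M$ weakly.

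Granted the extension lemma, the main theorem follows by a routine recursion on the finite tree $(K,<)$. That the resulting triple $\kcal$ validates $\HA$ is then a standard verification (cf.\ \cite{Smorynski-Troelstra}): at each node $\mathfrak{M}(\alpha)$ is a classical model of $\PA$, so $Q_1$--$Q_8$ hold everywhere, and the weak elementary substructure condition along $<$ gives Kripke monotonicity for atomic and $\Delta_0$ formulas, which combined with classical induction in each stalk suffices to force induction for all formulas via the standard forcing-style argument. The main obstacle I anticipate is the consistency step inside the extension lemma: one has to be careful to pick the nonstandard bound $a$ large enough to subsume the Gödel numbers of all $S$-axioms appearing in an arbitrary finite fragment, and to formalize the translation of external atomic facts into internal provable ones with enough uniformity to localize the eventual contradiction at $\mathrm{Con}(S^a)$. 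Every other step — picking the root model, the tree recursion, and the soundness check — is routine.
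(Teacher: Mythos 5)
Your overall outline (recursion on the finite tree, Orey's theorem, an extension lemma producing $N\models S$ with $M$ a $\Delta_0$-elementary substructure) is the right one, and the extension lemma itself can indeed be proved by the compactness/Henkin argument you sketch. The paper gives no proof, only a citation to Smory\'nski, so the comparison is with his method --- and there your proposal deviates at exactly the point where the real work happens, and the deviation opens a genuine gap.

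The gap is in the final step, where you claim that ``the weak elementary substructure condition \ldots combined with classical induction in each stalk suffices to force induction for all formulas via the standard forcing-style argument.'' The standard argument for forcing ${\sf Ind}(A,x)$ at a node runs as follows: given $\gamma$ forcing $A(0)\wedge\forall x(A(x)\to A({\sf S}x))$, one considers $X=\{c\in|\mathfrak{M}(\gamma)| : \gamma\Vdash A(c)\}$, notes that $0\in X$ and $X$ is closed under successor, and concludes $X=|\mathfrak{M}(\gamma)|$ by induction \emph{inside} $\mathfrak{M}(\gamma)$. This last step requires $X$ to be \emph{definable} in $\mathfrak{M}(\gamma)$; a nonstandard model of \PA has plenty of undefinable proper cuts closed under successor, and induction says nothing about them. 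The clause $\gamma\Vdash A(c)$ unwinds into a Boolean combination of satisfaction facts in all the models sitting at nodes above $\gamma$, so $X$ is definable in $\mathfrak{M}(\gamma)$ only if those higher models are themselves definable (as internal structures) in $\mathfrak{M}(\gamma)$. Your external Henkin construction gives no such definability, and at that point the verification that $\kcal\Vdash\HA$ collapses; note that you cannot repair it by strengthening ``weak substructure'' to full elementary substructure, since the theories $T_\alpha$ at distinct nodes are in general mutually inconsistent.

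The fix --- and this is what Smory\'nski's cited proof actually does --- is to extract from $T_\alpha\rhd T_\beta$ (via \autoref{Theorem-Orey} and the arithmetized completeness theorem applied to ${\sf Con}(T_\beta^a)$ for a nonstandard $a$ of $\mathfrak{M}(\alpha)$) a model of $T_\beta$ that is \emph{definable inside} $\mathfrak{M}(\alpha)$, with the numeral map $m\mapsto ({\sf S}^m0)^{N}$ providing the $\Delta_0$-elementary embedding (this is where your appeal to \autoref{Lemma-bounded Sigma completeness} belongs). Iterating along the finite tree, every $\mathfrak{M}(\delta)$ with $\delta\geq\gamma$ is definable in $\mathfrak{M}(\gamma)$, hence so is the forcing relation for each fixed formula over the subtree above $\gamma$, and the induction argument goes through. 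So the conclusion of your extension lemma is too weak: you need not merely \emph{some} $\Delta_0$-elementary extension modelling $S$, but one carried out internally to $M$.
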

\begin{proof} See \cite[page~372-7]{Smorynski-Troelstra}. For more detailed proof of a generalization of this theorem, see
\cite[Theorem~4.8]{ArMo14}.
\end{proof}

\begin{lemma}\label{Lemma-Kripke simulation}
For any  $\beta\in K_0$, let $T_\beta:=\PA\cup \{L\!=\!\beta\}$
and define $\mathcal{I}:=(K_0,<_0,T)$. Then
\begin{enumerate}
\item $\mathcal{I}$ is an I-frame.
\item There exists a first-order Kripke model
$\mathcal{K}_1:=(K_0,<_0,\mathfrak{M})$ of $\HA$ such that
for all $\beta\in K_0$ and  $B\in {\sf Sub}(\Gamma)$, $\kcal_0,\beta\Vdash B$ iff
$\kcal_1,\beta\Vdash \sigma_{_{\sf HA}}(B)$.
\end{enumerate}
\end{lemma}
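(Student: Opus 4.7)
The plan is to carry out the two parts in sequence: once $\mathcal{I}$ is shown to be an I-frame, Part~2 follows by applying \autoref{Theorem-Smorynski's general method of Kripke model construction} to obtain $\kcal_1$, and then by induction on the complexity of $B\in S$ showing $\kcal_0,\beta\Vdash B\iff\kcal_1,\beta\Vdash B^\lozenge$. The recurring motif throughout Part~1 is that the relevant lemmas of Section~5.2 are phrased as $\HA_0+L=\alpha_0\vdash\ldots$ or $\HA_0\vdash L=\alpha_0\to\ldots$; since $\mathbb{N}\models L=\alpha_0$ by \autoref{Lemma-limit is root}, every such conclusion is true in $\mathbb{N}$ and, when $\Pi_2$, can be combined with \autoref{Theorem-Orey}.

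For Part~1, $(K_0,<_0)$ is a finite tree by hypothesis and each $T_\beta$ is obviously r.e.\ and extends $\PA$. For consistency of $T_\beta$, note that $\alpha_0\R\beta$ for every $\beta\in K_0$ by the construction of $\R$, so \autoref{Lemma-Properties of Solovay's Function.4} gives $\HA_0\vdash L=\alpha_0\to\neg\Box^+\neg(L=\beta\wedge\Box\varphi_\beta^\lozenge)$; evaluating this in $\mathbb{N}$ using \autoref{Lemma-limit is root} shows that $\PA+L=\beta+\Box\varphi_\beta^\lozenge$ is consistent, and \emph{a fortiori} so is $T_\beta$. For $T_\beta\rhd T_\gamma$ when $\beta<_0\gamma$, I would apply \autoref{Lemma-1.5st Properties of Solovay's Function} with $\delta:=\alpha_0$, $\alpha:=\beta$, and outer $\beta:=\gamma$ to get
\[
\HA_0+L=\alpha_0\vdash(L=\beta\wedge\Box\varphi_\beta^\lozenge)\rhd(L=\gamma\wedge\Box\varphi_\gamma^\lozenge).
\]
Since $\rhd$ is $\Pi_2$ by the convention fixed after \autoref{Theorem-Orey}, this translates to a true $\rhd$-statement in $\mathbb{N}$. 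Combining with $T_\beta\vdash\Box\varphi_\beta^\lozenge$ (which follows from \autoref{corollar-4st&2st} via $T_\beta\vdash\exists x\,F(x)=\beta$) and the inclusion $T_\gamma\subseteq\PA+L=\gamma+\Box\varphi_\gamma^\lozenge$ then yields $T_\beta\rhd T_\gamma$.

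For Part~2, \autoref{Theorem-Smorynski's general method of Kripke model construction} produces $\kcal_1=(K_0,<_0,\mathfrak{M})\Vdash\HA$ with $\mathfrak{M}(\beta)\models T_\beta$ for all $\beta\in K_0$, and I would prove the claimed equivalence by induction on $B\in S$. For atomic $B$, $B^\lozenge=\bigvee_{\gamma\Vdash B}\gamma\preccurlyeq L$ is $\Sigma_1$, so \autoref{Lemma-Sigma-local-global} converts forcing at $\beta$ in $\kcal_1$ to satisfaction in $\mathfrak{M}(\beta)$; the $\Rightarrow$ direction uses $T_\beta\vdash\beta\preccurlyeq L$, and the $\Leftarrow$ direction uses monotonicity of $V_0$ together with \autoref{Lemma-Properties of Solovay's Function}(2) to infer $T_\beta\vdash\neg(\gamma\preccurlyeq L)$ for every $\gamma\not\leq\beta$. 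Conjunction and disjunction are immediate from the induction hypothesis. For $B=\Box C$, the target $\Box C^\lozenge$ is again $\Sigma_1$: if $\beta\Vdash\Box C$ then $C$ is a conjunct of $\varphi_\beta$, so $\HA\vdash\Box\varphi_\beta^\lozenge\to\Box C^\lozenge$ and \autoref{corollar-4st&2st} yields $T_\beta\vdash\Box C^\lozenge$, while if $\beta\nVdash\Box C$ then \autoref{Lemma-2st Properties of Solovay's Function} yields $T_\beta\vdash\neg\Box C^\lozenge$; in either subcase \autoref{Lemma-Sigma-local-global} transfers the verdict to $\kcal_1,\beta$. Finally, for $B=C\to D\in S\subseteq\TNNIL$ (so $C,D\in S$), the Kripke semantics of $\to$ turns forcing at $\beta$ in each model into a quantification over $\delta\geq\beta$, and the induction hypothesis applied to $C$ and $D$ at every such $\delta$ closes the case.

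The main obstacle is the boxed step, which is the whole point of the Solovay construction: $T_\beta$ must decide every arithmetical $\Box C^\lozenge$ in exact agreement with $\kcal_0,\beta\Vdash\Box C$. That balance was already tuned in Section~5 by \autoref{corollar-4st&2st} and \autoref{Lemma-2st Properties of Solovay's Function}, so at this final stage the proof reduces to careful bookkeeping, invoking \autoref{Lemma-Sigma-local-global} to shuttle between intuitionistic Kripke forcing in $\kcal_1$ and classical satisfaction in its fibres $\mathfrak{M}(\beta)$.
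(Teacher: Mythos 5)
Your proposal is correct and takes essentially the same route as the paper: Part~1 is obtained from interpretability statements made true in $\mathbb{N}$ via $\mathbb{N}\models L=\alpha_0$ (you simply unpack \autoref{corollar-limit is root} and \autoref{Lemma-5st} into their constituent lemmas, \autoref{Lemma-1.5st Properties of Solovay's Function} and \autoref{Lemma-Properties of Solovay's Function.4}, which is exactly how the paper derives them), and Part~2 proceeds by Smory\'nski's construction followed by the same induction, using \autoref{Lemma-Properties of Solovay's Function}, \autoref{corollar-4st&2st}, \autoref{Lemma-2st Properties of Solovay's Function} and \autoref{Lemma-Sigma-local-global} at the atomic and boxed steps just as the paper does.
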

\begin{proof}
\begin{enumerate}[leftmargin=*]
\item  \Cref{corollar-limit is root} implies that for each
$\alpha\leq\beta$, $T_\alpha\rhd T_\beta$. Moreover,  
\Cref{Lemma-5st} implies that $T_{\alpha_0}$ is consistent. These 
finish the requirements of $\mathcal{I}$ to be an $I$-frame.
\item By  \Cref{Theorem-Smorynski's general method of Kripke model
construction}, we can find a first-order Kripke model
$\mathcal{K}_1:=(K_0,<_0,\mathfrak{M})$, such that for all
$\beta\in K_0$,  $\mathfrak{M}(\beta)\models T_\beta$ and 
$\mathcal{K}_1\Vdash\HA$. Now we prove, by induction on the
complexity of $B\in {\sf Sub}(\Gamma)$, that for all $\beta\in K_0$,
$\kcal_0,\beta\Vdash B$ iff $\kcal_1,\beta\Vdash \sigma_{_{\sf HA}}(B)$.
\begin{itemize}[leftmargin=*]
\item $B$ is atomic. Then by definition,
$\sigma_{_{\sf HA}}(B):=\bigvee_{\gamma\Vdash B}L\succeq\gamma$. For left to
right direction, let $\beta\Vdash B$. By second part  of
\Cref{Lemma-Properties of Solovay's Function},
$T_\beta\vdash\bigvee_{\gamma\Vdash B}L\succeq\gamma$ and hence
$\mathfrak{M}(\beta)\models\bigvee_{\gamma\Vdash B}L\succeq\gamma$. Since
$\bigvee_{\gamma\Vdash B}L\succeq\gamma$ is $\Sigma_1$-formula,
by \Cref{Lemma-Sigma-local-global}, we  have
 $\beta\Vdash\bigvee_{\gamma\Vdash B}L\succeq\gamma$. 
%Then
%by $\Sigma_1$-completeness of $\HA$, we have
%$\beta\Vdash\bigvee_{\gamma\Vdash B}L\succeq\gamma$. 
For the other
way around, let $\beta\nVdash B$. Then 
\Cref{Lemma-Properties of Solovay's Function} \cref{2Lemma-Properties of Solovay's Function} implies
$T_\beta\vdash\neg L\succeq\gamma$, for all $\gamma\Vdash B$. 
This implies that $\mathfrak{M}(\beta)\not\models L\succeq\gamma$,
which by use of \Cref{Lemma-Sigma-local-global} implies  $\beta\nVdash L\succeq\gamma$. So
$\beta\nVdash\bigvee_{\gamma\Vdash B}L\succeq\gamma$.
\item $B$ is conjunction, disjunction or implication. Result follows easily by
induction hypothesis and inductive definition of $\Vdash$.
\item $B=\Box C$. Let $\beta\Vdash B$. Then by definition
of $\varphi_{_\beta}$ and \Cref{corollar-4st&2st},   $T_\beta\vdash \sigma_{_{\sf HA}}(\Box C)$,
and so $\mathfrak{M}(\beta)\models \sigma_{_{\sf HA}}(\Box C)$. By \Cref{Lemma-Sigma-local-global}, 
$\beta\Vdash \sigma_{_{\sf HA}}(\Box C)$. For the other way around, suppose
$\beta\nVdash\Box C$. Then  \Cref{Lemma-2st Properties of
Solovay's Function} implies $T_\beta\vdash\neg \sigma_{_{\sf HA}}(\Box C)$ and hence 
$\mathfrak{M}(\beta)\not\models \sigma_{_{\sf HA}}(\Box C)$. 
Then \Cref{Lemma-Sigma-local-global} implies  $\beta\nVdash \sigma_{_{\sf HA}}(\Box C)$.
\end{itemize}
\end{enumerate}
\end{proof}

\begin{proof}{(of \Cref{Theorem-Main tool})}
Let  $\sigma$ be the substitution  that we have defined at the beginning of this section and 
$\kcal_1$ be as we have by \Cref{Lemma-Kripke simulation}. Then the assertion of \Cref{Lemma-Kripke simulation} implies that 
for any $A\in {\sf Sub}(\Gamma)$ we have:
\begin{equation*}
\kcal_0,\alpha\Vdash A\quad  \text{ iff }\quad \kcal_1,\alpha\Vdash  \sigma_{_{\sf HA}}(A)
\end{equation*}
\end{proof}

%%%%%%%%%%%%%%%%%%%%%%%%%%%%%%%%%%%%%%%%%%%%%%%%%%%%%%%%%%%%
\section{The $\Sigma_1$-provability logic of $\HA$}\label{Section-Sigma Provability}
In this section, we will show that  $\mathcal{PL}_\sigma(\HA)=\lles$ (see \Cref{Definition-Provability Logic}).
 Moreover, we will show that $\lles$ is decidable.  As a by-product of \Cref{Theorem-Main
tool}, we show that $\HA+\Box\bot$ has de Jongh property.
Before we continue with the statement and proof of soundness and completeness theorems, let us  
apply our techniques presented in this paper  to \Cref{example01,example02} in \cref{sec-introduction}.

\begin{example}\label{example1}\em 
Let $A=\Box (p\vee q)\to(\Box p\vee\Box q)$. %We will show that how to refute $A$ from the provability logic of $\HA$.
We will refute the modal proposition $A$ from the provability logic (and $\Sigma_1$-provability logic) of $\HA$. 

First of all note that the Kripke model $\kcal_0$ from \Cref{example01} is a counter-model for $A$. 
Then by \Cref{Theorem-Main tool}, there exists some first-order Kripke model $\kcal_1\Vdash\HA$ and some 
$\Sigma_1$-substitution \nolinebreak$\sigma$ such that $\kcal_1\nVdash \sigma_{_{\sf HA}}(A)$. Hence we have 
$\HA\nvdash \sigma_{_{\sf HA}}(A)$, in other words, if we define $B:=\sigma(p)$ and $C:=\sigma(q)$, then 
we have $\HA\nvdash\Box(B\vee C)\to(\Box B\vee\Box C)$. \\
\end{example}

\begin{example}\label{example2}\em 
In this example, we show that how to refute $A=\neg\neg\Box(\neg\neg p\to  p)\to\Box(\neg\neg p\to p)$ from 
the provability logic of $\HA$  and also  from the $\Sigma_1$-provability logic of $\HA$. 
First we compute the $\TNNIL$-approximation of $A$, i.e. $A^+$. By $\TNNIL$-algorithm  
(\Cref{subsubsec-TNNIL^-algorith}),  $A^+\!=\!{\Box(p\vee\neg p)\vee\neg\Box(p\vee \neg p)}$.
Then $\kcal_0$ from \Cref{example02} is a countermodel for $A^+$.
Then by \Cref{Theorem-Main tool}, there exists some first-order Kripke model $\kcal_1\Vdash\HA$ and some 
$\Sigma_1$-substitution $\sigma$ such that $\kcal_1\nVdash \sigma_{_{\sf HA}}(A^+)$. Hence 
$\HA\nvdash \sigma_{_{\sf HA}}(A^+)$.  Since $\HA\vdash \sigma_{_{\sf HA}}(\Box A\lr\Box A^+)$
(\Cref{corollar-NNIL properties}), we can deduce 
$\HA\nvdash \sigma_{_{\sf HA}}(A)$.
\end{example}

\subsection*{Soundness}
\begin{theorem}\label{Theorem-Soundness}
$\lles$ is sound for $\Sigma_1$-arithmetical interpretations in
$\HA$, i.e. $\lles\subseteq\PLS(\HA)$.
\end{theorem}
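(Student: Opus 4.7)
The plan is to induct on the derivation of $A$ in $\lles$. The axioms of $\lles$ fall into four groups: the ${\sf iGL}$-axioms, the extended Leivant principle ${\sf Le}^+$, the atomic completeness principle $\CP_{\sf a}$, and the schema $\Box A \to \Box B$ for $A \brt B$. Modus ponens trivially preserves soundness, and the necessitation rule preserves it via the internal necessitation of $\HA$ (if $\HA \vdash \phi$ then $\HA \vdash \Box \phi$).

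The first three groups are essentially already done. The ${\sf iGL}$-axioms are arithmetically sound for $\HA$-provability by the standard argument (L\"ob's axiom is validated by the formalized L\"ob theorem, and the $\K4$-axioms unwind to routine formalized statements about $\Prv_\HA$). Group two is exactly \autoref{Theorem-Soundness of HA for lle+}, which the paper has already established via \autoref{Lemma-sigma_l translation}. Group three follows immediately from \autoref{Lemma-bounded Sigma completeness}: since $\sigma$ is a $\Sigma_1$-substitution, $\sigma(p)$ is $\Sigma_1$ for atomic $p$, hence $\HA \vdash \sigma(p) \to \Box \sigma(p)$.

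The real work is the schema $\Box A \to \Box B$ for $A \brt B$. For this I would do a sub-induction on the generation of $A \brt B$ through the rules A1--A4 and B1--B3. A1 invokes soundness of ${\sf iK4}$-provable implications; A2 and A3 are handled by transitivity and the distribution of $\Box$ over conjunction; A4 uses the formalized induction hypothesis via internal necessitation. Rule B3 exploits that for atomic or boxed $C$, $\sigma(C)$ is $\Sigma_1$, so that $\Sigma_1$-completeness allows the induction hypothesis to be transported through the implication $C \to (\cdot)$. For rule B1 (disjunction on the left) and especially for the Visser-style rule B2 (the refined Leivant-like principle encoded in the bracket operation $[B]Z$), the induction statement must be strengthened to accept an arbitrary $\Sigma_1$-antecedent $D$, proving uniformly $\HA \vdash \Box(D \to \sigma A) \to \Box(D \to \sigma B)$; one then combines the formalized hypothesis with the parametrized extended Leivant principle from \autoref{Lemma-qtranslation2}.

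The principal obstacle is rule B2. Its soundness requires aligning the structural recursion in the propositional definition of $[B]Z$ with the parametrized arithmetical refinement $\sigma_l(\cdot,x)$ from \autoref{Sec-ExLePr}, so that the generalized Leivant statement of \autoref{Lemma-qtranslation2} can be invoked to convert $\Box \sigma_{_{\sf HA}}(B \to C)$ into $\Box \sigma_{_{\sf HA}}([B]Z)$. This is precisely the technical content for which \autoref{Sec-Arithmetic} was developed, so once the sub-induction is set up with the right strengthened IH, the verification of B2 becomes a packaged application of \autoref{Lemma-qtranslation2} and \autoref{Lemma-sigma_l translation} iterated over the bracket structure of $[B]Z$.
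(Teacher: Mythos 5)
Your treatment of the ${\sf iGL}$-axioms, of ${\sf Le}^+$ and of $\CP_{\sf a}$ coincides with the paper's: the paper likewise invokes \autoref{Theorem-Soundness of HA for lle+} for ${\sf Le}^+$ and dismisses the remaining axioms and rules as standard. The divergence is in the schema $\Box A\ra\Box B$ for $A\brt B$. The paper does not prove this at all; it cites Theorem~10.2 of \cite{Visser02}. You instead propose to re-derive it by a sub-induction on the generation of $\brt$, powered by \autoref{Lemma-qtranslation2} and \autoref{Lemma-sigma_l translation}. That is where your argument has a genuine gap, and precisely in the two cases you yourself flag as delicate.

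For B1, suppose $A\brt C$ and $B\brt C$ are sound and consider $A\vee B\brt C$. Neither $\sigma_{_{\sf HA}}(A)$ nor $\sigma_{_{\sf HA}}(B)$ is $\Sigma_1$ in general, so your strengthened hypothesis with a $\Sigma_1$-antecedent $D$ does not let you case-split on the disjunction. The natural detour through the Leivant translation gives $\Box\sigma_{_{\sf HA}}(A\vee B)\ra\Box(\Boxdot\sigma_{_{\sf HA}}(A)\vee\Boxdot\sigma_{_{\sf HA}}(B))$, and applying the necessitated induction hypotheses under the outer box only yields $\Box\Box\sigma_{_{\sf HA}}(C)$, not $\Box\sigma_{_{\sf HA}}(C)$; nothing in the paper removes that extra box. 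For B2 the situation is worse: there the antecedent $B=\bigwedge X$ is a conjunction of implications, hence $B\notin{\sf NOI}$, and by definition $q_\sigma(B\ra C,x)$ then degenerates to $\sigma_{_{\sf HA}}(B\ra C)$, so the implication case of \autoref{Lemma-qtranslation2} gives you nothing. Moreover that lemma is about the Leivant translation $\sigma_{_l}$, which inserts $\Boxdot_x$ into disjunctions; it does not manufacture the bracket formula $[B]Z$, which is a fresh disjunction to be introduced under a box and is not an $\IPC_\Box$-consequence of $B\ra C$. The soundness of B1 and B2 is exactly the hard content of Visser's Theorem~10.2, and it is not a packaged application of the machinery of \autoref{Sec-Arithmetic}, which was built only to establish ${\sf Le}^+$. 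Either cite that theorem, as the paper does, or be prepared to reconstruct its q-realizability argument in full.
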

\begin{proof}
We must show that $\Sigma_1$-interpretations of all axioms of $\lles$
hold in $\HA$. For a proof that $\Sigma_1$-interpretations of axioms
$\Box A\ra\Box B$ with $A\brt B$, hold in $\HA$, see
\cite{Visser02}, Theorem 10.2. The other axioms are well-known or
obvious, except for the Extended Leivant's principle, which holds by 
\Cref{Theorem-Soundness of HA for lle+}.
\end{proof}

The following corollary, shows that $\LC$   captures the $\TNNIL$ part of the 
theory $\lles$. In other words, as far as we are interested in $\TNNIL$ propositions, 
we can work with the rather simple theory $\LC$ instead of $\lles$.
\begin{corollary}\label{Corollary-TNNIL-equaipotency}
For any $\TNNIL$ modal proposition $A$, $\LC\vdash A$ if and only if $\lles\vdash A$.
\end{corollary}
\begin{proof}
The deduction from  left to right is  by 
\Cref{Theorem-TNNIL Conservativity of LC over LLe+} and the fact that $\lles\vdash \lle$, which holds by definition of $\lles$. 
For the  right to left direction, assume that $\LC\nvdash A$.  
Then by \Cref{Theorem-Propositional Completeness LC}, 
there exists some Kripke model $\kcal_0$ such that 
$\kcal_0\nVdash A$.
Then by   \Cref{Theorem-Main tool}, we can find
 some $\Sigma_1$-substitution $\sigma$ and some first-order Kripke model $\kcal_1$, such that  $\kcal_1\Vdash \HA$ 
 and $\kcal_1\nVdash\sigma_{_{\sf HA}}(A)$. Hence 
$\HA\nvdash \sigma_{_{\sf HA}}(A)$.  Now \Cref{Theorem-Soundness} implies that $\lles\nvdash A$, as desired.
\end{proof}

\subsection*{Completeness}
\begin{theorem}\label{Theorem HA-Completeness}
$\lles$ is complete for $\Sigma_1$-arithmetical interpretations in $\HA$, i.e.
$$\mathcal{PL}_\sigma(\HA)\subseteq\lles$$
\end{theorem}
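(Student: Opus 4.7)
The plan is to prove the contrapositive: given $\lles\nvdash A$, I would construct a $\Sigma_1$-substitution $\sigma$ with $\HA\nvdash\sigma_{_{\sf HA}}(A)$. The strategy concatenates the three pillars of the preceding sections: first use the propositional apparatus of \autoref{sec-propositional} to reduce $A$ to its $\TNNIL$-approximation $A^+$, then apply the Kripke completeness of $\LC$ (\autoref{Theorem-Propositional Completeness LC}) to produce a finite perfect tree-frame counter-model $\kcal_0$ refuting $A^+$, and finally invoke \autoref{Theorem-Main tool} to promote $\kcal_0$ into a first-order Kripke model of $\HA$ that refutes $\sigma_{_{\sf HA}}(A^+)$.

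The first stage is a short chain of propositional reductions from $\lles\nvdash A$ to $\LC\nvdash A^+$. By \autoref{corollar HA-NNIL approximation is propositionally equivalent}, $\lles\vdash A\lr A^-$, so $\lles\nvdash A^-$. \autoref{corollar HA-NNIL properties} supplies $\IPC_\Box\vdash A^+\to A^-$, hence $\lles\nvdash A^+$. Since $\lles\supseteq\lle$, this gives $\lle\nvdash A^+$, and because $A^+\in\TNNIL$, the $\TNNIL$-conservativity of $\LC$ over $\lle$ (\autoref{Theorem-TNNIL Conservativity of LC over LLe+}) yields $\LC\nvdash A^+$.

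For the second stage, \autoref{Theorem-Propositional Completeness LC} furnishes a finite perfect Kripke model $\kcal_0=(K_0,\R_0,<_0,V_0)$ with tree frame together with a node at which $A^+$ fails; after pruning to the subtree below that node I may assume the root of $\kcal_0$ refutes $A^+$. Since $A^+\in\TNNIL\subseteq\TNNIL^-$, I would apply \autoref{Theorem-Main tool} with $\Gamma:=\{A^+\}$ to obtain a $\Sigma_1$-substitution $\sigma$ and a first-order Kripke model $\kcal_1=(K_0,<_0,\mathfrak{M})\Vdash\HA$ satisfying $\kcal_0,\beta\Vdash A^+\Leftrightarrow \kcal_1,\beta\Vdash \sigma_{_{\sf HA}}(A^+)$ for every $\beta\in K_0$. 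In particular $\kcal_1\nVdash\sigma_{_{\sf HA}}(A^+)$, and the soundness of $\HA$ for its first-order Kripke models gives $\HA\nvdash\sigma_{_{\sf HA}}(A^+)$. A final application of \autoref{corollar-NNIL properties}(1) converts this into $\HA\nvdash\sigma_{_{\sf HA}}(A)$, establishing $A\notin\PLS(\HA)$ as desired.

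The only real conceptual hurdle is the mismatch between the classes $\TNNIL$ and $\TNNIL^-$. The hypothesis $\lles\nvdash A$ most naturally routes through the $\TNNIL^-$-approximation $A^-$, whereas both \autoref{Theorem-TNNIL Conservativity of LC over LLe+} and the cleanest use of \autoref{Theorem-Main tool} prefer $\TNNIL$ inputs; the one-way implication $\IPC_\Box\vdash A^+\to A^-$ is the precise bridge that lets us replace $A^-$ by $A^+$ in the nonderivability argument. Once this bridge is in place, the proof of \autoref{Theorem HA-Completeness} amounts to the straightforward concatenation of already-established reductions, with the bulk of the arithmetical content safely packaged inside \autoref{Theorem-Main tool}.
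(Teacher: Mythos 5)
Your proposal is correct and follows essentially the same route as the paper's own proof: the identical chain $\lles\nvdash A \Rightarrow \lles\nvdash A^- \Rightarrow \lles\nvdash A^+ \Rightarrow \lle\nvdash A^+ \Rightarrow \LC\nvdash A^+$, then the Kripke counter-model from \autoref{Theorem-Propositional Completeness LC}, the transfer via \autoref{Theorem-Main tool}, and the final conversion back from $A^+$ to $A$ via \autoref{corollar-NNIL properties}(1). The only cosmetic difference is that you invoke $\IPC_\Box\vdash A^+\to A^-$ directly from \autoref{corollar HA-NNIL properties} where the paper routes through $(A^-)^*=A^+$ and \autoref{Theorem-NNIL Crucial Properties}(1); these are the same step.
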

\begin{proof}
Let $\lles\nvdash A$. Then by  \Cref{corollar HA-NNIL
approximation is propositionally equivalent}, $\lles\nvdash A^-$.
Then by  \Cref{Theorem-NNIL Crucial Properties} \cref{1Theorem-NNIL Crucial Properties},
$\lles\nvdash (A^-)^*$, which implies $\lles\nvdash A^+$. Hence
$\lle\nvdash A^+$ and by \Cref{Theorem-TNNIL Conservativity of LC over LLe+},  $\LC\nvdash A^+$.
Then by \Cref{Theorem-Propositional Completeness LC}, there exists some Kripke model $\kcal_0$ such that 
$\kcal_0\nVdash A^+$.
Then by   \Cref{Theorem-Main tool}, we can find
 some $\Sigma_1$-substitution $\sigma$ and some first-order Kripke model $\kcal_1$, such that  $\kcal\Vdash \HA$ 
 and $\kcal\nVdash\sigma_{_{\sf HA}}(A^+)$. Hence 
$\HA\nvdash \sigma_{_{\sf HA}}(A^+)$.  Now  \Cref{corollar-NNIL properties} \cref{1corollar-NNIL properties}
implies $\HA\nvdash \sigma_{_{\sf HA}}(A)$, as desired.
\end{proof}

Although the axioms of the theory $\lles$  sounds very complicated, however we have the following surpring result.

\begin{theorem}
The $\Sigma_1$-provability logic of $\HA$ {\em ($\lles$)}  is decidable.
\end{theorem}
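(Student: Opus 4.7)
The plan is to reduce the decision problem for $\lles$ to that for $\LC$, decidability of the latter being \autoref{Theorem-decidability of LC}. Given a modal proposition $A$, the decision procedure is: first compute the $\TNNIL$-approximation $A^+$ (the $\TNNIL$-algorithm halts on every input, so this is effective); then invoke the decision procedure of \autoref{Theorem-decidability of LC} on the $\TNNIL$-formula $A^+$ and output its answer. The correctness of this reduction amounts to the equivalence
\[ \lles \vdash A \quad\iff\quad \LC \vdash A^+. \]

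For the direction $\LC \vdash A^+ \Rightarrow \lles \vdash A$, I would argue as follows. Since $A^+ \in \TNNIL$, the $\TNNIL$-conservativity of $\LC$ over $\lle$ (\autoref{Theorem-TNNIL Conservativity of LC over LLe+}) yields $\lle \vdash A^+$, and hence $\lles \vdash A^+$ because $\lle \subseteq \lles$. Recalling that $A^+ = (A^-)^*$ and that $\IPC_\Box \vdash (A^-)^* \to A^-$ by \autoref{Theorem-NNIL Crucial Properties}(1), we obtain $\lles \vdash A^-$, which \autoref{corollar HA-NNIL approximation is propositionally equivalent} upgrades to $\lles \vdash A$.

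For the reverse direction $\lles \vdash A \Rightarrow \LC \vdash A^+$, I would argue contrapositively, essentially recycling the chain that is built into the proof of \autoref{Theorem HA-Completeness}. Assuming $\LC \nvdash A^+$, \autoref{Theorem-Propositional Completeness LC} furnishes a finite perfect Kripke model with tree frame refuting $A^+$; feeding this into \autoref{Theorem-Main tool} yields a $\Sigma_1$-substitution $\sigma$ and a first-order Kripke model of $\HA$ in which $\sigma_{_{\sf HA}}(A^+)$ fails, so $\HA \nvdash \sigma_{_{\sf HA}}(A^+)$. The first part of \autoref{corollar-NNIL properties} then transports this failure to $\HA \nvdash \sigma_{_{\sf HA}}(A)$, which by soundness (\autoref{Theorem-Soundness}) contradicts $\lles \vdash A$.

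There is no serious obstacle here: every ingredient has already been established, and the theorem is a direct by-product of the soundness and completeness results of this section combined with decidability of $\LC$. The only mild care needed is in cleanly packaging the equivalence $\lles \vdash A \iff \LC \vdash A^+$, which is what the completeness proof already essentially does behind the scenes.
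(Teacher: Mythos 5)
Your proposal is correct and follows essentially the same route as the paper: compute $A^+$ and decide $\LC\vdash A^+$ via \autoref{Theorem-decidability of LC}, with correctness of the reduction resting on the soundness and completeness machinery. The paper compresses the justification into the single remark that the proof of \autoref{Theorem HA-Completeness} guarantees validity of the algorithm, whereas you have simply unpacked the same chain of lemmas explicitly in both directions.
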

\begin{proof}
Let $A$ be a given modal proposition. We explain how to decide
$\lles\vdash A$ or $\lles\nvdash A$. First by $\TNNIL$ algorithm,
compute $A^+$. Then by  \Cref{Corollary-decidability of LC},
we can decide  $\LC\vdash A^+$. If $\LC\vdash A^+$ , we say `yes"
to $\lles\vdash A$, and otherwise we say ``no" to $\lles\vdash A$.
\Cref{Corollary-TNNIL-equaipotency} guarantees validity of the algorithm.

\end{proof}
\begin{theorem}\label{Theorem-de jongh property for HA+boxbot}
$\HA+\bo\bot$ has the de Jongh property, i.e. for all non-modal
proposition $A$, $\IPC\vdash A$ iff for all arithmetical
substitution $\sigma$,  $\HA+\bo\bot\vdash\sigma(A)$.
\end{theorem}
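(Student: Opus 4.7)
My plan is to reduce the de Jongh property for $\HA + \Box\bot$ to two already established results: the non-modal conservativity of $\LC + \Box\bot$ over $\IPC$ (\autoref{Theorem-nonmodal conservativity of LC over IPC}) and the main simulation \autoref{Theorem-Main tool}. The easy direction is standard: if $\IPC \vdash A$, then for any arithmetical substitution $\sigma$ the same proof yields $\HA \vdash \sigma(A)$, and hence $\HA + \Box\bot \vdash \sigma(A)$.

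For the non-trivial direction I argue by contraposition. Assume $A$ is non-modal and $\IPC \nvdash A$. By \autoref{Theorem-nonmodal conservativity of LC over IPC} we get $\LC + \Box\bot \nvdash A$, equivalently $\LC \nvdash \Box\bot \to A$ by the deduction theorem in $\LC$. Then \autoref{Theorem-Propositional Completeness LC} produces a finite perfect Kripke model $\kcal_0 = (K_0, <_0, \R_0, V_0)$ with tree frame and a node $\alpha \in K_0$ such that $\kcal_0, \alpha \nVdash \Box\bot \to A$. Next I observe that $\Box\bot \to A$ lies in $\TNNIL^-$: taking the non-modal skeleton $A'(p) := p \to A$, one can write $\Box\bot \to A = A'(\Box\bot)$ with $\bot \in \TNNIL$. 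Setting $\Gamma := \{\Box\bot \to A\} \subseteq \TNNIL^-$ and applying \autoref{Theorem-Main tool} to $\kcal_0$ and $\Gamma$, I obtain a $\Sigma_1$-substitution $\sigma$ and a first-order Kripke model $\kcal_1 = (K_0, <_0, \mathfrak{M})$ of $\HA$ with the property that $\kcal_0, \alpha \Vdash \Box\bot \to A$ iff $\kcal_1, \alpha \Vdash \sigma_{_{\sf HA}}(\Box\bot \to A)$. The left-hand side fails, so $\kcal_1, \alpha \nVdash \sigma_{_{\sf HA}}(\Box\bot \to A)$, and therefore $\HA \nvdash \sigma_{_{\sf HA}}(\Box\bot \to A)$. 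Unfolding, $\sigma_{_{\sf HA}}(\Box\bot \to A) = \Prv_\HA(\gnumber{\bot}) \to \sigma(A)$ (using that $A$ is non-modal), and hence $\HA + \Box\bot \nvdash \sigma(A)$, which is the required counterexample. Note that $\sigma$ is in particular an arithmetical substitution, so it refutes the universal hypothesis.

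Since the argument is an immediate assembly of results from \autoref{sec-propositional} and \autoref{sec-transforming}, no substantial new obstacle appears. The only point of care is verifying that $\Box\bot \to A$ belongs to the class $\TNNIL^-$ to which \autoref{Theorem-Main tool} applies; this is immediate because the sole boxed subformula is $\Box\bot$ with $\bot \in \TNNIL$, while all implications of $\Box\bot \to A$ sit inside its non-modal skeleton $p \to A$. All of the arithmetical content --- the Solovay-style recursive function $F$, the consistency of the local theories $T_\beta := \PA + \{L = \beta\}$, and the construction of the $I$-frame that produces a first-order Kripke model of $\HA$ simulating $\kcal_0$ --- is already absorbed into \autoref{Theorem-Main tool}.
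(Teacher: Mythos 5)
Your proof is correct, and it follows the same overall strategy as the paper's --- non-modal conservativity of $\LC+\Box\bot$ over $\IPC$ followed by the Kripke-model simulation --- but with one genuine difference: you apply \autoref{Theorem-Main tool} directly to $\Gamma=\{\Box\bot\to A\}$, whereas the paper first passes to the $\NNIL$-approximation and runs the argument with $\Box\bot\to A^*$. Your key observation, that $\Box\bot\to A$ already lies in $\TNNIL^-$ because the definition of $\TNNIL^-$ places no restriction on the non-modal skeleton (here $p\to A$) and only requires the boxed components (here $\bot$) to be $\TNNIL$, is exactly right, and it renders the detour through $(\cdot)^*$ unnecessary. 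This is arguably an improvement rather than a mere variant: the paper's route yields a $\Sigma_1$-substitution $\sigma$ with $\HA+\Box\bot\nvdash\sigma(A^*)$ and then states the conclusion for $\sigma(A)$ without comment, a passage that does not follow from $\IPC\vdash A^*\to A$ (which points the wrong way) and would require additional justification; your version never leaves $A$, so no such step arises. You also make explicit the intermediate appeal to \autoref{Theorem-Propositional Completeness LC} to extract a finite perfect tree counter-model before invoking \autoref{Theorem-Main tool}, which the paper leaves implicit. The remaining steps --- reading off $\sigma_{_{\sf HA}}(\Box\bot\to A)=\Box\bot\to\sigma(A)$ for non-modal $A$, and noting that a $\Sigma_1$-substitution is in particular an arithmetical one --- are all sound.
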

\begin{proof}
If $\IPC\vdash A$, we apparently have $\HA\vdash \sigma(A)$, for
all $\sigma$, and hence $\HA+\bo\bot\vdash \sigma(A)$.

For the other way around, let $\IPC\nvdash A$. Hence by 
\Cref{Theorem-NNIL Crucial Properties} \cref{1Theorem-NNIL Crucial Properties}, $\IPC\nvdash A^*$.  Then
by  \Cref{Theorem-nonmodal conservativity of LC over IPC},
$\LC\nvdash\bo\bot\ra A^*$. This implies that
$\lle\nvdash\bo\bot\ra A^*$. Hence by  \Cref{Theorem-Main
tool}, there exists some substitution $\sigma$ such that
$\HA+\bo\bot\nvdash \sigma(A)$, as desired.
\end{proof}

\section*{Acknowledgement} 
We would like to thank Albert Visser for reading of the first draft of this paper and his 
very helpful comments, remarks and corrections.
Some results of  this paper is obtained  
during the second author's visit to the Department of Philosophy in Utrecht University in May-June 2011.
The second author is  thankful to Albert Visser who patiently 
answered to his questions and having very fruitful conversations. The second author is also very thankful to Jeroen Goudsmit
for his  helps, hospitality and conversations. 

%%%%%%%%%%%%%%%%%%%%%%%%%%%%%%%%%%%%%%%%%%%%%%%%%%%%%%%%%%%%%%%%%%%%%%%%%%%%%%%%%%%%%%%%%%%%%%%%%%%%%%%%%%%%%%%%%%%%%

\providecommand{\bysame}{\leavevmode\hbox to3em{\hrulefill}\thinspace}
\providecommand{\MR}{\relax\ifhmode\unskip\space\fi MR }
% \MRhref is called by the amsart/book/proc definition of \MR.
\providecommand{\MRhref}[2]{%
	\href{http://www.ams.org/mathscinet-getitem?mr=#1}{#2}
}
\providecommand{\href}[2]{#2}

\end{document}